\newtheorem{definition}{\sc Definition}[section]
\newtheorem{theorem}[definition]{\sc Theorem}
\newtheorem{lemma}[definition]{\sc Lemma}
\newtheorem{proposition}[definition]{\sc Proposition}
\newtheorem{corollary}[definition]{\sc Corollary}
\newtheorem{remark}[definition]{\sc Remark}
\newcommand{\normtre}[1]{{\left\vert\kern-0.25ex\left\vert\kern-0.25ex\left\vert #1 \right\vert\kern-0.25ex\right\vert\kern-0.25ex\right\vert}}
\renewcommand{\d}{\text{\rm d}}
\newcommand{\vep}{\varepsilon}
\newcommand{\R}{\mathbb{R}}
\newcommand{\N}{\mathbb{N}}
\renewcommand{\dim}{N}
\newcommand{\e}{\text{\rm e}}
\newcommand{\sgn}{{\rm sgn}}
\newcommand{\lam}{\lambda}
\newcommand{\ddelta}{\delta}
\newcommand{\Lap}{\Delta_H}
\newcommand{\Rd}{\mathbb R^\dim}
\newcommand{\dH}{\nabla_\xi H}
\newcommand{\B}{{\sf B}}
\newcommand{\BR}{B_R}
\newcommand{\FBR}{\B_R}
\newcommand{\Br}{B_{R/2}}
\newcommand{\Sl}{S}
\newcommand{\phir}{\phi_r}
\newcommand{\psir}{\psi_r}
\newcommand{\Wpr}{W^{1,p}(\BR)}
\newcommand{\tI}{\hat{t}_1}
\newcommand{\tJ}{\hat{t}_2}
\newcommand{\Tr}{T_r}
\newcommand{\T}{T}
\def\Vec#1{\mbox{\boldmath $#1$}}
\newcommand{\prf}[1]{}  
\begin{document}


\title[General framework for nonlinear diffusion equations]
{General framework to construct local-energy solutions of nonlinear diffusion equations \\for growing initial data\prf{ \tt [extended version]}}

\author{Goro Akagi}
\address{Mathematical Institute and Graduate School of Science, Tohoku University, 6-3 Aoba, Aramaki, Aoba-ku, Sendai 980-8578, Japan.}
\email{goro.akagi@tohoku.ac.jp}

\author{Kazuhiro Ishige}
\address{Graduate School of Mathematical Sciences, The University of Tokyo, 3-8-1 Komaba, Meguro-ku, Tokyo 153-8914, Japan.}
\email{ishige@ms.u-tokyo.ac.jp}

\author{Ryuichi Sato}
\address{Department of Applied Mathematics, Faculty of Science, Fukuoka University, Fukuoka 814-0180, Japan.}
\email{rsato@fukuoka-u.ac.jp}

\date{\today}

\thanks{\textbf{Acknowledgment.}
\textrm{
GA is supported by JSPS KAKENHI Grant Number JP20H01812, JP18K18715, JP16H03946 and JP20H00117, JP17H01095 
and by the Alexander von Humboldt Foundation and by the Carl Friedrich von Siemens Foundation. KI and RS are supported in part by KAKENHI Grant Number 19H05599.}}

\keywords{Doubly-nonlinear parabolic equations ; identification of weak limits ; Minty's trick ; Finsler porous medium and fast diffusion equations ; growing initial data}

\begin{abstract}
This paper presents an integrated framework to construct \emph{local-energy solutions} to 
fairly general nonlinear diffusion equations for initial data growing at infinity under suitable assumptions on local-energy estimates for approximate solutions. A delicate issue for constructing local-energy solutions resides in the \emph{identification of weak limits of nonlinear terms} for approximate solutions in a limiting procedure. Indeed, such an identification process often needs the \emph{maximal monotonicity} of nonlinear elliptic operators (involved in the doubly-nonlinear equations) as well as uniform estimates for approximate solutions; however, even the monotonicity is violated due to a \emph{localization} of the equations, which is also necessary to derive local-energy estimates for approximate solutions. In the present paper, such an inconsistency will be systematically overcome by reducing the original equation to a localized one, where a (no longer monotone) localized elliptic operator will be decomposed into the sum of a maximal monotone one and a perturbation, and by integrating all the other relevant processes. Furthermore, the general framework developed in the present paper will also be applied to the \emph{Finsler porous medium and fast diffusion equations}, which are variants of the classical PME and FDE and also classified as a doubly-nonlinear equation.
\end{abstract}

\subjclass[2010]{\emph{Primary}: 35R11; \emph{Secondary}: 34K37, 47J35} 
\maketitle

\section{Introduction}

There have been numerous contributions to nonlinear diffusion equations 
such as the porous medium equation (PME for short), fast diffusion equation (FDE for short), $p$-Laplace parabolic equation, Stefan problem, Richards equation and the so-called \emph{doubly-nonlinear parabolic equation}, which is a unified form of the aforementioned equations. Some of these equations exhibit substantially different dynamics in solutions (e.g., loss of regularity) from that of the normal diffusion equation due to the degeneracy and singularity of diffusion machinery. For instance, propagation speeds of interfaces for degenerate diffusion equations (e.g., PME) are often finite, and then, some singularity emerges on such an interface (and hence, weak notions of solutions are essentially needed for those equations). Indeed, one can clearly observe such a singularity in the so-called ZKB (or Barenblatt) solutions, which are self-similar solutions of an explicit form and whose regularity is often at most H\"older continuous on the interface (see, e.g.,~\cite{Vaz}). On the other hand, to prove existence of a (weak) solution, due to the degeneracy and singularity of nonlinear diffusion terms, an additional task arises to \emph{identify weak limits of nonlinear terms}, that is, let $u$ be a \emph{weak} limit of a sequence $(u_n)$ (e.g., of approximate solutions) and let $f$ be a nonlinear term (e.g., $f(u)$ is a power of $u$ for the PME and FDE); the task is then to discuss a crucial step, whether a \emph{weak} limit of $f(u_n)$ coincides with $f(u)$ or not. However, the loss of classical regularity often prevents us to verify (pre)compactness of approximate solutions in a sufficiently strong topology for identifying the weak limit.

The PME and FDE  in a weak form involve only a power of the unknown function $u$ itself as a nonlinearity, thanks to the linearity of the Laplacian along with integration by parts. As for the $p$-Laplace parabolic equation, a gradient nonlinearity $f(\nabla u)$ remains even in a weak form. Moreover, the doubly-nonlinear equation combines nonlinearities of both equations. A typical doubly-nonlinear equation may be a unified form of the PME/FDE and the $p$-Laplace parabolic equation (see \S \ref{Ss:dnp} below for more details). We shall also later treat the \emph{Finsler porous medium and fast diffusion equations}, which are variants of the PME and FDE, respectively, and also classified as a sort of doubly-nonlinear equation (see \S \ref{Ss:Finsler} below). Hence issues on the identification of weak limits always arise to prove existence of solutions for these nonlinear diffusion equations. A main purpose of this paper is to present a general approach to settle such an issue on the identification of weak limits for doubly-nonlinear parabolic equations.

Identification of weak limits of nonlinear terms has been studied in various scenes and the so-called \emph{Minty's trick} is widely used to cope with the issue. Minty's trick is based on a closedness (in a weak topology) of maximal monotone graphs. Roughly speaking, whenever $f$ enjoys some ``monotonicity'' and either $u_n$ or $f(u_n)$ is strongly convergent with some ``duality'' of topologies, one can identify the (weak) limit of $f(u_n)$ with $f(u)$ (see Proposition \ref{P:Minty} below for more details). Therefore the reduction of each PDE to a functional analytic setting plays a crucial role. Indeed, the maximal monotonicity of the even classical Laplacian relies on the boundary condition as well as the choice of base spaces. Another device for the issue relies on the well-known fact that pointwise and weak limits coincide each other in Lebesgue spaces, and it enables us to identify weak limits for general continuous functions $f$ and sequences $(u_n)$ converging pointwisely. Therefore we may need at least either a fine monotone structure of the equation or higher regularity estimates which yield pointwise convergence of $(u_n)$ and their gradients.

It turns more delicate and critical in which (functional analytic) setting one should work on the task when we need to localize the problem. 
For instance, such situations actually occur to tackle construction of a \emph{local-energy solution} for growing initial data, which may not lie on standard Lebesgue and Sobolev spaces over the whole of domains. Studies on solutions growing at spatial infinity date back to the celebrated work of Tychonoff~\cite{Tyc35}, where it is proved that the Cauchy problem for the heat equation admits a unique solution on $(0,1/(4\Lambda))$ 
for initial data $\mu$ as a (signed) Radon measure satisfying 
$$
\int_{\R^\dim}e^{-\Lambda|x|^2}\d |\mu(x)|<\infty
$$
where $\Lambda>0$. 
Moreover, the result has been extended to the PME in~\cite{BeCrPi84}, the FDE in~\cite{HerPi85} as well as the $p$-Laplace parabolic equation in~\cite{DiHe89} (see also~\cite{DiBook}), and also to a doubly-nonlinear equation unifying these equations in~\cite{Ishige96}. To construct such non-integrable solutions, we usually need some \emph{localization} of equations to establish local-energy estimates; however, such localization may violate the monotonicity of nonlinear elliptic operators involved in the equations, and then, there arises a problem in identifying their weak limits. 
Concerning the degenerate $p$-Laplace equation, in~\cite{DiBook}, $C^{1,\alpha}$-estimates are established and then applied to identity the weak limit of gradient nonlinearity with aid of Ascoli's lemma. 

In the present paper, we shall develop a general framework which enables us to localize problems as well as to identify weak limits of nonlinear terms of approximate solutions for fairly general doubly-nonlinear parabolic equations 
for growing initial data without relying on higher-order estimates (e.g., $C^{1,\alpha}$-estimates) and specific structures of equations. 
Moreover, we shall also apply the theory to the Finsler PME and FDE with growing initial data.

\subsection{General framework for doubly-nonlinear parabolic equations}\label{Ss:DNP}

We shall set up a framework to construct \emph{local-energy solutions} of the Cauchy problem for fairly general doubly-nonlinear parabolic equations with growing initial data,
\begin{alignat}{4}
 \partial_t v &= \mathrm{div} \, a(x,t,\nabla u), \ v \in \beta(u) \ &&\mbox{ in } \Rd \times (0,\Sl),\label{dnp}\\
 v &= \mu &&\mbox{ on } \Rd \times \{0\},\label{dnp-ic}
\end{alignat}
where $\mu$ is a Radon measure in $\Rd$ and $a = a(x,t,\xi) : \Rd \times \R_+ \times \Rd \to \Rd$ is a \emph{Carath\'eodory function}, i.e., measurable in $(x,t)$ and continuous in $\xi$, satisfying
\begin{align}
& \xi \mapsto a(x,t,\xi) \ \mbox{ is monotone in } \R^\dim \ \mbox{ for a.e.} \ x \in \Rd, \ t > 0,\label{a-mono}\\
& |a(x,t,\xi)|^{p'} \leq C(|\xi|^p+k(x,t)) \ \mbox{ for } \ \xi \in \Rd \ \mbox{ and a.e. } x \in \Rd, \ t>0 \label{a-bdd}
\end{align}
for some $1 < p < +\infty$, $0 \leq C < +\infty$ and $k \in L^1_{\rm loc}(\Rd\times\R_+)$. 
Here $p'$ is the H\"older conjugate of $p$, that is, $1/p+1/p'=1$. 
Moreover, $\beta$ is a maximal monotone graph in $\R \times \R$ (then one can always write $\beta = \partial \hat{\beta}$ for some proper lower semicontinuous convex function $\hat\beta : \R \to (-\infty,+\infty]$) and complies with the assumption,
\begin{align}
\hat\beta \ \mbox{ is strictly convex.}\label{beta-conv}
\end{align}
We can assume, without any loss of generality, that
$$
0 \in D(\beta), \quad \beta(0) \ni 0  \quad \mbox{and} \quad 
\hat\beta, \, (\hat\beta)^* \geq 0, 
$$
where $(\hat{\beta})^*$ stands for the Legendre-Fenchel transform (or convex conjugate) of $\hat \beta$, by translation. Throughout this paper, we denote by $\BR$ the open ball in $\Rd$ centered at the origin with radius $R>0$ in terms of an \emph{arbitrary} norm of $\Rd$.
In what follows, we are concerned with \emph{local-energy solutions} of \eqref{dnp}, \eqref{dnp-ic} defined by

\begin{definition}[Local-energy solution of \eqref{dnp}, \eqref{dnp-ic}]\label{D:sol-dnp}
For $\Sl > 0$, a pair of measurable functions $(u,v) : \Rd \times (0,\Sl) \to \R^2$ {\rm (}or just $u : \Rd \times (0,\Sl) \to \R${\rm )} is called a \emph{local-energy solution} of \eqref{dnp}, \eqref{dnp-ic} on $(0,\Sl)$ if the following conditions are all satisfied\/{\rm :}
\begin{enumerate}
 \item[\rm (i)] It holds that
\begin{align*}
 u &\in L^p(\vep,T;W^{1,p}(\BR)),\\
 v &\in L^1(\BR \times (0,T)) \cap C_{\rm weak}([\vep,T];L^{p'}(\BR)),\\
 a(x,t,\nabla u) &\in L^1(\BR \times (0,T)) \cap L^{p'}(\BR \times (\vep,T)),
\end{align*}
for any $R > 0$ and $0 < \vep < T < \Sl$. Here $C_{\rm weak}$ stands for the class of weakly continuous functions {\rm (}see Notation below{\rm )}.
 \item[\rm (ii)] It also holds that
\begin{align}
- \int^t_0 \int_{\Rd} v \partial_t \psi \, \d x \d \tau
 + \int_{\Rd} v(\cdot,t) \psi(\cdot,t) \, \d x
  - \int_{\Rd} \psi(\cdot,0) \, \d \mu(x)\nonumber\\
+ \int^t_0 \int_{\Rd} a(x,\tau,\nabla u) \cdot \nabla \psi \, \d x \d \tau = 0\label{eq}
\end{align}
 for all $\psi \in C^\infty_c([0,\Sl)\times \R^\dim)$ and $0 < t < \Sl$, and moreover,
$$
v(x,t) \in \beta(u(x,t)) \quad \mbox{ for a.e. } (x,t) \in \Rd \times (0,\Sl).
$$
\end{enumerate} 
\end{definition}

\begin{remark}
{\rm
{\rm (i)} 
By subtraction, we can also derive from a weak form of \eqref{dnp}, that is, 
\begin{align*}
- \int^{t_2}_{t_1} \int_{\Rd} v \partial_t \psi \, \d x \d t
 + \int_{\Rd} v(\cdot,t_2) \psi(\cdot,t_2) \, \d x
  - \int_{\Rd} v(\cdot,t_1) \psi(\cdot,t_1) \, \d x\nonumber\\
+ \int^{t_2}_{t_1} \int_{\Rd} a(x,t,\nabla u) \cdot \nabla \psi \, \d  x \d t = 0
\end{align*}
for all $\psi \in C^\infty_c((0,\Sl)\times \R^\dim)$ and $0 < t_1 < t_2 < \Sl$.
Moreover, setting $\psi(\cdot,t) \equiv \varphi \in C_c(\Rd)$ for $t$ close to zero, 
one finds from \eqref{eq} that $v(\cdot,t)\rightarrow \mu$ weakly star in $\mathcal M(\R^\dim)$ as $t\to 0_+$, 
which corresponds to the initial condition \eqref{dnp-ic}.
\vspace{3pt}
\newline
{\rm (ii)} 
Let $(u,v)$ be a local-energy solution of \eqref{dnp}, \eqref{dnp-ic} on $(0,\Sl)$. By definition, $v$ belongs to $W^{1,{p'}}_{\rm loc}(0,\Sl;W^{-1,p'}(\BR))$, where $W^{-1,p'}(\BR)$ is the dual space of $W^{1,p}_0(\BR)$, for any $R > 0$. Indeed, for $t \in (0,T)$, define $\xi(t) \in W^{-1,p'}(\BR)$ by
$$
\left\langle \xi(t), w \right\rangle_{W^{1,p}_0(\BR)}
= \int_{\BR} a(x,t,\nabla u(x,t)) \cdot \nabla w(x) \, \d x
$$
for $w \in W^{1,p}_0(\BR)$. Then it follows that $\xi \in L^{{p'}}_{\rm loc}(0,\Sl;W^{-1,p'}(\BR))$. Substituting $\varphi(x,t) = w(x) \rho(t)$ to \eqref{eq} (by density) for any $\rho \in C^\infty_c(0,\Sl)$ and using the arbitrariness of $w \in W^{1,p}_0(\BR)$, we find that
$$
\int^{\Sl}_0 v(t) \partial_t \rho(t) \, \d t = \int^{\Sl}_0 \xi(t) \rho(t) \, \d t \ \mbox{ in } W^{-1,p'}(\BR)
$$
for a.e.~$t \in (0,\Sl)$. Hence we obtain $v \in W^{1,{p'}}_{\rm loc}(0,\Sl;W^{-1,p'}(\BR))$ for any $R > 0$.
}
\end{remark}

To construct a local-energy solution of \eqref{dnp}, \eqref{dnp-ic}, we start with approximation. 
Let $(\mu_n)$ be a sequence in $C^\infty_c(\Rd)$ such that $\mathrm{supp}\,\mu_n \subset B_n$ and 
\begin{equation*}
\mu_n \to \mu \quad \mbox{ weakly star in } \mathcal M(\R^\dim),
\end{equation*}
that is, 
\begin{equation}\label{ini-hypo}
\int_{\Rd} \varphi \mu_n \, \d x \to \int_{\Rd} \varphi \,\d \mu(x) \quad \mbox{ for } \ \varphi \in C_c(\Rd)
\end{equation}
(see Lemma \ref{L:meas} in Appendix). Then we shall consider the approximate problem,
\begin{alignat}{4}
 \partial_t v_n &= \mathrm{div} \, a(x,t,\nabla u_n), \ v_n \in \beta(u_n) \ &&\mbox{ in } B_n \times (0,\Sl),\label{dnp-n}\\
 u_n &= 0 &&\mbox{ on } \partial B_n \times (0,\Sl),\label{dnp-bc-n}\\
 v_n &= \mu_n &&\mbox{ on } B_n \times \{0\}. \label{dnp-ic-n}
\end{alignat}
We shall work along with the following basic assumptions:
\begin{itemize}
  \item[({\bf A0})]
  (Existence of approximate energy solutions)
  For each $n \in \N$ {\rm (}large enough{\rm )}, there exists an energy solution $(u_n,v_n)$ to \eqref{dnp-n}--\eqref{dnp-ic-n} on $[0,\Sl]$, that is,
  \begin{align}
  u_n \in L^p(0,\Sl;W^{1,p}_0(B_n)),\nonumber\\
  v_n \in W^{1,p'}(0,\Sl;W^{-1,p'}(B_n)) \cap C_{\rm weak}([0,\Sl];L^{p'}(B_n)),\nonumber\\
  v_n(x,t) \in \beta(u_n(x,t)) \ \mbox{ for a.e. } (x,t) \in \Rd \times (0,\Sl),\nonumber\\
  \left\langle \partial_t v_n(t), w \right\rangle_{W^{1,p}_0(B_n)}
  + \int_{B_n} a(x,t,\nabla u_n(x,t)) \cdot \nabla w(x) \, \d x = 0\label{dnp:wf}
  \end{align}
  for any $w \in W^{1,p}_0(B_n)$ and a.e.~$t \in (0,\Sl)$. 
  \item[({\bf A1})] 
  (Local-energy estimates)
  For any $R > 0$ and $0 < t_1 < t_2 < S$, there exist constants $\delta>p$ and $M > 0$ such that 
  \begin{equation}\label{un-bdd}
  \int^{t_2}_{t_1} \left( \int_{\BR}|u_n| \, \d x \right)^\delta \, \d t + 
  \int^{t_2}_{t_1} \int_{\BR} \left( |v_n|^{p'} + |\nabla u_n|^p \right) \, \d x \d t \leq M
  \end{equation}
  for any $n \in \N$ greater than $R$.
  \item[({\bf A2})] 
  (Uniform integrability around $t = 0$)
  For each $R > 0$, it holds that
  $$
  \sup_{n \in \N} \left( \int^t_0 \int_{\BR} \left( |v_n| + |a(x,\tau,\nabla u_n)| \right) \, \d x \d \tau \right) \to 0 \quad \mbox{ as } \ t \to 0_+.
  $$
  \end{itemize}
%
%
Then our result reads, 
\begin{theorem}[Construction of local-energy solution]\label{T:loc}
In addition to \eqref{a-mono}--\eqref{beta-conv}, assume that {\rm (A0)}--{\rm (A2)} hold for some $\Sl \in (0,+\infty]$. For each $n \in \N$, let $u_n$ be the energy solution of \eqref{dnp-n}--\eqref{dnp-ic-n} on $[0,\Sl]$. Then there exist a {\rm (}not relabeled{\rm )} subsequence of $(n)$ and a local-energy solution $(u,v) : \Rd \times (0,\Sl) \to \R^2$ of \eqref{dnp}, \eqref{dnp-ic} in $(0,\Sl)$ such that
\begin{alignat}{4}
 u_n &\to u \quad &&\mbox{ weakly in } L^p(t_1,t_2;W^{1,p}(\BR)),\label{tloc:1}\\
 & &&\mbox{ a.e.~in } \Rd \times (0,\Sl),\label{tloc:3}\\
 v_n &\to v \quad &&\mbox{ weakly star in } L^\infty(t_1,t_2;L^{p'}(\BR)),\label{tloc:2}\\
 a(\cdot,\cdot,\nabla u_n) &\to a(\cdot,\cdot,\nabla u) \quad &&\mbox{ weakly in } L^{p'}(\BR \times (t_1,t_2))^N\label{tloc:4}
\end{alignat}
for any $R>0$ and $0 < t_1 < t_2 < \Sl$. 
\end{theorem}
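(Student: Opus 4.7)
My plan is to extract weak limits from (A1), upgrade to almost-everywhere convergence via a doubly-nonlinear compactness argument, and then identify the nonlinear weak limits by Minty's trick under a spatial cutoff. Fix $R>0$ and $0<t_1<t_2<\Sl$. By (A1) and \eqref{a-bdd}, $(u_n)$, $(v_n)$, and $(a(\cdot,\cdot,\nabla u_n))$ are uniformly bounded in $L^p(t_1,t_2;W^{1,p}(\BR))$, $L^{p'}(\BR\times(t_1,t_2))$, and $L^{p'}(\BR\times(t_1,t_2))^{\dim}$, respectively. A diagonal subsequence yields limits $u$, $v$, $\chi$ satisfying \eqref{tloc:1} and the weak-$L^{p'}$ convergences of $v_n$ and $a(\cdot,\cdot,\nabla u_n)$ on every relevant cylinder. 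The weak formulation \eqref{dnp:wf} additionally bounds $\partial_t v_n$ in $L^{p'}(0,\Sl;W^{-1,p'}(\BR))$, which combined with an energy identity for $\hat\beta(u_n)$ sharpens the weak convergence of $v_n$ to the weak-star $L^\infty(t_1,t_2;L^{p'}(\BR))$ convergence \eqref{tloc:2} and supplies the weak continuity in time required by Definition \ref{D:sol-dnp}(i).

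\textbf{Main obstacle: a.e.\ convergence \eqref{tloc:3}.} The hard part is strong compactness of $(u_n)$ (or equivalently of $(v_n)$): the time-derivative control rests on $v_n$ while the spatial compactness lives on $u_n$, so the usual Aubin--Lions lemma does not apply directly. I would proceed by an Alt--Luckhaus-type time-shift estimate: for $\phi\in C^\infty_c(\BR)$ with $\phi\geq 0$ and small $h>0$, testing the time-shifted weak form of \eqref{dnp:wf} with $\phi\bigl(u_n(\cdot+h)-u_n(\cdot)\bigr)$ and using the pointwise monotonicity $(v_1-v_2)(u_1-u_2)\geq 0$ for $v_i\in\beta(u_i)$ yields
\begin{equation*}
\int_{t_1}^{t_2-h}\!\!\int_{\BR}\phi\bigl(v_n(t+h)-v_n(t)\bigr)\bigl(u_n(t+h)-u_n(t)\bigr)\,\d x\,\d t\;\leq\;Ch
\end{equation*}
uniformly in $n$. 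The strict convexity \eqref{beta-conv} of $\hat\beta$ renders this integrand a proper modulus of continuity on $v_n$; together with the spatial $W^{1,p}$-bound, a Fr\'echet--Kolmogorov argument then yields strong $L^1_{\rm loc}$ convergence of $v_n$, hence a.e.\ convergence along a further subsequence. The strict monotonicity of $\beta=\partial\hat\beta$ then transfers a.e.\ convergence of $v_n$ to \eqref{tloc:3} for $u_n$.

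\textbf{Minty identification.} Given \eqref{tloc:3}, the closedness of the maximal monotone graph $\beta$ immediately gives $v\in\beta(u)$ a.e.\ from $v_n\in\beta(u_n)$ and $v_n\rightharpoonup v$. To identify $\chi=a(\cdot,\cdot,\nabla u)$, choose $\phi\in C^\infty_c(\BR)$ with $0\le\phi\le 1$ and $\phi\equiv 1$ on $B_{R/2}$; test \eqref{dnp:wf} against $u_n\phi\in W^{1,p}_0(B_n)$ and use the chain rule $\langle\partial_t v_n,\,u_n\phi\rangle = \frac{d}{dt}\int\hat\beta(u_n)\phi\,\d x$ to obtain
\begin{equation*}
\int_{t_1}^{t_2}\!\!\int_{\BR}\phi\,a(x,t,\nabla u_n)\!\cdot\!\nabla u_n\,\d x\,\d t=-\Bigl[\int_{\BR}\hat\beta(u_n)\phi\,\d x\Bigr]_{t_1}^{t_2}-\int_{t_1}^{t_2}\!\!\int_{\BR} u_n\,a(x,t,\nabla u_n)\!\cdot\!\nabla\phi\,\d x\,\d t.
\end{equation*}
Passing to $\limsup$ (using lower semicontinuity of $\hat\beta$ on the boundary term and the weak$\times$a.e.\ pairing on the last one) yields $\limsup_n\int_{t_1}^{t_2}\!\int\phi\, a(x,t,\nabla u_n)\!\cdot\!(\nabla u_n-\nabla u)\,\d x\,\d t\le 0$. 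With the pointwise monotonicity \eqref{a-mono}, the standard Minty argument (test against $\nabla u + \vep\eta$ and let $\vep\to 0$) then forces $\chi=a(\cdot,\cdot,\nabla u)$ on $B_{R/2}\times(t_1,t_2)$; since $R,t_1,t_2$ are arbitrary, \eqref{tloc:4} follows.

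\textbf{Limit equation and initial condition.} For $\psi\in C^\infty_c([0,\Sl)\times\Rd)$ choose $n$ so large that $\mathrm{supp}\,\psi\subset B_n\times[0,\Sl)$; integrate \eqref{dnp:wf} against $\psi$ over $(0,t)$ and pass to the limit in each term using the above convergences together with \eqref{ini-hypo} for the initial slice. Assumption (A2) is precisely what allows the boundary layer at $t=0$ to be absorbed uniformly in $n$ before taking $n\to\infty$, producing \eqref{eq} for $(u,v)$. This, combined with $v\in\beta(u)$ from the Minty step, shows that $(u,v)$ is a local-energy solution. The expected principal difficulty is the a.e.\ step of the second paragraph, because the doubly-nonlinear coupling $v\in\beta(u)$ couples the temporal regularity of $v$ to the strict convexity of $\hat\beta$ in a way that demands a tailored compactness lemma rather than a black-box Aubin--Lions result.
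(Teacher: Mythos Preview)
Your overall plan is reasonable, and the Alt--Luckhaus time-shift route to strong compactness is a legitimate alternative to what the paper does; but the Minty step as written has two concrete gaps. First, the chain rule is misstated: since $v_n\in\partial\hat\beta(u_n)$ means $u_n\in\partial(\hat\beta)^*(v_n)$, and it is $v_n$ (not $u_n$) that carries the time derivative, the correct identity is $\langle\partial_t v_n,\phi u_n\rangle=\tfrac{d}{dt}\int(\hat\beta)^*(v_n)\,\phi\,\d x$, not $\tfrac{d}{dt}\int\hat\beta(u_n)\,\phi\,\d x$. Second, and more seriously, even with the corrected boundary term $-\bigl[\int(\hat\beta)^*(v_n)\phi\bigr]_{t_1}^{t_2}$, weak lower semicontinuity gives the inequality in the useful direction only at $t_2$; at $t_1$ the sign is reversed and you cannot close the $\limsup$. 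A related issue is your ``weak$\times$a.e.\ pairing'' for the cross term $\int u_n\,a(\cdot,\cdot,\nabla u_n)\cdot\nabla\phi$: you need $u_n\to u$ strongly in $L^p$, not merely a.e.; this is exactly where the paper invokes the hypothesis $\delta>p$ in (A1), interpolating between the $L^\delta(L^1)$ bound and the $L^p(W^{1,p})$ bound.

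The paper resolves the endpoint problem by proving \emph{actual convergence} (not just an inequality) of $\int(\hat\beta)^*(v_n(\cdot,t))\rho\,\d x$ for a.e.~$t$. Its route is different from yours: it applies Aubin--Lions--Simon directly to $\rho v_n$ (bounded in $L^{p'}(\BR\times(t_1,t_2))$ with $\partial_t[\rho v_n]$ bounded in $L^{p'}(t_1,t_2;(W^{1,p}(\BR))^*)$, and $L^{p'}\hookrightarrow(W^{1,p})^*$ compactly), obtaining $\rho v_n\to\rho v$ in $C([t_1,t_2];(W^{1,p}(\BR))^*)$. Minty on the maximal monotone map $u\mapsto\rho\beta(u)$ then yields $v\in\beta(u)$ together with $\int v_nu_n\rho\to\int vu\rho$; combined with weak lower semicontinuity this forces $\int\hat\beta(u_n)\rho\to\int\hat\beta(u)\rho$, and Visintin's strict-convexity theorem gives $u_n\to u$ in $L^1$ and a.e. The Fenchel--Moreau identity then converts this into pointwise-in-$t$ convergence of $\int(\hat\beta)^*(v_n(t))\rho$, and Minty on the localized operator $A^t(w)=\int a(\cdot,t,\nabla w)\cdot\nabla(\cdot)\,\rho$ is run with endpoints $\hat t_1,\hat t_2$ chosen from that full-measure set. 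Your Alt--Luckhaus step gives only space-time $L^1$ compactness, which by itself does not supply the pointwise-in-$t$ control of the boundary terms; you would still need either an Aubin--Lions argument in the dual, as above, or some time-averaging device to dispense with the endpoints.
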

Theorem~\ref{T:loc} provides a general framework for constructing local-energy solutions to doubly-nonlinear parabolic equations, and then, it enables us to concentrate on constructing approximate solutions on bounded domains and establishing local-energy estimates (without higher regularity ones) for the approximate solutions. These processes will depend more deeply on the structure of each equation and may reveal more quantitative information such as growth and local existence time of solutions. Such remaining processes will also be discussed below for some specific PDEs.
\subsection{Finsler PME and FDE}\label{Ss:Finsler}

We shall apply the preceding general theory to the \emph{Finsler porous medium equation} and \emph{Finsler fast diffusion equation}, which are variants of the standard porous medium equation ($1 < m < +\infty$ in \eqref{pmfd-1} and $1 < q < 2$ in \eqref{pmfd-2} below) and fast diffusion equation ($0 < m < 1 $ in \eqref{pmfd-1} and $2 < q < +\infty$ in \eqref{pmfd-2} below), respectively, of the form
\begin{equation}
\partial_t w = \Delta \left( |w|^{m-1}w \right) \ \mbox{ in } \Rd \times (0,+\infty) \label{pmfd-1}
\end{equation}
or an equivalent form
\begin{equation}\label{pmfd-2}
\partial_t \left( |u|^{q-2}u \right) = \Delta u \ \mbox{ in } \Rd \times (0,+\infty)
\end{equation}
with the choice $u := |w|^{m-1}w$ along with $q := (1+m)/m$. To be more precise, we shall consider the Cauchy problem,
\begin{alignat}{4}
\partial_t \left(|u|^{q-2}u\right) &= \Lap u \quad &&\mbox{ in } \R^\dim \times (0,+\infty),\label{pde}\\
|u|^{q-2}u&=\mu \quad &&\mbox{ in } \R^\dim \times \{0\},\label{ic}
\end{alignat}
where $1 < q < +\infty$. Here $\Lap$ is the so-called \emph{Finsler Laplacian} given by
\begin{align*}
\Lap u &:= \mathrm{div}\, \left( H(\nabla u)\nabla_\xi H(\nabla u) \right)\\
&= \sum_{j=1}^\dim \frac{\partial}{\partial x_j} \left( H(\nabla u) \frac{\partial H}{\partial \xi_j}(\nabla u) \right),
\end{align*}
where $H\in C(\Rd)\cap C^1(\Rd\setminus\{0\})$ is a (possibly non-Euclidean) norm of $\Rd$, that is 
\begin{equation}
\label{eq:1.1}
\left.
\begin{array}{l}
 H\ge 0 \mbox{ in } \Rd \ \mbox{ and } \ H(\xi)=0 \ \mbox{ if and only if } \ \xi=0,\\
 H(\alpha\xi)=|\alpha|H(\xi) \ \mbox{ for } \ \xi\in\Rd \ \mbox{ and } \ \alpha\in\Rd,\\
 H(\xi_1 + \xi_2) \leq H(\xi_1) + H(\xi_2) \ \mbox{ for } \ \xi_1,\xi_2 \in \Rd.
\end{array}
\right\}
\end{equation}
We denote by $H_0$ the dual norm  of $H$ defined by
$$
H_0(x):=\sup_{\xi\in\Rd\setminus\{0\}}\frac{x\cdot\xi}{H(\xi)}. 
$$
Then it follows immediately that
\begin{equation}
\label{eq:1.2}
|x\cdot\xi|\le H_0(x)H(\xi),
\quad
H(\xi)=\sup_{x\in\Rd\setminus\{0\}}\frac{x\cdot\xi}{H_0(x)}. 
\end{equation}
The Finsler Laplacian can be regarded as an anisotropic variant of the Laplacian, and hence, it may not comply with a \emph{strong monotonicity}, which is a typical feature of the $p$-Laplacian, e.g.,
$$
\left\langle -\Delta_p u + \Delta_p v, u-v \right\rangle_{W^{1,p}(\R^\dim)} \geq \omega_p \|\nabla u - \nabla v\|_{L^p(\R^\dim)}^p \quad \mbox{ for } \ u,v \in W^{1,p}(\R^\dim)
$$
for some constant $\omega_p > 0$, provided that $p \geq 2$. Actually, one can construct a counterexample by choosing $H(\cdot)$ as a non-Euclidean norm, e.g., $H(\xi) = (\sum_{j=1}^\dim |\xi_j|^q)^{1/q}$ for $\xi = (\xi_1,\ldots,\xi_\dim) \in \R^\dim$ with any $q \in (1,2)$.

Throughout this paper, we assume that 
\begin{equation}
\label{eq:1.3}
\mbox{the open unit ball } \{x \in \Rd \colon H(x) < 1\} \mbox{ is strictly convex},
\end{equation}
which is equivalent to $H_0\in C^1({\bf R}^N\setminus\{0\})$ (see \cite[Corollary~1.7.3]{S}). 

In general, the Finsler Laplacian is a nonlinear (but uniform) elliptic operator (cf.~$\Lap$ coincides with the usual Laplacian, if $H(\cdot)$ is the Euclidean norm), and moreover, \eqref{pde} is also classified as a sort of doubly-nonlinear problem due to the \emph{gradient nonlinearity} intrinsic to the Finsler Laplacian. Moreover, we can reduce the Finsler PME/FDE \eqref{pde} to \eqref{dnp} by setting $a(x,t,\xi) = H(\xi) \nabla_\xi H(\xi)$ and $\beta(u) = |u|^{q-2}u$, whose potential function is given by
$$
\hat \beta(u) = \frac 1 q |u|^q.
$$
On the other hand, the dynamics in solutions for \eqref{pde} shares much with the PME/FDE. For instance, the authors~\cite{AIS} exhibit a \emph{Finsler ZKB {\rm(}Zel'dovich-Kompaneets-Barenblatt{\rm)} solution} to \eqref{pde} for $1<q<2$ with a parameter $C > 0$,
$$
{\mathcal U}_{H_0}(x,t;C):= U(H_0(x),t;C) = t^{-\frac \alpha {q-1}}\left(C-kH_0(x)^2 t^{-2\beta}\right)_+^{\frac 1 {2-q}},
$$
where $U(r,t;C)$ is a radial profile of the ZKB solution $\mathcal{U}(x,t;C)$ (i.e., $\mathcal U(x,t;C) = U(|x|,t;C)$) for the PME \eqref{pmfd-2}, $(s)_+:=\max\{s,0\}$ and
$$
\alpha:=\frac{N}{2+N\frac{2-q}{q-1}},\quad
\beta:=\frac{\alpha}{N},\quad
k:=\frac{\alpha(2-q)}{2N},
$$
and then ${\mathcal U}_{H_0}(x,t;C)$ turns out to be singular on the interface 
$\{x \in \Rd \colon H_0(x) = C^{1/2}k^{-1/2}t^{\beta}\}$, provided that $1 < q < 2$.

In what follows, we write 
$$
\B_R := \{x \in \R^\dim \colon H_0(x) < R\} \quad \mbox{ for } \ R > 0.
$$
The notion of local-energy solution for the Cauchy problem \eqref{pde}, \eqref{ic} can be analogously defined as follows:
\begin{definition}[Local-energy solution of \eqref{pde}, \eqref{ic}]\label{D:sol}
For $\Sl \in (0,+\infty]$, a measurable function $u : \Rd \times (0,\Sl) \to \R$ is called a \emph{local-energy solution} of \eqref{pde}, \eqref{ic} on $(0,\Sl)$ if the following conditions are all satisfied\/{\rm :}
\begin{enumerate}
 \item[\rm (i)] It holds that
\begin{align*}
 u &\in L^2(\vep,T;H^1(\B_R)),\\
 u^{q-1} &\in L^1(\FBR \times (0,T)) \cap C_{\rm weak}([\vep,T];L^2(\FBR)),\\
 H(\nabla u) &\in L^1(\FBR \times (0,T)) \cap L^2(\FBR \times (\vep,T))
\end{align*}
for any $R > 0$ and $0 < \vep < T < \Sl$.
 \item[\rm (ii)] For all $0 < t < \Sl$, it also holds that
\begin{align*}
- \int^t_0 \int_{\Rd} u^{q-1} \partial_t \psi \, \d x \d \tau
 + \int_{\Rd} u(\cdot,t)^{q-1} \psi(\cdot,t) \, \d x
  - \int_{\Rd} \psi(\cdot,0) \, \d \mu(x)\nonumber\\
+ \int^t_0 \int_{\Rd} H(\nabla u) \dH(\nabla u) \cdot \nabla \psi \, \d x \d \tau = 0
\end{align*}
 for all $\psi \in C^\infty_c(\R^\dim\times[0,\Sl))$.
\end{enumerate} 
\end{definition}

In what follows, we shall state results on existence of local-energy solutions for the Finsler porous medium and fast diffusion equations for growing initial data; briefly stated, in the porous medium case, the growth of initial data at infinity will be restricted in a quantitative way to assure existence of local energy solutions, and moreover, at some critical growth, solutions exist locally in time, but may blow up in finite time (see Theorem \ref{T:pme} and Remark \ref{R:opt} below). On the other hand, in the fast diffusion case, local-energy solutions always exist globally in time for any (non-negative) initial data with no growth restriction (see Theorem \ref{T:fde} below). Such a clear contrast may be intuitively interpreted in terms of the singularity and degeneracy of diffusion coefficients for both cases as $|u| \to +\infty$. Here and henceforth, we set
\begin{equation}\label{kappa_r}
\kappa_p := 2p - N \frac{q-2}{q-1} \quad \mbox{ for } \ p \geq 1.
\end{equation}
Applying Theorem \ref{T:loc}, we have

\begin{theorem}[Finsler fast diffusion]\label{T:fde}
Let $q > 2$ satisfy $\kappa_1 > 0$, that is,
\begin{equation}\label{hypo-q}
q < \dfrac{2(N-1)}{(N-2)_+} 
\end{equation}
and let $\mu \in \mathcal M(\R^\dim)$ be any non-negative Radon measure. Then \eqref{pde}, \eqref{ic} admits a local-energy solution $u : \Rd \times (0,+\infty) \to [0,+\infty)$ on $(0,+\infty)$ defined in Definition \ref{D:sol} such that
$$
u \in L^\infty_{\rm loc}(\Rd \times (0,+\infty)).
$$
Moreover, there exists a constant $C$ depending only on $N,q$ such that
\begin{align}
\sup_{0<\tau<t} \|u(\cdot,\tau)^{q-1}\|_{L^1(\B_R)} &\leq C \mu(\B_{2R}) + C \left( \frac t {R^{\kappa_1}}\right)^{\frac{q-1}{q-2}},\label{fde:2}\\
\|u(\cdot,t)\|_{L^\infty(\B_R)} &\leq C t^{-\frac N {\kappa_1(q-1)}} \left(\sup_{\frac t8 < \tau < t} \|u(\cdot,\tau)^{q-1}\|_{L^1(\B_{2R})}\right)^{\frac 2 {\kappa_1(q-1)}}\nonumber\\
&\quad + C  \left( \frac t {R^2}\right)^{\frac{1}{q-2}},\label{fde:1}\\
\int^t_0 \|H(\nabla u) \|_{L^1(\B_R)} \, \d \tau &\leq C t^{\frac 1 2} R^{\frac{N(q-2)}{2(q-1)}} \left( \mu(\B_{2R}) + t^{\frac{q-1}{q-2}} R^{N-2\frac{q-1}{q-2}} 
\right)^{\frac q{2(q-1)}} \nonumber\\
&\quad + C t^{\frac{q-1}{q-2}} R^{N-\frac q{q-2}}
\label{fde:3}
\end{align} 
for any $R > 0$ and $t > 0$. 

For any $q > 2$ {\rm(}without any further restriction{\rm )}, if $\d \mu(x) = v_0(x)\, \d x$ for some non-negative $v_0 \in L^p_{\rm loc}(\R^\dim)$ with $p \geq 1$ satisfying $\kappa_p > 0$, that is,
\begin{equation}\label{hypo-p}
p > \frac{N(q-2)}{2(q-1)},
\end{equation}
then the same assertions except \eqref{fde:1} also hold true, and moreover, it holds that
\begin{align*}
\sup_{0<\tau<t}\|u(\cdot,\tau)^{q-1}\|_{L^p(\B_R)}^{p} &\leq C \|v_0\|_{L^p(\B_{2R})}^p + C \left(\frac{t^p}{R^{\kappa_p}}\right)^{\frac{q-1}{q-2}},\\
\|u(\cdot,t)\|_{L^\infty(\B_R)} &\leq C t^{-\frac N {\kappa_p(q-1)}} \left(\sup_{\frac t8 < \tau < t} \|u(\cdot,\tau)^{q-1}\|_{L^p(\B_{2R})}\right)^{\frac{2p}{\kappa_p(q-1)}} + C \left(\frac t {R^2}\right)^{\frac 1{q-2}}
\end{align*}
for any $R > 0$ and $t > 0$.
\end{theorem}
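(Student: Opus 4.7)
The strategy is to match \eqref{pde},\,\eqref{ic} to the abstract setting of Theorem~\ref{T:loc} by choosing $a(x,t,\xi)=H(\xi)\nabla_\xi H(\xi)$, $\beta(u)=|u|^{q-2}u$, and $p=2$, and to obtain \eqref{fde:2}--\eqref{fde:3} (together with the $L^p$ analogues) by deriving the corresponding estimates for approximate solutions and passing them to the limit via the convergences supplied by Theorem~\ref{T:loc}.

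First, mollify $\mu$ into $\mu_n\in C^\infty_c(\R^N)$ supported in $B_n$ with $\mu_n\to\mu$ weakly star in $\mathcal{M}(\R^N)$, and solve the Dirichlet problem \eqref{dnp-n}--\eqref{dnp-ic-n} on $B_n$ by standard doubly-nonlinear theory. The functional $u\mapsto\tfrac12\int H(\nabla u)^2\,dx$ is convex on $H^1_0(B_n)$ with subdifferential $-\Lap$ (by \eqref{eq:1.1}), and $\hat\beta(u)=|u|^q/q$ is strictly convex; hence an energy solution $u_n$ exists, and a comparison with the zero sub-solution forces $u_n\ge 0$ since $\mu_n\ge 0$. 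This verifies (A0).

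The technical core is the derivation of (A1) together with the pointwise estimates. Pick a Lipschitz cut-off $\phi_R$ defined in terms of $H_0$, supported in $\B_{2R}$ and equal to $1$ on $\B_R$, and test the weak form with $u_n^{(q-1)(p-1)}\phi_R^{\gamma}$. Using the Euler identity $H(\xi)\nabla_\xi H(\xi)\cdot\xi=H(\xi)^2$ and the boundedness of $\nabla_\xi H$ on $\R^N\setminus\{0\}$, the resulting identity produces the time-derivative of $\int u_n^{(q-1)p}\phi_R^{\gamma}\,dx$, a nonnegative dissipation of order $\int H(\nabla u_n)^2 u_n^{(q-1)(p-1)-1}\phi_R^{\gamma}\,dx$, and a cut-off error absorbable into the dissipation by Young's inequality thanks to $|\nabla\phi_R|\lesssim 1/R$ and the equivalence of $H$ with any Euclidean norm. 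Balancing exponents yields a differential inequality whose integration in time gives \eqref{fde:2}, its $L^p$ version, and the gradient bound \eqref{fde:3} (the latter by H\"older from the dissipation). The algebraic restriction $\kappa_p>0$, equivalently \eqref{hypo-q} or \eqref{hypo-p}, is exactly the matching of exponents required for the Young balance to close, while the additive $R$-dependent term encodes the contribution from the Dirichlet data on $\partial B_n$. The Sobolev embedding $H^1(\B_R)\hookrightarrow L^{2^*}(\B_R)$, valid since \eqref{eq:1.3} makes $\B_R$ a bounded convex Lipschitz domain, then supplies the $L^\delta_t L^1_x$-bound on $u_n$ needed for (A1); (A2) follows routinely from the same estimates together with \eqref{a-bdd}. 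Finally, a Moser-type iteration over nested Finsler balls $\B_{R(1+2^{-k})}$ with exponents $p_k\to\infty$, driven by the Caccioppoli inequality just derived and by $H^1\hookrightarrow L^{2^*}$, upgrades the local $L^p$ bound into the pointwise estimate \eqref{fde:1}, the additive term $C(t/R^2)^{1/(q-2)}$ again accounting for the localization.

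Theorem~\ref{T:loc} then produces a local-energy solution $u$ as the limit of $(u_n)$, and weak and weak-star lower semicontinuity combined with the a.e.~convergence \eqref{tloc:3} (and Fatou's lemma for the $L^\infty$ norm) transfer \eqref{fde:2}--\eqref{fde:3} and the $L^p$ analogues from $u_n$ to $u$; non-negativity is preserved in the limit. The principal obstacle is closing the Caccioppoli step: the fast-diffusion structure $q>2$ makes the effective diffusivity singular at $u=0$, so the test-function exponents must be calibrated precisely to render all integrands integrable and to let $\kappa_p$ emerge as the sharp threshold, and the anisotropy of $H$ must be reconciled with the Euclidean gradient of $\phi_R$ — both difficulties are surmountable via the $0$-homogeneity of $\nabla_\xi H$ and the norm-equivalence of $H$. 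A secondary technical point is making the Moser iteration uniform in $n$ and $R$, which follows from the scale-invariance of the Sobolev constants on the convex Lipschitz balls $\B_R$.
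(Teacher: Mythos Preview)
Your overall architecture matches the paper's: reduce to Theorem~\ref{T:loc} with $a(x,t,\xi)=H(\xi)\nabla_\xi H(\xi)$, $\beta(u)=|u|^{q-2}u$, $p=2$, verify (A0)--(A2) by local energy estimates on the approximants $u_n$, and pass the quantitative bounds to the limit by lower semicontinuity. The Moser iteration you propose for \eqref{fde:1} is a legitimate alternative to the paper's De~Giorgi scheme (Lemma~\ref{lLE}), and the $L^p$ estimate for $p>1$ indeed follows from testing with $u_n^{(q-1)(p-1)}\phi_R^\gamma$, as in Lemma~\ref{L:lLr}.

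There is, however, a genuine gap in your derivation of \eqref{fde:2} and \eqref{fde:3}. For the $L^1$ estimate \eqref{fde:2} your test function degenerates to $\phi_R^\gamma$ (the case $p=1$), and then the ``nonnegative dissipation of order $\int H(\nabla u_n)^2 u_n^{(q-1)(p-1)-1}\phi_R^\gamma$'' you invoke is simply absent: the coefficient $(q-1)(p-1)$ vanishes. What remains is
\[
\frac{\d}{\d t}\int u_n^{q-1}\phi_R^\gamma\,\d x
= -\gamma\int H(\nabla u_n)\nabla_\xi H(\nabla u_n)\cdot(\nabla\phi_R)\,\phi_R^{\gamma-1}\,\d x,
\]
so you need an \emph{a priori} control of $\int_0^t\!\int H(\nabla u_n)$ before you can close the $L^1$ bound, not after. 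The paper obtains that gradient control by a different test: it substitutes $w=\tau^{1/2}(u_\vep^{q-1})^{r-1}\zeta^2$ with $u_\vep=u+\vep$ and the \emph{sub-unit} exponent $r=\tfrac{q}{2(q-1)}\in(0,1)$ (Lemma~\ref{L:lHL1}). The negative exponent flips the sign so that the weighted dissipation $\int_0^t\tau^{1/2}\!\int H(\nabla u_n)^2 u_\vep^{(q-1)(r-1)-1}\zeta^2$ is controlled by $\sup_\tau\int u_\vep^{q-1}$, and H\"older with the matching choice $1-(q-1)(r-1)=(q-1)r$ then yields \eqref{fde:3} in terms of the same quantity. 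Only with this in hand does the $L^1$ argument close via an iteration over nested balls (Lemma~\ref{L:lL1}); your remark that the additive term $C(t/R^{\kappa_1})^{(q-1)/(q-2)}$ ``encodes the contribution from the Dirichlet data on $\partial B_n$'' is not correct---it arises from this localization iteration, not from the far-away boundary.

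In short: for $p>1$ your scheme is fine, but for general Radon data you cannot start at $p>1$ (the initial term $\int\mu_n^p$ does not make sense), and for $p=1$ the single Caccioppoli step you describe does not close. You need the extra ingredient of a negative-power test on $u_\vep$ with a time weight to get the gradient $L^1$ bound first; after that, the $L^1$ estimate, the $L^\infty$ estimate, and (A1)--(A2) follow in the order the paper gives (Corollary~\ref{C:est}).
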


The assumption \eqref{hypo-q} (equivalently, $m > m_c = (\dim-2)_+/\dim$ in the form \eqref{pmfd-1}) is essentially needed to guarantee local boundedness of weak solutions for the case $\kappa_1 = 0$. We refer the reader to~\cite[Remark 3.6]{DK92}, where a counter example is provided for the classical FDE, and moreover, it can be extended to the Finsler FDE with the aid of~\cite[Theorem 1.1]{AIS}.

As for the Finsler porous medium equation, one has

\begin{theorem}[Finsler porous medium]\label{T:pme}
Let $1 < q < 2$ and set $d = \frac{2-q}{q-1} > 0$. Let $\mu \in \mathcal M(\R^\dim)$ be a {\rm (}signed{\rm)} Radon measure such that
$$
\normtre{\mu}_r := \sup_{R \geq r} \left( R^{-\frac {\kappa_1} d} |\mu|(\B_R) \right) < +\infty
$$
for some {\rm (}and any{\rm )} $r > 0$. Then there exists $\T(\mu) \in (0,+\infty]$ such that \eqref{pde}, \eqref{ic} admits a local-energy solution $u : \Rd \times (0,\T(\mu)) \to \R$ on $(0,\T(\mu))$ in the sense of Definition \ref{D:sol} such that
$$
u \in L^\infty_{\rm loc}(\Rd \times (0,\T(\mu)).
$$
Here $\T(\mu)$ is given by
\begin{equation}\label{Tmu}
\T(\mu) = \begin{cases}
	 c \left( a_\mu \right)^{-d} &\mbox{ if } \ a_\mu := \displaystyle\lim_{r \to +\infty} \normtre{\mu}_r > 0,\\
	+\infty &\mbox{ if } \ a_\mu = 0
       \end{cases}
\end{equation}
for some constant $c > 0$. Moreover, for each $r > 0$, set 
$$
\Tr(\mu) = c \normtre{\mu}_r^{-d}.
$$
Then there exists a constant $C > 0$ such that
\begin{align}
\normtre{u(\cdot,t)^{q-1}}_r &\leq C \normtre{\mu}_r,\label{pme:1}\\
\|u(\cdot,t)\|_{L^\infty(\B_R)} &\leq C t^{-\frac N {\kappa_1(q-1)}} R^{\frac 2 {d(q-1)}} \normtre{\mu}_r^{\frac 2 {\kappa_1(q-1)}},\label{pme:2}\\
\int^t_0 \|H(\nabla u)\|_{L^1(\B_R)} \, \d \tau &\leq C t^{\frac 1 {\kappa_1}} R^{1+\frac {\kappa_1} d} \normtre{\mu}_r^{1+\frac d {\kappa_1}}\label{pme:3}
\end{align}
for any $R > r$ and $0 < t < \Tr(\mu)$. Here $\normtre{\,\cdot\,}_r$ is given by
$$
\normtre{f}_r := \sup_{R \geq r} \left( R^{-\frac {\kappa_1} d} \int_{\B_R} |f(x)| \, \d x \right) \quad \mbox{ for } \ f \in L^1_{\rm loc}(\Rd).
$$
\end{theorem}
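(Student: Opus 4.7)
The plan is to invoke Theorem~\ref{T:loc} with $a(x,t,\xi)=H(\xi)\,\nabla_\xi H(\xi)$, $\beta(u)=|u|^{q-2}u$, and $p=2$. Since $H$ is a norm equivalent to the Euclidean one on $\Rd$ and $\nabla_\xi H$ is bounded and $0$-homogeneous, one has $|a(x,t,\xi)|\le L|\xi|$, which yields \eqref{a-bdd}; the monotonicity \eqref{a-mono} follows from the convexity of $\xi\mapsto H(\xi)^2/2$, and $\hat\beta(u)=|u|^q/q$ is strictly convex for $1<q<2$. Approximating $\mu$ by $\mu_n\in C^\infty_c(B_n)$ via truncation and mollification, one may additionally arrange $\normtre{\mu_n}_r\le C\normtre{\mu}_r$ uniformly in $n$. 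The approximate problem \eqref{dnp-n}--\eqref{dnp-ic-n} on the bounded domain $B_n$ then admits an energy solution $(u_n,v_n)$ by standard theory for doubly-nonlinear parabolic equations (Galerkin combined with maximal-monotone-operator methods, exploiting the smoothness and strict monotonicity of $\beta$), which verifies~(A0).

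The core of the proof is to derive the estimates \eqref{pme:1}--\eqref{pme:3} uniformly in $n$ on every $\B_R\times(0,\Tr(\mu))$ with $R>r$, which in turn yield the uniform bounds~(A1) and~(A2) on any $\B_R\times(t_1,t_2)\subset \B_R\times(0,\Tr(\mu))$. I would test the weak formulation \eqref{dnp:wf} against $\phi_R^\lambda$ and against $u_n\phi_R^\lambda$, with a cutoff $\phi_R\in C^\infty_c(\B_{2R})$ such that $\phi_R\equiv 1$ on $\B_R$ and $H_0(\nabla\phi_R)\le C/R$, choosing $\lambda$ large enough to absorb the boundary contributions. Using the duality \eqref{eq:1.2} together with Young's inequality, the first test yields an evolution inequality for $\int v_n\phi_R^\lambda\,\d x$, while the second produces an energy identity controlling $\int |u_n|^q\phi_R^\lambda\,\d x$ and $\int_0^t\int H(\nabla u_n)^2\phi_R^\lambda\,\d x\,\d\tau$. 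Rewriting $|u_n|^2=|v_n|^{2/(q-1)}$ on the right-hand side and interpolating against the weighted norm $\normtre{v_n(\cdot,t)}_r$ over a dyadic family of $H_0$-balls $(\B_{2^j R})_{j\geq 0}$ produces a super-linear Bernoulli-type differential inequality for $y(t):=\normtre{v_n(\cdot,t)}_r$, whose comparison with the explicit solution yields $y(t)\le C\normtre{\mu_n}_r$ precisely on $[0,\Tr(\mu))$ with $\Tr(\mu)=c\normtre{\mu_n}_r^{-d}$, i.e.\ \eqref{pme:1}. The gradient estimate \eqref{pme:3} then follows by inserting \eqref{pme:1} back into the energy identity, and the pointwise bound \eqref{pme:2} is obtained by a Moser iteration on a dyadic family of parabolic cylinders, seeded by the $L^q$-bound implicit in \eqref{pme:1}.

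Once these uniform estimates are in hand, (A1) (with $p=2$ and some $\delta>2$, the extra slack being supplied by \eqref{pme:2}) and (A2) follow immediately, and Theorem~\ref{T:loc} supplies a subsequential limit $u$ which is a local-energy solution of \eqref{pde}, \eqref{ic} on $(0,\Tr(\mu))$ in the sense of Definition~\ref{D:sol}. The estimates \eqref{pme:1} and \eqref{pme:3} pass to the limit by the weak-type lower semicontinuities associated with \eqref{tloc:1}--\eqref{tloc:2}, while \eqref{pme:2} is preserved by the a.e.\ convergence \eqref{tloc:3}. A diagonal argument over $r\to\infty$, combined with $\Tr(\mu)\nearrow\T(\mu)$ as in \eqref{Tmu}, extends $u$ to a local-energy solution on the whole of $(0,\T(\mu))$.

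The main obstacle is isolating the correct Bernoulli inequality for $\normtre{v_n(\cdot,t)}_r$. The difficulty lies in simultaneously balancing three features of the problem: the Finsler anisotropy, which rules out any radial argument and forces one to work with $H_0$-balls via the duality \eqref{eq:1.2}; the admissible growth exponent $\kappa_1/d=N+2/d$ of the initial data, which pins down both the cutoff exponent $\lambda$ and the interpolation converting $\int|v_n|^{2/(q-1)}$ into a power of $\normtre{v_n}_r$; and the Bernoulli exponent itself, which must match exactly in order to produce the sharp lifetime $\Tr(\mu)\sim\normtre{\mu}_r^{-d}$. All three constraints are dictated by the scaling invariance of \eqref{pde}, and matching them simultaneously is the technical heart of the argument.
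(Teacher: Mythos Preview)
Your overall architecture is correct: verify (A0)--(A2) for the approximate solutions on $\B_n$ via uniform local estimates, apply Theorem~\ref{T:loc}, and pass the estimates to the limit by lower semicontinuity and a.e.\ convergence. The gap is in how you propose to obtain~\eqref{pme:1}. You aim to close a Bernoulli-type inequality for $y(t)=\normtre{v_n(\cdot,t)}_r$ alone, deriving the $L^\infty$ bound~\eqref{pme:2} only afterwards by Moser iteration ``seeded by the $L^q$-bound''. This ordering does not close.

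Testing~\eqref{dnp:wf} with $\phi_R^\lambda$ leaves a boundary term $\int H(\nabla u_n)\,H(\nabla\phi_R)\,\phi_R^{\lambda-1}$, and testing with $u_n\phi_R^\lambda$ to control it produces $\int u_n^2\,H(\nabla\phi_R)^2\,\phi_R^{\lambda-2}$ on the right. Since $1<q<2$, one has $u_n^2=|v_n|^{2/(q-1)}$ with $2/(q-1)>2$, so this is a \emph{higher} Lebesgue power of $v_n$ than the $L^1$-type quantity $\normtre{v_n}_r$ you are tracking. Your ``interpolation over a dyadic family of balls'' can control the growth in $R$ of an $L^1$ quantity, but it cannot convert an $L^{2/(q-1)}$ integral into a power of an $L^1$ integral without an $L^\infty$ bound. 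Thus the inequality for $y(t)$ cannot be closed without already knowing~\eqref{pme:2}. (In the classical PME this circularity is avoided via the very weak formulation, testing with $C^2$ functions; the nonlinearity of $\Delta_H$ rules that out here.)

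The paper resolves this by tracking \emph{two} scale-invariant quantities simultaneously,
\[
\psi_r(t)=\sup_{\tau\le t}\normtre{u(\tau)^{q-1}}_r,
\qquad
\phi_r(t)=\sup_{\tau\le t}\sup_{R\ge r}\bigl(\tau^{N/\kappa}R^{-2/d}\|u(\tau)\|_{L^\infty(\B_R\cap\Omega)}^{q-1}\bigr),
\]
and deriving a \emph{coupled} pair of integral inequalities (Lemmas~\ref{pm:e-1} and~\ref{pm:e-2}). The De~Giorgi--Moser iteration (Lemma~\ref{pm:L:1}) bounds $\phi_r$ in terms of $\phi_r$ and $\psi_r$; the gradient-$L^1$ estimate (Lemma~\ref{pm:L:grad-L1}, obtained by testing with $u_\varepsilon^{(q-1)(r-1)}\zeta^2$ for $r=q/(2(q-1))>1$) bounds $\psi_r$ in terms of $\normtre{\mu}_r$ and integrals of \emph{both} $\phi_r$ and $\psi_r$. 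One first solves the $\phi_r$-inequality to obtain $\phi_r(t)\lesssim\psi_r(t)^{2/\kappa}$ while $t\,\psi_r(t)^d$ stays small, substitutes this into the $\psi_r$-inequality to get a closed Bernoulli inequality for $\psi_r$, and only then reads off $T_r(\mu)\simeq\normtre{\mu}_r^{-d}$. This is the DiBenedetto--Herrero scheme (for the degenerate $p$-Laplacian) adapted to the Finsler setting. A secondary point you omit: for signed data the paper works with the positive and negative parts $u_\pm$, which are non-negative \emph{subsolutions}, and the estimates are first proved for such subsolutions (Lemma~\ref{L:sc}).
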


\begin{remark}[Optimality of the growth condition]\label{R:opt}
{\rm
B\'enilan et al~\cite{BeCrPi84} exhibits a finite-time blow-up of some explicit solution for the classical PME. Thanks to~\cite[Theorem 1.1]{AIS}, by replacing the Euclidean norm $|\cdot|$ with $H_0(\cdot)$, it can be transformed to an explicit solution of the Finsler PME with an initial datum $\mu$ growing at the critical rate (to be more precise, $a_\mu = \lim_{r\to+\infty}\normtre{\mu}_r$ is finite and positive) and local-existence time proportional to $(a_\mu)^{-d}$.
}
\end{remark}

To prove these theorems, we shall often use the following useful properties of the norm $H(\cdot)$ as well as the dual norm $H_0(\cdot)$:
\begin{equation}
\label{eq:2.1}
\left.
\begin{array}{ll}
\xi\cdot\nabla_\xi H(\xi)=H(\xi) &\mbox{ for } \ \xi\in\Rd,\\
H_0(\nabla_\xi H(\xi))=1 &\mbox{ for } \ \xi\in\Rd\setminus\{0\}.
\end{array}
\right\}
\end{equation}
Here $\xi\cdot\nabla_\xi H(\xi)$ is supposed to be zero at $\xi=0$. We refer the reader to \cite{CS,FK} for further properties of $H(\cdot)$ and $H_0(\cdot)$. 


\bigskip\noindent
{\bf Structure of the paper.} This paper consists of five sections. In Section \ref{S:ident}, we shall prove Theorem \ref{T:loc}, which is a general part of the theory for constructing local-energy solutions to the Cauchy problem for fairly general doubly-nonlinear parabolic equations of the form \eqref{dnp} with growing initial data. Section \ref{S:FFDE} is devoted to discussing the Finsler fast diffusion equation (i.e., the case where $2 < q < +\infty$) based on the preceding general theory. Main ingredients of this section are local-energy estimates for approximate solutions. Theorem \ref{T:fde} will be finally proved by applying Theorem \ref{T:loc}. Moreover, the Finsler porous medium equation (i.e., $1 < q < 2$) will be treated in Section \ref{S:FPME}, where local-energy estimates are also established and Theorem \ref{T:pme} will be proved. The final section is concerned with other applications and possible extensions of Theorem \ref{T:loc}.

\bigskip\noindent
{\bf Notation.} 
Let $\R_+$ and $\Omega$ 
stand for the open interval $(0,+\infty)$ and a domain in $\R^\dim$, respectively. 
We often write $u(t)$ instead of $u(\cdot,t)$ for each $t\in I$ and $u:\Omega\times\R_+\to\R$. Moreover, $C_{\rm weak}([a,b];X)$ (respectively, $C_{\rm weak\,*}([a,b];X)$) denotes the set of all weakly (respectively, weakly star) continuous functions on $[a,b]$ with values in a Banach space $X$. Furthermore, we denote by $C$ a non-negative constant which is independent of the elements of the corresponding space or set and may vary from line to line. We also often write $f \lesssim g$ for functions or sequences $f,g$, if they comply with the relation
$$
f \leq C g \quad \mbox{uniformly}
$$
for some constant $C > 0$.

\section{General framework to construct local-energy solutions}\label{S:ident}

In this section, we shall prove Theorem \ref{T:loc}. More precisely, under the assumptions (A1) and (A2), which provide \emph{local} boundedness of energy solutions $(u_n,v_n)$ to \eqref{dnp-n}--\eqref{dnp-ic-n} on $[0,\Sl]$, we shall discuss convergence of $(u_n,v_n)$ (up to a subsequence) as $n \to +\infty$ and prove that the limit $(u,v)$ is a local-energy solution of the Cauchy problem. The main difficulty resides in the identification of weak limits of nonlinear terms including the gradient of $u_n$. To overcome it, we shall develop a framework of Minty's trick along with full localization. 

\subsection{Outline}\label{Ss:OL}

Before going to the details, let us give an outline of the proof. Let $(\mu_n)$ be a sequence in $C^\infty_c(B_n)$ such that
$$
\mu_n \to \mu \quad \mbox{ weakly star in } \mathcal M(\R^\dim),
$$
that is,
$$
\int_{\Rd} \varphi \mu_n \, \d x \to \int_{\Rd} \varphi \,\d \mu(x) \quad \mbox{ for } \ \varphi \in C_c(\Rd).
$$

Let $u_n$ be a (smooth for simplicity) solution of the approximate problem \eqref{dnp-n}--\eqref{dnp-ic-n} such that $(u_n)$ is bounded in a local-energy space $L^p(t_1,t_2;W^{1,p}(\BR))$ for any $0 < t_1 < t_2 < \Sl$ and $R > 0$ (see (A1) for more details). 
To localize the equation onto the ball $\BR$, we multiply both sides of \eqref{dnp-n} for $n > R$ by a smooth cut-off (in space) function $\rho$ (supported over $\overline{\BR}$). Then we see that 
$$
\rho \partial_t \beta(u_n) = \rho \, \mathrm{div}\,a(x,t,\nabla u_n) \ \mbox{ in } \BR \times (0,\Sl).
$$
Furthermore, we multiply 
both sides by a test function $\varphi \in W^{1,p}(\BR)$. 
Note that both $u_n$ and $\varphi$ may not vanish on the boundary $\partial B_R$. 
Since $\rho$ vanishes on $\partial B_R$, we observe that
\begin{align*}
 \int_{\BR} \rho \partial_t \beta(u_n) \varphi \, \d x
 &= \int_{\BR} \rho \,\mathrm{div}\,a(x,t,\nabla u_n) \varphi  \, \d x\\
 &= - \int_{\BR} a(x,t,\nabla u_n) \cdot \nabla (\varphi \rho) \, \d x\\
 &= - \int_{\BR} a(x,t,\nabla u_n) \cdot (\nabla \varphi) \rho \, \d x - \int_{\BR} a(x,t,\nabla u_n) \cdot (\nabla \rho) \varphi \, \d x\\
 &=: - \langle A^t(u_n) , \varphi \rangle_V + \langle F^t(u_n), \varphi \rangle_V,
\end{align*}
where $A^t$ and $F^t$ are operators from $V := W^{1,p}(\BR)$ into $V^*$ defined by
\begin{align*}
\langle A^t(v) , w \rangle_V &= \int_{\BR} a(x,t,\nabla v) \cdot (\nabla w) \rho \, \d x,\\
\langle F^t(v) , w \rangle_V &= - \int_{\BR} a(x,t,\nabla v) \cdot (\nabla \rho) w \, \d x,
\end{align*}
for $v,w \in V$ and $t \in (0,\Sl)$. Hence, $u_n$ solves the following auxiliary evolution equation,
\begin{equation}\label{aux-eq}
\rho \partial_t \beta(u_n) + A^t(u_n) = F^t(u_n) \ \mbox{ in } V^*, \quad 0 < t < \Sl.
\end{equation}
Then  $A^t$ will turn out to be maximal monotone in $V \times V^*$. Therefore, under the local boundedness of $(u_n)$,  the maximal monotonicity enables us to identify the weak  limit of $A^t(u_n)$ by employing standard Minty's trick (see Proposition \ref{P:Minty} below). Furthermore, the limit of $F^t(u_n)$ will also be identified as
$$
\lim_{n \to \infty} \int^{t_2}_{t_1} \langle F^t(u_n), w \rangle_V = - \int^{t_2}_{t_1} \int_{\BR} a(x,t,\nabla u) \cdot (\nabla \rho) w \, \d x \, \d t \quad \mbox{ for } \ w \in V,
$$
whence follows that $u$ solves
$$
\rho \partial_t \beta(u) + A^t(u) = - a(x,t,\nabla u) \cdot (\nabla \rho) \ \mbox{ in } V^*, \quad t_1 < t < t_2.
$$
Setting $\rho \equiv 1$ in $\Br$, we shall obtain 
$$
\partial_t \beta(u) - \mathrm{div}\, a(x,t,\nabla u) = 0 \ \mbox{ in } \Br \times (t_1,t_2)
 $$
 in a weak sense, i.e., $u$ solves the original equation in $B_{R/2}\times(t_1,t_2)$.

 \begin{proposition}[Minty's trick]\label{P:Minty}
  Let $A$ be a {\rm (}possibly multi-valued{\rm )} maximal monotone operator from a Banach space $E$ into its dual space $E^*$. Let $u_n \in D(A)$ and $\xi_n \in A(u_n)$ be such that $u_n \to u$ weakly in $E$, $\xi_n \to \xi$ weakly in $E^*$ and
$$
\limsup_{n \to \infty} \langle \xi_n, u_n \rangle_E \leq \langle \xi , u \rangle_E
$$
for some $u \in E$ and $\xi \in E^*$. Then $u \in D(A)$ and $\xi \in A(u)$. 
Moreover, 
$$
\lim_{n \to \infty} \langle \xi_n, u_n \rangle_E = \langle \xi , u \rangle_E.
$$
 \end{proposition}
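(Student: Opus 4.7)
The plan is to exploit the defining property of maximal monotone operators: a pair $(u,\xi)\in E\times E^*$ lies in the graph of $A$ if and only if $\langle \xi - \eta, u - v\rangle_E \geq 0$ for every $(v,\eta)$ with $\eta\in A(v)$. So the whole task reduces to verifying this inequality for the candidate limit $(u,\xi)$, after which the conclusion $u\in D(A)$ and $\xi\in A(u)$ follows automatically.

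First I would fix an arbitrary pair $(v,\eta)$ with $\eta \in A(v)$ and write down the monotonicity inequality for the approximating pair:
\[
\langle \xi_n - \eta, u_n - v\rangle_E \geq 0.
\]
Expanding into four terms and taking $\limsup$ as $n\to\infty$, three of the four terms converge outright by the weak convergences $u_n\rightharpoonup u$ in $E$ and $\xi_n \rightharpoonup \xi$ in $E^*$, while the remaining term $\limsup \langle \xi_n,u_n\rangle_E$ is controlled from above by $\langle \xi,u\rangle_E$ thanks to the standing hypothesis. Collecting the terms yields exactly
\[
\langle \xi - \eta, u - v\rangle_E \geq 0,
\]
and since $(v,\eta)$ was arbitrary on the graph of $A$, maximal monotonicity of $A$ gives $\xi\in A(u)$.

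Once the inclusion is secured, the pair $(u,\xi)$ itself can be plugged back into the monotonicity inequality to obtain $\langle \xi_n - \xi, u_n - u\rangle_E \geq 0$, which after expansion and taking $\liminf$ provides
\[
\liminf_{n\to\infty}\langle \xi_n, u_n\rangle_E \geq \langle \xi, u\rangle_E.
\]
Combined with the hypothesized reverse inequality for $\limsup$, this pins the full limit down to $\langle \xi,u\rangle_E$.

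I do not expect a genuine obstacle here; the only subtlety is the careful bookkeeping of the cross terms when passing to the $\limsup$ (each of them is a duality pairing of a weakly convergent sequence against a fixed element, so convergence is granted). The proof is essentially the standard abstract Minty argument and no structural assumption beyond maximal monotonicity of $A$ and the weak convergences is used.
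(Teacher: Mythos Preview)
Your proof is correct and is exactly the standard argument for Minty's trick. The paper itself does not prove this proposition; it is stated as a known device (Proposition~\ref{P:Minty}) and simply invoked in the identification step, so there is no in-paper proof to compare against.
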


\subsection{Localization and functional setting}

Let us now proceed to the details. Let $R > 0$ be arbitrarily fixed and let $\rho(x) : \Rd \to [0,1]$ be a smooth (say $C^1$) non-negative function satisfying
\begin{equation}\label{rho-hypo}
\mathrm{supp}\, \rho = \overline{\BR}, \quad
\rho \equiv 1 \ \mbox{ on } \Br.
\end{equation}
Define an operator $A^t : \Wpr \to (\Wpr)^*$ for each $t \in (0,\Sl)$ by
\begin{equation}
\label{eq:Q1}
\left\langle A^t (w), e \right\rangle_{ \Wpr}
 = \int_{\BR} a(x,t,\nabla w) \cdot (\nabla e) \rho \, \d x
\quad \mbox{ for } \ w,e \in  \Wpr.
\end{equation}
By virtue of \eqref{a-mono} and \eqref{a-bdd}, one can easily check that $A^t$ is monotone and continuous in $\Wpr$ (see, e.g.,~\cite[Theorem 1.27]{Roubicek}), and hence, $A^t$ turns out to be maximal monotone in $ \Wpr \times ( \Wpr)^*$ for each $t \in (0,\Sl)$ (see, e.g.,~\cite[Theorem 1.3]{B}).

\subsection{Weak convergence of approximate solutions}\label{Ss:wc}


Let $(u_n,v_n)$ be an energy solution of \eqref{dnp-n}--\eqref{dnp-ic-n} on $[0,\Sl]$. We first derive from (A1) the following estimates:

\begin{lemma}
\label{Lemma:2.2}
For any $R > 0$ and $0 < t_1 < t_2 < \Sl$, there exists a constant $M > 0$ such that
\begin{align}
\int^{t_2}_{t_1} \int_{\BR} |u_n|^p \, \d x \d t &\leq M,\label{e:unL2}\\
\sup_{t \in [t_1,t_2]} \int_{\BR} |v_n(\cdot,t)|^{p'} \, \d x &\leq M. \label{e:un2(q-1)}
\end{align}
\end{lemma}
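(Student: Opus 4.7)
For \eqref{e:unL2}, I would use the Poincar\'e--Wirtinger inequality
\[
\|w\|_{L^p(\BR)} \leq C_R\bigl(\|\nabla w\|_{L^p(\BR)} + \|w\|_{L^1(\BR)}\bigr) \quad\text{for } w \in W^{1,p}(\BR),
\]
applied pointwise in time to $u_n(\cdot,t)$. Raising to the $p$-th power and integrating in $t\in(t_1,t_2)$, the gradient contribution is bounded directly by \eqref{un-bdd}. For the $L^1$-mean contribution, H\"older's inequality in time with exponent $\delta/p>1$ gives
\[
\int_{t_1}^{t_2}\|u_n(\cdot,t)\|_{L^1(\BR)}^p\,\d t \leq (t_2-t_1)^{1-p/\delta}\left(\int_{t_1}^{t_2}\|u_n(\cdot,t)\|_{L^1(\BR)}^\delta\,\d t\right)^{p/\delta},
\]
and the latter is bounded by \eqref{un-bdd} as well.

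For \eqref{e:un2(q-1)}, the strategy is to test the approximate weak form \eqref{dnp:wf} with $\rho u_n$, where $\rho\in C^1_c(\Rd)$ is a non-negative cut-off with $\rho\equiv1$ on $\overline{\BR}$ and $\mathrm{supp}\,\rho\subset B_{2R}$; for $n$ large enough $\rho u_n\in W^{1,p}_0(B_n)$, so the test is admissible. Because $\hat\beta$ is strictly convex by \eqref{beta-conv}, $\partial\hat\beta^*$ is single-valued and $u_n\in\partial\hat\beta^*(v_n)$ a.e., so the Legendre--Fenchel chain rule yields
\[
\langle \partial_t v_n,\rho u_n\rangle_{W^{1,p}_0(B_n)}=\frac{\d}{\d t}\int_{B_{2R}}\rho\,\hat\beta^*(v_n)\,\d x,
\]
which together with \eqref{dnp:wf} gives the energy identity
\[
\frac{\d}{\d t}\int_{B_{2R}}\rho\,\hat\beta^*(v_n)\,\d x + \int_{B_{2R}}\rho\,a(x,t,\nabla u_n)\cdot\nabla u_n\,\d x = -\int_{B_{2R}}u_n\,a(x,t,\nabla u_n)\cdot\nabla\rho\,\d x.
\]
The right-hand side is controlled uniformly in $n$ using H\"older's inequality, \eqref{a-bdd}, \eqref{un-bdd}, and the analogue of \eqref{e:unL2} on $B_{2R}$ obtained from the same argument as above applied on the larger ball.

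A reference time $t_0\in(t_1/2,t_1)$ is then chosen by the mean-value theorem (using \eqref{un-bdd} on a slightly enlarged interval, and invoking (A2) when $t_1$ is close to $0$) so that $\int\rho\,\hat\beta^*(v_n(t_0))\,\d x$ is uniformly bounded; integration of the energy identity from $t_0$ to $t\in[t_1,t_2]$ then yields a uniform sup-in-time bound on $\int\rho\,\hat\beta^*(v_n)\,\d x$. The main obstacle is the final conversion step: in general the growth rates of $\hat\beta^*$ and $|\cdot|^{p'}$ at infinity need not coincide, so passing from a bound on $\int\hat\beta^*(v_n)\,\d x$ to the claimed bound on $\int|v_n|^{p'}\,\d x$ is non-trivial, and will rely on combining this convex-energy control with the space-time $L^{p'}$-integrability already supplied by \eqref{un-bdd} through an interpolation-type argument exploiting the strict convexity \eqref{beta-conv}.
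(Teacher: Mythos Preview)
Your argument for \eqref{e:unL2} is correct and essentially the same as the paper's (the paper just says it ``follows immediately from (A1) along with Sobolev inequality'').

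For \eqref{e:un2(q-1)}, there is a genuine gap at exactly the point you flag: the conversion from a sup-in-time bound on $\int\rho\,\hat\beta^*(v_n)\,\d x$ to one on $\int|v_n|^{p'}\,\d x$. The strict convexity \eqref{beta-conv} is a purely qualitative condition and gives no growth control on $\hat\beta^*$; for instance, if $\hat\beta(u)=\tfrac12 u^2$ (so $\hat\beta^*(v)=\tfrac12 v^2$) and $p'>2$, then a sup-in-time $L^2$ bound on $v_n$ together with the space-time $L^{p'}$ bound from (A1) does \emph{not} imply a sup-in-time $L^{p'}$ bound --- no interpolation produces this. Your chain-rule identity is correct and is in fact used elsewhere in the paper (see \eqref{cr} and Lemma~\ref{L:chain-rule}), but it cannot deliver the $L^{p'}$ endpoint estimate claimed in \eqref{e:un2(q-1)}.

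The paper sidesteps this obstacle by testing \eqref{dnp:wf} with $w=v_n^{p'-1}\zeta_R^{p'}$ (for a cut-off $\zeta_R$ supported in $B_{2R}$) rather than with $\rho u_n$. This choice directly produces $\frac{1}{p'}\frac{\d}{\d t}\int_{B_{2R}}|v_n|^{p'}\zeta_R^{p'}\,\d x$; the elliptic term $\int a(x,t,\nabla u_n)\cdot(\nabla v_n^{p'-1})\zeta_R^{p'}\,\d x$ has a favourable sign (via the monotonicity of $a$ and of $\beta$), and the remaining cut-off term is absorbed using Young's inequality, \eqref{a-bdd}, and (A1). A second device worth noting: instead of locating a reference time $t_0$ as you propose, the paper multiplies the resulting differential inequality by the weight $(t-t_1/2)$ before integrating over $(t_1/2,t)$. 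The weight vanishes at the lower limit, so no initial-energy control is needed, and the extra integral term produced by the product rule is handled directly by (A1).
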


Throughout this section, we denote by $M$ a constant which is independent of $n$ and $(x,t)$ but may depend on $R$, $t_1$ and $t_2$ and which may vary from line to line.

\begin{proof}
First, \eqref{e:unL2} follows immediately from (A1) along with Sobolev inequality. 
As for \eqref{e:un2(q-1)}, test \eqref{dnp-n} by $v_n^{p'-1} \zeta_R^{p'}$, where $\zeta_R = \zeta_R(x)$ is a smooth cut-off function satisfying
$$
0 \leq \zeta_R \leq 1 \ \mbox{ in } \Rd, \quad \zeta_R \equiv 1 \ \mbox{ on } \ \BR, \quad \zeta_R \equiv 0 \ \mbox{ on } \ \R^\dim \setminus B_{2R}, \quad |\nabla\zeta_R| \leq \frac C R \ \mbox{ in } \Rd.
$$
Then, for $n$ large enough, we see that
\begin{align*}
\lefteqn{
\frac 1 {p'} \frac \d {\d t} \left( \int_{B_{2R}} |v_n|^{p'} \zeta_R^{p'} \, \d x \right)
+ \int_{B_{2R}} \underbrace{a(x,t,\nabla u_n) \cdot \left(\nabla v_n^{p'-1}\right)}_{\geq \, 0} \zeta_R^{p'} \, \d x
}\\
&= - p' \int_{B_{2R}} a(x,t,\nabla u_n) \cdot (\nabla \zeta_R) \zeta_R^{p'-1} v_n^{p'-1} \, \d x\\
&\leq \frac{C}{R^{p'}} \int_{B_{2R}} |a(x,t,\nabla u_n)|^{p'} \, \d x + \int_{B_{2R}} |v_n|^{p'} \zeta_R^{p'} \, \d x.
\end{align*}
Multiply both sides by $(t-t_1/2)$. Then we deduce that
\begin{align*}
\lefteqn{
\frac 1 {p'} \frac \d {\d t} \left[\biggr(t-\frac{t_1}2\biggr) \int_{B_{2R}} |v_n|^{p'} \zeta_R^{p'} \, \d x \right] 
}\\
&\stackrel{\eqref{a-bdd}}\leq \frac{C}{R^{p'}}\biggr(t-\frac{t_1}2\biggr)
\int_{B_{2R}} \left(|\nabla u_n|^p + k(\cdot,t)\right) \, \d x + \biggr(t-\frac{t_1}2\biggr)\int_{B_{2R}} |v_n|^{p'} \zeta_R^{p'} \, \d x\\
&\quad + \frac 1 {p'} \int_{B_{2R}} |v_n|^{p'} \zeta_R^{p'} \, \d x.
\end{align*} 
Integrating both sides over $(t_1/2,t)$ for $t \in (t_1,t_2)$ and using (A1), we conclude that
$$
\biggr(t-\frac{t_1}2\biggr) \int_{B_{2R}} |v_n(\cdot,t)|^{p'} \zeta_R^{p'} \, \d x \leq M
\quad \mbox{ for } \ t \in (t_1,t_2).
$$
Thus \eqref{e:un2(q-1)} follows. 
\end{proof}

Hence, by (A1) and Lemma~\ref{Lemma:2.2}, we find that, up to a (not relabeled) subsequence,
 \begin{alignat}{4}
 u_n &\to u \quad &&\mbox{ weakly in } L^p(t_1,t_2; W^{1,p}(\BR)),\label{c:unL2H1}\\
 v_n &\to v \quad &&\mbox{ weakly in } L^{p'}(\BR \times (t_1,t_2)),\label{c:vnLp'}\\
 & &&\mbox{ weakly star in } L^\infty(t_1,t_2;L^{p'}(\BR)).\label{c:vnLi}
 \end{alignat}
On the other hand, we observe from \eqref{eq:Q1} that
 \begin{align*}
 \left| \langle A^t (w), e \rangle_{ \Wpr} \right|
 &= \left| \int_{\BR} a(x,t,\nabla w) \cdot (\nabla e) \, \rho \, \d x \right|\\
  &\leq \left( \int_{\BR} |a(x,t,\nabla w)|^{p'} \, \d x \right)^{1/{p'}} \|\nabla e\|_{L^p(\BR)}\|\rho\|_{L^\infty(\BR)}\\
  &\stackrel{\eqref{a-bdd}}\leq C \left( \int_{\BR} \left(|\nabla w|^p+k(\cdot,t)\right) \, \d x \right)^{1/{p'}} \|e\|_{ \Wpr}
 \end{align*}
for $w, e \in  \Wpr$. Thus
 $$
 \|A^t(w)\|_{( \Wpr)^*}^{p'} \leq C \left(\|\nabla w\|_{L^p(\BR)}^p+\|k(\cdot,t)\|_{L^1(\BR)}\right) \quad \mbox{ for } \ w \in  \Wpr.
 $$
 Hence it follows from \eqref{un-bdd} that 
\begin{equation}
\label{eq:Q2}
\mbox{$\eta_n := A^t (u_n)$ is bounded in $L^{p'}(t_1,t_2;( \Wpr)^*)$},
\end{equation} 
and therefore, up to a (not relabeled) subsequence,
\begin{equation}\label{c:eta}
 \eta_n \to \eta \quad \mbox{ weakly in } L^{p'}(t_1,t_2;( \Wpr)^*)
\end{equation}
 for some $\eta \in L^{p'}(t_1,t_2;( \Wpr)^*)$. Moreover, we note that
 \begin{equation}\label{e:HnL2}
 \int^{t_2}_{t_1} \int_{\BR} |a(x,t,\nabla u_n)|^{p'} \, \d x \d t
 \stackrel{\eqref{a-bdd}}\leq C \int^{t_2}_{t_1} \int_{\BR} \left(|\nabla u_n|^p+k(\cdot,t)\right) \, \d x \d t \stackrel{\eqref{un-bdd}}\leq C.
\end{equation}
Hence taking a (not relabeled) subsequence, we get
 \begin{equation}\label{c:HnL2}
 a(\cdot,\cdot,\nabla u_n) \to \xi \quad \mbox{ weakly in } L^{p'}(\BR\times(t_1,t_2))^\dim
 \end{equation}
 for some $\xi \in L^{p'}(\BR\times(t_1,t_2))^\dim$.

Recall \eqref{dnp:wf} for $n > R$ and substitute $w = \overline{\phi \rho} \in W^{1,p}_0(B_n)$ (where $\overline{\phi\rho}$ is the zero extension of $\phi\rho \in W^{1,p}_0(B_R)$ onto $B_n$) for an arbitrary $\phi \in  \Wpr$. Then we see that
$$
\langle \partial_t v_n, \overline{\phi\rho} \rangle_{W^{1,p}_0(B_n)}
 + \int_{\BR} a(x,t,\nabla u_n) \cdot (\nabla \phi) \rho \, \d x
 + \int_{\BR} a(x,t,\nabla u_n) \cdot (\nabla \rho) \phi \, \d x = 0,
$$
which along with \eqref{eq:Q1} yields
\begin{align*}
  \left| \langle \partial_t v_n, \overline{\phi\rho} \rangle_{W^{1,p}_0(B_n)} \right|
 &\leq \left\| A^t(u_n) \right\|_{( \Wpr)^*} \|\phi\|_{ \Wpr}\\
 & \quad + \left\| a(\cdot,t,\nabla u_n) \right\|_{L^{p'}(\BR)}
  \left( \int_{\BR} |\phi|^p |\nabla \rho|^p \, \d x \right)^{1/p}\\
 &\leq \left\| A^t(u_n) \right\|_{( \Wpr)^*} \|\phi\|_{ \Wpr} \\
 &\quad + \left\| a(\cdot,t,\nabla u_n) \right\|_{L^{p'}(\BR)} \|\phi\|_{L^p(\BR)} \|\nabla\rho\|_{L^\infty(\BR)}
 \end{align*}
for any $\phi \in  \Wpr$. Hence the arbitrariness of $\phi \in  \Wpr$ yields
$$
\|\rho \partial_t v_n\|_{( \Wpr)^*} 
\leq \left\| A^t(u_n) \right\|_{( \Wpr)^*} + \left\| a(\cdot,t,\nabla u_n) \right\|_{L^{p'}(\BR)} \|\nabla\rho\|_{L^\infty(\BR)},
$$
where $\rho \partial_t v_n \in ( \Wpr)^*$ is defined by 
$$
[\rho \partial_t v_n](\phi) := \langle \partial_t v_n, \overline{\phi\rho}  \rangle_{W^{1,p}_0(B_n)} \lesssim \|\partial_t v_n\|_{W^{-1,p'}(B_n)} \|\phi\|_{ \Wpr}
$$ 
for $\phi \in  \Wpr$. 
Thus, by \eqref{eq:Q2} and \eqref{e:HnL2}, we obtain
$$
 \int^{t_2}_{t_1} \|\rho\partial_t v_n\|_{( \Wpr)^*}^{p'} \, \d t \leq C.
 $$
 Furthermore, one can observe from (A1) that $(\rho v_n)$ is bounded in $L^{p'}(\BR\times(t_1,t_2))$. Therefore noting that $\rho \partial_t v_n = \partial_t [\rho v_n]$ in $( \Wpr)^*$ (see Lemma \ref{a:L:rel} in Appendix for a proof), we deduce that
 \begin{alignat}{4}
  \rho v_n &\to z \quad &&\mbox{ weakly in } L^{p'}(\BR\times(t_1,t_2)),\label{c:rhovnLp'}\\
  \rho\partial_t v_n &\to \partial_t z \quad &&\mbox{ weakly in } L^{p'}(t_1,t_2;( \Wpr)^*),\label{c:db}
 \end{alignat}
for some $z \in L^{p'}(\BR\times(t_1,t_2)) \cap W^{1,p'}(t_1,t_2;( \Wpr)^*)$. 
By virtue of Aubin-Lions-Simon's compactness lemma (see, e.g.,~\cite[Theorem~3]{Simon}) along with the compact embedding,
$$
L^{p'}(\BR) \simeq (L^p(\BR))^* \hookrightarrow (W^{1,p}(\BR))^*
$$
(here we also used the compact embedding $W^{1,p}(\BR) \hookrightarrow L^p(\BR)$ due to the Rellich-Kondrachov theorem), 
by \eqref{c:rhovnLp'} and \eqref{c:db}, we infer that
\begin{equation}\label{c:rhovn}
 \rho v_n \to z \quad \mbox{ strongly in } C([t_1,t_2]; (W^{1,p}(\BR))^*).
\end{equation}
Thus we obtain $z = \rho v$ by \eqref{c:vnLp'}. Moreover, it follows from \eqref{e:un2(q-1)} that
$$
\rho v_n \to \rho v \quad \mbox{ weakly star in } L^\infty(t_1,t_2;L^{p'}(\BR)),
$$
and therefore, $\rho v \in C_{\rm weak}([t_1,t_2];L^{p'}(\BR))$. Hence $v \in C_{\rm weak}([t_1,t_2];L^{p'}(B_{R/2}))$. On the other hand, recalling \eqref{c:unL2H1}, \eqref{c:rhovnLp'}, \eqref{c:rhovn} and applying Proposition \ref{P:Minty} to the maximal monotone operator $u \mapsto \rho \beta(u)$ in $L^p(B_R \times (t_1,t_2)) \times L^{p'}(B_R \times (t_1,t_2))$, we can verify that
\begin{equation}\label{z=rb}
\rho v \in \rho \beta(u) \ \mbox{ a.e.~in } \BR \times (t_1,t_2)
\end{equation}
(and hence, $v \in \beta(u)$ a.e.~in $\BR \times (t_1,t_2)$). Moreover, we obtain
$$
\iint_{\BR \times (t_1,t_2)} v_n u_n \rho \, \d x \d t
\to \iint_{\BR \times (t_1,t_2)} v u \rho \, \d x \d t.
$$
It further follows that
\begin{align*}
\iint_{B_{R}\times(t_1,t_2)} \hat \beta(u_n) \rho\, \d x \d t
&\leq \iint_{B_{R}\times(t_1,t_2)} \hat \beta(u) \rho\, \d x \d t+ \iint_{B_{R}\times(t_1,t_2)} v_n (u_n-u) \rho\, \d x \d t\\
&\to \iint_{B_{R}\times(t_1,t_2)} \hat \beta(u) \rho\, \d x \d t,
\end{align*}
which along with the weak lower semicontinuity of convex functionals yields
\begin{equation}\label{c:int_hb}
\iint_{B_{R}\times(t_1,t_2)} \hat \beta(u_n) \rho\, \d x \d t
\to \iint_{B_{R}\times(t_1,t_2)} \hat \beta(u) \rho\, \d x \d t.
\end{equation}
Hence the strict convexity of $\hat \beta$ (see \eqref{beta-conv}) implies that
\begin{alignat}{4}
u_n &\to u \quad &&\mbox{ strongly in } L^1(B_{R} \times (t_1,t_2)),\label{c:uL1}\\
\hat\beta(u_n)\rho &\to \hat\beta(u)\rho \quad &&\mbox{ strongly in } L^1(B_{R} \times (t_1,t_2))\label{c:hb}
\end{alignat}
(see~\cite[Theorem~3]{Vi}). Moreover, one can also take a (not relabeled) subsequence of $(n)$ such that
\begin{alignat}{4}
u_n(x,t) &\to u(x,t) \quad &&\mbox{ for a.e. } (x,t) \in \BR \times (t_1,t_2),\label{c:un-pt}\\
\hat\beta(u_n(\cdot,t))\rho &\to \hat\beta(u(\cdot,t))\rho \quad &&\mbox{ strongly in $L^1(\BR)$ \ for a.e. } t \in (t_1,t_2).\nonumber
\end{alignat}
Now, note that $(u_n)$ is bounded in $L^\delta(t_1,t_2;L^1(B_R)) \cap L^p(t_1,t_2;W^{1,p}(B_R))$ due to \eqref{un-bdd} and \eqref{e:unL2}. Taking $r \in (p,p^*)$, we deduce that
$$
\|u_n(t)\|_{L^r(\BR)} \leq \|u_n(t)\|_{L^1(\BR)}^{\theta} \|u_n(t)\|_{L^{p^*}(\BR)}^{1-\theta},
$$
where $\theta \in (0,1)$ is given by $1/r = \theta + (1-\theta)/p^*$, and hence,
\begin{align*}
\|u_n\|_{L^\omega(t_1,t_2;L^r(\BR))}
\leq \|u_n\|_{L^{\delta}(t_1,t_2;L^1(\BR))}^\theta \|u_n\|_{L^p(t_1,t_2;L^{p^*}(\BR))}^{1-\theta} \leq C,
\end{align*}
where $\omega \in (p,+\infty)$ is a constant satisfying $1/\omega = \theta/\delta + (1-\theta)/p$ ($< 1/p$ since $\delta > p$ in (A1)). Hence we can deduce from \eqref{c:uL1} that
\begin{equation}\label{c:un-Lp}
u_n \to u \quad \mbox{ strongly in } L^p(B_R \times (t_1,t_2)).
\end{equation}
Moreover, we also find that, up to a (not relabeled) subsequence,
$$
u_n(t) \to u(t) \quad \mbox{ strongly in } L^p(B_R) \ \mbox{ for a.e. } t \in (t_1,t_2).
$$
Furthermore, exploiting \eqref{c:rhovn} along with \eqref{e:un2(q-1)} and the density of $W^{1,p}(B_R)$ in $L^p(B_R)$, 
we can also verify that
$$
\rho v_n(t) \to \rho v(t) \quad \mbox{ weakly in } L^{p'}(B_R) \quad \mbox{ for a.e. } t \in (t_1,t_2)
$$
without taking any further subsequence. Therefore, from  the Fenchel-Moreau identity for the convex conjugate $(\hat \beta)^*$ of $\hat \beta$, it follows that
\begin{align}
\int_{\BR} (\hat\beta)^*(v_n(\cdot,t))\rho \, \d x &= \int_{\BR} v_n(\cdot,t) u_n(\cdot,t)\rho \, \d x - \int_{\BR} \hat\beta(u_n(\cdot,t))\rho \, \d x\nonumber\\
&\to \int_{\BR} v(\cdot,t)u(\cdot,t)\rho \, \d x- \int_{\BR} \hat{\beta}(u(\cdot,t)) \rho \, \d x\nonumber\\
&= \int_{\BR} (\hat\beta)^*(v(\cdot,t))\rho \, \d x\label{c:hb*}
\end{align}
for a.e.~$t \in (t_1,t_2)$. In what follows, we shall denote by $I$ the set of $t \in [t_1,t_2]$ at which \eqref{c:hb*} holds and $u(t) \in W^{1,p}(\BR)$. 

\subsection{Identification of weak limits of nonlinear terms}

We first identify the weak limit $\eta$ of $A^t(u_n)$. Let $\tI, \tJ \in I$ be fixed. Then we have
 \begin{align}
  \lefteqn{
  \int^{\tJ}_{\tI} \langle A^t(u_n) , u_n \rangle_{ \Wpr} \, \d t
  }\nonumber\\
  &= \int^{\tJ}_{\tI} \int_{\BR} a(x,t,\nabla u_n) \cdot (\nabla u_n) \rho \, \d x \d t\nonumber\\
  &\stackrel{\eqref{dnp:wf}}= - \int_{\BR} (\hat\beta)^*(v_n(\cdot,\tJ)) \rho\, \d x
  + \int_{\BR} (\hat\beta)^*(v_n(\cdot,\tI)) \rho\, \d x\nonumber\\
  & \quad - \int^{\tJ}_{\tI} \int_{\BR} a(x,t,\nabla u_n) \cdot (\nabla \rho) u_n \, \d x \d t.\label{dphi*u}
 \end{align}
Here we used the fact that
\begin{align}
\langle \rho\partial_t v_n(t) , u_n(t) \rangle_{ \Wpr} 
&=\langle \partial_t v_n(t) , \rho u_n(t) \rangle_{W^{1,p}_0(B_n)}\nonumber\\
&= \dfrac{\d}{\d t} \int_{B_n} (\hat\beta)^*(v_n(\cdot,t)) \, \rho\, \d x\nonumber\\
& = \dfrac{\d}{\d t} \int_{\BR} (\hat\beta)^*(v_n(\cdot,t)) \,\rho\, \d x, \label{cr}
\end{align}
which is derived from a chain-rule for subdifferentials along with the relation $\beta^{-1} = \partial (\hat\beta)^*$ (i.e., $u_n \in \partial (\hat\beta)^*(v_n)$) as well as the regularity, $v_n \in W^{1,p'}(0,\Sl;W^{-1,p'}(B_n)) \cap C_{\rm weak}([0,\Sl];L^{p'}(B_n))$  and $\rho u_n \in L^p(0,\Sl;W^{1,p}_0(B_n))$ (cf.~Lemma \ref{L:chain-rule} below).  Therefore combining all these facts with \eqref{c:HnL2}, \eqref{c:un-Lp} and \eqref{c:hb*}, we infer that
 \begin{align*}
  \lefteqn{
  \lim_{n \to \infty} \int^{\tJ}_{\tI} \langle A^t(u_n) , u_n \rangle_{ \Wpr} \, \d t
  }\\
  &\stackrel{\eqref{dphi*u}}= - \int_{\BR} (\hat \beta)^*(v(\cdot,\tJ)) \rho\, \d x
  + \int_{\BR} (\hat \beta)^*(v(\cdot,\tI)) \rho\, \d x\\
  &\quad - \int^{\tJ}_{\tI} \int_{\BR} \xi \cdot (\nabla \rho) u \, \d x \d t.
 \end{align*}

 Next, we claim that
 \begin{equation}\label{cl}
 \mbox{(the right-hand side)} = \int^{\tJ}_{\tI} \langle \eta, u \rangle_{ \Wpr} \, \d t.
 \end{equation}
 To see this, we recall the weak form \eqref{dnp:wf} again and observe that
 \begin{equation}\label{aprx-weak}
 \int^{\tJ}_{\tI} \langle \partial_t v_n, \psi \rangle_{W^{1,p}_0(B_n)} \d t
 + \int^{\tJ}_{\tI} \int_{B_n} a(x,t,\nabla u_n) \cdot \nabla \psi \, \d x \d t = 0
 \end{equation}
 for any $\psi \in C^\infty_c(B_n\times[\tI,\tJ])$. 
 Let $\phi(x,t) \in C^\infty(\R^\dim\times[\tI,\tJ])$ and put $\psi(x,t) = \phi(x,t) \rho(x)$ for $n > R$. Then we find that
 \begin{align}
 \int^{\tJ}_{\tI} \langle \partial_t [\rho v_n], \phi \rangle_{ \Wpr} \d t
 + \int^{\tJ}_{\tI} \underbrace{\int_{\BR} a(x,t,\nabla u_n) \cdot (\nabla \phi) \rho \, \d x}_{\quad = \, \langle A^t(u_n), \phi \rangle_{W^{1,p}(\BR)}} \d t\nonumber\\
 + \int^{\tJ}_{\tI} \int_{\BR} a(x,t,\nabla u_n) \cdot (\nabla \rho) \phi \, \d x \d t = 0.\label{ap-eq}
 \end{align}
%
Passing to the limit as $n \to +\infty$, we derive from \eqref{c:eta}, \eqref{c:HnL2} and \eqref{c:db} with $z=\rho v$ that
 \begin{align}\label{weak_form}
  \int^{\tJ}_{\tI} \langle \partial_t [\rho v], \phi \rangle_{ \Wpr} \d t
 + \int^{\tJ}_{\tI} \langle \eta , \phi \rangle_{ \Wpr}\, \d t
 + \int^{\tJ}_{\tI} \int_{\BR} \xi \cdot (\nabla \rho) \phi \, \d x \d t = 0
 \end{align}
 for any $\phi \in C^\infty([\tI,\tJ]\times \R^\dim)$. Substitute $\phi = u$ (by density) to get
 $$
\int^{\tJ}_{\tI} \langle \partial_t [\rho v], u \rangle_{ \Wpr} \, \d x \d t + \int^{\tJ}_{\tI} \langle \eta , u \rangle_{ \Wpr}\, \d t
 + \int^{\tJ}_{\tI} \int_{\BR} \xi \cdot (\nabla \rho) u \, \d x \d t = 0.
 $$
 Therefore \eqref{cl} follows immediately from the next lemma, which will be proved at the end of this subsection.
\begin{lemma}[Chain-rule with a weight]\label{L:chain-rule}
It holds that
\begin{align}
\lefteqn{
\int^{\tJ}_{\tI} \langle \partial_t [\rho v], u \rangle_{ \Wpr} \, \d t
}\nonumber\\
& = \int_{\BR} (\hat \beta)^*(v(\cdot,\tJ)) \rho\, \d x - \int_{\BR} (\hat \beta)^*(v(\cdot,\tI)) \rho\, \d x\label{chain-rule}
\end{align}
for any $R > 0$ and $\tI,\tJ \in I$. 
\end{lemma}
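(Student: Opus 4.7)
My plan is to deduce the chain rule from the pointwise subgradient inequality for $(\hat\beta)^*$ combined with a difference-quotient argument in time. Setting $V := W^{1,p}(\BR)$, I first note that since $v \in \beta(u)$ a.e.\ and $\beta = \partial\hat\beta$, the Young--Fenchel identity yields the pointwise inclusion $u(x,t) \in \partial(\hat\beta)^*(v(x,t))$. For $h > 0$ small, the convex subgradient inequality applied with selection $u(x,t)$ at $v(x,t)$ and with $u(x,t+h)$ at $v(x,t+h)$ gives a.e.\ in $(x,t)$:
\begin{equation*}
u(x,t)\bigl[v(x,t+h) - v(x,t)\bigr] \leq (\hat\beta)^*(v(x,t+h)) - (\hat\beta)^*(v(x,t)) \leq u(x,t+h)\bigl[v(x,t+h) - v(x,t)\bigr].
\end{equation*}
Multiplying by $\rho(x) \geq 0$, integrating over $\BR \times (\tI, \tJ - h)$, and dividing by $h$, the expression
\begin{equation*}
\frac{1}{h}\int^{\tJ - h}_{\tI} \bigl[\Phi(t+h) - \Phi(t)\bigr]\, \d t, \qquad \Phi(t) := \int_{\BR} \rho (\hat\beta)^*(v(t))\, \d x,
\end{equation*}
is sandwiched between two difference-quotient expressions involving $u(t)$ and $u(t+h)$ respectively.

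I would then pass to the limit $h \to 0$ in all three. The middle term telescopes to $h^{-1}\bigl[\int^{\tJ}_{\tJ-h}\Phi - \int^{\tI+h}_{\tI}\Phi\bigr]$, which tends to $\Phi(\tJ) - \Phi(\tI)$ at Lebesgue points of $\Phi$. Fenchel's inequality gives $\Phi(t) \leq \int_{\BR} \rho u(t) v(t)\, \d x$, which by H\"older combined with $u \in L^p(\tI,\tJ;V)$ and the essential bound on $v$ from Lemma~\ref{Lemma:2.2} shows that $\Phi \in L^1(\tI,\tJ)$, so a.e.\ $t$ is a Lebesgue point; I would refine $I$ to the (still full-measure) subset on which this additionally holds. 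As for the sandwiching bounds, the standard characterization of Bochner--Sobolev functions gives that the difference quotients of $\rho v \in W^{1,p'}(t_1,t_2;V^*)$ converge strongly in $L^{p'}(\tI,\tJ-h;V^*)$ to $\partial_t[\rho v]$; pairing with $u \in L^p(\tI,\tJ;V)$ shows that the lower bound converges to $\int^{\tJ}_{\tI} \langle \partial_t[\rho v], u \rangle_V\, \d t$. The upper bound, involving the shifted $u(t+h)$, reduces to the same form after the substitution $s = t+h$ (since $u \in L^p$ is translation-continuous in time) with boundary contributions of order $h$ that vanish in the limit. Squeezing then yields \eqref{chain-rule}.

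The main obstacle I anticipate is the careful handling of the limit passage in the two sandwiching expressions: verifying strong $L^{p'}(V^*)$-convergence of the difference quotients, controlling the shift $t \mapsto t+h$ applied to $u$, and matching the boundary corrections as the integration interval contracts from $[\tI,\tJ-h]$ to $[\tI,\tJ]$. A secondary but important point is ensuring that $\tI, \tJ$ can be chosen as Lebesgue points of $\Phi$; this requires a slight refinement of the set $I$ introduced in \S\ref{Ss:wc}, which is harmless since every prior use of $I$ depends only on a.e.\ properties in $t$.
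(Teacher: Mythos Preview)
Your approach is correct and follows a genuinely different route from the paper's. The paper works at the level of functionals on $V = W^{1,p}(\BR)$: it defines $\phi(w) = \int_{\BR}\hat\beta(w)\rho\,\d x$, observes that $\rho v \in \partial\phi(u)$ so that $u \in \partial\phi^*(\rho v)$, and invokes a \emph{standard abstract chain rule for subdifferentials} to obtain the absolute continuity of $t \mapsto \phi^*(\rho v(t))$ together with $\frac{\d}{\d t}\phi^*(\rho v(t)) = \langle \partial_t[\rho v], u\rangle_V$. The remaining work is to identify $\phi^*(\rho v(t))$ with the explicit integral $\int_{\BR}(\hat\beta)^*(v(t))\rho\,\d x$ at points $t \in I$, which is done by introducing an auxiliary functional $\psi$ on $L^{p'}(\BR)$ and showing $\phi^* = \psi$ on the relevant set.

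Your argument instead builds the chain rule from scratch via the pointwise subgradient sandwich and difference quotients, in the spirit of Mignot--Bamberger and Alt--Luckhaus. This is more elementary (no cited chain-rule lemma) but requires the bookkeeping you already flag: translation continuity of $u$ in $L^p(V)$, strong $L^{p'}(V^*)$-convergence of the difference quotients of $\rho v$, and the refinement of $I$ to Lebesgue points of $\Phi$. The paper's route avoids this refinement because the abstract chain rule yields absolute continuity of $\phi^*(\rho v(\cdot))$ directly; the endpoint identification then only needs the original definition of $I$. One small correction: your $L^1$ bound for $\Phi$ follows not from Fenchel's \emph{inequality} but from the Fenchel--Moreau \emph{identity} $(\hat\beta)^*(v) = uv - \hat\beta(u)$ (valid since $v \in \beta(u)$) combined with $\hat\beta \geq 0$.
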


Combining all these facts, we obtain
$$
\lim_{n \to \infty} \int^{\tJ}_{\tI} \langle A^t(u_n) , u_n \rangle_{ \Wpr} \, \d t
= \int^{\tJ}_{\tI} \langle \eta , u \rangle_{ \Wpr} \, \d t.
$$
Thanks to Proposition \ref{P:Minty}\footnote{To be precise, we apply Proposition \ref{P:Minty} to the operator $\mathcal{A} : \mathcal{V}:=L^p(t_1,t_2;\Wpr) \to \mathcal{V}^*$ defined by
$$
\eta \in \mathcal{A}(u) \quad \mbox{ if and only if } \quad \eta(t) \in A^t(u(t)) \ \mbox{ for a.e. } t \in (t_1,t_2)
$$
for $u \in \mathcal{V}$ and $\eta \in \mathcal{V}^*$. The maximal monotonicity of $\mathcal{A}$ in $\mathcal{V} \times \mathcal{V}^*$ can also be checked as in the case of $A^t$.} we obtain $\eta = A^t(u)$, and moreover, it follows that
 \begin{align}\label{PO}
  \int^{\tJ}_{\tI} \langle \eta, \phi \rangle_{ \Wpr} \, \d t
  = \int^{\tJ}_{\tI} \int_{\BR} a(x,t,\nabla u) \cdot (\nabla \phi) \, \rho \, \d x \d t
 \end{align}
 for any $\phi \in C^\infty(\Rd\times[\tI,\tJ])$. 

For later use, let us next identify the weak limit $\xi$ of $a(\cdot,\cdot,\nabla u_n)$. Note that
$$
\int^{\tJ}_{\tI} \int_{\BR} a(x,t,\nabla u_n) \cdot (\nabla u) \rho \, \d x \d t
\to \int^{\tJ}_{\tI} \int_{\BR} \xi \cdot (\nabla u) \rho \, \d x \d t
$$
and
\begin{align*}
\int^{\tJ}_{\tI} \int_{\BR} a(x,t,\nabla u_n) \cdot (\nabla u) \rho \, \d x \d t
&= \int^{\tJ}_{\tI} \left\langle A^t(u_n), u \right\rangle_{ \Wpr} \, \d t\\
&\to \int^{\tJ}_{\tI} \left\langle \eta, u \right\rangle_{ \Wpr} \, \d t\\
&= \int^{\tJ}_{\tI} \int_{\BR} a(x,t,\nabla u) \cdot (\nabla u) \rho \, \d x \d t.
\end{align*}
Thus
\begin{equation}\label{rel}
\int^{\tJ}_{\tI} \int_{\BR} \xi \cdot (\nabla u) \rho \, \d x \d t
= \int^{\tJ}_{\tI} \int_{\BR} a(x,t,\nabla u) \cdot (\nabla u) \rho \, \d x \d t.
\end{equation}
Since the map $\xi \in L^p(\BR)^\dim \mapsto a(\cdot,t,\xi(\cdot)) \rho \in L^{p'}(\BR)^\dim$ is also maximal monotone in $L^p(\BR)^\dim \times L^{p'}(\BR)^\dim$ for a.e.~$t \in (0,\Sl)$ as in the case of $A^t$,  noting that
\begin{align*}
\int^{\tJ}_{\tI} \int_{\BR} a(x,t,\nabla u_n) \cdot (\nabla u_n) \rho \, \d x \d t
&= \int^{\tJ}_{\tI} \left\langle A^t(u_n), u_n \right\rangle_{ \Wpr} \, \d t\\
&\to \int^{\tJ}_{\tI} \left\langle \eta, u \right\rangle_{\Wpr} \, \d t\\
&= \int^{\tJ}_{\tI} \int_{\BR} a(x,t,\nabla u) \cdot (\nabla u) \rho \, \d x \d t\\
&\stackrel{\eqref{rel}}= \int^{\tJ}_{\tI} \int_{\BR} \xi \cdot (\nabla u) \rho \, \d x \d t
\end{align*}
and employing \eqref{c:unL2H1} and \eqref{c:HnL2}, we can conclude by Proposition \ref{P:Minty} that $\xi = a(\cdot,\cdot,\nabla u)$ a.e.~in $\BR \times (\tI,\tJ)$.

Finally, recalling \eqref{weak_form} along with \eqref{PO} and \eqref{rel}, we further find that
\begin{align*}
- \int^{\tJ}_{\tI} \int_{\BR} v \rho \partial_t \phi \, \d x \d t
+ \int_{\BR} v(\cdot,\tJ) \phi(\cdot,\tJ) \rho \, \d x
 - \int_{\BR} v(\cdot,\tI) \phi(\cdot,\tI) \rho \, \d x\\
+ \int^{\tJ}_{\tI} \int_{\BR} a(x,t,\nabla u) \cdot (\nabla \phi) \rho \, \d x \d t  + \int^{\tJ}_{\tI} \int_{\BR} a(x,t,\nabla u) 
  \cdot (\nabla \rho) \phi \, \d x \d t = 0
 \end{align*}
for all $\phi \in C^\infty(\R^\dim\times[\tI,\tJ])$. 
Now, let $\psi$ be a function of class $C^\infty_c(\R^\dim\times[\tI,\tJ])$ and take $R > 0$ such that
 $$
 \mathrm{supp} \, \psi(\cdot,t) \subset \Br \quad \mbox{ for all } \ t \in [\tI,\tJ].
 $$
Moreover, recall that $\rho \equiv 1$ on $\Br$. Substitute $\phi = \psi$ to the relation above and use the fact that $\nabla \rho \equiv 0$ in $\Br$. Then it follows that
 \begin{align}
 - \int^{\tJ}_{\tI} \int_{\Rd} v \partial_t \psi \, \d x \d t
 + \int_{\Rd} v(\cdot,\tJ) \psi(\cdot,\tJ) \, \d x
  - \int_{\Rd} v(\cdot,\tI) \psi(\cdot,\tI) \, \d x\nonumber\\
+ \int^{\tJ}_{\tI} \int_{\Rd} a(x,t,\nabla u) \cdot \nabla \psi \, \d x \d t = 0\label{weak_form_hat}
 \end{align}
for $\tI,\tJ \in I$, and moreover, one can replace $\tI$ and $\tJ$ with $t_1$ and $t_2$, respectively, by using $v \in C_{\rm weak}([t_1,t_2];L^{p'}(\BR))$ along with $|(t_1,t_2)\setminus I|=0$ as well as the absolute continuity of Lebesgue  integral.

To be precise, we denote by $u_{\BR \times (t_1,t_2)}$ and $v_{\BR \times (t_1,t_2)}$ the limits of (subsequences of) $(u_n)$ and $(v_n)$, respectively, such that all the convergences (in particular, \eqref{c:unL2H1}, \eqref{c:vnLi} and \eqref{c:HnL2}) established so far hold true. From the arbitrariness of $R > 0$ and $0 < t_1 < t_2 < \Sl$, (by a diagonal argument) we can also extract a (not relabeled) subsequence of $(n)$ and obtain a pair of measurable\footnote{The measurability of $u$ in $\Rd \times (0,\Sl)$ is straightforward from the pointwise convergence \eqref{c:un-pt-whole}, and that of $v$ can be proved as follows, although no pointwise convergence has been obtained for $(v_n)$:  One can immediately assure the measurability of $v$ on any compact subsets of $\Rd \times (0,\Sl)$ from the weak convergences of $(v_n)$ obtained so far, and hence, $v \chi_n$ is measurable on $\Rd \times (0,\Sl)$, where $\chi_n$ is the characteristic function supported over the compact set $\overline{B_n} \times [1/n,\Sl-1/n]$. Then $(v \chi_n)(x,t) \to v(x,t)$ for a.e.~$(x,t) \in \Rd \times (0,\Sl)$. Thus $v$ turns out to be measurable in $\Rd \times (0,\Sl)$.} functions $(u,v) : \Rd \times (0,\Sl) \to \R^2$  such that 
\begin{alignat}{4}
&u = u_{\BR \times (t_1,t_2)}, \ v = v_{\BR \times (t_1,t_2)} \ &&\mbox{ a.e.~in } \BR \times (t_1,t_2),\\
&u_n(x,t) \to u(x,t) \quad &&\mbox{ for a.e. } (x,t) \in \Rd \times (0,\Sl),
\label{c:un-pt-whole}
\end{alignat}
for any $R > 0$ and $0 < t_1 < t_2 < \Sl$ and \eqref{c:unL2H1}, \eqref{c:vnLi} and \eqref{c:HnL2} still hold true with the measurable functions $u,v$ defined over $\Rd \times (0,\Sl)$ instead of $u_{\BR \times (t_1,t_2)},v_{\BR \times (t_1,t_2)}$ for any fixed $R$ and $t_1,t_2$. Then the pair $(u,v) : \Rd \times (0,\Sl) \to \R^2$ satisfies \eqref{weak_form_hat} for any $0 < \tI < \tJ < \Sl$ and $R > 0$. Furthermore, from the arbitrariness of $R>0$, we obtain

\begin{lemma}[Identification of weak limits]\label{L:ident} 
Let $(u_n,v_n)$ be energy solutions to \eqref{dnp-n}--\eqref{dnp-ic-n} on $[0,\Sl]$ such that the assumptions {\rm (A0)} and {\rm (A1)} hold. Then there exists a pair of measurable functions $(u,v) : \Rd \times (0,\Sl) \to \R^2$ such that, up to a subsequence, \eqref{tloc:1}--\eqref{tloc:4} hold true and
\begin{align*}
- \int^{t_2}_{t_1} \int_{\Rd} v \partial_t \psi \, \d x \d t
 + \int_{\Rd} v(\cdot,t_2) \psi(\cdot,t_2) \, \d x
  - \int_{\Rd} v(\cdot,t_1) \psi(\cdot,t_1) \, \d x\nonumber\\
+ \int^{t_2}_{t_1} \int_{\Rd} a(x,t,\nabla u) \cdot \nabla \psi \, \d x \d t = 0
\end{align*}
 for any $\psi \in C^\infty_c([t_1,t_2]\times \R^\dim)$ and for all $0 < t_1 < t_2 < \Sl$, and moreover,
$$
v \in \beta(u) \quad \mbox{ a.e. in } \ \Rd \times (0,\Sl).
$$
\end{lemma}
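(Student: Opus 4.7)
The plan is to localize the approximate equation by a smooth cutoff and reduce the identification of weak limits to Minty's trick applied to a localized maximal-monotone operator. I fix $R>0$ and $0<t_1<t_2<S$, pick a smooth $\rho\colon\Rd\to[0,1]$ with $\mathrm{supp}\,\rho=\overline{B_R}$ and $\rho\equiv 1$ on $B_{R/2}$, and define the operator $A^t\colon W^{1,p}(B_R)\to W^{1,p}(B_R)^*$ by $\langle A^t(w),e\rangle=\int_{B_R}a(x,t,\nabla w)\cdot(\nabla e)\,\rho\,dx$. By \eqref{a-mono} and \eqref{a-bdd} this operator is maximal monotone for a.e.\ $t$, and multiplying \eqref{dnp-n} by $\rho\phi$ recasts the approximate equation as $\partial_t(\rho v_n)+A^t(u_n)=F^t(u_n)$ in $W^{1,p}(B_R)^*$, with $F^t$ collecting the lower-order contribution involving $\nabla\rho$.

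First, from (A1) I extract weakly convergent subsequences. Testing \eqref{dnp-n} against $v_n^{p'-1}\zeta_R^{p'}$ with a spatial cutoff $\zeta_R$, combined with \eqref{a-bdd}, yields a uniform $L^\infty(t_1,t_2;L^{p'}(B_R))$ bound on $v_n$ and hence weak (star) limits $u,v,\eta,\xi$ of $u_n$, $v_n$, $A^t(u_n)$, $a(\cdot,\cdot,\nabla u_n)$ in the appropriate local spaces. Testing \eqref{dnp-n} against $\overline{\phi\rho}$ for $\phi\in W^{1,p}(B_R)$ bounds $\partial_t(\rho v_n)$ in $L^{p'}(t_1,t_2;W^{1,p}(B_R)^*)$, so Aubin--Lions delivers $\rho v_n\to\rho v$ strongly in $C([t_1,t_2];W^{1,p}(B_R)^*)$. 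A first application of Minty's trick to the maximal monotone operator $u\mapsto\rho\beta(u)$ on $L^p(B_R\times(t_1,t_2))\times L^{p'}(B_R\times(t_1,t_2))$ then yields $v\in\beta(u)$ a.e.\ on $B_R\times(t_1,t_2)$, and the strict convexity \eqref{beta-conv} upgrades the weak convergence of $u_n$ to strong $L^1$ convergence and, along a subsequence, to pointwise a.e.\ convergence.

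The main obstacle is identifying $\eta=A^t(u)$, which requires passing to the limit in $\int_{t_1}^{t_2}\langle A^t(u_n),u_n\rangle\,dt$. I plan to test the weak form \eqref{dnp:wf} against $\rho u_n$ and invoke a weighted chain-rule (Lemma \ref{L:chain-rule}) to rewrite this integral as a difference of endpoint terms $\int_{B_R}(\hat\beta)^*(v_n)\rho\,dx$ plus an $L^{p'}\times L^p$ coupling of $a(\cdot,\cdot,\nabla u_n)$ with $(\nabla\rho)u_n$. The endpoint terms converge via the Fenchel--Moreau identity $(\hat\beta)^*(v_n)=v_n u_n-\hat\beta(u_n)$ combined with the strong convergence of $\rho v_n$ and the pointwise convergence of $u_n$, while the coupling term passes by the weak convergence of $a(\cdot,\cdot,\nabla u_n)$ together with the strong $L^p$ convergence of $u_n$. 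Matching this against the identity produced by substituting $\phi=u$ into the limit of the localized weak form \eqref{ap-eq} gives $\limsup_n\int\langle A^t(u_n),u_n\rangle\,dt\leq\int\langle\eta,u\rangle\,dt$, whence Proposition \ref{P:Minty} forces $\eta=A^t(u)$; an entirely analogous monotonicity argument on the maximal monotone map $\xi\in L^p(B_R\times(t_1,t_2))^N\mapsto a(\cdot,\cdot,\xi)\rho\in L^{p'}(B_R\times(t_1,t_2))^N$ identifies $\xi$ with $a(\cdot,\cdot,\nabla u)$.

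To finish, I pass to the limit in \eqref{ap-eq} along test functions $\psi\in C^\infty_c([t_1,t_2]\times\Rd)$ supported in $B_{R/2}$, where $\nabla\rho\equiv 0$ kills the $F^t$ contribution and leaves precisely the claimed weak formulation. A diagonal extraction over $R\uparrow\infty$ and $(t_1,t_2)\uparrow(0,S)$ then produces the global measurable pair $(u,v)$ together with the convergences \eqref{tloc:1}--\eqref{tloc:4}; the measurability of $v$ on all of $\Rd\times(0,S)$ is obtained by writing $v$ as an a.e.\ pointwise limit of its measurable restrictions to expanding compact sets.
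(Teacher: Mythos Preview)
Your proposal is correct and follows essentially the same route as the paper: localize via the cutoff $\rho$, define the maximal monotone $A^t$, use Aubin--Lions and Minty for $\rho\beta$ to obtain $v\in\beta(u)$ and strong $L^1$ convergence of $u_n$, then apply the weighted chain rule together with the Fenchel--Moreau identity to close the Minty argument for $A^t$ and for $\xi\mapsto a(\cdot,\cdot,\xi)\rho$, and finish by a diagonal extraction. The only step you should make explicit is how you upgrade the strong $L^1$ convergence of $u_n$ to the strong $L^p$ convergence you invoke for the coupling term; the paper does this by interpolating the $L^p(t_1,t_2;W^{1,p}(B_R))$ bound with the $L^\delta(t_1,t_2;L^1(B_R))$ bound from (A1) (crucially using $\delta>p$) to obtain boundedness in $L^\omega(t_1,t_2;L^r(B_R))$ for some $\omega>p$, which then yields strong $L^p$ convergence from the strong $L^1$ convergence.
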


We close this section with a proof of Lemma \ref{L:chain-rule}.
\begin{proof}[Proof of Lemma \ref{L:chain-rule}]
Define a functional $\phi : W^{1,p}(B_R) \to (-\infty,+\infty]$ by
$$
\phi(w) = \begin{cases}
	   \int_{B_R} \hat\beta(w(\cdot)) \rho \, \d x &\mbox{ if } \ \hat\beta(w(\cdot)) \rho \in L^1(B_R),\\
	   +\infty &\mbox{ otherwise},
	  \end{cases}
$$
for $w \in W^{1,p}(B_R)$. Then $\phi$ is convex and lower semicontinuous on $W^{1,p}(B_R)$, and hence, so is the convex conjugate $\phi^* : (W^{1,p}(B_R))^* \to (-\infty,+\infty]$ defined by $\phi^*(w)=\sup\{\langle w, \zeta \rangle_{W^{1,p}(\BR)} - \phi(\zeta) \colon \zeta \in W^{1,p}(\BR)\}$ for $w \in (W^{1,p}(\BR))^*$. Moreover, recall that $\rho v$ and $u$ lie on $W^{1,p'}(\tI,\tJ;(\Wpr)^*) \cap L^{p'}(\BR \times (\tI,\tJ))$ and $L^p(\tI,\tJ;\Wpr)$, respectively, and that $v \in \beta(u)$ a.e.~in $\BR \times (\hat t_1,\hat t_2)$; therefore, $\rho v \in \partial \phi(u)$, i.e., $u \in \partial \phi^*(\rho v)$. Hence a standard chain-rule formula for subdifferentials yields
$$
\dfrac{\d}{\d t} \phi^*(\rho v(t)) = \langle \partial_t [\rho v](t) , u(t) \rangle_{W^{1,p}(B_R)} \quad \mbox{ for a.e. } t \in (\hat t_1,\hat t_2).
$$
Now, define a functional $\psi : L^{p'}(\BR) \to [0,+\infty]$ by
$$
\psi(w) = \begin{cases}
	   \int_{\BR} (\hat \beta)^*(\rho^{-1}w) \rho \, \d x
	   &\mbox{ if } \ (\hat \beta)^*(\rho^{-1}w) \rho \in L^1(\BR),\\
	   +\infty &\mbox{ otherwise},
	  \end{cases}
$$
for $w \in L^{p'}(\BR)$. Then we claim that
\begin{equation}
\phi^*(w) = \psi(w) \quad \mbox{ if } \ w \in L^{p'}(\BR), \ w \in \beta(z) \rho \ \mbox{ and } \ z \in W^{1,p}(\BR).
\end{equation}
Indeed, we first observe that
$$
\psi(w) = \sup_{\zeta \in L^p(\BR)} \int_{\BR} \left[ \rho^{-1} w\zeta - \hat \beta(\zeta(\cdot)) \right] \rho \, \d x \geq \phi^*(w).
$$ 
On the other hand, the supremum above is attained at $\zeta = z \in \Wpr$ due to the Fenchel-Moreau identity. Thus $\psi(w) = \phi^*(w)$. In particular, we find that
$$
\phi^*(\rho v(t)) = \psi(\rho v(t)) \quad \mbox{ for all } \ t \in I. 
$$
Therefore we obtain
$$
\psi(\rho v(\hat t_2)) - \psi(\rho v(\hat t_1)) = \int^{\hat t_2}_{\hat t_1} \langle \partial_t [\rho v] , u \rangle_{W^{1,p}(B_R)} \, \d t,
$$
which completes the proof.
\end{proof}

\subsection{Weak formulation including initial data}\label{S:ic}

We next check the weak form \eqref{eq}.
\begin{lemma}\label{L:ic}
In addition to {\rm (A0)} and {\rm (A1)}, assume that {\rm (A2)} is fulfilled. Let $(u,v)$ be the pair of measurable functions defined on $\Rd \times (0,\Sl)$ constructed by Lemma \ref{L:ident}. Then $v$ and $|a(\cdot,\cdot,\nabla u)|$ belong to $L^1(\BR \times (0,T))$ for any $R > 0$ and $T \in (0,\Sl)$, and moreover, it holds that
\begin{align}
- \int^t_0 \int_{\Rd} v \partial_t \psi \, \d x \d \tau
 + \int_{\Rd} v(\cdot,t) \psi(\cdot,t) \, \d x
  - \int_{\Rd} \psi(\cdot,0) \, \d \mu(x)\nonumber\\
+ \int^t_0 \int_{\Rd} a(x,\tau,\nabla u)  \cdot \nabla \psi \, \d x \d \tau = 0\label{wic}
\end{align}
 for any $\psi \in C^\infty_c([0,\Sl)\times \R^\dim)$ and $0 < t < \Sl$.
\end{lemma}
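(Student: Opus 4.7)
The plan is to derive \eqref{wic} by passing to the limit $n \to \infty$ directly in a weak formulation of the approximate problem on $[0,t]$, with the initial condition $v_n(\cdot,0)=\mu_n$ already built in; the lemma then follows from the arbitrariness of $\psi$ and $t$. Fix $\psi \in C^\infty_c([0,\Sl)\times\Rd)$ and $t\in(0,\Sl)$, and choose $R>0$ such that $\mathrm{supp}\,\psi(\cdot,\tau) \subset \BR$ for all $\tau \in [0,t]$. For $n>R$, the regularity of $v_n$ from (A0) together with $v_n(\cdot,0)=\mu_n$ permits an integration by parts in time in \eqref{dnp:wf} tested against $\psi$, yielding
\begin{align*}
- \int_0^t \int_{B_n} v_n \partial_\tau \psi \, \d x \d \tau + \int_{B_n} v_n(\cdot,t)\psi(\cdot,t) \, \d x\\
- \int_{\Rd} \psi(\cdot,0) \mu_n \, \d x + \int_0^t \int_{B_n} a(x,\tau,\nabla u_n) \cdot \nabla \psi \, \d x \d \tau = 0.
\end{align*}

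Before letting $n\to\infty$, I first upgrade integrability. The weak convergences \eqref{tloc:2}--\eqref{tloc:4} on $\BR \times (t_1,t)$ for any $t_1\in(0,t)$, combined with weak lower semicontinuity of the $L^1$-norm and assumption (A1), give
$$
\int_{t_1}^t \int_{\BR} \bigl(|v|+|a(\cdot,\cdot,\nabla u)|\bigr) \, \d x \d \tau \leq \liminf_{n\to\infty} \int_{t_1}^t \int_{\BR} \bigl(|v_n|+|a(\cdot,\cdot,\nabla u_n)|\bigr) \, \d x \d \tau,
$$
while (A2) makes the corresponding integrals over $(0,t_1)$ arbitrarily small, uniformly in $n$, as $t_1\to 0_+$. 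Combining these two ingredients yields $v,|a(\cdot,\cdot,\nabla u)|\in L^1(\BR\times(0,T))$ for any $T\in(0,\Sl)$, and moreover that $\int_0^{t_1}\int_{\BR}(|v|+|a(\cdot,\cdot,\nabla u)|)\,\d x\d\tau \to 0$ as $t_1 \to 0_+$.

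Now I pass $n\to\infty$ term by term in the displayed identity. The initial-data term converges to $\int\psi(\cdot,0)\,\d\mu$ by \eqref{ini-hypo}, since $\psi(\cdot,0)\in C_c(\Rd)$. For the terminal term, choose a cut-off $\rho\in C^1_c(\Rd)$ with $\rho\equiv 1$ on $\mathrm{supp}\,\psi(\cdot,t)$ and $\mathrm{supp}\,\rho\subset B_{R'}$ for some $R'\geq R$; then $\int v_n(\cdot,t)\psi(\cdot,t)\,\d x = \langle \rho v_n(\cdot,t),\psi(\cdot,t)\rangle$, which converges by the strong convergence of $\rho v_n$ in $C([t_1,t];(W^{1,p}(B_{R'}))^*)$ supplied by \eqref{c:rhovn}. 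Each of the two remaining space-time integrals is split at a small $t_1>0$: the part on $(t_1,t)$ converges by the weak $L^{p'}$-convergences \eqref{tloc:2} and \eqref{tloc:4}, whereas the part on $(0,t_1)$ is dominated by $C\|\psi\|_{C^1}$ times an $L^1$-integral controlled uniformly in $n$ by (A2) and in the limit by the integrability upgrade of the previous paragraph. Letting $t_1\to 0_+$ sends these error terms to zero and produces \eqref{wic}.

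The main obstacle is controlling the flux integral $\int_0^{t_1}\int_{\BR} a(\cdot,\cdot,\nabla u_n)\cdot\nabla\psi\,\d x\d\tau$ near $\tau=0$: weak convergence of $a(\cdot,\cdot,\nabla u_n)$ has only been established on time intervals separated from zero (via the maximal-monotone-operator argument of Section \ref{S:ident}), so (A2) is the indispensable ingredient that bridges this gap by supplying uniform equi-integrability near the initial time.
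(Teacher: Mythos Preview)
Your argument is correct and follows essentially the same route as the paper: both split the time interval at a small parameter, use (A2) to control the near-zero contributions uniformly in $n$, pass to the limit on the remaining interval via the weak convergences established in Lemma~\ref{L:ident}, and then send the splitting parameter to zero using the $L^1$ integrability of $v$ and $a(\cdot,\cdot,\nabla u)$. The only cosmetic differences are that the paper packages the $L^1$-integrability step as a separate Cauchy-sequence argument (rather than your direct lower-semicontinuity bound) and explicitly names the residual $c_\varepsilon := \lim_{n} I_{\varepsilon,n}$, whose existence follows---as in your argument---from the convergence of all other terms in the identity.
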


To prove this lemma, we first claim that
\begin{lemma}
For any $R > 0$ and $T \in (0,\Sl)$, $v$ and $|a(\cdot,\cdot,\nabla u)|$ belong to $L^1(\BR\times(0,T))$.
\end{lemma}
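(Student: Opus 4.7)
The plan is to split the time interval $(0,T)$ into a piece away from the origin, where the already-established weak $L^{p'}$ convergences do all the work, and a neighborhood of the origin, where assumption (A2) provides the needed uniform control on the approximating sequences.

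First I would handle the portion bounded away from $t=0$. For any $t_0\in(0,T)$, the convergences \eqref{tloc:2} and \eqref{tloc:4} imply that $v\in L^{p'}(\BR\times(t_0,T))$ and $a(\cdot,\cdot,\nabla u)\in L^{p'}(\BR\times(t_0,T))^\dim$. Since $\BR\times(t_0,T)$ has finite Lebesgue measure, H\"older's inequality yields that both $v$ and $|a(\cdot,\cdot,\nabla u)|$ belong to $L^1(\BR\times(t_0,T))$.

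For the portion near $t=0$, fix $\vep>0$. By (A2), choose $t_0=t_0(\vep)\in(0,T)$ so small that
$$
\sup_{n\in\N}\int_0^{t_0}\int_{\BR}\bigl(|v_n|+|a(x,\tau,\nabla u_n)|\bigr)\,\d x\,\d\tau\leq\vep.
$$
For any $t_1'\in(0,t_0)$, on the finite-measure set $\BR\times(t_1',t_0)$ the weak convergence $v_n\to v$ in $L^{p'}$ transfers into lower semicontinuity of the $L^1$-norm: testing against $\phi:=\sgn(v)\in L^\infty\subset L^p(\BR\times(t_1',t_0))$,
$$
\int_{t_1'}^{t_0}\!\!\int_{\BR}|v|\,\d x\,\d\tau
=\lim_{n\to\infty}\int_{t_1'}^{t_0}\!\!\int_{\BR}v_n\phi\,\d x\,\d\tau
\leq\liminf_{n\to\infty}\int_{t_1'}^{t_0}\!\!\int_{\BR}|v_n|\,\d x\,\d\tau\leq\vep.
$$
Sending $t_1'\to 0_+$ and invoking monotone convergence gives $\int_0^{t_0}\!\int_{\BR}|v|\,\d x\,\d\tau\leq\vep$. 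Applying the same reasoning componentwise to $a(\cdot,\cdot,\nabla u_n)\to a(\cdot,\cdot,\nabla u)$ (testing against $\sgn(a(\cdot,\cdot,\nabla u))$ coordinate by coordinate) yields the analogous bound for $|a(\cdot,\cdot,\nabla u)|$.

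Combining the two regions finishes the proof. I do not anticipate a serious obstacle: the only subtlety is the passage from weak $L^{p'}$-convergence to $L^1$-lower semicontinuity, which is exactly what the duality $L^{p'}\leftrightarrow L^p$ buys us on sets of finite measure, and the bookkeeping of monotone convergence as $t_1'\to 0_+$ to absorb the lack of a uniform $L^1$ bound down to $t=0$ that is stronger than (A2).
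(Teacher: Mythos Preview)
Your proof is correct and rests on the same two ingredients as the paper's: (A2) for uniform smallness near $t=0$, and weak lower semicontinuity of the $L^1$-norm (inherited from the weak $L^{p'}$ convergences on finite-measure strips) to pass the smallness to the limit.

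The packaging differs slightly. The paper introduces the zero extensions $v_\vep$ of $v|_{\BR\times(\vep,T)}$, uses the estimate $\int_\vep^t\int_{\BR}|v|\,\d x\,\d\tau\to 0$ as $t\to 0_+$ (exactly your weak-lsc step) to show that $(v_\vep)_{\vep>0}$ is Cauchy in $L^1(\BR\times(0,T))$, and then identifies the $L^1$-limit. Your decomposition into $(0,t_0)$ and $(t_0,T)$, together with monotone convergence in $t_1'$, reaches the same conclusion more directly and avoids the Cauchy-sequence detour. Either way, the substance is identical; your version is a bit leaner.
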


\begin{proof}
In what follows, denote by $(u_n,v_n)$ a subsequence of the energy solutions to \eqref{dnp-n}--\eqref{dnp-ic-n} on $[0,\Sl]$ satisfying all the convergences obtained in the last subsection. Fix $R > 0$ and $T \in (0,\Sl)$. Based on Lemma \ref{L:ident} and its proof in the last subsection, we can assure that, for each $\vep \in (0,T)$, there exists a limit $(u_\vep,v_\vep) : \BR \times (\vep,T) \to \R^2$ such that
\begin{alignat*}{3}
u_n &\to u_\vep \quad &&\mbox{ weakly in } L^p(\vep,T; W^{1,p}(\BR)),
\\
v_n &\to v_\vep \quad &&\mbox{ weakly in } L^{p'}(\vep,T; L^{p'}(\BR)),
\\
a(\cdot,\cdot,\nabla u_n) &\to a(\cdot,\cdot,\nabla u_\vep) \quad &&\mbox{ weakly in } L^{p'}(\vep,T;L^{p'}(\BR))^N,\\
u_\vep &= u, \quad v_\vep = v \quad &&\mbox{ a.e.~in } \ \BR \times (\vep,T),
\end{alignat*}
where $u,v : \Rd \times (0,\Sl) \to \R^2$ are measurable functions constructed in the last subsection (see Lemma \ref{L:ident}). By virtue of (A2) along with the (weak) lower semicontinuity of norms, it follows that
\begin{align}
 \int^t_\vep \int_{\BR} \left( |v_\vep| + |a(x,\tau,\nabla u_\vep)| \right) \, \d x \d \tau 
&\leq \liminf_{n \to +\infty} \int^t_\vep \int_{\BR} \left( |v_n| + |a(x,\tau,\nabla u_n)| \right) \, \d x \d \tau \nonumber\\
&\leq \sup_{n \in \N} \left( \int^t_0 \int_{\BR} \left( |v_n| + |a(x,\tau,\nabla u_n)| \right) \, \d x \d \tau \right)\nonumber \\
&\stackrel{\text{(A2)}}{\to} 0 \quad \mbox{ as } \ 0 < \vep < t \to 0_+.\label{e:L1}
\end{align}
Here and subsequently, we shall use the same notation for the zero extensions of $u_\vep$, $a(\cdot,\cdot,\nabla u_\vep)$ and $v_\vep$ onto $\BR \times (0,T)$ for each $\vep \in (0,T)$. We claim that $(v_\vep)$ and $(a(\cdot,\cdot,\nabla u_\vep)_j)$ form Cauchy sequences in $L^1(B_R \times (0,T))$ for $j = 1,2,\ldots,N$. Indeed, we see that, for any $0 < \vep' < \vep < T$,
 \begin{align*}
  \int^T_0 \int_{\BR} \left(|v_\vep - v_{\vep'}|+|a(x,t,\nabla u_\vep) - a(x,t,\nabla u_{\vep'})|\right) \, \d x \d t
\\
= \int^{\vep}_{\vep'} \int_{\BR} \left(|v_{\vep'}|+|a(x,t,\nabla u_{\vep'})|\right) \, \d x \d t,
 \end{align*}
and moreover, the right-hand side converges to zero as $\vep \to 0_+$ by \eqref{e:L1}. Therefore we deduce that
 $$
 v_\vep \to v \ \mbox{ and } \ a(\cdot,\cdot,\nabla u_\vep)_j \to a(\cdot,\cdot,\nabla u)_j \ \mbox{ strongly in } L^1(\BR\times(0,T))
 $$
for $j = 1,2,\ldots,N$. Here the limits are identified by means of the fact that $v_\vep = v$ and $a(\cdot,\cdot,\nabla u_\vep)=a(\cdot,\cdot,\nabla u)$ a.e.~in $\BR\times (\vep,T)$. In particular, $v$ and $|a(\cdot,\cdot,\nabla u)|$ belong to $L^1( \BR \times (0,T) )$ for any $R > 0$ and $T \in (0,\Sl)$.
The proof is complete.
\end{proof}

 Now, we are in a position to prove

\begin{lemma}
The weak form \eqref{wic} holds true.
\end{lemma}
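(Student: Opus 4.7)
The plan is to pass to the limit $n\to\infty$ in the weak form satisfied by the approximate solutions $(u_n,v_n)$ on the full time interval $(0,t)$, including the initial datum $\mu_n$. Given $\psi\in C^\infty_c([0,\Sl)\times\R^\dim)$, fix $R>0$ so that $\mathrm{supp}\,\psi\subset B_{R/2}\times[0,\Sl)$ and take $n$ so large that $B_R\subset B_n$. Testing \eqref{dnp:wf} against $\psi$ on $B_n$, integrating by parts in time and using \eqref{dnp-ic-n}, one obtains
\begin{align*}
-\int_0^t\!\int_{\Rd} v_n\partial_\tau\psi\,\d x\d\tau+\int_{\Rd}v_n(\cdot,t)\psi(\cdot,t)\,\d x-\int_{\Rd}\mu_n\psi(\cdot,0)\,\d x\\
+\int_0^t\!\int_{\Rd}a(x,\tau,\nabla u_n)\cdot\nabla\psi\,\d x\d\tau=0.
\end{align*}
The identity \eqref{wic} will follow once each of these four terms is shown to converge to its expected limit.

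Three of the four passages are straightforward. The initial-datum term converges to $\int_{\Rd}\psi(\cdot,0)\,\d\mu(x)$ by \eqref{ini-hypo}, since $\psi(\cdot,0)\in C_c(\Rd)$. For the boundary term at time $t$, choose any $0<t_1<t<t_2<\Sl$; the strong convergence $\rho v_n\to\rho v$ in $C([t_1,t_2];(\Wpr)^*)$ established in \eqref{c:rhovn}, combined with the $L^\infty(t_1,t_2;L^{p'}(\BR))$-bound of Lemma~\ref{Lemma:2.2}, yields $v_n(\cdot,t)\to v(\cdot,t)$ weakly in $L^{p'}(\BR)$; since $\rho\equiv 1$ on $B_{R/2}\supset\mathrm{supp}\,\psi(\cdot,t)$, the boundary integral converges as desired.

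The two space-time integrals over $(0,t)$ form the main obstacle, since the weak convergences $v_n\to v$ and $a(\cdot,\cdot,\nabla u_n)\to a(\cdot,\cdot,\nabla u)$ provided by Lemma~\ref{L:ident} are only available on intervals $(\vep,t)$ bounded away from $\tau=0$. The plan is to split each integral as $(0,t)=(0,\vep)\cup(\vep,t)$ for $\vep>0$ small and invoke (A2) on the slab $(0,\vep)$. Specifically, Lemma~\ref{L:ident} gives convergence of the $(\vep,t)$-part to the corresponding integral involving $v$ or $a(\cdot,\cdot,\nabla u)$, while (A2) ensures
$$
\|\partial_\tau\psi\|_\infty\!\int_0^\vep\!\int_{\BR}|v_n|\,\d x\d\tau+\|\nabla\psi\|_\infty\!\int_0^\vep\!\int_{\BR}|a(x,\tau,\nabla u_n)|\,\d x\d\tau=o_\vep(1)
$$
uniformly in $n$; the preceding lemma shows that the analogous tails formed with $v$ and $a(\cdot,\cdot,\nabla u)$ are likewise $o_\vep(1)$, since $v$ and $|a(\cdot,\cdot,\nabla u)|$ belong to $L^1(\BR\times(0,T))$. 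A standard $\vep/3$-argument (pick $\vep$ so the $(0,\vep)$-tails are small uniformly in $n$, then $n$ so the $(\vep,t)$-remainder is small) yields convergence of the full integrals from $0$ to $t$, and combining the four limits produces \eqref{wic}. The decisive ingredient is the uniform integrability (A2), which is precisely what enables the passage across the threshold $\tau=0$ where the energy estimates of Lemma~\ref{L:ident} degenerate.
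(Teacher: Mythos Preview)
Your proposal is correct and follows essentially the same approach as the paper: both split the time interval as $(0,\vep)\cup(\vep,t)$, invoke (A2) to control the $(0,\vep)$-tails uniformly in $n$, use the convergences of Lemma~\ref{L:ident} on $(\vep,t)$, and then let $\vep\to 0_+$ using the $L^1$-integrability of $v$ and $|a(\cdot,\cdot,\nabla u)|$ established in the preceding lemma. The paper packages the argument by first passing $n\to\infty$ to obtain an intermediate identity containing a residual term $c_\vep:=\lim_{n\to\infty}I_{\vep,n}$ and then sending $\vep\to 0_+$, whereas you phrase it as an $\vep/3$-argument; these are equivalent presentations of the same two-step limit.
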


\begin{proof}
 Let $\psi \in C^\infty_c([0,\Sl)\times \R^\dim)$ and $t \in (0, \Sl)$ be fixed.
 Take $R > 0$ such that $\mathrm{supp} \, \psi(\tau,\cdot) \subset \BR$ for any $\tau \in [0,t]$. For $n > R$, we observe that
  \begin{align*}
   \lefteqn{
  0 \stackrel{\eqref{dnp:wf}}= \int^{t}_{0} \langle \partial_t v_n, \psi \rangle_{W^{1,p}_0(B_n)} \, \d \tau
  + \int^{t}_{0} \int_{\Rd} a(x,\tau,\nabla u_n) \cdot \nabla \psi \, \d x \d \tau}\\
   &= \int^t_{0} \langle \partial_t v_n, \psi \rangle_{W^{1,p}_0(B_n)} \, \d \tau
  + \int^{\vep}_{0} \int_{\Rd} a(x,\tau,\nabla u_n) \cdot \nabla \psi \, \d x \d \tau\\
  &\quad + \int^{t}_{\vep} \int_{\Rd} a(x,\tau,\nabla u_n) \cdot \nabla \psi \, \d x \d \tau\\
  &= - \int^{\vep}_{0} \int_{\Rd} v_n \partial_t \psi \, \d x \d \tau
   - \int^t_{\vep} \int_{\Rd} v_n \partial_t \psi \, \d x \d \tau\\
   &\quad + \int_{\Rd} v_n(\cdot,t) \psi(\cdot,t) \, \d x
  - \int_{\Rd} \mu_n \psi(\cdot,0)\, \d x \\
   & \quad + \int^{\vep}_{0} \int_{\Rd} a(x,\tau,\nabla u_n) \cdot \nabla \psi \, \d x \d \tau
   + \int^{t}_{\vep} \int_{\Rd} a(x,\tau,\nabla u_n) \cdot \nabla \psi \, \d x \d \tau.
  \end{align*}
  Set
 $$
  I_{\vep,n} := - \int^{\vep}_{0} \int_{\Rd} v_n \partial_t \psi \, \d x \d \tau
   + \int^{\vep}_{0} \int_{\Rd} a(x,\tau,\nabla u_n) \cdot \nabla \psi \, \d x \d \tau
 $$
and repeat the same argument as in the proof of Lemma \ref{L:ident} with $t_1=\vep$ and $t_2=t$. Then we deduce from \eqref{ini-hypo} that
\begin{align*}
 \lim_{n \to \infty} I_{\vep,n}
 - \int^{t}_{\vep} \int_{\Rd} v \partial_t \psi \, \d x \d \tau
 + \int_{\Rd} v(\cdot,t) \psi(\cdot,t) \, \d x
  - \int_{\Rd} \psi(\cdot,0) \, \d \mu(x)\\
+ \int^{t}_{\vep} \int_{\Rd} a(x,\tau,\nabla u) \cdot \nabla \psi \, \d x \d \tau = 0.
\end{align*}
Here we used \eqref{c:HnL2} along with the relation $\xi = a(\cdot,\cdot,\nabla u)$. By (A2), we find that
$$
c_\vep := \lim_{n \to \infty} I_{\vep,n} \to 0 \quad \mbox{ as } \ \vep \to 0_+.
$$
Thus we conclude that
\begin{align}
  c_\vep - \int^{t}_\vep \int_{\Rd} v \partial_t \psi \, \d x \d \tau
 + \int_{\Rd} v(\cdot,t) \psi(\cdot,t) \, \d x
  - \int_{\Rd} \psi(\cdot,0) \, \d \mu(x) \nonumber\\
+ \int^{t}_\vep \int_{\Rd} a(x,\tau,\nabla u) \cdot \nabla \psi \, \d x \d \tau = 0.\label{ep}
\end{align}
Passing to the limit as $\vep \to 0_+$, since $v$ and $|a(\cdot,\cdot,\nabla u)|$ are integrable over $\BR \times (0,t)$ and so is $v(\cdot, t)$ over $\BR$, we conclude that
 \begin{align*}
 - \int^{t}_0 \int_{\Rd} v \partial_t \psi \, \d x \d \tau
 + \int_{\Rd} v(\cdot,t) \psi(\cdot,t) \, \d x
  - \int_{\Rd} \psi(\cdot,0) \, \d \mu(x) \nonumber\\
+ \int^{t}_0 \int_{\Rd} a(x,\tau,\nabla u) \cdot \nabla \psi \, \d x \d \tau = 0
\end{align*}
due to the absolute continuity of the Lebesgue integral. Therefore combining all the facts proved so far, we conclude that $(u,v)$ is a local-energy solution of the Cauchy problem \eqref{dnp}, \eqref{dnp-ic} in $(0,\Sl)$ in the sense of Definition \ref{D:sol-dnp}.
\end{proof}

Thus we have proved Theorem \ref{T:loc}.

\section{Finsler FDE}\label{S:FFDE}

In this section, we shall apply Theorem \ref{T:loc} to construct a local-energy solution of the Cauchy problem for the Finsler FDE, 
that is, \eqref{pde}, \eqref{ic} with $2 < q < +\infty$. In the rest of the paper, we shall write
$$
\B_R = \left\{ x \in \Rd \colon H_0(x) < R \right\} \quad \mbox{ for } \ R > 0,
$$
where $H_0(\cdot)$ stands for the dual norm of $H(\cdot)$ (see \S \ref{Ss:Finsler}). To apply Theorem \ref{T:loc}, we set
\begin{equation}\label{F-set}
p = 2, \quad \beta(u) = |u|^{q-2}u, \quad a(x,t,\xi) = H(\xi) \nabla_\xi H(\xi)
\end{equation}
and take $H_0(\cdot)$ as a norm of $\Rd$ (i.e., set $\BR = \B_R$). Then \eqref{pde}, \eqref{ic} is reduced to \eqref{dnp}, \eqref{dnp-ic}. Moreover, one can check \eqref{a-mono} and \eqref{a-bdd} by noting that
\begin{gather*}
\left( a(x,t,\xi)-a(x,t,\eta) \right) \cdot (\xi - \eta) = \big( H(\xi) \nabla_\xi H(\xi)-H(\eta)\nabla_\xi H(\eta) \big) \cdot (\xi - \eta) \geq 0,\\
H_0(a(x,t,\xi)) = H_0(H(\xi)\nabla_\xi H(\xi)) = H(\xi), 
\end{gather*}
for $x, \xi, \eta \in \Rd$ and $t > 0$. Moreover, we set
$$
\hat\beta(u) = \frac 1 q |u|^q,
$$
which is strictly convex by $q > 1$. Thus \eqref{beta-conv} follows.

\subsection{Approximation}\label{Ss:aprx}

This part is common for both fast diffusion ($2 < q < +\infty$) and porous medium ($1 < q < 2$) cases. Let us consider the following approximate problems posed on the balls $\B_n$,
\begin{alignat}{4}
\partial_t u_n^{q-1} &= \Lap u_n \ &&\mbox{ in } \B_n \times (0,+\infty),\label{pde-n}\\
u_n &= 0 \ &&\mbox{ on } \partial \B_n \times (0,+\infty),\label{bc-n}\\
u_n^{q-1} &= \mu_n \ &&\mbox{ in } \B_n \times \{0\},\label{ic-n}
\end{alignat}
where $u_n^{q-1} := |u_n|^{q-2}u_n$ and $(\mu_n)$ is a sequence in $C^\infty_c(\B_n)$ such that
\begin{equation}\label{ini-hypo-F}
\mu_n \to \mu \quad \mbox{ weakly star in } \mathcal M(\R^\dim).
\end{equation}
Thanks to~\cite{B-dn}, for each $n \in \N$, one can construct an energy solution $u_n$ of \eqref{pde-n}--\eqref{ic-n} such that
\begin{align}
 u_n &\in C_{\rm weak}([0,T];H^1_0(\B_n)) \cap C([0,T];L^q(\B_n)),\\
 u_n^{q-1} &\in W^{1,\infty}(0,T;H^{-1}(\B_n)) \cap C_{\rm weak}(0,T;L^2(\B_n)) \cap C([0,T];L^{q'}(\B_n))
\end{align}
for any $T > 0$, and moreover, it holds that
$$
\left\langle \partial_t u_n^{q-1}(t), w \right\rangle_{H^1_0(\B_n)}
+ \int_{\B_n} H(\nabla u_n(x,t)) \nabla_\xi H(\nabla u_n(x,t)) \cdot \nabla w(x) \, \d x = 0
$$
for any $w \in H^1_0(\B_n)$ and a.e.~$t > 0$. Thus (A0) has been ensured.

Furthermore, put $w = (u_n^{q-1} - M_n)_+$ and choose a constant $M_n > \|\mu_n\|_{L^\infty(\B_n)}$. Then we have
$$
\frac 1 2 \frac \d {\d t} \int_{\B_n} (u_n^{q-1}-M_n)_+^2 \, \d x \leq 0
\quad \mbox{ for a.e. } t \in (0,T),
$$
which yields $(u_n(x,t)^{q-1}-M_n)_+ = 0$, that is, $u_n(x,t)^{q-1} \leq M_n$, for a.e.~$(x,t) \in \B_n \times (0,T)$. One can similarly prove that $u_n^{q-1} \geq - M_n$ a.e.~in $\B_n \times (0,T)$. Therefore $u_n$ lies on $L^\infty(\B_n\times(0,T))$ (and hence, $u_n \in C_{\rm weak\,*}([0,T];L^\infty(\B_n))$). Moreover, in particular for the fast diffusion case, since $\mu$ is supposed to be non-negative, one can assume $\mu_n \geq 0$, and hence, so is $u_n$ a.e.~in $\B_n \times (0,T)$.

\subsection{Local-energy estimates for the Finsler FDE}\label{S:FDE}

We next check (A1) and (A2) with \eqref{F-set} and $S = +\infty$. To this end, we shall derive local-energy estimates for $(u_n)$. The following lemma are also valid for \emph{locally bounded non-negative local weak solutions} on general domains $\Omega \subset \Rd$. In the rest of this section, as an independent interest, for any $T > 0$, we shall establish local estimates for a \emph{non-negative measurable function} $u : \Omega \times (0,T) \to [0,+\infty)$ satisfying
\begin{align}\label{es:1}
u \in L^\infty_{\rm loc}(\Omega \times (0,T)) \cap L^2_{\rm loc}(0,T;H^1(B)),\quad
u^{q-1} \in W^{1,2}_{\rm loc}(0,T;H^{-1}(B)),
\end{align}
where $H^{-1}(B)$ stands for the dual space of $H^1_0(B)$, for any domain $B \Subset \Omega$ (i.e., $B$ is bounded and $\overline{B} \subset \Omega$) and
\begin{equation}
- \iint_{Q_T} u^{q-1} \partial_t \varphi \, \d x \d t + \iint_{Q_T} H(\nabla u) \nabla_\xi H(\nabla u) \cdot \nabla \varphi \, \d x \d t = 0,\label{es:3}
\end{equation}
where $Q_T := \Omega \times (0,T)$, for all $\varphi \in C^\infty_c(Q_T)$. 
Hence $u^{q-1} \in C_{\rm weak \,*}(0,T;L^\infty(B))$, and moreover, we note from \eqref{es:3} that
\begin{equation}
\left\langle \partial_t u^{q-1}(t), w \right\rangle_{H^1_0(B)} + \int_B H(\nabla u(t)) \nabla_\xi H(\nabla u(t)) \cdot \nabla w \, \d x = 0
\label{es:3p}
\end{equation}
for a.e.~$t \in (0,T)$ and any $w \in H^1_0(B)$.

In the rest of this subsection, e.g.,  in \eqref{e:lLE} below, the origin of balls can be shifted to suitable $y_0 \in \R^N$. For instance, $\B_R$ can be replaced with $\B_R+y_0$ for any $y_0 \in \R^N$ satisfying $\B_{2R}+y_0 \subset \Omega$. Let us start with a local $L^\infty$-estimate.

\begin{lemma}[Local $L^\infty$ estimates for the Finsler FDE]\label{lLE}
Let $2 < q < +\infty$ and let $r > 1/(q-1)$ be any number satisfying
$$
\kappa_r = 2r - \dim \frac{q-2}{q-1} > 0.
$$
Let $u = u(x,t) : Q_T \to [0,+\infty)$ be a non-negative measurable function satisfying \eqref{es:1}, \eqref{es:3}. Then there exists a constant $C = C(\dim,q,r)$ such that
\begin{equation}\label{e:lLE}
 \|u(\cdot,t)\|_{L^\infty(\B_R)} \leq C t^{-\frac\dim{\kappa_r(q-1)}} \left( \sup_{\frac t8 < \tau < t} \int_{\B_{2R}}  u(x,\tau)^{r(q-1)} \, \d x \right)^{\frac2{\kappa_r(q-1)}} + C \left( \dfrac t {R^2} \right)^{\frac 1 {q-2}}
\end{equation}
for any $t \in (0,T)$ and $R > 0$ satisfying $\B_{2R} \subset \Omega$.
\end{lemma}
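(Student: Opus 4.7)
The estimate is a standard parabolic De~Giorgi--Moser local sup bound, most naturally phrased on the density variable $v:=u^{q-1}$, with the distinctive additive elevation $(t/R^2)^{1/(q-2)}$ reflecting the fast-diffusion singularity of the linearized coefficient $u^{2-q}$. The Finsler structure enters only through the identities $\xi\cdot\nabla_\xi H(\xi)=H(\xi)$ and $H_0(\nabla_\xi H(\xi))=1$ of \eqref{eq:2.1} and the duality inequality $|a\cdot b|\le H_0(a)H(b)$ of \eqref{eq:1.2}; these transfer every Euclidean computation to the anisotropic setting, and the proof is essentially a transcription of the classical fast-diffusion argument in DiBenedetto's monograph.

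My plan is to fix a positive shift $h$ satisfying $h\ge\theta(t/R^2)^{(q-1)/(q-2)}$ for a small absolute $\theta$, introduce the nested parabolic cylinders $Q_n:=\B_{R_n}\times(\tau_n,t)$ with $R_n:=R(1+2^{-n})$ and $\tau_n:=(t/8)(1+3(1-2^{-n}))$, smooth cut-offs $\zeta_n\in C^\infty_c(Q_n)$ equal to~$1$ on $Q_{n+1}$ with $|\nabla\zeta_n|\lesssim 2^n/R$ and $|\partial_t\zeta_n|\lesssim 2^n/t$, and an increasing geometric sequence of exponents $p_n:=r(1+2/N)^n$. Then I would test the weak equation \eqref{es:3p} against $(v+h)^{p_n-1}\zeta_n^2$ (admissible by the chain rule for subdifferentials used in \S\ref{Ss:wc}); the identity $\nabla v=(q-1)u^{q-2}\nabla u$ converts the dissipation into $\iint(v+h)^{p_n-2}v^{-(q-2)/(q-1)}H(\nabla v)^2\zeta_n^2\,\d x\d\tau$, which by $v+h\ge h$ and $v+h\ge v$ is bounded below, for $p_n$ exceeding a fixed threshold, by a positive multiple of $h^{-(q-2)/(q-1)}\iint H(\nabla W_n)^2\zeta_n^2$ with $W_n:=(v+h)^{p_n/2}$. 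After absorbing the cut-off cross terms through the Finsler duality inequality and Young's inequality, and exploiting the elevation to rewrite $(h^{(q-2)/(q-1)}R^2)^{-1}\lesssim 1/t$, the energy inequality takes the schematic form
\[
\sup_\tau\int_{\B_{R_{n+1}}}\!W_n(\tau)^2\,\d x\;+\;\iint_{Q_{n+1}}\!H(\nabla W_n)^2\,\d x\d\tau\;\lesssim\;\frac{4^n}{t}\iint_{Q_n}\!W_n^{\,2}\,\d x\d\tau.
\]

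The final step is to plug this into the parabolic Gagliardo--Nirenberg inequality $\|w\|_{L^{2(1+2/N)}(Q_{n+1})}^{2(1+2/N)}\lesssim(\sup_\tau\|w(\tau)\|_{L^2}^2)^{2/N}\iint|\nabla w|^2$, rewrite the resulting left side as $\|v+h\|_{L^{p_{n+1}}(Q_{n+1})}^{p_{n+1}}$ with $p_{n+1}=p_n(1+2/N)$, and iterate the Moser recursion
\[
\|v+h\|_{L^{p_{n+1}}(Q_{n+1})}\;\le\;\bigl(C\,4^n/t\bigr)^{(1+2/N)/p_n}\|v+h\|_{L^{p_n}(Q_n)}
\]
as $n\to\infty$. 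Telescoping the constants, the algebraic combination of exponents yields
\[
\|v+h\|_{L^\infty(\B_R\times(t/2,t))}\;\lesssim\;t^{-N/\kappa_r}\Bigl(\sup_{t/8<\tau<t}\int_{\B_{2R}}v^r\,\d x\Bigr)^{2/\kappa_r}+h,
\]
with $\kappa_r=2r-N(q-2)/(q-1)$ emerging as the discriminant of the iteration. Choosing $h$ at its threshold $\theta(t/R^2)^{(q-1)/(q-2)}$ and invoking subadditivity of $s\mapsto s^{1/(q-1)}$ on $[0,\infty)$ (valid since $q>2$) then converts this bound into the bound on $u=v^{1/(q-1)}$ claimed in \eqref{e:lLE}.

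The central obstacle is the non-uniform ellipticity of the dissipation: since $u^{2-q}$ can be arbitrarily small, the iteration only closes after the shift $v\mapsto v+h$, and this shift is exactly what forces the additive second summand in \eqref{e:lLE}. The hypothesis $\kappa_r>0$ is precisely the condition that makes the Gagliardo--Nirenberg step contractive at the starting exponent $r$, while $r>1/(q-1)$ guarantees integrability of the initial datum $v^r=u^{r(q-1)}$ and thus finiteness of the starting term of the iteration. Every other ingredient --- Sobolev inequality, cut-off manipulations, Young and Cauchy--Schwarz --- is purely technical and inherits verbatim from the Euclidean case.
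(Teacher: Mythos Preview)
Your Moser-iteration strategy contains a genuine gap at the step where you claim the dissipation is bounded below by $h^{-(q-2)/(q-1)}\iint H(\nabla W_n)^2\zeta_n^2$. Writing $\alpha=(q-2)/(q-1)\in(0,1)$, the dissipation you correctly compute is $\frac{p_n-1}{q-1}\iint (v+h)^{p_n-2}\,v^{-\alpha}\,H(\nabla v)^2\,\zeta_n^2$, whereas $H(\nabla W_n)^2=(p_n/2)^2(v+h)^{p_n-2}H(\nabla v)^2$. Your claimed bound therefore requires the \emph{pointwise} inequality $v^{-\alpha}\gtrsim h^{-\alpha}$, i.e.\ $v\lesssim h$, which plainly fails wherever $v$ is large. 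The inequalities $v+h\ge h$ and $v+h\ge v$ that you invoke give at best $v^{-\alpha}\ge (v+h)^{-\alpha}$, which moves the exponent on $(v+h)$ rather than producing a uniform coefficient. You have the direction of the degeneracy reversed: for the fast-diffusion equation the diffusion coefficient $u^{2-q}=v^{-\alpha}$ vanishes as $v\to\infty$, so the shift $v\mapsto v+h$ cures nothing in the regime that matters for an $L^\infty$ bound. (It also creates trouble on the error side: after Young's inequality the cut-off gradient term leaves $\iint (v+h)^{p_n}v^{-\alpha}H(\nabla\zeta_n)^2$, which is not controlled by $\iint W_n^2$ on $\{v\ll h\}$.)

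The paper takes a De~Giorgi route instead: one tests \eqref{es:3p} with $(u-k_n)_+^{\,\delta-1}\zeta_n^2$, $\delta=r(q-1)>1$, $k_n\nearrow k$. The point is that the chain rule for $\partial_t u^{q-1}$ produces a primitive $F(u^{q-1})=(q-1)\int_{k_n}^u s^{q-2}(s-k_n)_+^{\delta-1}\,\d s$, and since one is working on the super-level set $\{u>k_n\ge k/2\}$ this picks up a \emph{favourable} factor $k^{q-2}$ in front of the time energy $\int (u-k_n)_+^\delta$. The resulting Caccioppoli inequality carries a residual $\|u\|_{L^\infty(Q_0)}^{q-2}$ on the right, so after the first level-set iteration one obtains $\|u\|_{L^\infty(Q_\infty)}\le C\sigma^{-\#}\|u\|_{L^\infty(Q_0)}^{\theta}\|u\|_{L^\delta(Q_0)}^{1-\theta'}$ with $\theta<1$; a second iteration over a tower of shrinking cylinders then removes the $L^\infty$ norm from the right-hand side. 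The additive term $(t/R^2)^{1/(q-2)}$ arises from the dichotomy $\|u\|_{L^\infty(Q_0)}^{q-2}\lessgtr t/R^2$ in the energy inequality, not from a shift. If you wish to salvage a Moser argument, you would have to build this same self-improving two-scale structure into it; the single-pass iteration you outline does not close.
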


\begin{proof}[Outline of proof]
The lemma can be proved by modifying the argument in~\cite[Proof of Theorem 3.1, p.789]{DK92}, where an $L^\infty$ estimate is established for the usual fast diffusion equation. More precisely, we test \eqref{es:3p} with $w=(u-k_n)_+^{r(q-1)-1} \zeta_n^2$, where $(k_n)$ is a sequence increasingly converging to a positive constant $k$ and $(\zeta_n)$ is a sequence of smooth cut-off (in space and time) functions whose supports are rectangles $Q_n \subset Q_T$ decreasing in $n$, and derive energy inequalities (for $w_n := (u-k_n)_+^{r(q-1)/2} \zeta_n$). Choosing $k$ large enough and employing a standard argument based on the H\"older and Gagliard-Nirenberg inequalities as well as an iteration (for a recurrence inequality), one can conclude that $u \leq k$ on a bounded subdomain $Q_\infty$ of $Q_T$. However, due to the choice, $k$ still involves the $L^\infty$ norm $\|u\|_{L^\infty(Q_0)}$ with a power less than $1$ (as well as $\|u\|_{L^{r(q-1)}(Q_0)}$), where $Q_0$ is strictly larger than $Q_\infty$. Therefore running another iteration, we exclude the $L^\infty$ norm from the bound, and thus, the assertion follows. In order to derive energy estimates (for $w_n$), the following algebraic inequality may be useful:
\begin{align*}
\lefteqn{
c H(\Vec a + \Vec b)^2 - (H\nabla_\xi H)(\Vec a + \Vec b) \cdot \Vec b
}\\
&\geq
c H(\Vec a + \Vec b)^2 - H(\Vec a + \Vec b) H_0 \left(\nabla_\xi H(\Vec a + \Vec b) \right) H(\Vec b)\\
&\geq \dfrac{c}{2} H(\Vec a + \Vec b)^2 - \dfrac 1 {2c} H(\Vec b)^2\\
&\geq \dfrac{c}{2} \left( H(\Vec a)^2 + H(\Vec b)^2 \right) - c H(\Vec a) H(\Vec b)- \dfrac 1 {2c} H(\Vec b)^2\\
&\geq \dfrac{c}{4} H(\Vec a)^2 - \frac 12 \left(c + \frac 1 c\right) H(\Vec b)^2
\end{align*}
for $\Vec a, \Vec b \in \Rd$ and $c > 0$ (here we also used \eqref{eq:2.1}), since the norm $H(\cdot)$ is not necessarily induced by an inner product.
\end{proof}

\prf{
We remark that the consequence of the lemma is also valid for \emph{subsolutions}, that is, the equality in \eqref{es:3} (and \eqref{es:3p}) is replaced with the inequality $\leq$ and $\varphi$ (and $w$) is supposed to be non-negative (see~\eqref{es:4} and \eqref{es:4p} in \S \ref{S:FPME}).

\begin{proof}
Let $t > 0$ and $R>0$ be fixed such that $Q_{4R,t} \subset Q_T$. Moreover, let $\tau \in (\frac 1 2, 1]$, $\rho \in [1,\frac32]$, $\sigma \in (0,\frac14]$ and set
\begin{equation}\label{RTBn}
\left.
\begin{aligned}
 R_n &:= R(\rho + \sigma 2^{-n}) \searrow R \rho \quad \mbox{ as } \ n \to \infty,\quad\\
 t_n &:= \dfrac t 2 (\tau - \sigma 2^{-n}) \nearrow \dfrac t 2 \tau \quad \mbox{ as } \ n \to \infty,\\
 \B_n &:= \B_{R_n}, \quad Q_n := \B_n \times (t_n, t). 
\end{aligned}
\right\}
\end{equation}
Then $\B_n \supset \B_{n+1} \to \B_\infty := \B_{R\rho}$ and $Q_n \supset Q_{n+1} \to Q_\infty := \B_{R\rho} \times (\frac t 2 \tau,t)$, and moreover,
$Q_0 = \B_0 \times (t_0,t) = \B_{R(\rho+\sigma)} \times (\frac t2(\tau-\sigma),t) \subset \B_{2R} \times (\frac t8,t)$. Define a smooth non-negative function
$$
(x,t) \mapsto \zeta_n(x,t) \geq 0 \quad \mbox{ in } Q_n
$$
such that
\begin{equation}\label{zetan}
\left.
\begin{aligned}
 \zeta_n \equiv 1 \ \mbox{ on } Q_{n+1},\quad \zeta_n(\cdot,t) \equiv 0 \ \mbox{ on } Q_{n-1} \setminus Q_n,\quad\\
 H(\nabla\zeta_n) \leq \dfrac{2^{n+2}}{\sigma R}, \quad 0 \leq \partial_t \zeta_n \leq \dfrac{2^{n+2}}{\sigma t}.\quad
\end{aligned}
\right\} 
\end{equation}
Let $k > 0$ (be explicitly determined later) and set
\begin{equation}\label{kn}
\frac k 2 \leq k_n := k \left( 1 - \dfrac 1 {2^{n+1}} \right) \nearrow k \quad \mbox{ as } \ n \to \infty.
\end{equation}

Test \eqref{es:3p} (formally\footnote{Indeed, $\nabla [(u - k_n)_+^{\ddelta-1} \zeta_n^2]$ may not belong to $L^2(Q_n)$. However, the following argument can be justified by approximating the function $u \mapsto (u-k_n)_+^{\ddelta-1}$ with a Lipschitz continuous function as well as a limiting procedure.}) with $(u - k_n)_+^{\ddelta-1} \zeta_n^2$ for any $\ddelta = r(q-1) > 1$, and set 
$$
w_n := (u - k_n)_+^{\ddelta/2} \zeta_n.
$$
We observe that
\begin{align*}
\lefteqn{
 \iint_{Q_n} H(\nabla u)\nabla_\xi H(\nabla u) \cdot \nabla \left[ (u - k_n)_+^{\ddelta-1} \zeta_n^2 \right] \, \d x \d t
}\\
&= (\ddelta-1) \iint_{Q_n} (u - k_n)_+^{\ddelta-2} H(\nabla u)^2 \zeta_n^2 \, \d x \d t \\
&\qquad + 2 \iint_{Q_n} (u - k_n)_+^{\ddelta-1} \zeta_n H(\nabla u) \nabla_\xi H(\nabla u) \cdot \nabla \zeta_n \, \d x \d t =: I_1.
\end{align*}
Noting that
\begin{align*}
(u - k_n)_+^{\ddelta-2} H(\nabla u)^2 \zeta_n^2 
&= H\left( (u - k_n)_+^{\frac \ddelta 2 - 1} \nabla u \right)^2 \zeta_n^2 \\
&= \dfrac 4 {\ddelta^2} H\left( \zeta_n \nabla (u - k_n)_+^{\frac \ddelta 2} \right)^2\\
&= \dfrac 4 {\ddelta^2} H\left( \nabla w_n - (u - k_n)_+^{\frac \ddelta 2} \nabla \zeta_n \right)^2 
\end{align*}
and
\begin{align*}
(u - k_n)_+^{\ddelta-1} \zeta_n (H \nabla_\xi H)(\nabla u)
&= (u - k_n)_+^{\frac \ddelta 2 -1} (H \nabla_\xi H)(\nabla u) \zeta_n (u - k_n)_+^{\frac \ddelta 2}\\
&= \dfrac 2 \ddelta (H \nabla_\xi H)\left( \nabla w_n - (u - k_n)_+^{\frac \ddelta 2} \nabla \zeta_n \right) (u - k_n)_+^{\frac \ddelta 2}
\end{align*}
(here and henceforth, we write $(H \nabla_\xi H)(\mathbf{a})$ instead of $H(\mathbf{a}) \nabla_\xi H(\mathbf{a})$ for simplicity), we have
\begin{align*}
 I_1 &= \dfrac{4(\ddelta-1)}{\ddelta^2}\iint_{Q_n} H\left( \nabla w_n - (u - k_n)_+^{\frac \ddelta 2} \nabla \zeta_n \right)^2 \, \d x \d t\\
&\quad + \dfrac 4 \ddelta \iint_{Q_n} (H \nabla_\xi H) \left( \nabla w_n - (u - k_n)_+^{\frac \ddelta 2} \nabla \zeta_n \right) \cdot (\nabla \zeta_n)  (u - k_n)_+^{\frac \ddelta 2} \, \d x \d t\\
&\geq \dfrac{\ddelta-1}{\ddelta^2} \iint_{Q_n} H\left( \nabla w_n \right)^2 \, \d x \d t
- 2 \dfrac{\ddelta^2 + 2(\ddelta-1)^2}{\ddelta^2(\ddelta-1)} \iint_{Q_n} H(\nabla \zeta_n)^2 (u - k_n)_+^\ddelta \, \d x \d t\\
&\geq \dfrac{\ddelta-1}{\ddelta^2} \iint_{Q_n} H\left( \nabla w_n \right)^2 \, \d x \d t - C_\ddelta \left( \dfrac{2^{n+2}}{\sigma R} \right)^2 \iint_{Q_n} (u - k_n)_+^\ddelta \, \d x \d t
\end{align*}
for some constant $C_\ddelta > 0$ depending only on $\ddelta$. Here we used the fact that
\begin{align*}
\lefteqn{
\dfrac{\ddelta-1}\ddelta H(\Vec a + \Vec b)^2 - (H\nabla_\xi H)(\Vec a + \Vec b) \cdot \Vec b
}\\
&\geq
\dfrac{\ddelta-1}\ddelta H(\Vec a + \Vec b)^2 - H(\Vec a + \Vec b) H_0 \left(\nabla_\xi H(\Vec a + \Vec b) \right) H(\Vec b)\\
&\geq \dfrac{\ddelta-1}{2\ddelta} H(\Vec a + \Vec b)^2 - \dfrac \ddelta {2(\ddelta-1)} H(\Vec b)^2\\
&\geq \dfrac{\ddelta-1}{2\ddelta} \left( H(\Vec a)^2 + H(\Vec b)^2 \right) - \dfrac{\ddelta-1}{\ddelta} H(\Vec a) H(\Vec b)- \dfrac \ddelta {2(\ddelta-1)} H(\Vec b)^2\\
&\geq \dfrac{\ddelta-1}{4\ddelta} H(\Vec a)^2 - \dfrac{\ddelta^2 + 2(\ddelta-1)^2}{2\ddelta(\ddelta-1)} H(\Vec b)^2.
\end{align*}
Moreover, note that
\begin{align*}
 \iint_{Q_n} (u-k_n)_+^\ddelta \, \d x \d t
 &\leq  \iint_{Q_{n-1}} (u-k_n)_+^\ddelta \zeta_{n-1}^2\, \d x \d t
 \qquad \mbox{(by $\zeta_{n-1} \equiv 1$ on $Q_n$)}\\
 &\leq \iint_{Q_{n-1}} (u-k_{n-1})_+^\ddelta \zeta_{n-1}^2\, \d x \d t
 \qquad \mbox{(by $k_n \geq k_{n-1}$)}\\
 &= \iint_{Q_{n-1}} w_{n-1}^2 \, \d x \d t.
\end{align*}
Thus
\begin{align*}
\iint_{Q_n} (H \nabla_\xi H)(\nabla u) \cdot \nabla \left[ (u - k_n)_+^{\ddelta-1} \zeta_n^2 \right] \, \d x \d t &\ge \dfrac{\ddelta-1}{\ddelta^2} \iint_{Q_n} H(\nabla w_n)^2 \, \d x \d t \\
 &\quad - C_\ddelta \left( \dfrac{2^{n+2}}{\sigma R} \right)^2 \iint_{Q_{n-1}} w_{n-1}^2 \, \d x \d t.
\end{align*}

The rest of proof runs as in the classical fast diffusion case (see~\cite{DKV91,DK92}). However, we give a proof for the completeness. Due to a chain-rule formula in a dual space setting, we (formally) observe that
\begin{align*}
\lefteqn{
\int^t_{t_n} \left\langle \partial_t \left( u^{q-1} \right), (u - k_n)_+^{\ddelta-1} \zeta_n^2 \right\rangle_{H^1_0(\B_n)} \, \d t
}\\
&= \int_{\B_n} F( u^{q-1} ) \zeta_n^2 \, \d x \Big|^t_{t_n} - \iint_{Q_n} F( u^{q-1} ) 2 \zeta_n \partial_t \zeta_n \, \d x \d t, 
\end{align*}
where $F(s)$ is a primitive function of $s \mapsto (s^{\frac 1 {q-1}} - k_n)_+^{\ddelta-1}$ such that $F(k_n^{q-1}) = 0$ (i.e., $F'(u^{q-1}) = (u - k_n)_+^{\ddelta-1}$). Here we note that $F(s) = 0$ for any $s < k_n^{q-1}$.
Moreover, for any number $u \in [k_n, \|u\|_{L^\infty(Q_0)}]$, one has
\begin{align*}
F(u^{q-1})
&= \int^{u^{q-1}}_{k_n^{q-1}} (\sigma^{\frac 1 {q-1}}-k_n)_+^{\ddelta-1} \, \d \sigma\\
&= (q-1)\int^{u}_{k_n} s^{q-2}(s - k_n)^{\ddelta-1} \, \d s\\
&\begin{cases}
  \geq (q-1) k_n^{q-2} \frac 1 \ddelta (u - k_n)^\ddelta 
  \geq \frac{q-1}{\ddelta} \left( \frac k 2 \right)^{q-2} (u - k_n)^\ddelta 
  \qquad \mbox{(by $k_n \geq k/2$ and $q >2$)},\\[3mm]
  \leq (q-1) u^{q-2} \frac 1 \ddelta (u - k_n)^\ddelta
  \leq \frac{q-1}{\ddelta} \|u\|_{L^\infty(Q_0)}^{q-2} (u-k_n)^\ddelta
  \qquad \mbox{(by $u \leq \|u\|_{L^\infty(Q_0)}$)}.
 \end{cases}
\end{align*}
Therefore since $\zeta_n(t_n) \equiv 0$, it follows that
\begin{align*}
\lefteqn{
\int^{t}_{t_n} \left\langle \partial_t \left( u^{q-1} \right), (u - k_n)_+^{\ddelta-1} \zeta_n^2 \right\rangle_{H^1_0(\B_n)} \, \d t
}\\
&\geq \dfrac{q-1}{\ddelta} \left(\dfrac{k}{2}\right)^{q-2} \int_{\B_n} w_n(t)^2 \, \d x - \dfrac{2(q-1)}{\ddelta} \|u\|_{L^\infty(Q_0)}^{q-2} \iint_{Q_n} (u-k_n)_+^\ddelta \zeta_n |\partial_t \zeta_n| \, \d x \d t\\
&\geq \dfrac{q-1}{\ddelta} \left(\dfrac{k}{2}\right)^{q-2} \int_{\B_n} w_n(t)^2 \, \d x - \dfrac{2(q-1)}{\ddelta} \|u\|_{L^\infty(Q_0)}^{q-2} \dfrac{2^{n+2}}{\sigma t} \iint_{Q_{n-1}} (u-k_n)_+^\ddelta \zeta_n \zeta_{n-1}\, \d x \d t\\
&\geq \dfrac{q-1}{\ddelta} \left(\dfrac{k}{2}\right)^{q-2} \int_{\B_n} w_n(t)^2 \, \d x - \dfrac{2(q-1)}{\ddelta} \|u\|_{L^\infty(Q_0)}^{q-2} \dfrac{2^{n+2}}{\sigma t} \iint_{Q_{n-1}} w_{n-1}^2\, \d x \d t.
\end{align*}
Here we used the fact that $(u - k_n)_+^\ddelta \leq (u - k_{n-1})_+^\ddelta$ by $k_n \geq k_{n-1}$, $\zeta_n \leq \zeta_{n-1}$, $\zeta_{n-1} \equiv 1$ on $Q_n$ and $\zeta_n \equiv 0 \leq \zeta_{n-1}$ on $Q_{n-1} \setminus Q_n$.

Combining all these facts, we deduce that
\begin{align}
\lefteqn{
 k^{q-2} \int_{\B_n} w_n(t)^2 \, \d x + \iint_{Q_n} H(\nabla w_n)^2 \, \d x \d t
}\nonumber\\
&\leq \gamma_{q,r} \left( \dfrac{2^n}{\sigma t} \|u\|_{L^\infty(Q_0)}^{q-2} + \dfrac{2^{2n}}{\sigma^2 R^2} \right) \iint_{Q_{n-1}} w_{n-1}^2 \, \d x \d t.\label{eq1}
\end{align}
Here and henceforth, $\gamma_{q,r} > 0$ denotes a constant depending only on $q$ and $r$ and may vary from line to line. Then since $0 < \sigma < 1$, it follows that
\begin{align*}
\lefteqn{
k^{q-2} \sup_{t_n \leq \tau \leq t} \int_{\B_n} w_n(\tau)^2 \, \d x + \iint_{Q_n} H(\nabla w_n)^2 \, \d x \d t}\\
&\leq \dfrac{2^{2n} \gamma_{q,r} }{\sigma^2 t} \left( \dfrac t {R^2} + \|u\|_{L^\infty(Q_0)}^{q-2} \right) \iint_{Q_{n-1}} w_{n-1}^2 \,\d x \d t.
\end{align*}
If $\|u\|_{L^\infty(Q_0)}^{q-2} < t/R^2$, then the desired conclusion readily follows. Otherwise,
\begin{align}
\lefteqn{
 k^{q-2} \sup_{t_n \leq \tau \leq t} \int_{\B_n} w_n(\tau)^2 \, \d x + \iint_{Q_n} H(\nabla w_n)^2 \, \d x \d t}\nonumber\\
&\leq \dfrac{2^{2n+1} \gamma_{q,r}}{\sigma^2 t} \|u\|_{L^\infty(Q_0)}^{q-2} \iint_{Q_{n-1}} w_{n-1}^2 \,\d x \d t.\label{eq2}
\end{align}

By H\"older's inequality,
$$
\iint_{Q_n} w_n^2 \, \d x \d t \leq \left( \iint_{Q_n} w_n^{\frac{2(\dim+2)}\dim} \, \d x \d t \right)^{\frac \dim {\dim+2}} |A_n|^{\frac 2{\dim+2}}
$$
with
$$
A_n := \{ (x,t) \in Q_n \colon u(x,t) > k_n \}.
$$
Moreover, applying the Gagliardo-Nirenberg inequality (with the norm $H(\cdot)$),
$$
\|w_n\|_{L^{\frac{2(\dim+2)}\dim}(\B_n)} \leq C_{\rm GN} \|w_n\|_{L^2(\B_n)}^\theta \|H(\nabla w_n)\|_{L^2(\B_n)}^{1 - \theta}
$$
with constants
$$
\theta := \dfrac{\frac \dim {2(\dim+2)} - \frac{\dim-2}{2\dim}}{\frac 1 2 - \frac{\dim-2}{2\dim}} = \frac 2 {\dim+2} \in (0,1), \quad C_{\rm GN} > 0,
$$
one has
\begin{align*}
 \left( \iint_{Q_n} w_n^{\frac{2(\dim+2)}{\dim}} \, \d x \d t\right)^{\frac \dim {\dim+2}}
&\leq C_{\rm GN} \left( \int^t_{t_n} \|w_n\|_{L^2(\B_n)}^{\theta \cdot \frac{2(\dim+2)}\dim} \|H(\nabla w_n)\|_{L^2(\B_n)}^{(1-\theta) \cdot \frac{2(\dim+2)}\dim} \, \d \tau\right)^{\frac \dim{\dim+2}}\\
&\leq C_{\rm GN} \sup_{t_n \leq \tau \leq t}\|w_n(\tau)\|_{L^2(\B_n)}^{\frac 4 {\dim+2}} \left( \iint_{Q_n} H(\nabla w_n)^2 \, \d x \d t \right)^{\frac \dim{\dim+2}}.
\end{align*}
Therefore
$$
\iint_{Q_n} w_n^2 \, \d x \d t \leq C_{\rm GN} \sup_{t_n \leq \tau \leq t}\|w_n(\tau)\|_{L^2(\B_n)}^{\frac 4 {\dim+2}} \left( \iint_{Q_n} H(\nabla w_n)^2 \, \d x \d t \right)^{\frac \dim{\dim+2}} |A_n|^{\frac 2 {\dim+2}}.
$$
Apply \eqref{eq2} to the right-hand side. It follows that
$$
\iint_{Q_n} w_n^2 \, \d x \d t \leq C_{\rm GN} k^{-(q-2) \cdot \frac 2 {\dim+2}} \dfrac{2^{2n+1} \gamma_{q,r} }{\sigma^2 t} \|u\|_{L^\infty(Q_0)}^{q-2} |A_n|^{\frac 2 {\dim+2}} \iint_{Q_{n-1}} w_{n-1}^2 \, \d x \d t.
$$
Here, due to the fact
$$
w_{n-1} = (u - k_{n-1})_+^{\frac \ddelta 2} \zeta_{n-1} \geq (k_n - k_{n-1})_+^{\frac \ddelta 2} \zeta_{n-1} = \left( \dfrac k {2^{n+1}} \right)^{\frac \ddelta 2} \ \mbox{ on } \ A_n,
$$
one can derive
$$
\iint_{Q_{n-1}} w_{n-1}^2 \, \d x \d t \geq \iint_{A_n} w_{n-1}^2 \, \d x \d t \geq \left( \dfrac k {2^{n+1}} \right)^\ddelta |A_n|,
$$
which gives
$$
|A_n| \leq \left( \dfrac k {2^{n+1}} \right)^{-\ddelta} \iint_{Q_{n-1}} w_{n-1}^2 \, \d x \d t.
$$
Therefore
\begin{align*}
\iint_{Q_n} w_n^2 \, \d x \d t 
&\leq C_{\rm GN} k^{-(q-2) \cdot \frac 2 {\dim+2}} \dfrac{2^{2n+1} \gamma_{q,r} }{\sigma^2 t} \|u\|_{L^\infty(Q_0)}^{q-2} \left( \dfrac k {2^{n+1}} \right)^{-\frac{2\ddelta}{\dim+2}} \left( \iint_{Q_{n-1}} w_{n-1}^2 \, \d x \d t \right)^{\frac 2 {\dim+2} + 1}\\
&=: d_0^n C_0 \left( \iint_{Q_{n-1}} w_{n-1}^2 \, \d x \d t \right)^{\alpha + 1}
\end{align*}
with positive constants
$$
\alpha := \dfrac 2 {\dim+2}, \quad
d_0 := 2^{2 + \alpha \ddelta}, \quad
C_0 := C_{\rm GN} k^{- \alpha (q-2+\ddelta)} \dfrac{2^{\alpha \ddelta +1}\gamma_{q,r}}{\sigma^2 t} \|u\|_{L^\infty(Q_0)}^{q-2}.
$$

Set
$$
Y_n := \iint_{Q_n} w_n^2 \, \d x \d t.
$$
Then we observe that
$$
Y_n \leq d_0^n C_0 Y_{n-1}^{\alpha+1} \quad \mbox{ for } \ n = 1,2,\ldots
$$
and
$$
Y_0 = \iint_{Q_0} w_0^2 \, \d x \d t \leq \iint_{Q_0} u^\ddelta \, \d x \d t
$$
from the fact that $w_0 = (u - \frac k 2)_+^{\frac \ddelta 2} \zeta_0 \leq u^{\frac \ddelta 2}$. Now, choose a constant $k > 0$ depending only on $\|u\|_{L^r(Q_0)}$, $\|u\|_{L^\infty(Q_0)}$, $\dim$, $q$, $r$, $\sigma$ and $t$ such that
\begin{equation}\label{k-Y0}
(Y_0 \leq) \ \iint_{Q_0} u^\ddelta \, \d x \d t
= d_0^{-\frac 1 {\alpha^2}} C_0^{- \frac 1 \alpha}.
\end{equation}
Indeed, it is possible, because $C_0 \to 0$ as $k \to \infty$ and $C_0 \to \infty$ as $k \to 0$.

Then, by Lemma 4.1 of~\cite[Chap I, p.~12]{DiBook}, we obtain
$$
Y_n \to 0 \quad \mbox{ as } \ n \to \infty.
$$
Noting that
$$
\iint_{Q_\infty} (u - k_n)_+^\ddelta \, \d x \d t \leq Y_n \to 0,
$$
one finds that
$$
(u - k_n)_+ \to 0 \quad \mbox{ strongly in } L^\ddelta(Q_\infty),
$$
that is,
$$
0 \leq u \leq k \quad \mbox{ a.e.~in } \ Q_{\infty}.
$$
Due to~\eqref{k-Y0}, $k$ is given by
$$
k = \left( d_0^{\frac 1 \alpha} C_{\rm GN} \dfrac{2^{\alpha \ddelta + 1}\gamma_{q,r}}{\sigma^2 t} \|u\|_{L^\infty(Q_0)}^{q-2} \|u\|_{L^\ddelta(Q_0)}^{\alpha \ddelta} \right)^{\frac 1 {\alpha(q-2+\ddelta)}}.
$$
Thus
$$
\|u\|_{L^\infty(Q_\infty)} \leq \left( d_0^{\frac 1 \alpha} C_{\rm GN} \dfrac{2^{\alpha \ddelta + 1}\gamma_{q,r}}{\sigma^2 t} \|u\|_{L^\infty(Q_0)}^{q-2} \|u\|_{L^\ddelta(Q_0)}^{\alpha \ddelta} \right)^{\frac 1 {\alpha(q-2+\ddelta)}},
$$
which is equivalently rewritten as
\begin{equation}\label{eq3}
\|u\|_{L^\infty(Q_\infty)} \leq \left[ \left( d_0^{\frac 1 \alpha} C_{\rm GN} \dfrac{2^{\alpha \ddelta + 1}\gamma_{q,r}}{\sigma^2 t} \right)^{\frac 1 \alpha} \iint_{Q_0} u^\ddelta \, \d x \d t \right]^{\frac 1 {q-2+\ddelta}}
\|u\|_{L^\infty(Q_0)}^{\frac{q-2}{\alpha(q-2+\ddelta)}}.
\end{equation}

Write $\sigma_1$ instead of $\sigma$ and take
\begin{equation*}
\sigma_n := 2^{-(n+1)}. 
\end{equation*}
Set
\begin{align*}
\hat Q_0 &:= Q_\infty \quad \mbox{ with } \ \tau = \rho = 1, \quad \sigma = \sigma_1\\
&= \B_R \times \left(\frac t 2, t\right).
\end{align*}
Moreover,
\begin{align*}
\hat Q_1 &:= Q_0 \quad \mbox{ with } \ \tau = \rho = 1, \quad \sigma = \sigma_1\\
&= \B_{R(1+\sigma_1)} \times \left(\frac t 2 (1 - \sigma_1), t\right).
\end{align*}
Then \eqref{eq3} implies
\begin{equation}\label{eq4}
\|u\|_{L^\infty(\hat Q_0)} \leq \left[ \left( d_0^{\frac 1 \alpha} C_{\rm GN} \dfrac{2^{\alpha \ddelta + 1}\gamma_{q,r}}{\sigma_1^2 t} \right)^{\frac 1 \alpha} \iint_{\hat{Q}_1} u^\ddelta \, \d x \d t \right]^{\frac 1 {q-2+\ddelta}}
\|u\|_{L^\infty(\hat Q_1)}^{\frac{q-2}{\alpha(q-2+\ddelta)}}.
\end{equation}
For $n = 2,3,\ldots$, set
$$
\hat Q_n := \B_{R \rho_n} \times \left(\frac t 2 \tau_n , t\right)
$$
with
$$
\rho_n = 
1 + \sum_{k=1}^{n} \sigma_k < \frac 3 2,\quad
\tau_n = 1 - \sum_{k=1}^{n} \sigma_k > \frac 1 2.
$$
By putting $\tau = \tau_n$, $\rho = \rho_n$ and $\sigma = \sigma_{n+1}$ in \eqref{eq3},
\begin{equation}\label{eq5}
\|u\|_{L^\infty(\hat Q_n)} \leq \left[ \left( d_0^{\frac 1 \alpha} C_{\rm GN} \dfrac{2^{\alpha \ddelta + 1}\gamma_{q,r}}{\sigma_{n+1}^2 t} \right)^{\frac 1 \alpha} \iint_{\B_{2R} \times (\frac t8,t)} u^\ddelta \, \d x \d t \right]^{\frac 1 {q-2+\ddelta}}
\|u\|_{L^\infty(\hat Q_{n+1})}^{\frac{q-2}{\alpha(q-2+\ddelta)}}
\end{equation}
for any $n = 0,1,2,\ldots$. Set
$$
Z_n := \|u\|_{L^\infty(\hat Q_n)} \quad \mbox{ and } \quad \kappa_r(q-1) 
> 0 \quad \mbox{(by assumption)}.
$$
Then we observe that 
$$
\frac{q-2}{\alpha(q-2+\ddelta)} = \dfrac{(\dim+2)(q-2)}{\kappa_r(q-1) + (\dim+2)(q-2)} \in (0,1).
$$
By Young's inequality, for any $\nu > 0$, one can take a constant $\gamma_1 = \gamma_1(\dim,q,r,\nu) > 0$ such that
\begin{align*}
 Z_n &\leq \nu Z_{n+1} + \gamma_1 \left[ \left( d_0^{\frac 1 \alpha} C_{\rm GN} \dfrac{2^{\alpha \ddelta + 1}\gamma_{q,r}}{\sigma_{n+1}^2 t} \right)^{\frac 1 \alpha} \iint_{\B_{2R} \times (\frac t8,t)} u^\ddelta \, \d x \d t \right]^{\frac 1 {q-2+\ddelta} \cdot \frac{\kappa_r(q-1) + (\dim+2)(q-2)}{\kappa_r(q-1)} = \frac 2 {\kappa_r(q-1)}}\\
&= \nu Z_{n+1} + \gamma_1 2^{\frac{4n}{\alpha\kappa_r(q-1)}} 2^{\frac{4}{\alpha\kappa_r(q-1)}} \left[ \left( d_0^{\frac 1 \alpha} C_{\rm GN} 2^{\alpha \ddelta + 1} \gamma_{q,r} \right)^{\frac 1 \alpha} t^{-\frac 1 \alpha} \iint_{\B_{2R} \times (\frac t8,t)} u^\ddelta \, \d x \d t \right]^{\frac 2 {\kappa_r(q-1)}},
\end{align*}
i.e.,
\begin{align*}
 Z_n &\leq \nu Z_{n+1} + \gamma_2 2^{\frac{2(\dim+2)}{\kappa_r(q-1)} n}
\left( t^{-\frac{\dim+2}2} \iint_{\B_{2R} \times (\frac t8,t)} u^\ddelta \, \d x \d t \right)^{\frac 2 {\kappa_r(q-1)}}
\end{align*}
for some constant $\gamma_2 > 0$. Thereby,
$$
Z_0 \leq \nu^n Z_n + \gamma_2 \left( t^{-\frac{\dim+2}2} \iint_{\B_{2R} \times (\frac t8,t)} u^\ddelta \, \d x \d t \right)^{\frac 2 {\kappa_r(q-1)}} \sum_{k=0}^n \left( 2^{\frac{2(\dim+2)}{\kappa_r(q-1)}}\nu \right)^k.
$$
Choose $\nu \in (0,1)$ such that $2^{\frac{2(\dim+2)}{\kappa_r(q-1)}}\nu \leq \frac 1 2$ to get
$$
\sum_{k=0}^n \left( 2^{\frac{2(\dim+2)}{\kappa_r(q-1)}}\nu \right)^k \leq \sum_{k = 0}^n \left( \frac 1 2 \right)^k = \dfrac{1 - (\frac 1 2)^{n+1}}{1 - \frac 1 2} \leq 2.
$$
Hence
$$
Z_0 \leq \nu^n Z_n + 2 \gamma_2 \left( t^{-\frac{\dim+2}2} \iint_{\B_{2R} \times (\frac t8,t)} u^\ddelta \, \d x \d t \right)^{\frac 2 {\kappa_r(q-1)}}.
$$
Note that
$$
Z_n = \|u\|_{L^\infty(\hat Q_n)} \leq Z_\infty := \|u\|_{L^\infty(\B_{\frac 3 2 R} \times (\frac t 4,t))} < +\infty 
$$
by assumption (i.e., local boundedness of $u$) and the facts: $\rho_n \leq 3/2$, $\tau_n \geq 1/2$. Consequently,
$$
Z_0 \leq \nu^n Z_\infty + 2 \gamma_2 \left( t^{-\frac{\dim+2}2} \iint_{\B_{2R} \times (\frac t8,t)} u^\ddelta \, \d x \d t \right)^{\frac 2 {\kappa_r(q-1)}}.
$$
Passing to the limit as $n \to \infty$, we deduce that
$$
\|u(t)\|_{L^\infty(\B_R)} \leq Z_0 \leq 2 \gamma_2 \, t^{-\frac{\dim}{\kappa_r(q-1)}} \sup_{\frac t8 < \tau < t} \left( \int_{\B_{2R}} u(x,\tau)^\ddelta \, \d x \right)^{\frac 2 {\kappa_r(q-1)}}.
$$
This completes the proof.
\end{proof}
}

We next establish local $L^r$-estimates ($1 < r < +\infty$) for $u^{q-1}$.

\begin{lemma}[Local $L^r$ estimates for the Finsler FDE]\label{L:lLr}
Let $u = u(x,t) : Q_T \to [0,+\infty)$ be a non-negative measurable function satisfying \eqref{es:1}, \eqref{es:3}. Then for any $r > 1$ there exists a constant $C = C(\dim,q,r) > 0$ such that
$$
\sup_{\tau \in (0,t)}\int_{\B_R} u(x,\tau)^{r(q-1)} \, \d x
\leq C \int_{\B_{2R}} u(x,0_+)^{r(q-1)} \, \d x + 
 C \left( \frac{t^r}{R^{\kappa_r}} \right)^{\frac{q-1}{q-2}},
$$
provided that
\begin{equation}\label{lLr-h}
\int_{\B_{2R}} u(x,0_+)^{r(q-1)} \, \d x := \liminf_{\tau \to 0_+} \int_{\B_{2R}} u(x,\tau)^{r(q-1)} \, \d x < +\infty,
\end{equation}
for any $t \in (0,T)$ and $R > 0$ satisfying $\B_{2R} \subset \Omega$.
\end{lemma}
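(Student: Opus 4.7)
The plan is to test the weak equation against a cut-off power of $u$, integrate in time, and close the argument via Hölder's inequality, Young's inequality, and a standard radii-iteration lemma on a chain of concentric balls. Fix $R>0$ with $\B_{2R}\subset\Omega$, $t\in(0,T)$, and set $m:=r(q-1)$. For any $R\le r'<R'\le 2R$, pick a smooth cut-off $\zeta$ with $\zeta\equiv 1$ on $\B_{r'}$, $\mathrm{supp}\,\zeta\subset\B_{R'}$, $0\le\zeta\le 1$, and $H_0(\nabla\zeta)\le C/(R'-r')$. Since $u\in L^\infty_{\rm loc}$ by \eqref{es:1}, the function $u^{(r-1)(q-1)}\zeta^\sigma$ lies in $L^2_{\rm loc}(0,T;H^1_0(\B_{R'}))$ for any $\sigma\ge 2$, and can be used as a test function in \eqref{es:3p} after a routine Lipschitz truncation of $s\mapsto s^{(r-1)(q-1)}$ followed by a limit.

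Using the Euler identity $(H\nabla_\xi H)(\nabla u)\cdot\nabla u=H(\nabla u)^2$ from \eqref{eq:2.1}, the test produces
\begin{align*}
&\frac{1}{r}\frac{\d}{\d t}\int u^m\zeta^\sigma\,\d x + (r-1)(q-1)\int u^{m-q}H(\nabla u)^2\zeta^\sigma\,\d x \\
&\qquad = -\sigma\int u^{(r-1)(q-1)}\zeta^{\sigma-1}(H\nabla_\xi H)(\nabla u)\cdot\nabla\zeta\,\d x.
\end{align*}
Splitting the integrand on the right as $[u^{(m-q)/2}H(\nabla u)\zeta^{\sigma/2}]\cdot[u^{(m-q+2)/2}H_0(\nabla\zeta)\zeta^{\sigma/2-1}]$ and using $|\nabla_\xi H(\nabla u)\cdot\nabla\zeta|\le H_0(\nabla\zeta)$ via \eqref{eq:1.2}--\eqref{eq:2.1}, Young's inequality with a small parameter absorbs the quadratic gradient piece into the left-hand side; taking $\sigma\ge 2$ so that $\zeta^{\sigma-2}\le 1$, integrating over $(\tau_0,\tau)$ with $\tau\in(\tau_0,t)$, and then passing to the supremum in $\tau$ and to the limit $\tau_0\to 0_+$ via Fatou's lemma and hypothesis \eqref{lLr-h}, I obtain
$$
M(r',t)\le \int_{\B_{2R}}u^m(0_+)\,\d x + \frac{C}{(R'-r')^2}\int_0^t\!\!\int_{\B_{R'}}u^{m-(q-2)}\,\d x\,\d s,
$$
where $M(\rho,s):=\sup_{0<\tau<s}\int_{\B_\rho}u^m(\tau)\,\d x$, which is finite for $\rho\in[R,2R]$ by local boundedness of $u$.

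Hölder's inequality with exponents $m/(m-(q-2))$ and $m/(q-2)$ bounds the space-time integral by $CtR^{N(q-2)/m}M(R',t)^{1-(q-2)/m}$. A second Young's inequality with exponents $(1-(q-2)/m)^{-1}$ and $r(q-1)/(q-2)$ and small prefactor $\eta>0$ then yields
$$
M(r',t)\le \int_{\B_{2R}}u^m(0_+)\,\d x + \eta\, M(R',t) + C(\eta)\,\frac{R^N\,t^{r(q-1)/(q-2)}}{(R'-r')^{2r(q-1)/(q-2)}}.
$$
Choosing $\eta$ sufficiently small and invoking a standard iteration lemma on the bounded function $\rho\mapsto M(\rho,t)$ over $\rho\in[R,2R]$ absorbs the $\eta M(R',t)$ term. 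Evaluating the resulting bound at $r'=R$, $R'=2R$, and using $N-2r(q-1)/(q-2)=-\kappa_r(q-1)/(q-2)$, I recover exactly the claimed inequality.

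The main obstacle will be orchestrating the two Young inequalities and the radii iteration so that all prefactors recombine into the scaling-invariant tail $(t^r/R^{\kappa_r})^{(q-1)/(q-2)}$. A further technical nuisance, intrinsic to the Finsler setting, is the lack of an inner-product structure for $H$: every Cauchy--Schwarz-type splitting must be routed through the $H$--$H_0$ duality \eqref{eq:1.2} rather than a bilinear form, and the use of $u^{(r-1)(q-1)}$ as a test function must be justified via a Lipschitz-truncation argument since this power is not Lipschitz near $u=0$.
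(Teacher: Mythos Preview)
Your argument is correct and follows essentially the same route as the paper: test with $u^{(r-1)(q-1)}\zeta^\sigma$, absorb the cross term by Young's inequality, use H\"older on the residual power $u^{m-(q-2)}$, and iterate over concentric balls. The only packaging difference is that the paper carries out the iteration explicitly on a dyadic chain $R_n=(2-2^{-n})R$, obtaining a recursion $H_n\le I_0+\nu H_{n+1}+C\cdot 2^{\alpha n}(t^r/R^{\kappa_r})^{(q-1)/(q-2)}$ and summing the geometric series, whereas you invoke the Giaquinta--Giusti type iteration lemma as a black box; both yield the same bound.

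One cosmetic slip: the correct duality bound is $|\nabla_\xi H(\nabla u)\cdot\nabla\zeta|\le H_0(\nabla_\xi H(\nabla u))\,H(\nabla\zeta)=H(\nabla\zeta)$ by \eqref{eq:2.1}, not $H_0(\nabla\zeta)$. Since the balls $\B_R$ are defined via $H_0$, the natural cut-off estimate is $H(\nabla\zeta)\le C/(R'-r')$ (one builds $\zeta$ as a function of $H_0(x)$ and uses $H(\nabla H_0)=1$). This does not affect the argument, as all norms on $\R^N$ are equivalent, but it is worth stating the inequality in the right direction.
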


\begin{proof}[Outline of proof]
The lemma can be proved by following the argument in~\cite[Proof of Theorem 2.2, p.113]{V94}, where a doubly-nonlinear parabolic equation with power nonlinearity is studied. To be more precise, test \eqref{es:3p} with $w=(u^{q-1})^{r-1} \zeta_n^2$, where $(\zeta_n)$ is a sequence of smooth cut-off functions in space only such that $\zeta_n \equiv 1$ on $\B_{R_n}$ and $\mathrm{supp} \, \zeta_n = \overline{\B_{R_{n+1}}}$. Then it follows that
\begin{align*}
\lefteqn{
\frac 1 r \frac \d {\d t} \left( \int_{\B_{R_{n+1}}} u^{r(q-1)} \zeta_n^2 \, \d x \right)
}\\
&\quad + (q-1)(r-1) \int_{\B_{R_{n+1}}} \underbrace{H(\nabla u) \nabla_\xi H(\nabla u) \cdot (\nabla u)}_{= H(\nabla u)^2} u^{(q-1)(r-1)-1} \zeta_n^2 \, \d x\\
&\leq -2 \int_{\B_{R_{n+1}}} H(\nabla u)\nabla_\xi H(\nabla u) \cdot (\nabla \zeta_n) u^{(q-1)(r-1)} \zeta_n \, \d x\\
&\leq \frac{(q-1)(r-1)}2 \int_{\B_{R_{n+1}}} H(\nabla u)^2 u^{(q-1)(r-1)-1} \zeta_n^2 \, \d x\\
&\quad + C_{q,r} \int_{\B_{R_{n+1}}} H(\nabla \zeta_n)^2 u^{(q-1)(r-1)+1} \, \d x.
\end{align*}
Therefore integrating both sides over $(s,t)$ and noting that $(q-1)(r-1)+1 < r(q-1)$ by $q > 2$, we can derive an energy estimate for the $L^r$ norm of $u(\cdot,t)^{q-1}$ over $\B_{R_n}$. Then the bound still involves the $L^r$ norm over $\B_{R_{n+1}}$, which is larger than $\B_{R_n}$, with some power less than $1$ as well as that for the initial datum. However, an iterative argument (similar to the second iteration in the proof of Lemma \ref{lLE}) enables us to exclude it from the bound. Finally, passing to the limit as $s \to 0_+$, we obtain the assertion, provided that \eqref{lLr-h} holds.
\end{proof}

\prf{The lemma above is also valid for \emph{subsolutions}.
\begin{proof}
Formally, substitute $w = (u^{q-1})^{r-1} \zeta^2$, where $\zeta = \zeta(x)$ denotes a smooth cut-off function supported over the closure of a bounded open set $B \subset \Omega$ in $\R^\dim$ and will more precisely be specified later, to \eqref{es:3p}. Then
\begin{align*}
\left\langle \partial_t u^{q-1}, u^{(q-1)(r-1)} \zeta^2 \right\rangle_{H^1_0(B)} + \int_B H(\nabla u) \nabla_\xi H(\nabla u) \cdot \nabla \left( u^{(q-1)(r-1)} \zeta^2 \right) \, \d x \leq 0,
\end{align*}
which implies that
\begin{align}
\lefteqn{
\frac 1 r \frac \d {\d t} \left( \int_B u^{r(q-1)} \zeta^2 \, \d x \right)
}\nonumber\\
&\quad + (q-1)(r-1) \int_B \underbrace{H(\nabla u) \nabla_\xi H(\nabla u) \cdot (\nabla u)}_{= H(\nabla u)^2} u^{(q-1)(r-1)-1} \zeta^2 \, \d x\nonumber\\
&\leq -2 \int_B \underbrace{H(\nabla u)\nabla_\xi H(\nabla u) \cdot (\nabla \zeta)}_{\leq H(\nabla u) H(\nabla \zeta)} u^{(q-1)(r-1)} \zeta \, \d x\nonumber\\
&\leq \frac{(q-1)(r-1)}2 \int_B H(\nabla u)^2 u^{(q-1)(r-1)-1} \zeta^2 \, \d x\nonumber\\
&\quad + C_{q,r} \int_B H(\nabla \zeta)^2 u^{(q-1)(r-1)+1} \, \d x.\label{Lr-eneq}
\end{align}
Noting that $(q-1)(r-1)+1 < r(q-1)$ by $q > 2$, we find that
\begin{align*}
\int_B H(\nabla \zeta)^2 u^{(q-1)(r-1)+1} \, \d x
&\leq \|H(\nabla \zeta)\|_{L^\infty(B)}^2 |B|^{\frac{q-2}{r(q-1)}} \left(\int_B u^{r(q-1)} \, \d x\right)^{\frac{(q-1)(r-1)+1}{r(q-1)}}.
\end{align*}
Let $R > 0$ and set
$$
R_n := R \sum_{i=0}^n 2^{-i} = (2 - 2^{-n}) R.
$$
Then $R = R_0 \leq R_n \nearrow 2R$ as $n \to +\infty$. Moreover, put $\zeta = \zeta_n$ defined by
$$
\zeta_n \equiv 1 \mbox{ on } \B_{R_n}, \quad \zeta_n \equiv 0 \mbox{ on } \R^\dim \setminus \B_{R_{n+1}}, \quad \|H(\nabla \zeta_n)\|_{L^\infty(\B_{R_{n+1}})} \leq \dfrac{2^{n+2}}R.
$$
Let $s>0$ and integrate both sides of \eqref{Lr-eneq} over $(s,t)$. It then follows that
\begin{align*}
\lefteqn{
\frac 1 r \int_{\B_{R_n}} u(x,t)^{r(q-1)} \, \d x
+ \frac{(q-1)(r-1)}2 \int^t_s \int_{\B_{R_{n+1}}} H(\nabla u)^2 u^{(q-1)(r-1)-1} \zeta_n^2 \, \d x \d t
}\\
&\leq \frac 1 r \int_{\B_{2R}} u(x,s)^{r(q-1)} \, \d x \\
&\quad + C_{q,r} \left(\frac{2^{n+2}}{R}\right)^2 R^{\frac{\dim(q-2)}{r(q-1)}} \omega_\dim^{\frac{q-2}{r(q-1)}} t \left(\sup_{\tau \in (s,t)} \int_{\B_{R_{n+1}}} u(x,\tau)^{r(q-1)} \, \d x\right)^{\frac{(q-1)(r-1)+1}{r(q-1)}},
\end{align*}
where $\omega_\dim$ stands for the volume of the $\dim$-dimensional unit ball. Set
$$
H_n := \sup_{\tau \in (s,t)} \int_{\B_{R_n}} u(x,\tau)^{r(q-1)} \, \d x,
\quad I_0 := \int_{\B_{2R}} u(x,s)^{r(q-1)} \, \d x.
$$
Then for any $\nu > 0$, one can take $C_{q,r,\nu} > 0$ depending only on $\nu$, $q$ and $r$ such that
\begin{align*}
H_n &\leq I_0 + \nu H_{n+1} + C_{q,r,\nu} \omega_\dim R^{\dim-\frac{2r(q-1)}{q-2}} t^{\frac{r(q-1)}{q-2}} 2^{\frac{r(q-1)}{q-2} n},
\end{align*}
which implies
\begin{align*}
H_0 &\leq I_0 \sum_{k=0}^n \nu^k + \nu^{n+1} H_{n+1}
+ C_{q,r,\nu} \omega_\dim R^{\dim-\frac{2r(q-1)}{q-2}} t^{\frac{r(q-1)}{q-2}} \sum_{k=0}^n \left( 2^{\frac{r(q-1)}{q-2}} \nu\right)^k.
\end{align*}
Note that
$$
H_0 = \sup_{\tau \in (s,t)}\int_{\B_R} u(x,\tau)^{r(q-1)} \, \d x,
\quad H_{n+1} \leq \sup_{\tau \in (s,t)} \int_{\B_{2R}} u(x,\tau)^{r(q-1)} \, \d x < +\infty.
$$
Hence choosing $\nu > 0$ small enough and taking a limit as $n \to +\infty$, we conclude that
\begin{align*}
 \sup_{\tau \in (s,t)} \int_{\B_R} u(x,\tau)^{r(q-1)} \, \d x
 \leq C_{\dim,q,r} \left( \int_{\B_{2R}} u(x,s)^{r(q-1)} \, \d x + R^{\dim-\frac{2r(q-1)}{q-2}} t^{\frac{r(q-1)}{q-2}} \right).
\end{align*}
Finally, taking a liminf as $s \to 0_+$ in both sides, we obtain the desired fact.
\end{proof}
}

Moreover, we shall establish a local $L^1$ estimate for $u^{q-1}$.

\begin{lemma}[Local $L^1$ estimate for the Finsler FDE]\label{L:lL1}
Let $u=u(x,t) : Q_T \to [0,+\infty]$ be a non-negative measurable function satisfying \eqref{es:1}, \eqref{es:3}. Then there exists a constant $C = C(\dim,q) > 0$ such that
\begin{equation}\label{e:lL1}
\sup_{\tau \in (0,t)}\int_{\B_R} u(x,\tau)^{q-1} \, \d x
\leq C \int_{\B_{2R}} u(x,0_+)^{q-1} \, \d x + 
 C \left( \frac{t}{R^{\kappa_1}} \right)^{\frac{q-1}{q-2}},
\end{equation}
where $\kappa_1$ is given by \eqref{kappa_r} with $r = 1$, provided that
\begin{equation*}
\int_{\B_{2R}} u(x,0_+)^{q-1} \, \d x := \liminf_{\tau \to 0_+} \int_{\B_{2R}} u(x,\tau)^{q-1} \, \d x < +\infty,
\end{equation*}
for any $t \in (0,T)$ and $R > 0$ satisfying $\B_{2R} \subset \Omega$.
\end{lemma}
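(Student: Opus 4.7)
The plan is to adapt the strategy of Lemma~\ref{L:lLr} to the endpoint case $r = 1$, where the absorbing gradient term $(q-1)(r-1)\int H(\nabla u)^2 u^{(q-1)(r-1)-1}\zeta_n^2\,dx$ used in that proof vanishes. Viewing $v := u^{q-1}$ as solving the fast diffusion equation $\partial_t v = \Delta_H v^m$ with $m := 1/(q-1) \in (0,1)$, I would use the classical B\'enilan--Crandall--Pierre cutoff
\begin{equation*}
  w := \zeta_n^{\lambda}, \qquad \lambda := \frac{2(q-1)}{q-2} = \frac{2}{1-m},
\end{equation*}
where $\zeta_n$ is the dyadic spatial cutoff from the proof of Lemma~\ref{L:lLr} ($\zeta_n \equiv 1$ on $\B_{R_n}$, $\mathrm{supp}\,\zeta_n \subset \overline{\B_{R_{n+1}}}$, $R_n = R(2 - 2^{-n})$, $\|H(\nabla\zeta_n)\|_{L^\infty}\lesssim 2^n/R$). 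The key algebraic identity is $\lambda(1-m) = 2$, equivalently $(N-\lambda)(1-m) = -\kappa_1$; this is what encodes the fast-diffusion scaling and produces the exponent $\kappa_1$ in the final bound.

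Testing~\eqref{es:3p} with $w = \zeta_n^\lambda$ and using the duality bound $|H\nabla_\xi H(\nabla u)\cdot\nabla\zeta_n^\lambda|\leq H(\nabla u)H(\nabla\zeta_n^\lambda)$ yields
\begin{equation*}
  \frac{d}{dt}F_n(t) \leq \lambda \int H(\nabla u) H(\nabla\zeta_n)\zeta_n^{\lambda-1}\,dx, \qquad F_n(t) := \int u^{q-1}\zeta_n^\lambda\,dx.
\end{equation*}
The crucial step is to bound the flux integral by $C\,2^{\alpha n}R^{-\kappa_1} F_n(t)^m$ for some $\alpha = \alpha(N,q)>0$. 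In the classical case $\Delta_H = \Delta$, this is standard: via $\int \nabla v^m\cdot\nabla\zeta^\lambda\,dx = -\int v^m\Delta\zeta^\lambda\,dx$ and H\"older's inequality with exponents $(1/m,1/(1-m))$ one obtains $\bigl(\int v\zeta^\lambda\bigr)^m\bigl(\int|\Delta\zeta^\lambda|^{1/(1-m)}\zeta^{-\lambda m/(1-m)}\,dx\bigr)^{1-m}\lesssim F^m R^{(N-\lambda)(1-m)} = F^m R^{-\kappa_1}$. In the Finsler setting the same scaling would come from writing $H(\nabla u) = mv^{m-1}H(\nabla v)$ via the chain rule and arranging the H\"older split so that all $v$-dependence collects into $(v\zeta_n^\lambda)^m$, leaving a pure $\zeta_n$-integral of size $(2^n/R)^{\lambda(1-m)}|\B_{R_{n+1}}|^{1-m}\lesssim 2^{\alpha n}R^{-\kappa_1}$.

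The resulting Bernoulli differential inequality $F_n'(t)\leq C\,2^{\alpha n}R^{-\kappa_1}F_n(t)^m$ integrates explicitly through $(F_n^{1-m})'(t)\leq (1-m)C\,2^{\alpha n}R^{-\kappa_1}$; raising to the power $1/(1-m)=(q-1)/(q-2)$ gives
\begin{equation*}
  F_n(t)\leq 2^{1/(1-m)}F_n(s)+C'\,2^{\alpha n/(1-m)}\Bigl(\frac{t-s}{R^{\kappa_1}}\Bigr)^{(q-1)/(q-2)}.
\end{equation*}
The dyadic iteration of Lemma~\ref{L:lLr} in $n$ then absorbs the $F_n(s)$-type terms on successively larger balls (the geometric series $\sum \nu^n 2^{\alpha n/(1-m)}$ being summable for $\nu$ small), and a final $\liminf$ as $s\to 0_+$ produces the claimed bound with $\int_{\B_{2R}}u(\cdot,0_+)^{q-1}\,dx$ on the right-hand side.

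The main obstacle is the H\"older step producing $F_n^m$: since the Finsler flux $H(\nabla u)\nabla_\xi H(\nabla u)$ is not a gradient of any scalar function of $u$, the clean classical reduction $\int \nabla v^m\cdot\nabla\zeta^\lambda = -\int v^m \Delta\zeta^\lambda$ is unavailable, and the H\"older splitting of $v^{m-1}H(\nabla v)\cdot H(\nabla\zeta_n^\lambda)$ must be engineered at the level of the integrand itself, with every power of $v$ and $\zeta_n$ tracked so that the surviving residual collapses to the pure volume-type factor $\lesssim R^{-\kappa_1}$. The BCP exponent $\lambda = 2/(1-m)$ is precisely what makes this bookkeeping close, and the positivity $\kappa_1 > 0$ (guaranteed in this section by the range~\eqref{hypo-q}) is what lets the dyadic iteration in~$n$ converge to a constant independent of $n$ and $R$.
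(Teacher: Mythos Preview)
Your proposal has a genuine gap at the flux estimate. The B\'enilan--Crandall--Pierre argument for the classical equation $\partial_t v = \Delta v^m$ works because one can integrate by parts \emph{twice}: testing with $\zeta^\lambda$ gives $\frac{d}{dt}\int v\zeta^\lambda = \int v^m\,\Delta\zeta^\lambda$, and only then does H\"older with exponents $(1/m,1/(1-m))$ produce $F_n^m$ times a pure cutoff integral. The second integration by parts eliminates the gradient of the unknown entirely. For the Finsler Laplacian this step is unavailable: $H(\nabla u)\nabla_\xi H(\nabla u)$ is not the gradient of any scalar function of $u$, so after one integration by parts you are stuck with $\int H(\nabla u)\,H(\nabla\zeta_n)\,\zeta_n^{\lambda-1}\,dx$, and no H\"older split of this integrand can collapse to $F_n^m$ times a geometric factor --- every split leaves a factor of $H(\nabla u)$ (or $H(\nabla v)$) that requires an independent gradient bound. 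You identify this obstacle yourself but do not overcome it; the assertion that ``the BCP exponent is precisely what makes this bookkeeping close'' is true only in the presence of the second integration by parts, which you do not have.

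The paper's proof supplies exactly the missing ingredient. It tests \eqref{es:3p} with $\zeta_n^2$ (not $\zeta_n^\lambda$), obtaining the same uncontrolled flux term, and then proves a separate gradient estimate (Lemma~\ref{L:lHL1}) by testing with $t^{1/2}(u_\vep^{q-1})^{r-1}\zeta^2$ for $r=\tfrac{q}{2(q-1)}\in(0,1)$ and $u_\vep=u+\vep$. Because $r<1$, the coefficient $(q-1)(r-1)$ of the term $\int H(\nabla u)^2 u_\vep^{(q-1)(r-1)-1}\zeta^2$ is negative, so this term appears with a good sign and yields a bound on $\int_s^t\int|H(\nabla u)\nabla_\xi H(\nabla u)\cdot\nabla\zeta|\zeta\,dx\,d\tau$ in terms of $\sup_\tau\|u_\vep(\tau)^{q-1}\|_{L^1}^r$, with $\vep^{q-2}=(t-s)\|H(\nabla\zeta)\|_\infty^2$ chosen to balance the contributions. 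Feeding this back into the $r=1$ identity and iterating dyadically as in Lemma~\ref{L:lLr} gives \eqref{e:lL1}. In short, the Finsler nonlinearity forces an auxiliary energy estimate that your BCP-style shortcut cannot bypass.
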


\begin{proof}[Outline of proof]
The lemma can be proved by following the argument in~\cite[Proof of Theorem 2.3, p.116]{V94}. We may test \eqref{es:3p} with $w=\zeta_n^2$, which corresponds to the test function used in the proof of Lemma \ref{L:lLr} for the choice $r = 1$. Then we have
\begin{align*}
\lefteqn{
\int_{\B_{R_{n+1}}} u(x,t)^{q-1} \zeta_n(x)^2\, \d x
}\nonumber\\
&= \int_{\B_{R_{n+1}}} u(x,s)^{q-1} \zeta_n(x)^2\, \d x
- 2 \int^t_s \int_{\B_{R_{n+1}}} H(\nabla u)\dH(\nabla u) \cdot (\nabla \zeta_n) \zeta_n \, \d x \d \tau
\end{align*}
for $0 < s < t < T$. However, the gradient term with a ``good'' sign does not appear any longer due to the choice $r=1$, and hence, we need establish a gradient estimate separately to control the last term of the right-hand side. It will be obtained in the following lemma. Then the rest of proof runs as in the proof of Lemma \ref{L:lLr}.
\end{proof}

\prf{
\begin{proof}
Substitute $w = \zeta^2$ to \eqref{es:3p}, where $\zeta = \zeta(x)$ denotes a smooth cut-off function supported over the closure of a bounded open set $B \subset \Omega$ in $\R^\dim$ and will more precisely be specified later. Then
$$
\dfrac{\d}{\d t} \left( \int_B u^{q-1} \zeta^2 \, \d x \right)
\leq - 2\int_B H(\nabla u)\dH(\nabla u) \cdot (\nabla \zeta) \zeta \, \d x.
$$
Integrate both sides over $(s,t)$ in time. We see that
\begin{align}
\lefteqn{
\int_B u(x,t)^{q-1} \zeta(x)^2\, \d x
}\nonumber\\
& \leq \int_B u(x,s)^{q-1} \zeta(x)^2\, \d x
- 2 \int^t_s \int_B H(\nabla u)\dH(\nabla u) \cdot (\nabla \zeta) \zeta \, \d x \d \tau.\label{L1-idt}
\end{align}
Hence in order to derive a desired estimate, it suffices to estimate the last term of the inequality above (to be continued). 
\end{proof}
To this end, we need to set up an auxiliary estimate. In what follows, we write $u_\vep := u + \vep$ for $\vep > 0$. 
}

\begin{lemma}\label{L:lHL1}
Let $B \Subset \Omega$ be a domain in $\Rd$ and let $\zeta : \Rd \to [0,1]$ be a smooth cut-off function supported over the set $\overline{B}$. Let $u=u(x,t) : Q_T \to [0,+\infty]$ be a non-negative measurable function satisfying \eqref{es:1}, \eqref{es:3}. Then there exists a constant $C = C(q)>0$ such that
\begin{align}
\lefteqn{
\int^t_s \int_B \left| H(\nabla u) \dH(\nabla u) \cdot (\nabla \zeta) \right| \zeta \, \d x \d \tau
}\nonumber\\
&\leq C (t-s)^{\frac12} \|H(\nabla \zeta)\|_{L^\infty(B)} |B|^{1-r} \left( \sup_{\tau \in (s,t)} \int_B u_\vep(x,\tau)^{q-1} \, \d x\right)^r,\label{A5}
\end{align}
where $u_\vep := u + \vep$ with the choice $\vep^{q-2}=(t-s)\|H(\nabla \zeta)\|_{L^\infty(B)}^2$ and $r = \frac q {2(q-1)} \in (0,1)$, for any $0 < s < t < T$.
\end{lemma}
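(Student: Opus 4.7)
}

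The argument combines three ingredients: (i) a pointwise reduction via the Finsler identities, (ii) a Cauchy--Schwarz decomposition with a weight chosen to match a known energy identity, and (iii) an energy estimate obtained by testing \eqref{es:3p} with $w = (\vep^{2-q} - u_\vep^{2-q})\zeta^2$. The choice $\vep^{q-2}=(t-s)\|H(\nabla\zeta)\|_{L^\infty(B)}^2$ is precisely the scale at which the $\vep$-dependent factors cancel, yielding a quadratic inequality for the quantity to be bounded.

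First, by \eqref{eq:1.2} and the identity $H_0(\nabla_\xi H(\eta))=1$ from \eqref{eq:2.1}, the integrand obeys
\[
\bigl|H(\nabla u)\dH(\nabla u)\cdot\nabla\zeta\bigr|\,\zeta \leq H(\nabla u)\,H(\nabla\zeta)\,\zeta,
\]
so it suffices to bound $J:=\int_s^t\int_B H(\nabla u)H(\nabla\zeta)\zeta\,dxd\tau$. Next, decomposing $H(\nabla u)H(\nabla\zeta)\zeta = \bigl[u_\vep^{-(q-1)/2}H(\nabla u)\zeta\bigr]\cdot\bigl[u_\vep^{(q-1)/2}H(\nabla\zeta)\bigr]$ and applying Cauchy--Schwarz in $L^2(B\times(s,t))$ gives
\[
J \leq A\cdot \|H(\nabla\zeta)\|_{L^\infty(B)}\,(t-s)^{1/2}\,S^{1/2}, \qquad A^2 := \int_s^t\!\!\int_B u_\vep^{1-q}H(\nabla u)^2\zeta^2\,dxd\tau,
\]
where $S := \sup_{\tau\in(s,t)}\int_B u_\vep(\cdot,\tau)^{q-1}dx$.

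Third, I would test \eqref{es:3p} with $w=(\vep^{2-q}-u_\vep^{2-q})\zeta^2$, which is in $H^1_0(B)$ and non-negative since $u_\vep\geq\vep$ and $q>2$ makes $x\mapsto x^{2-q}$ decreasing. Computing $\nabla w$ produces the gradient term $(q-2)\int u_\vep^{1-q}H(\nabla u)^2\zeta^2\,dx$ with a good sign, a time-derivative term which integrates via chain rule to $\int\tilde G(u(s))\zeta^2-\int\tilde G(u(t))\zeta^2$ where $\tilde G(u)=(q-1)\int_0^u s^{q-2}(\vep^{2-q}-(s+\vep)^{2-q})ds\leq \vep^{2-q}u^{q-1}$, and a boundary term bounded by $|\vep^{2-q}-u_\vep^{2-q}|\leq \vep^{2-q}$ and $H_0(\dH(\nabla u))=1$. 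Combining these yields
\[
(q-2)A^2 \leq \vep^{2-q}S + 2\vep^{2-q}J.
\]
Substituting the choice $\vep^{q-2}=(t-s)\|H(\nabla\zeta)\|_{L^\infty(B)}^2$ (so that $\|H(\nabla\zeta)\|_{L^\infty(B)}^2(t-s)\vep^{2-q}=1$) into the squared Cauchy--Schwarz bound produces the quadratic
\[
(q-2)J^2 - 2JS - S^2 \leq 0,
\]
whence $J\leq \frac{1+\sqrt{q-1}}{q-2}\,S$.

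Finally, to recast this bound into the sublinear form $C(t-s)^{1/2}\|H(\nabla\zeta)\|_{L^\infty(B)}|B|^{1-r}S^r$, I would refine Step 2 by instead introducing the gradient of $u_\vep^{q/2}$ via the chain-rule identity $H(\nabla u)=\tfrac{2}{q}u_\vep^{(2-q)/2}H(\nabla u_\vep^{q/2})$, applying Cauchy--Schwarz with this splitting, and invoking the H\"older bound $\int_B u_\vep^{q/2}\,dx\leq |B|^{1-r}S^r$ (which is exactly where the exponent $r=q/(2(q-1))$ enters, via $(q-1)r=q/2$ and $(q-1)(1-r)=(q-2)/2$).

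\emph{Main obstacle.} The crux is Step 5: while the simple Cauchy--Schwarz argument gives the linear bound $J\lesssim S$, the lemma demands the sublinear form $J\lesssim|B|^{1-r}S^r$, which is strictly sharper when $S\gg \vep^{q-1}|B|$. Bridging this gap requires combining the alternative Cauchy--Schwarz splitting based on $H(\nabla u_\vep^{q/2})$ with a control of $\int_s^t\int H(\nabla u_\vep^{q/2})^2\zeta^2$, the latter obtained by testing with $w=(u_\vep^{q-1}-\vep^{q-1})\zeta^2$ and using the pointwise bound $\Phi(u)=(q-1)\int_0^u s^{q-2}(s+\vep)^{q-1}ds\leq u^{q-1}u_\vep^{q-1}$ on the time-derivative contribution, with any super-linear terms absorbed through Young's inequality at the specific scale fixed by $\vep^{q-2}=(t-s)\|H(\nabla\zeta)\|_{L^\infty(B)}^2$.
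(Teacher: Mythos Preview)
Your overall strategy---pointwise reduction via \eqref{eq:2.1}, a weighted Cauchy--Schwarz decomposition, and an energy inequality obtained by testing \eqref{es:3p}---is the right one, but your choice of weights and test functions is off, and the gap you flag in the ``Main obstacle'' paragraph is not closed by the fix you propose.

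The issue is the mismatch between the Cauchy--Schwarz weight in step~(ii) and the energy weight produced by the test function in step~(iii). Your splitting $u_\vep^{-(q-1)/2}H(\nabla u)\zeta\cdot u_\vep^{(q-1)/2}H(\nabla\zeta)$ forces the second factor to be $S^{1/2}$, not $(\int u_\vep^{q/2})^{1/2}$; this is precisely why you only reach the linear bound $J\leq C_qS$, which (since $S\geq \vep^{q-1}|B|$ always) is never sharper than the sublinear bound claimed. Your proposed remedy---testing with $w=(u_\vep^{q-1}-\vep^{q-1})\zeta^2$ to control $\int H(\nabla u_\vep^{q/2})^2\zeta^2$---fails because the associated time-derivative primitive satisfies $\Phi(u)\leq u^{q-1}u_\vep^{q-1}$, and $\int_B u^{q-1}u_\vep^{q-1}$ is genuinely of order $\int u_\vep^{2(q-1)}$. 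No Young inequality at the scale $\vep^{q-2}=(t-s)\|H(\nabla\zeta)\|_{L^\infty}^2$ will absorb this: both sides of any Young splitting remain super-linear in $u^{q-1}$, and there is nothing in the inequality to balance against.

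The correct choice is to test \eqref{es:3p} with $w=u_\vep^{(q-1)(r-1)}\zeta^2=u_\vep^{(2-q)/2}\zeta^2$, i.e.\ take the exponent $r=\frac{q}{2(q-1)}$ from the outset rather than at the end. This yields the weighted energy $\int u_\vep^{-q/2}H(\nabla u)^2\zeta^2$ with coefficient $(q-1)(1-r)>0$, and---crucially---a time-derivative primitive $\Phi_\vep(u^{q-1})$ bounded by $\frac{1}{r}u_\vep^{(q-1)r}=\frac{1}{r}u_\vep^{q/2}$, which is sub-linear in $u^{q-1}$. The matching Cauchy--Schwarz split is then $[u_\vep^{-q/4}H(\nabla u)\zeta]\cdot[u_\vep^{q/4}H(\nabla\zeta)]$, so that both the energy bound and the second Cauchy--Schwarz factor involve $\int_B u_\vep^{q/2}$, to which Jensen applies directly to produce $|B|^{1-r}S^r$. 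The paper carries this out with an additional time weight $\tau^\alpha$, $\alpha=\tfrac12$ (after shifting $s\to 0$), which discards the initial term via integration by parts, but the argument also goes through without the weight: the term $-\int_B\Phi_\vep(u(s)^{q-1})\zeta^2$ has a favourable sign and can simply be dropped, while the remaining boundary contribution is controlled by $\|H(\nabla\zeta)\|_{L^\infty}^2\vep^{2-q}(t-s)\sup_\tau\int u_\vep^{q/2}$, which equals $\sup_\tau\int u_\vep^{q/2}$ at the chosen $\vep$-scale.
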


\begin{proof}[Outline of proof]
The lemma can be proved as in~\cite[Lemma 3.1, p.114]{V94} (cf.~\cite[Lemma I.2.2]{DiHe89}). We test the equation by $t^\alpha(u_\vep^{q-1})^{r-1} \zeta_n^2 $ with $r \in (0,1)$ and $\alpha > 0$. Then due to $q > 2$ and $r \in (0,1)$, one can derive an energy inequality where the integral of the product between $H(\nabla u_\vep)^2$ and a power of $u_\vep$ appears with a ``good'' sign. Moreover, \eqref{A5} follows from H\"older's inequality as well as the choice of parameters, $r=\frac q{2(q-1)}$ and $\alpha=\frac12$.
\end{proof}

\prf{
In the following proof, the function $u = u(x,t)$ is supposed to be at least a (locally bounded) \emph{non-negative} (local weak) \emph{supersolution}.
\begin{proof}
Shifting time, we can assume $s = 0$. Substitute $w=(u_\vep^{q-1})^{r-1} \zeta^2$ with $r \in (0,1)$, which will be determined later, to \eqref{es:3p}. Since $\nabla u = \nabla u_\vep$ and $\dH(\xi) \cdot \xi = H(\xi)$, we find that
\begin{align*}
\lefteqn{
\int_B H(\nabla u) \dH(\nabla u) \cdot \nabla \left( u_\vep^{(q-1)(r-1)} \zeta^2 \right) \, \d x
}\\
&= \underbrace{(q-1)(r-1)}_{\, < \, 0} \int_B H(\nabla u_\vep)\dH(\nabla u_\vep) \cdot (\nabla u_\vep) u_\vep^{(q-1)(r-1)-1}\zeta^2 \, \d x\\
&\quad + 2 \int_B H(\nabla u_\vep)\dH(\nabla u_\vep) \cdot (\nabla \zeta) u_\vep^{(q-1)(r-1)}\zeta \, \d x\\
&\leq (q-1)(r-1) \int_B H(\nabla u_\vep)^2 u_\vep^{(q-1)(r-1)-1}\zeta^2 \, \d x\\
&\quad + 2 \int_B H(\nabla u_\vep) H(\nabla \zeta) u_\vep^{(q-1)(r-1)}\zeta \, \d x\\
&\leq \frac{(q-1)(r-1)}2 \int_B H(\nabla u_\vep)^2 u_\vep^{(q-1)(r-1)-1}\zeta^2 \, \d x\\
&\quad + C_{q,r}\int_B H(\nabla \zeta)^2 u_\vep^{(q-1)(r-1)+1} \, \d x
\end{align*}
for some constant $C_{q,r} \geq 0$ depending only on $q$ and $r$. Here the last inequality follows from Schwartz and Young's inequalities. Moreover, one has
\begin{align*}
 \lefteqn{
\frac{(q-1)(1-r)}2 \int_B H(\nabla u_\vep)^2 u_\vep^{(q-1)(r-1)-1}\zeta^2 \, \d x
}\\
&\leq -\int_B H(\nabla u) \dH(\nabla u) \cdot \nabla \left( u_\vep^{(q-1)(r-1)} \zeta^2 \right) \, \d x + C_{q,r} \int_B H(\nabla \zeta)^2 u_\vep^{(q-1)(r-1)+1} \, \d x\\
&\stackrel{\eqref{es:3p}}= \left\langle \partial_t u^{q-1}, u_\vep^{(q-1)(r-1)} \zeta^2 \right\rangle_{H^1_0(B)} + C_{q,r} \int_B H(\nabla \zeta)^2 u_\vep^{(q-1)(r-1)+1} \, \d x.
\end{align*}
Here we note that
\begin{align*}
 \left\langle \partial_t u^{q-1}, u_\vep^{(q-1)(r-1)} \zeta^2 \right\rangle_{H^1_0(B)}
= \dfrac \d {\d t} \int_B \Phi_\vep(u^{q-1}) \zeta^2 \, \d x,
\end{align*}
where $\Phi_\vep : \R \to \R$ is given by
\begin{equation}\label{Phie}
\Phi_\vep(s) = \int^s_0 (\sigma^{\frac 1{q-1}}+\vep )^{(q-1)(r-1)} \, \d \sigma,
\end{equation}
and moreover, by $q > 2$,
\begin{align}
\Phi_\vep(s) &= (q-1) \int^{s^{\frac 1 {q-1}}}_0 (\rho + \vep)^{(q-1)(r-1)} \rho^{q-2} \, \d \rho \nonumber \\
&\stackrel{q>2}\leq (q-1) \int^{s^{\frac 1 {q-1}}}_0 (\rho + \vep)^{(q-1)r-1} \, \d \rho \nonumber \\
&\leq \dfrac 1 r ( s^{\frac 1 {q-1}}+\vep )^{(q-1)r}.\label{Phi}
\end{align}
Therefore multiplying both sides by $t^{\alpha}$ with $\alpha \in (0,1)$ which will be determined later and integrating it in time over $(0,t)$, we can deduce that
\begin{align}
 \lefteqn{
\frac{(q-1)(1-r)}2 \int^t_0 \tau^\alpha \left( \int_B H(\nabla u_\vep)^2 u_\vep^{(q-1)(r-1)-1}\zeta^2 \, \d x \right) \d \tau
}\nonumber\\
&\leq \int^t_0 \tau^\alpha \dfrac{\d}{\d t} \left( \int_B \Phi_\vep(u^{q-1}) \zeta^2 \, \d x \right) \d \tau + C_{q,r} \int^t_0 \tau^\alpha \left( \int_B H(\nabla \zeta)^2 u_\vep^{(q-1)(r-1)+1} \, \d x \right) \d \tau\nonumber\\
&= t^\alpha \int_B \Phi_\vep(u(x,t)^{q-1}) \zeta(x)^2 \, \d x - \alpha \int^t_0 \tau^{\alpha - 1} \left( \int_B \Phi_\vep(u^{q-1}) \zeta^2 \, \d x \right) \d \tau\nonumber\\
&\quad + C_{q,r} \int^t_0 \tau^\alpha \left( \int_B H(\nabla \zeta)^2 u_\vep^{(q-1)(r-1)+1} \, \d x \right) \d \tau\nonumber\\
&\stackrel{\eqref{Phi}}\leq \dfrac{t^\alpha}r \int_B u_\vep(x,t)^{(q-1)r} \zeta(x)^2 \, \d x\nonumber\\
&\quad + C_{q,r} \int^t_0 \tau^\alpha \left( \int_B H(\nabla \zeta)^2 u_\vep^{(q-1)(r-1)+1} \, \d x \right) \d \tau.\label{A1}
\end{align}

Now, we are ready to estimate the last term of the equality \eqref{L1-idt}. By Young's inequality, it follows that
\begin{align}
 \lefteqn{
 \int^t_0 \int_B \left| H(\nabla u) \dH(\nabla u) \cdot (\nabla \zeta) \zeta \right| \, \d x \d \tau
}\nonumber\\
&\leq \int^t_0 \int_B H(\nabla u_\vep) H(\nabla \zeta) \zeta \, \d x \d \tau\nonumber\\
&\leq \left( \int^t_0 \int_B \tau^\alpha H(\nabla u_\vep)^2 u_\vep^{(q-1)(r-1)-1} \zeta^2 \, \d x \d \tau \right)^{1/2} \left( \int^t_0 \int_B \tau^{-\alpha} H(\nabla \zeta)^2 u_\vep^{1-(q-1)(r-1)} \, \d x \d \tau \right)^{1/2}\nonumber\\
&\leq C_{q,r} \Bigg[
\dfrac{t^\alpha}r \int_B u_\vep(x,t)^{(q-1)r} \zeta(x)^2\, \d x
+ \int^t_0 \tau^\alpha \left( \int_B H(\nabla \zeta)^2 u_\vep^{(q-1)(r-1)+1} \, \d x \right) \d \tau \Bigg]^{1/2}\nonumber\\
&\quad \times \left( \int^t_0 \int_B \tau^{-\alpha} H(\nabla \zeta)^2 u_\vep^{1-(q-1)(r-1)} \, \d x \d \tau \right)^{1/2}\nonumber\\
&\leq C_{q,r}\Bigg(
\dfrac{t^\alpha}r \int_B u_\vep(x,t)^{(q-1)r} \zeta(x)^2\, \d x\nonumber\\
&\quad + \dfrac{t^{\alpha+1}}{\alpha+1} \|H(\nabla \zeta)\|^2_{L^\infty(B)} \sup_{\tau \in (0,t)} \int_B u_\vep(x,\tau)^{(q-1)(r-1)+1} \, \d x \Bigg)^{1/2}\label{A2}\\
&\quad \times \left( \frac{t^{1-\alpha}}{1-\alpha} \|H(\nabla \zeta)\|^2_{L^\infty(B)} \sup_{\tau \in (0,t)} \int_B  u_\vep(x,\tau)^{1-(q-1)(r-1)} \, \d x \right)^{1/2}.\nonumber
\end{align}
Here we choose $r \in (0,1)$ and $\alpha \in (0,1)$ such that
\begin{equation}\label{A3}
1-(q-1)(r-1) = (q-1)r \quad \mbox{ and } \quad 1-\alpha = \alpha,
\end{equation}
which namely means that
$$
r = \dfrac q {2(q-1)} \quad \mbox{ and } \quad \alpha = \frac 1 2.
$$
Here it is noteworthy that $r \in (0,1)$ by $q > 2$. Thus noting that
\begin{equation}\label{AAA}
u_\vep^{(q-1)(r-1)+1} = u_\vep^{(q-1)r + 2 - q} \leq u_\vep^{(q-1)r} \vep^{2-q}
\end{equation}
by $q > 2$, we infer that
\begin{align*}
\lefteqn{
\int^t_0 \int_B \left| H(\nabla u) \dH(\nabla u) \cdot (\nabla \zeta) \zeta \right| \, \d x \d \tau
}\\
&\leq C_q \Bigg( t^{1/2} \int_B u_\vep(x,t)^{(q-1)r} \zeta(x)^2 \, \d x\\
&\quad + t^{3/2} \|H(\nabla \zeta)\|^2_{L^\infty(B)} \vep^{-(q-2)} \sup_{\tau \in (0,t)} \int_B u_\vep(x,\tau)^{(q-1)r} \, \d x  \Bigg)^{1/2}\\
&\quad \times \left( t^{1/2} \|H(\nabla \zeta)\|^2_{L^\infty(B)} \sup_{\tau \in (0,t)} \int_B  u_\vep(x,\tau)^{(q-1)r} \, \d x\right)^{1/2}.
\end{align*}
Here we also find by Jensen's inequality and $r \in (0,1)$ that
\begin{equation}\label{A4}
 \int_B u_\vep^{(q-1)r} \, \d x 
\leq |B| \left( \dfrac 1 {|B|} \int_B u_\vep^{q-1} \, \d x \right)^r
= |B|^{1-r} \left( \int_B u_\vep^{q-1} \, \d x \right)^r.
\end{equation}
From the fact that $0 \leq \zeta^2 \leq 1$, it follows that
\begin{align*}
\lefteqn{
\int^t_0 \int_B \left| H(\nabla u) \dH(\nabla u) \cdot (\nabla \zeta) \zeta \right| \, \d x \d \tau
}\\
&\leq C_q \Bigg[ t^{1/2} |B|^{1-r} \left( \sup_{\tau \in (0,t)}\int_B u_\vep(x,\tau)^{q-1} \, \d x\right)^r \\
&\quad + t^{3/2} \|H(\nabla \zeta)\|^2_{L^\infty(B)} \vep^{-(q-2)} |B|^{1-r} \left( \sup_{\tau \in (0,t)} \int_B u_\vep(x,\tau)^{q-1} \, \d x\right)^r  \Bigg]^{1/2}\\
&\quad \times \Bigg[ t^{1/2} \|H(\nabla \zeta)\|^2_{L^\infty(B)} |B|^{1-r} \left(\sup_{\tau \in (0,t)} \int_B  u_\vep(x,\tau)^{q-1} \, \d x\right)^r \Bigg]^{1/2}.
\end{align*}
Now, we choose a constant $\vep > 0$ such that
\begin{equation}\label{epo}
\vep^{q-2} = t \|H(\nabla \zeta)\|_{L^\infty(B)}^2.
\end{equation}
Finally, restoring time (i.e., replacing the interval $(0,t)$ and its length $t$ by $(s,t)$ and $t-s$, respectively), we can obtain \eqref{A5}.
\end{proof}

Now, we turn back to the proof of Lemma \ref{L:lL1}, and we are in a position to conclude it. 
\begin{proof}[Continuation of the proof of Lemma \ref{L:lL1}]
To this end, we first specify the cut-off function $\zeta$ as $\zeta_n$ defined as in the proof of Lemma \ref{L:lLr}. 
Hence by \eqref{L1-idt} and \eqref{A5} we have
\begin{align*}
\lefteqn{
 \int_{\B_{R_{n+1}}} u(x,t)^{q-1} \zeta_n(x)^2 \, \d x
\leq \int_{\B_{R_{n+1}}} u(x,s)^{q-1} \zeta_n(x)^2 \, \d x
}\\
&\quad + C_q (t-s)^{1/2} \|H(\nabla \zeta_n)\|_{L^\infty(\B_{R_{n+1}})} |\B_{R_{n+1}}|^{1-r} \left( \sup_{\tau \in (s,t)} \int_{\B_{R_{n+1}}} u_\vep(x,\tau)^{q-1} \, \d x\right)^r.
\end{align*}
Noting that
\begin{align*}
\int_{\B_{R_n}} u(x,t)^{q-1}  \, \d x &\leq \int_{\B_{R_{n+1}}} u(x,t)^{q-1} \zeta_n(x)^2 \, \d x, \\
\int_{\B_{R_{n+1}}} u(x,s)^{q-1} \zeta_n(x)^2 \, \d x &\leq \int_{\B_{2R}} u(x,s)^{q-1} \, \d x
\end{align*}
and
$$
u_\vep^{q-1} \leq 2^{q-1} \left( u^{q-1} + \vep^{q-1} \right),
$$
for any $\nu > 0$, we can take a constant $C_{q,\nu} > 0$ such that
\begin{align*}
\lefteqn{
 \int_{\B_{R_n}} u(x,t)^{q-1} \, \d x
}\\
&\leq \int_{\B_{2R}} u(x,s)^{q-1} \, \d x\\
&\quad + C_q (t-s)^{1/2} \|H(\nabla \zeta_n)\|_{L^\infty(\B_{R_{n+1}})} |\B_{R_{n+1}}|^{1-r} \left( \sup_{\tau \in (s,t)} \int_{\B_{R_{n+1}}} u(x,\tau)^{q-1} \, \d x\right)^r\\
&\quad + C_q (t-s)^{1/2} \|H(\nabla \zeta_n)\|_{L^\infty(\B_{R_{n+1}})} |\B_{R_{n+1}}|^{1-r} |\B_{R_{n+1}}|^r \vep^{(q-1)r}\\
&\leq \int_{\B_{2R}} u(x,s)^{q-1} \, \d x + \nu \left(\sup_{\tau \in (s,t)} \int_{\B_{R_{n+1}}} u(x,\tau)^{q-1} \, \d x\right)\\
&\quad + C_{q,\nu} (t-s)^{\frac 1 {2(1-r)}} \|H(\nabla \zeta_n)\|_{L^\infty(\B_{R_{n+1}})}^{\frac 1 {1-r}} |\B_{R_{n+1}}|\\
&\quad + C_q (t-s)^{1/2} \|H(\nabla \zeta_n)\|_{L^\infty(\B_{R_{n+1}})} |\B_{R_{n+1}}|^{1-r} |\B_{R_{n+1}}|^r \vep^{(q-1)r}\\
&= \int_{\B_{2R}} u(x,s)^{q-1} \, \d x + \nu \left(\sup_{\tau \in (s,t)} \int_{\B_{R_{n+1}}} u(x,\tau)^{q-1} \, \d x\right)\\
&\quad + C_{q,\nu} (t-s)^{\frac{q-1}{q-2}} \|H(\nabla \zeta_n)\|_{L^\infty(\B_{R_{n+1}})}^{\frac{2(q-1)}{q-2}} |\B_{R_{n+1}}|,
\end{align*}
where we also used the fact that
\begin{equation}\label{epsilon}
\dfrac 1 {1-r} = \dfrac{2(q-1)}{q-2}, \quad
\vep^{(q-1)r} = \vep^{\frac q2} = \left[ (t-s)^{1/2} \|H(\nabla \zeta_n)\|_{L^\infty(\B_{R_{n+1}})}\right]^{\frac q {q-2}}.
\end{equation}
By the definition of $\zeta_n$, one observes that
$$
(t-s)^{\frac{q-1}{q-2}} \|H(\nabla \zeta_n)\|_{L^\infty(\B_{R_{n+1}})}^{\frac{2(q-1)}{q-2}} |\B_{R_{n+1}}|
\leq 2^{\frac{2(q-1)}{q-2}(n+2)} \left( \frac{t-s}{R^{\kappa_1}}\right)^{\frac{q-1}{q-2}} \omega_\dim,
$$
where $\kappa_1 = 2 - \dim(q-2)/(q-1)$ and $\omega_\dim$ denotes the volume of the $\dim$-dimensional unit ball. Set
$$
H_n := \sup_{\tau \in (s,t)}\int_{\B_{R_n}} u(x,\tau)^{q-1} \, \d x.
$$
Set $\rho := 2^{\frac{2(q-1)}{q-2}}$. It then follows immediately that
\begin{align*}
 H_n \leq \int_{\B_{2R}} u(x,s)^{q-1} \, \d x + \nu H_{n+1}
+ C_{q,\nu} \omega_\dim \rho^n \left( \frac{t-s}{R^{\kappa_1}}\right)^{\frac{q-1}{q-2}},
\end{align*}
whence follows
\begin{align*}
 H_0 &\leq \left( 1 + \nu + \nu^2 + \cdots + \nu^n\right) \int_{\B_{2R}} u(x,s)^{q-1} \, \d x\\
&\quad + C_{q,\nu} \omega_\dim (1 + \nu \rho + \nu^2 \rho^2 + \cdots + \nu^n \rho^n) 
\left( \frac{t-s}{R^{\kappa_1}}\right)^{\frac{q-1}{q-2}}
+ \nu^{n+1} H_{n+1}.
\end{align*}
Hence taking $\nu > 0$ small enough that $\nu \rho \in (0,1)$ (hence $\nu$ depends only on $q$) and passing to the limit as $n \to +\infty$, we conclude that
$$
\sup_{\tau \in (s,t)}\int_{\B_R} u(x,\tau)^{q-1} \, \d x
 = H_0 \leq \dfrac 1 {1 - \nu}  \int_{\B_{2R}} u(x,s)^{q-1} \, \d x
+ \dfrac{C_{q,\nu} \omega_\dim}{1 - \nu \rho} \left( \frac{t-s}{R^{\kappa_1}}\right)^{\frac{q-1}{q-2}}.
$$
Furthermore, letting $s \to 0_+$ and using the assumption, we obtain the desired estimate, i.e., \eqref{e:lL1}.
\end{proof}
}

Combining all these lemmas, we can verify

\begin{corollary}\label{C:est}
Let $u = u(x,t) : Q_T \to [0,+\infty)$ be a non-negative measurable function satisfying \eqref{es:1}, \eqref{es:3} such that 
\begin{equation}\label{MR}
\int_{\B_{2R}} u(x,0_+)^{q-1} \, \d x 
:= \liminf_{\tau\to0_+} \int_{\B_{2R}} u(x,\tau)^{q-1} \, \d x \leq M_R
\end{equation}
for some constant $M_R>0$. Then the following {\rm (i)--(iii)} hold\/{\rm :}
\begin{enumerate}
 \item There exists a constant $C = C(N,q) > 0$ such that
\begin{align}
\lefteqn{
\int^t_0 \int_{\B_R} H(\nabla u) \, \d x \d \tau
}\nonumber\\
&\leq C t^{\frac12} R^{\dim(1-r)} \left(\sup_{\tau \in (0,t)} \int_{\B_{2R}} u(x,\tau)^{q-1} \, \d x\right)^r + C t^{\frac{q-1}{q-2}} R^{\dim -\frac q {q-2}},\label{e:H-L1}
\end{align}
where $r := \frac q {2(q-1)} \in (0,1)$, for any $t \in (0,T)$ and $R > 0$ satisfying $\B_{2R} \subset \Omega$. In particular, for any $R >0$ satisfying $\B_{2R} \subset \Omega$, there exists a constant $C > 0$ depending only on $N$, $q$, $R$, $T$ and $M_R$ such that
\begin{equation}\label{e:H-L1-2}
\int^t_0 \int_{\B_R} H(\nabla u) \, \d x \d \tau \leq Ct^{\frac12} 
\end{equation}
for all $t \in (0,T)$.
 \item Suppose that $\kappa_1 > 0$, that is, $q < 2(\dim-1)/(\dim-2)_+$ {\rm (}see~\eqref{hypo-q}{\rm )}. For any $0 < t_1 < t_2 < T$ and $R > 0$ satisfying $\B_{2R} \subset \Omega$, there exists a constant $C > 0$ depending only on $N$, $q$, $R$, $t_1$, $t_2$ and $M_R$ such that
$$
\int^{t_2}_{t_1} \int_{\B_R} H(\nabla u)^2 \, \d x \d \tau \leq C.
$$
 \item Under the same assumption as above, 
for any $0 < t_1 < t_2 < T$, $R > 0$ satisfying $\B_{2R} \subset \Omega$ and $r \in [1,+\infty)$, there exists a constant $C > 0$ depending only on $N$, $q$, $R$, $t_1$, $t_2$, $M_R$ and $r$ such that
$$
\sup_{\tau \in (t_1,t_2)} \int_{\B_R} u(x,\tau)^r \, \d x \leq C.
$$
\end{enumerate}
\end{corollary}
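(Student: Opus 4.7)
My plan handles the three assertions in order: Lemma~\ref{L:lHL1} (more precisely, the energy inequality underlying it) drives (i), while Lemma~\ref{L:lL1} fed into Lemma~\ref{lLE} yields a local $L^\infty$-bound for $u$ that drives both (ii) and (iii).

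For part (i), I would not invoke Lemma~\ref{L:lHL1} as a black box; instead, I would repeat its derivation up to the energy inequality produced by testing \eqref{es:3p} with $\tau^{1/2}\,u_\vep^{(q-1)(r-1)-1}\,\zeta^2$ for the same parameters $r=q/(2(q-1))\in(0,1)$ and $\alpha=1/2$. This bounds $\int_0^t\tau^{1/2}\int_B H(\nabla u_\vep)^2\,u_\vep^{(q-1)(r-1)-1}\,\zeta^2\,\d x\,\d\tau$ by a right-hand side involving only $\sup_\tau\int_B u_\vep^{(q-1)r}\,\d x$. I would then split
$$
H(\nabla u)\,\zeta \;=\; \bigl[\,H(\nabla u_\vep)\,u_\vep^{((q-1)(r-1)-1)/2}\,\zeta\,\tau^{1/4}\,\bigr]\cdot\bigl[\,u_\vep^{(q-1)r/2}\,\tau^{-1/4}\,\bigr]
$$
(using the identity $1-(q-1)(r-1)=(q-1)r$ for this $r$) and apply Cauchy--Schwarz in space-time: the first factor is controlled by the energy inequality and the second by $\int_0^t\tau^{-1/2}\,\d\tau=2t^{1/2}$. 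With the same choice $\vep^{q-2}=t\,\|H(\nabla\zeta)\|_{L^\infty}^2$ used in Lemma~\ref{L:lHL1}, both contributions collapse to a bound of the form $C\,t^{1/2}\sup_\tau\int_B u_\vep^{(q-1)r}\,\d x$. Taking $\zeta$ a standard cutoff between $\B_R$ and $\B_{2R}$ with $\|H(\nabla\zeta)\|_{L^\infty}\leq C/R$, Jensen's inequality converts the $L^{(q-1)r}$-norm to the $L^{q-1}$-norm of $u_\vep$; splitting $u_\vep=u+\vep$ and using $(a+b)^r\leq C_r(a^r+b^r)$ for $r\in(0,1)$, the two resulting contributions reproduce exactly the $R$- and $t$-exponents in \eqref{e:H-L1}. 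The quantitative consequence \eqref{e:H-L1-2} then follows by inserting \eqref{e:lL1} into the first term of \eqref{e:H-L1} and noting that $(q-1)/(q-2)>1/2$ for $q>2$, so on $(0,T)$ the second term of \eqref{e:H-L1} is dominated by $C(T,R,M_R)\,t^{1/2}$.

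For part (ii), the hypothesis $\kappa_1>0$ together with \eqref{MR} triggers both auxiliary lemmas in sequence: Lemma~\ref{L:lL1} yields $\sup_{\tau\in(0,t_2)}\int_{\B_{2R}}u^{q-1}\,\d x\leq C$, and feeding this bound into Lemma~\ref{lLE} with $r=1$ (for which $\kappa_r=\kappa_1>0$) produces a uniform estimate $\|u\|_{L^\infty(\B_{R'}\times(t_1/2,t_2))}\leq C$ for some $R'\in(R,2R)$. With $u$ locally bounded, I would test \eqref{es:3p} against $u(t)\,\zeta(x)^2\,\eta(t)$, where $\zeta$ is a spatial cutoff between $\B_R$ and $\B_{R'}$ and $\eta$ is a smooth temporal cutoff equal to $1$ on $(t_1,t_2)$ and vanishing near $t_1/2$. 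Using the chain-rule identity $u\,\partial_t u^{q-1}=\tfrac{q-1}{q}\partial_t u^q$ (valid because $u\geq 0$) together with $\nabla_\xi H(\nabla u)\cdot\nabla u=H(\nabla u)$ from \eqref{eq:2.1}, and absorbing half of $\int_{\B_{R'}} H(\nabla u)^2\zeta^2\,\d x$ from the right via Young's inequality, the estimate reduces to controlling $\iint u^q\,|\partial_t(\zeta^2\eta)|\,\d x\,\d\tau$ and $\iint H(\nabla\zeta)^2\,u^2\,\eta\,\d x\,\d\tau$, both of which are finite by the $L^\infty$-bound on $u$.

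Part (iii) then follows at once from the $L^\infty$-bound established in (ii): for any $r\in[1,+\infty)$, $\sup_{\tau\in(t_1,t_2)}\int_{\B_R} u(x,\tau)^r\,\d x\leq|\B_R|\,\|u\|_{L^\infty(\B_R\times(t_1,t_2))}^r\leq C$. The main obstacle is (i): since Lemma~\ref{L:lHL1} controls the weighted quantity $\int H(\nabla u)\,\nabla_\xi H(\nabla u)\cdot(\nabla\zeta)\,\zeta$ rather than $\int H(\nabla u)$ itself, the $L^1$-gradient estimate must be extracted from the lemma's underlying energy inequality, not from its statement. Tracking the Cauchy--Schwarz splitting, the precise choice of $\vep$, and the passage between $L^{(q-1)r}$ and $L^{q-1}$ norms so as to reproduce the exact $R$- and $t$-exponents in \eqref{e:H-L1} requires care; a misplaced factor of $R$ or $\vep$ would immediately weaken the bound.
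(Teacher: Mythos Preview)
Your proof is correct and, for parts (i) and (iii), follows essentially the same route as the paper: the $L^1$-gradient estimate \eqref{e:H-L1} is indeed extracted from the energy inequality underlying Lemma~\ref{L:lHL1} via the Cauchy--Schwarz splitting you describe (with the same choice $\vep^{q-2}=t\|H(\nabla\zeta)\|_{L^\infty}^2$ and Jensen's inequality to pass from $L^{(q-1)r}$ to $L^{q-1}$), and (iii) is immediate from the local $L^\infty$-bound coming from Lemma~\ref{lLE} fed by Lemma~\ref{L:lL1}.

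For part (ii) your argument diverges from the paper's. The paper does not introduce a fresh test function; instead it reuses the weighted energy inequality already established in the proof of Lemma~\ref{L:lHL1}, which controls $\int_0^t\tau^{1/2}\int H(\nabla u_\vep)^2\,u_\vep^{(q-1)(r-1)-1}\zeta^2\,\d x\,\d\tau$, and then observes that since $(q-1)(r-1)-1<0$ the weight $u_\vep^{(q-1)(r-1)-1}$ is bounded below by a positive constant depending on $\|u\|_{L^\infty}$ on $(t_1,t_2)\times\B_{2R}$; the $L^\infty$-bound from Lemma~\ref{lLE} (valid because $\kappa_1>0$) then yields the $L^2$-gradient estimate directly. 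Your route---testing \eqref{es:3p} with $u\,\zeta^2\eta$, using $\nabla_\xi H(\nabla u)\cdot\nabla u=H(\nabla u)$, and absorbing the cross term by Young---is more self-contained and avoids revisiting the weighted inequality; the paper in fact notes this alternative explicitly in a remark following its proof. Both approaches rely on the same $L^\infty$-bound to close, so neither buys a sharper conclusion; yours is perhaps cleaner to present, while the paper's has the economy of recycling an inequality already in hand.
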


\begin{proof}[Outline of proof]
Concerning (i), \eqref{e:H-L1} can be obtained as a by-product of the proof for Lemma \ref{L:lHL1} (see~\cite[Lemma 3.1, p.114]{V94}). Moreover, combining \eqref{e:H-L1} with Lemma \ref{L:lL1}, one can prove \eqref{e:H-L1-2}. The assertion (ii) can also be verified by recalling the proofs of Lemmas \ref{L:lL1} and \ref{L:lHL1} and by using Lemma \ref{lLE} with $r = 1$ along with Assumption \eqref{hypo-q}, that is, $\kappa_1>0$. Furthermore, under the same assumption above, (iii) follows immediately from Lemma \ref{lLE} with $r = 1$ along with Lemma \ref{L:lL1}.
\end{proof}

\prf{
\begin{proof}
We prove (i). Set $B = \B_{2R}$ and let $\zeta$ be a smooth cut-off function which is supported over $\overline{\B_{2R}}$ and identically equal to $1$ over $\B_R$. As in the proof of Lemma \ref{L:lHL1}, taking $r$ and $\alpha$ as in \eqref{A3}, one can derive
\begin{align*}
 \lefteqn{
 \int^t_0 \int_{\B_{2R}} H(\nabla u) \zeta^2 \, \d x \d \tau
 }\\
 &\leq \left( \int^t_0 \int_{\B_{2R}} \tau^{1/2} H(\nabla u_\vep)^2 u_\vep^{(q-1)(r-1)-1} \zeta^2 \, \d x \d \tau \right)^{1/2}
\left( \int^t_0 \int_{\B_{2R}} \tau^{-1/2} u_\vep^{1-(q-1)(r-1)} \zeta^2 \, \d x \d \tau \right)^{1/2}\\
 &\stackrel{\eqref{A1}}\leq C_{q} \Bigg[
 t^{1/2} \int_{\B_{2R}} u_\vep(x,t)^{(q-1)r} \zeta(x)^2 \, \d x + \int^t_0 \tau^{1/2} \left( \int_{\B_{2R}} H(\nabla \zeta)^2 u_\vep^{(q-1)(r-1)+1} \, \d x \right) \d \tau
 \Bigg]^{1/2}\\
 &\quad \times \left(t^{1/2} \sup_{\tau \in (0,t)} \int_{\B_{2R}} u_\vep(x,\tau)^{(q-1)r} \zeta(x)^2 \, \d x \right)^{1/2}\\
 &\stackrel{\eqref{AAA},\,\eqref{epo}}\leq C_q t^{1/2} \left(\sup_{\tau \in (0,t)} \int_{\B_{2R}} u_\vep(x,\tau)^{(q-1)r} \, \d x\right)\\
 &\stackrel{\eqref{A4}}\leq C_q t^{1/2} |\B_{2R}|^{1-r} \left(\sup_{\tau \in (0,t)} \int_{\B_{2R}} u_\vep(x,\tau)^{q-1} \, \d x\right)^r\\
 &\leq C_q t^{1/2} |\B_{2R}|^{1-r} \left[
 \left(\sup_{\tau \in (0,t)} \int_{\B_{2R}} u(x,\tau)^{q-1} \, \d x\right)^r + \vep^{q/2} |\B_{2R}|^r
\right].
\end{align*}
Hence recalling \eqref{epsilon} and taking $\vep > 0$ as in~\eqref{epo}, we obtain \eqref{e:H-L1}. Moreover, combining it with Lemma \ref{L:lL1} and noting $(q-1)/(q-2)>1/2$, one obtains \eqref{e:H-L1-2}.

Concerning (ii), recalling the proof of Lemmas \ref{L:lL1} and \ref{L:lHL1}, we find that
\begin{align*}
 \lefteqn{
\int^t_0 \int_{\B_{2R}} \tau^{1/2} H(\nabla u_\vep)^2 u_\vep^{(q-1)(r-1)-1} \zeta^2 \, \d x \d \tau
}\\
 &\stackrel{\eqref{A1}}\leq C_{q} \Bigg[
 t^{1/2} \int_{\B_{2R}} u_\vep(x,t)^{(q-1)r} \zeta(x)^2 \, \d x + \int^t_0 \tau^{1/2} \left( \int_{\B_{2R}} H(\nabla \zeta)^2 u_\vep^{(q-1)(r-1)+1} \, \d x \right) \d \tau
 \Bigg]\\
 &\stackrel{\eqref{AAA}\,\eqref{epo}}\leq C_q t^{1/2} \left(\sup_{\tau \in (0,t)} \int_{\B_{2R}} u_\vep(x,\tau)^{(q-1)r} \, \d x\right)\\
 &\leq C_q t^{1/2} |\B_{2R}|^{1-r} \left[
 \left(\sup_{\tau \in (0,t)} \int_{\B_{2R}} u(x,\tau)^{q-1} \, \d x\right)^r + \vep^{q/2} |\B_{2R}|^r
\right].
\end{align*}
Since $(q-1)(r-1)-1<0$, we see that
$$
u_\vep(x,\tau)^{(q-1)(r-1)-1} 
\geq c_\vep,
$$
where $c_\vep$ is given by
$$
c_\vep
:= 
    \left[ \sup_{\tau \in (t_1,t)}\|u(\cdot,\tau)\|_{L^\infty(\B_{2R})} + \vep \right]^{(q-1)(r-1)-1} > 0. 
$$
Moreover, we employ Lemma \ref{lLE} to bound $\sup_{\tau \in [t_1,t]} \|u(\cdot,\tau)\|_{L^\infty(B)}$. Here we used the assumption \eqref{hypo-q}, that is, $\kappa_{1} > 0$. For any $t_1 \in (0,t)$, we deduce that
\begin{align*}
\lefteqn{
\int^t_0 \int_{\B_{2R}} \tau^{1/2} H(\nabla u_\vep)^2 u_\vep^{(q-1)(r-1)-1} \zeta^2 \, \d x \d \tau
}\\
&\geq \int^t_{t_1} \int_{\B_{2R}} \tau^{1/2} H(\nabla u)^2 u_\vep^{(q-1)(r-1)-1} \zeta^2 \, \d x \d \tau\\
&\geq t_1^{1/2} c_\vep \int^t_{t_1} \int_{\B_{R}} H(\nabla u)^2 \, \d x \d \tau.
\end{align*}
Thus combining all these facts, we can obtain the desired assertion. Finally, under the assumption \eqref{hypo-q}, the assertion (iii) follows immediately from Lemma \ref{lLE} with $r = 1$ along with Lemma \ref{L:lL1}.
\end{proof}

\begin{remark}
{\rm
We reused an estimate established in the proof of Lemma \ref{L:lL1} in order to prove the assertion (ii) above. However, one can also prove the assertion (ii) by simply carrying out the first energy estimate with a proper weight (i.e., test with $t^\alpha u \zeta^2$ for $\alpha > 0$ large enough) and using Lemma \ref{lLE}.
}
\end{remark}
}

\subsection{Proof of Theorem \ref{T:fde}}

We are now ready to prove Theorem \ref{T:fde}.

\begin{proof}[Proof of Theorem \ref{T:fde}]
Let $\varphi \in C^\infty_c(\Rd)$ be such that $0 \leq \varphi \leq 1$ in $\R^\dim$, $\varphi \equiv 1$ in $\B_{2R}$ and $\varphi \equiv 0$ on $\Rd \setminus \B_{3R}$. We then note that
\begin{align*}
\int_{\B_{2R}} u_n(x,0)^{q-1} \, \d x
&= \int_{\B_{2R}} \mu_n(x) \, \d x\\
&\leq \int_{\Rd} \varphi(x) \mu_n(x) \, \d x
 \to \int_{\Rd} \varphi(x) \, \d \mu(x) \leq \mu(\B_{3R})
\end{align*}
for any $R > 0$, $t > 0$. Thus we have \eqref{MR} with $M_R = \mu(\B_{3R}) + 1 < +\infty$ for $n$ large enough. Hence by Lemma \ref{L:lL1}, there exists a constant $C > 0$ independent of $n$ such that
\begin{align}
\sup_{\tau \in (0,t)}\int_{\B_R} u_n(x,\tau)^{q-1} \, \d x
&\leq C \int_{\B_{2R}} \mu_n \, \d x + C \left( \frac{t}{R^{\kappa_1}} \right)^{\frac{q-1}{q-2}}\nonumber\\
&\to C \int_{\B_{2R}} \varphi \, \d \mu(x) + C \left( \frac{t}{R^{\kappa_1}} \right)^{\frac{q-1}{q-2}}\nonumber\\
&\leq C \mu(\B_{2R}) + C \left( \frac{t}{R^{\kappa_1}} \right)^{\frac{q-1}{q-2}}\label{fd:ini}
\end{align}
for any $R,t>0$. Hence by Corollary \ref{C:est}, (A1) and (A2) with \eqref{F-set} and $S = +\infty$ can be checked for energy solutions $u_n$ to \eqref{pde-n}--\eqref{ic-n}, $n \in \N$. Therefore, thanks to Theorem \ref{T:loc}, we can ensure the existence of a local-energy solution $u = u(x,t)$ to \eqref{pde}, \eqref{ic} on $(0,+\infty)$, and moreover, by a priori estimates for $(u_n)$ due to Lemma \ref{lLE}, the (weak star) limit $u$ turns out to be bounded in $\Rd \times [\vep,+\infty)$ for any $\vep > 0$. Furthermore, the quantitative estimate \eqref{fde:1} follows by a direct application of Lemma \ref{lLE} to the limit $u$, since every local-energy solution is a local weak solution for \eqref{pde}.  Recalling \eqref{fd:ini} and exploiting weak lower semicontinuity of norms, we can derive that
\begin{align*}
\sup_{\tau \in (\vep,t)} \int_{\B_R} u(x,\tau)^{q-1} \, \d x
&\leq \liminf_{n \to +\infty} \left(
\sup_{\tau \in (\vep,t)}\int_{\B_R} u_n(x,\tau)^{q-1} \, \d x \right) \nonumber\\
&\stackrel{\eqref{fd:ini}}\leq C \mu(\B_{2R}) + C \left( \frac{t}{R^{\kappa_1}} \right)^{\frac{q-1}{q-2}}
\end{align*}
for $\vep > 0$. Hence passing to the limit as $\vep\to0_+$ and using the fact that $u(\cdot,t)^{q-1} \in L^1(\B_R)$ for $t>0$, one obtains \eqref{fde:2}. Furthermore, recalling \eqref{e:H-L1} with $u = u_n$ and using \eqref{fd:ini} again, we find that
\begin{align}
\lefteqn{
\int^t_\vep \int_{\B_R} H(\nabla u) \, \d x \d \tau
\leq \liminf_{n \to +\infty} \int^t_\vep \int_{\B_R} H(\nabla u_n) \, \d x \d \tau
}\nonumber\\
&\leq C t^{\frac12} R^{\frac{N(q-2)}{2(q-1)}} \left[ \mu(\B_{2R}) +  \left( \frac{t}{R^{\kappa_1}} \right)^{\frac{q-1}{q-2}} \right]^{\frac{q}{2(q-1)}} + C t^{\frac{q-1}{q-2}} R^{\dim -\frac q {q-2}}.
\end{align}
Hence noting that $H(\nabla u) \in L^1(\B_R \times (0,t))$ and passing to the limit as $\vep \to 0_+$, we obtain \eqref{fde:3}.

As for the second half of the assertion, we take non-negative $v_{0,n} \in C^\infty_c(\Rd)$ such that $v_{0,n} \to v_0$ strongly in $L^p(\B_R)$ for any $R > 0$ and set $\mu_n = v_{0,n}$. By Lemma \ref{L:lLr} with $r=p$, for each $R , t \in (0,+\infty)$, one can obtain the following estimate:
$$
\sup_{\tau \in (0,t)} \int_{\B_R} u_n(x,\tau)^{p(q-1)} \, \d x
\leq C \int_{\B_{2R}} v_{0,n}(x)^p \, \d x + C \left(\frac{t^p}{R^{\kappa_p}}\right)^{\frac{q-1}{q-2}},
$$
provided that \eqref{hypo-p} is fulfilled, that is, $\kappa_p > 0$. Thus Lemma \ref{lLE} implies
$$
\sup_{x \in \B_R} u_n(x,t)^{q-1} \leq C t^{-\frac \dim {\kappa_p}} \left( 
\sup_{\frac t8 < \tau < t} \int_{\B_{2R}} u_n(x,\tau)^{p(q-1)} \, \d x 
\right)^{\frac 2 {\kappa_p}} 
+ C \left( \frac t {R^2}\right)^{\frac{q-1}{q-2}}
$$
for any $t > 0$ and $R > 0$. The rest of proof runs as in the first half of the proof.
\end{proof}

\section{Finsler PME}\label{S:FPME}

In this section, based on Theorem \ref{T:loc}, we shall prove Theorem \ref{T:pme} for the case
$$
1 < q < 2.
$$
Construction of approximate solutions $(u_n)$ can be performed as in \S \ref{Ss:aprx}. Hence (A0) follows. So we shall check (A1) and (A2) for $(u_n)$. As we will see later (see Lemma \ref{L:sc} below), it suffices to check them for locally bounded \emph{non-negative} local weak \emph{subsolutions} for general domains $\Omega \subset \Rd$ and $T > 0$, i.e., measurable functions $u = u(x,t)$ in $Q_T = \Omega \times (0,T)$ which comply with \eqref{es:1} and, instead of \eqref{es:3},
\begin{equation}\label{es:4}
- \iint_{Q_T} u^{q-1} \partial_t \varphi \, \d x \d t + \iint_{Q_T} H(\nabla u) \nabla_\xi H(\nabla u) \cdot \nabla \varphi \, \d x \d t \leq 0
\end{equation}
for all $\varphi \in C^\infty_c(Q_T)$ satisfying $\varphi \geq 0$ a.e.~in $Q_T$. It implies that, for every bounded domain $B \Subset \Omega$ and $T>0$,
\begin{equation}
\left\langle \partial_t u^{q-1}(t), w \right\rangle_{H^1_0(B)} + \int_B H(\nabla u(t)) \nabla_\xi H(\nabla u(t)) \cdot \nabla w \, \d x \leq 0
\label{es:4p}
\end{equation}
for a.e.~$t \in (0,T)$ and any $w \in H^1_0(B)$ satisfying $w \geq 0$ a.e.~in $B$. In the rest of this section, as an independent interest, we shall establish local estimates for such non-negative subsolutions.

\begin{remark}[Sign-changing solutions for the Finsler fast diffusion equation]
{\rm
In contrast with the Finsler porous medium case, it does not seem available to apply the same strategy as in the present section to handle sign-changing weak solutions to the Finsler fast diffusion case, since the gradient estimate is established only for non-negative weak (super)solutions in Lemma \ref{L:lHL1}. As for classical fast diffusion (as well as porous medium) equations, it is not necessary to derive such a gradient estimate due to the linearity of the Laplace operator; indeed, one can employ the very weak formulation to derive an $L^1$-estimate (cf.~Lemma \ref{L:lL1}) and to identity weak limits, and hence, an existence result is obtained for possibly sign-changing data $\mu \in L^1_{\rm loc}(\Rd)$ (see~\cite{HerPi85}).
}
\end{remark}

\subsection{Local estimates for the Finsler PME}

A significant difference between the FDE and PME cases will show up. More precisely, for the PME case, one cannot always obtain time-global estimates (cf.~Lemmas \ref{lLE}--\ref{L:lL1} for the FDE case) and one may need to squeeze a time-interval for local estimates depending on the growth of initial data at infinity.

To this end, for $r > 0$, we set
\begin{alignat*}{4}
\normtre{f}_r &:= \sup_{R \geq r} \left( R^{-\frac \kappa d} \int_{\B_R} |f(x)| \, \d x \right) \quad &&\mbox{ for } \ f \in L^1_{\rm loc}(\Rd),\\
\normtre{\mu}_r &:= \sup_{R \geq r} \left( R^{-\frac \kappa d} |\mu|(\B_R) \right) &&\mbox{ for } \ \mu \in \mathcal M(\Rd),
\end{alignat*}
where
$$
\kappa := \kappa_1 = 2+Nd \quad \mbox{ and } \quad d := \frac{2-q}{q-1} > 0.
$$
Moreover, when $f$ is not defined on the whole of $\Rd$, e.g., $f \in L^1_{\rm loc}(\Omega)$, the integrand $f(x)$ in the definition of $\normtre{f}_r$ above will be replaced with its zero extension onto $\Rd$. Then a key lemma reads,

\begin{lemma}[Local estimates for the Finsler PME]\label{pm:lLE}
Let $u = u(x,t) : Q_T \to [0,+\infty)$ be a non-negative measurable function satisfying \eqref{es:1}, \eqref{es:4} with $1 < q < 2$ and assume for any $r > 0$ that
\begin{equation}\label{pm-ini}
\liminf_{s \to 0_+}\left( R^{-\frac \kappa d} \int_{\B_R\cap\Omega} u(x,s)^{q-1} \, \d x \right) \lesssim \normtre{\mu}_r
\end{equation}
for all $R > r$ and
\begin{equation}\label{phi_psi_wd}
 \sup_{\tau \in (0,t)} \normtre{u(\cdot,\tau)^{q-1}}_r < +\infty, \quad 
 \sup_{\tau \in (0,t)} \sup_{R \geq r} \left( \tau^{\frac N \kappa} R^{-\frac 2d} \|u(\cdot,\tau)\|_{L^\infty(\B_R\cap\Omega)}^{q-1}\right) < +\infty,
\end{equation}
for all $t \in (0,T)$. Then, for any $r > 0$, there exist a time $\Tr(\mu) \simeq \normtre{\mu}_r^{-d} > 0$ and a constant $C \geq 0$ such that 
\begin{enumerate}
 \item an estimate for $\normtre{\cdot}_r$\/{\rm :}
$$
\displaystyle \normtre{u(t)^{q-1}}_r \leq C \normtre{\mu}_r,
$$
 \item an $L^\infty$ estimate\/{\rm :}
$$
\|u(t)\|_{L^\infty(\B_R)}^{q-1} \leq C t^{-\frac N \kappa} R^{\frac 2 d} \normtre{\mu}_r^{\frac 2 \kappa},
$$
 \item an $L^1$ estimate for gradients\/{\rm :}
$$
\int^t_0 \|H(\nabla u)\|_{L^1(\B_R)} \, \d \tau
\leq C t^{\frac 1 \kappa} R^{1+\frac \kappa d} \normtre{\mu}_r^{1+\frac d \kappa}
$$
\end{enumerate}
hold true for any $t \in (0, \Tr(\mu) \wedge T)$ and $R \in [r,+\infty)$ satisfying $\B_{2R} \subset \Omega$.
\end{lemma}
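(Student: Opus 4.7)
The plan is to prove (i)--(iii) simultaneously via a bootstrap argument whose architecture parallels the FDE analysis of Section \ref{S:FFDE}, but with the crucial modification that, for $1<q<2$, the coupling between the local $L^1$-norm of $u^{q-1}$ and the local $L^\infty$-norm of $u$ produces a \emph{super-linear} recurrence, forcing a time restriction $(0,\Tr(\mu))$ with $\Tr(\mu)\simeq \normtre{\mu}_r^{-d}$. I would introduce the bookkeeping quantities
\begin{equation*}
\Phi(t) := \sup_{\tau \in (0,t)} \normtre{u(\cdot,\tau)^{q-1}}_r, \qquad
\Psi(t) := \sup_{\tau \in (0,t)} \sup_{R \geq r} \bigl( \tau^{N/\kappa} R^{-2/d} \|u(\cdot,\tau)\|_{L^\infty(\B_R \cap \Omega)}^{q-1} \bigr),
\end{equation*}
which are finite by hypothesis \eqref{phi_psi_wd}, and aim to upgrade them to $\Phi(t)\lesssim\normtre{\mu}_r$ and $\Psi(t)\lesssim\normtre{\mu}_r^{2/\kappa}$ on $(0,\Tr(\mu)\wedge T)$.

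The first building block is a local $L^\infty$--$L^1$ smoothing estimate: by a Moser--De Giorgi iteration using two levels of cut-offs on $(\B_R,\B_{2R})$ and truncations $k_n\nearrow k$, analogous to the proof sketched for Lemma \ref{lLE} but now exploiting that $(u-k)_+^{q-1}$ interacts favourably with $\partial_t u^{q-1}$ when $q<2$, I would obtain
\begin{equation*}
\|u(\cdot,t)\|_{L^\infty(\B_R)}^{q-1} \leq C\, t^{-N/\kappa} \left( \sup_{t/8 < \tau < t} \int_{\B_{2R}} u(\cdot,\tau)^{q-1}\, \d x \right)^{2/\kappa}
\end{equation*}
for subsolutions satisfying \eqref{es:4p}. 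Rescaling by $R^{-2/d}$ and $\tau^{N/\kappa}$ and taking suprema over $R\geq r$ and $\tau\in(0,t)$ gives $\Psi(t)\leq C\Phi(t)^{2/\kappa}$, which translates growth control at spatial infinity into pointwise control.

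Next, I would derive a local $L^1$ balance and a companion gradient bound. Testing \eqref{es:4p} with a cut-off $\zeta^2$ supported in $\B_{2R}$ with $\|H(\nabla\zeta)\|_\infty\lesssim R^{-1}$ yields
\begin{equation*}
\int_{\B_R} u(\cdot,t)^{q-1}\, \d x \leq \int_{\B_{2R}} u(\cdot,s)^{q-1}\, \d x + \frac{C}{R} \int_s^t \|H(\nabla u)\|_{L^1(\B_{2R})}\, \d\tau,
\end{equation*}
and the gradient term is controlled by testing \eqref{es:4p} with $u_\varepsilon^{(q-1)(r-1)-1}\zeta^2\cdot \tau^\alpha$ (with $r=q/[2(q-1)]$, $\alpha=1/2$, and $\varepsilon^{q-2}$ calibrated to $t\|H(\nabla\zeta)\|_\infty^2$) as in Lemma \ref{L:lHL1}. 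After substituting $\Psi\leq C\Phi^{2/\kappa}$ and rescaling by $R^{-\kappa/d}$, the dimensional balance of the Finsler Barenblatt scaling collapses this into the self-improvement inequality
\begin{equation*}
\Phi(t) \leq C_0 \normtre{\mu}_r + C_1\, t^{1/\kappa}\, \Phi(t)^{1+d/\kappa},
\end{equation*}
where I pass to the liminf as $s\to 0_+$ using \eqref{pm-ini} and take the supremum over $R\geq r$, $\tau\in(0,t)$.

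Since $1+d/\kappa>1$, a standard continuity argument then closes the bootstrap: setting $\Tr(\mu):=c\normtre{\mu}_r^{-d}$ with $c$ small enough that $C_1\Tr(\mu)^{1/\kappa}(2C_0\normtre{\mu}_r)^{d/\kappa}\leq 1/2$, and using the continuity of $t\mapsto \Phi(t)$ inherited from the weak continuity of $u(\cdot,t)^{q-1}$, one shows that $\Phi(t)\leq 2C_0\normtre{\mu}_r$ persists throughout $[0,\Tr(\mu)\wedge T)$; this yields (i), whereafter (ii) follows by reinserting into the $L^\infty$--$L^1$ estimate and (iii) by reinserting into the gradient bound. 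The main obstacle, as always in such bootstraps, will be tracking the algebra of exponents carefully, verifying that the scaling weight $R^{-\kappa/d}$ of $\normtre{\,\cdot\,}_r$ matches the PME invariance so as to produce an exponent strictly greater than $1$ on the right-hand side, and that the resulting lifespan matches the sharp value $\Tr(\mu)\simeq\normtre{\mu}_r^{-d}$ prescribed by the self-similar scaling of the Finsler Barenblatt solution recalled in Section \ref{Ss:Finsler}.
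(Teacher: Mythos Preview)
Your overall architecture is right, and in particular your bookkeeping quantities $\Phi,\Psi$ are exactly the paper's $\psir,\phir$, the self-improvement inequality you reach for $\Phi$ has the correct form, and the final continuity/ODE-comparison closure is the same mechanism the paper uses. The gap is in your \emph{first building block}: the clean smoothing estimate
\[
\|u(\cdot,t)\|_{L^\infty(\B_R)}^{q-1} \leq C\, t^{-N/\kappa}\Bigl(\sup_{t/8<\tau<t}\int_{\B_{2R}} u(\cdot,\tau)^{q-1}\,\d x\Bigr)^{2/\kappa}
\]
does \emph{not} follow from the Moser--De~Giorgi iteration when $q<2$. The reason is structural: testing by $(u-k_n)_+^{\alpha}\zeta_n^2$ and computing the primitive of $\partial_t u^{q-1}$ gives $(q-1)\int_{k_n}^{u}\rho^{q-2}(\rho-k_n)^{\alpha}\,\d\rho$; since $\rho^{q-2}$ is \emph{decreasing} for $q<2$, the favourable lower bound on this primitive carries the factor $\|u\|_{L^\infty(Q_0)}^{q-2}$, not $k^{q-2}$. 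Consequently the iteration (which is exactly Lemma~\ref{pm:L:1} in the paper) terminates with $\|u\|_{L^\infty}$ still present on the right-hand side, together with an $L^2$ rather than $L^1$ norm. This is the opposite of the FDE situation and is precisely what the paper's proof outline for Lemma~\ref{pm:L:1} flags: ``the $L^\infty$ norm will remain in the bound even after performing iteration arguments.''

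Because of this, the relation $\Psi\leq C\Phi^{2/\kappa}$ that you treat as an input is in fact an \emph{output} of the argument: the paper obtains it (equation~\eqref{mo}) only after deriving a genuine integral inequality for $\phir$ (Lemma~\ref{pm:e-1}),
\[
\phir(t)\leq C_1\int_0^t \tau^{-Nd/\kappa}\phir(\tau)^{1/(q-1)}\,\d\tau + C_2\,\psir(t)^{2/\kappa},
\]
and comparing with the solution of the associated ODE under the time restriction~\eqref{psi-cond}. Only then can one substitute into the second integral inequality for $\psir$ (Lemma~\ref{pm:e-2}) and close the bootstrap. So the missing piece in your plan is this intermediate ODE-comparison step for $\Psi$; once you insert it, your argument and the paper's coincide.
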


To prove this lemma, we first set up the following
\begin{lemma}\label{pm:L:1}
Let $u = u(x,t) : Q_T \to [0,+\infty)$ be a non-negative measurable function satisfying \eqref{es:1}, \eqref{es:4} with $1 < q < 2$. Then there exists a constant $C > 0$ such that 
\begin{align*}
 \|u(t)\|_{L^\infty(\B_R)} \leq C \left( \frac{\|u\|_{L^\infty(\B_{2R} \times (\frac t 8,t))}^{2-q}}{R^2} + \frac 1 t \right)^{\frac{N+2}{N(2-q)+4}} \left( \int^t_{t/8} \int_{\B_{2R}} u^2 \, \d x \d \tau \right)^{\frac 2{N(2-q)+4}}
\end{align*}
for any $R, t > 0$ satisfying $\B_{2R} \subset \Omega$.
\end{lemma}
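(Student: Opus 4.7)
The plan is a Moser/De Giorgi iteration on a nested family of parabolic cylinders, adapted to the PME-type nonlinearity $u^{q-1}$ with $q-1 < 1$. First I would set $R_n := R(1 + 2^{-n})$, $t_n := (t/8)(2 - 2^{-n})$, $Q_n := \B_{R_n} \times (t_n, t)$ (so $Q_0 = \B_{2R} \times (t/8, t)$ and $Q_\infty = \B_R \times (t/2, t)$), truncation levels $k_n := k(1 - 2^{-n-1})$ with $k > 0$ to be determined, and smooth cut-offs $\zeta_n$ with $\zeta_n \equiv 1$ on $Q_{n+1}$, $\mathrm{supp}\,\zeta_n \subset Q_n$, $H(\nabla \zeta_n) \lesssim 2^n/R$ and $|\partial_t \zeta_n| \lesssim 2^n/t$. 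Write $M := \|u\|_{L^\infty(Q_0)}$ and $w_n := (u-k_n)_+ \zeta_n$. The iteration is driven by the quantity $Y_n := \iint_{Q_n} w_n^2 \, \d x \d \tau$, and is initiated by testing the subsolution inequality \eqref{es:4p} (after the standard approximation of $s \mapsto (s-k_n)_+$ by Lipschitz truncations) with $(u-k_n)_+ \zeta_n^2$ and integrating over $(t_n, t)$.

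The computation of the time-derivative term is the step that differs from Lemma \ref{lLE}: the chain rule yields a primitive $G_n$ with $G_n'(s) = (s^{1/(q-1)} - k_n)_+$, and because $q-2 < 0$ one obtains the two-sided bound
\[
\tfrac{q-1}{2}\, M^{q-2} (u-k_n)_+^2 \;\le\; G_n(u^{q-1}) \;\le\; \tfrac{q-1}{2}\, k_n^{q-2} (u-k_n)_+^2,
\]
so $M^{q-2}$ serves as the lower bound at the endpoint $t$ while $k_n^{q-2}$ controls the remainder from $\partial_t \zeta_n$ (the opposite assignment from the FDE case). Combined with the spatial term, using the identity $\dH(\xi)\cdot\xi = H(\xi)$ from \eqref{eq:2.1} and Young's inequality on the cross term $(u-k_n)_+ \zeta_n H(\nabla u) \dH(\nabla u) \cdot \nabla \zeta_n$, this produces a Caccioppoli-type inequality
\[
M^{q-2} \sup_{\tau \in (t_n, t)} \int_{\B_{R_n}} w_n(\tau)^2 \, \d x + \iint_{Q_n} H(\nabla w_n)^2 \, \d x \d \tau \;\lesssim\; \Bigl( \tfrac{k^{q-2} 2^n}{t} + \tfrac{4^n}{R^2} \Bigr) Y_{n-1}.
\]
Multiplying the first term by $M^{2-q}$, invoking the anisotropic Gagliardo--Nirenberg inequality (exactly as in Lemma \ref{lLE}) together with the measure bound $|\{u > k_n\} \cap Q_n| \le (k/2^{n+1})^{-2} Y_{n-1}$, gives the recurrence
\[
Y_n \;\le\; c\, b^n \, M^{(2-q)\alpha} k^{-2\alpha} \Bigl(\tfrac{k^{q-2}}{t} + \tfrac{1}{R^2}\Bigr) Y_{n-1}^{1+\alpha}, \qquad \alpha = \tfrac{2}{N+2}, \quad b = 4^{1+\alpha}.
\]

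By the standard iteration lemma (cf.~\cite{DiBook}), $Y_n \to 0$ — and hence $u \le k$ a.e.~on $Q_\infty$, in particular on $\B_R \times \{t\}$ by weak-$\ast$ continuity in time — whenever $k$ satisfies $k^2 \gtrsim M^{2-q} (k^{q-2}/t + 1/R^2)^{(N+2)/2} Y_0$. Solving this implicit condition for $k$ — splitting into the two regimes $k^{2-q} \gtrless R^2/t$ and using $(a+b)^{(N+2)/2} \lesssim a^{(N+2)/2} + b^{(N+2)/2}$ — produces an explicit $k$ with the exponents $(N+2)/(N(2-q)+4)$ on $X := M^{2-q}/R^2 + 1/t$ and $2/(N(2-q)+4)$ on $Y_0$, after observing $Y_0 \le \iint_{\B_{2R}\times(t/8,t)} u^2 \d x \d \tau$. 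The main obstacle will be precisely this last algebraic step: because the $\partial_t \zeta$-contribution involves $k^{q-2}$ with $q-2 < 0$, the condition on $k$ is self-referential, and the resolution requires carefully balancing the two competing scales $R^2$ and $t M^{2-q}$ — which is exactly what is encoded in the two summands of the claimed bound. Unlike Lemma \ref{lLE}, no second iteration is needed to eliminate $\|u\|_{L^\infty(Q_0)}$ because the statement retains it.
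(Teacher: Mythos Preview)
Your Caccioppoli step and the first De\,Giorgi iteration are set up correctly, and the two-sided bound on the primitive $G_n$ (with $M^{q-2}$ below and $k_n^{q-2}$ above) is exactly the right observation for $1<q<2$. The gap is in the last paragraph: the claim that ``no second iteration is needed to eliminate $\|u\|_{L^\infty(Q_0)}$ because the statement retains it'' is incorrect, and the proposed regime-splitting does not yield the stated exponents.

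After the first iteration on your fixed family $Q_n$, the smallest admissible $k$ satisfies $k^2 \simeq M^{2-q}\bigl(k^{q-2}/t + R^{-2}\bigr)^{(N+2)/2} Y_0$. Resolving this (either directly or, as the paper does, by using $k\ge \|u\|_{L^\infty(Q_\infty)}$ and factoring $k^{q-2}$ out of the bracket) produces
\[
\|u\|_{L^\infty(Q_\infty)} \;\lesssim\; M^{\frac{(2-q)\alpha}{2\alpha+2-q}}\, X^{\frac{1}{2\alpha+2-q}}\, Y_0^{\frac{\alpha}{2\alpha+2-q}},
\qquad X=\tfrac{M^{2-q}}{R^2}+\tfrac1t,
\]
with $\alpha=2/(N+2)$. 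The denominator here is $2\alpha+2-q=(4+(2-q)(N+2))/(N+2)$, \emph{not} $\beta=(N(2-q)+4)/(N+2)$; the two differ by $2(2-q)/(N+2)>0$, and there is an extraneous factor $M^{\theta}$ with $\theta=(2-q)\alpha/(2\alpha+2-q)\in(0,1)$. Your regime-splitting reproduces exactly this: in either regime the exponent on $k$ that you have to invert is $2+(2-q)(N+2)/2$, giving the same wrong denominator. The statement retains $M$ only inside $X$, and the stray power $M^{\theta}$ cannot be absorbed into $X^{1/\beta}Y_0^{\alpha/\beta}$ (indeed that would amount to the a~priori inequality $M\lesssim X^{1/\beta}Y_0^{\alpha/\beta}$, which is circular).

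What the paper does---and what you still need---is to carry a free scale parameter $\sigma$ through the first iteration, use Young's inequality to rewrite the above as
\[
\|u\|_{L^\infty(Q_\infty)} \le \nu\,\|u\|_{L^\infty(Q_0)} + C_\nu\,\sigma^{-2/\beta} X^{1/\beta} Y_0^{\alpha/\beta},
\]
and then run a second (outer) iteration over a shrinking family $\hat Q_n$ with $\sigma_n=2^{-(n+1)}$; choosing $\nu$ so that $2^{2/\beta}\nu<1$ makes the geometric series converge and kills the $\nu\,\|u\|_{L^\infty(Q_0)}$ term, leaving precisely $C\,X^{(N+2)/(N(2-q)+4)}Y_0^{2/(N(2-q)+4)}$. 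So the second iteration is not about \emph{removing} $M$ from the estimate---it survives inside $X$---but about correcting the exponents from $1/(2\alpha+2-q)$ to $1/\beta$.
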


\begin{proof}[Outline of proof]
The lemma can be proved based on the argument of~\cite[Proof of Lemma 3.1, p.198]{DiHe89} (see also~\cite[Lemma 5.1, p.1253]{Ishige96}). To be more precise, as in the proof of Lemma \ref{lLE}, testing \eqref{es:4p} with $w=(u-k_n)_+ \zeta_n^2$ and repeating a similar argument, one can prove an estimate for $\|u\|_{L^\infty(Q_\infty)}$ with a bound which still involves $\|u\|_{L^\infty(Q_0)}$ with a power less than $1$ as well as $\|u\|_{L^2(Q_0)}$ for some subdomains $Q_\infty \subsetneq Q_0 \subsetneq Q_T$. On the other hand, such a dependence of the bound on $\|u\|_{L^\infty(Q_0)}$ differs from that in the proof of Lemma \ref{lLE}, and in particular, the $L^\infty$ norm will remain in the bound even after performing iteration arguments.
\end{proof}

\prf{
\begin{proof}
Let $t > 0$ and $R > 0$ be fixed such that $\B_{2R} \subset \Omega$. Moreover, define $R_n$, $t_n$, $\B_{R_n}$, $Q_n$, $\zeta_n$ as in the proof of Lemma \ref{lLE} with arbitrary constants $\rho, \tau > 0$ and $\sigma > 0$. Moreover, let $k > 0$ be a constant to be specified later and define $k_n$ by \eqref{kn}. Substitute $w = (u-k_n)_+ \zeta_n^2$ in \eqref{es:4p}. It then follows that
\begin{align*}
\lefteqn{
\iint_{Q_n} (H \nabla_\xi H)(\nabla u) \cdot \nabla \left[(u-k_n)_+\zeta_n^2 \right]\, \d x \d \tau
}\\
&= \iint_{Q_n} (H \nabla_\xi H)(\nabla u) \cdot (\nabla u) \,\sgn(u-k_n)  \zeta_n^2 \, \d x \d \tau\\
&\quad + 2 \iint_{Q_n} (H \nabla_\xi H)(\nabla u) \cdot (\nabla \zeta_n) \zeta_n (u-k_n)_+ \, \d x \d \tau\\
&\geq \iint_{Q_n} H(\nabla u)^2 \sgn(u-k_n) \zeta_n^2 \, \d x \d \tau\\
&\quad - 2 \iint_{Q_n} H(\nabla u) H(\nabla \zeta_n) \zeta_n (u-k_n)_+ \, \d x \d \tau\\
&\geq \frac 1 2 \iint_{Q_n} \underbrace{H(\nabla u)^2 \sgn(u-k_n)}_{=H(\nabla (u-k_n)_+^2)} \zeta_n^2 \, \d x \d \tau\\
&\quad - 2\iint_{Q_n} H(\nabla \zeta_n)^2 (u-k_n)_+^2 \, \d x \d \tau.
\end{align*}
Set $w_n := (u-k_n)_+$. By virtue of \eqref{zetan}, we have
\begin{align*}
\lefteqn{
\iint_{Q_n} (H \nabla_\xi H)(\nabla u) \cdot \nabla \left[(u-k_n)_+\zeta_n^2 \right]\, \d x \d \tau
}\\
&\geq \frac 1 2 \iint_{Q_n} H(\nabla w_n)^2 \zeta_n^2 \, \d x \d \tau
- 2 \iint_{Q_n} H(\nabla \zeta_n)^2 w_n^2 \, \d x \d \tau\\
&\geq \frac 1 2 \iint_{Q_n} H(\nabla w_n)^2 \zeta_n^2 \, \d x \d \tau
- \frac{2\cdot 4^{n+2}}{(\sigma R)^2} \iint_{Q_n} w_n^2 \, \d x \d \tau.
\end{align*}
On the other hand, we observe that
\begin{align*}
 \lefteqn{
\int^t_{t_n} \left\langle
\partial_t u^{q-1}, (u-k_n)_+ \zeta_n^2
\right\rangle_{H^1_0(\B_{R_n})} \, \d \tau
}\\
&= \int^t_{t_n} \dfrac{\d}{\d \tau} \left( \int_{\B_{R_n}} F_n(u^{q-1}) \zeta_n^2 \, \d x \right) \d \tau - 2 \iint_{Q_n} F_n(u^{q-1}) \zeta_n \partial_t \zeta_n \, \d x \d \tau\\
&= \int_{B_n} F_n(u(x,t)^{q-1}) \zeta_n^2(x,t) \, \d x - 2 \iint_{Q_n} F_n(u^{q-1}) \zeta_n \partial_t \zeta_n \, \d x \d \tau.
\end{align*}
Here $F_n$ is a function defined by
$$
 F_n(s) := \int^s_{k_n^{q-1}} (\sigma^{\frac 1{q-1}}-k_n)_+ \, \d \sigma
 = (q-1) \int^{s^{\frac 1 {q-1}}}_{k_n} \rho^{q-2} (\rho-k_n)_+ \, \d \rho
$$
and estimated as follows:
\begin{align*}
F_n(u^{q-1}) & \begin{cases}
   \geq (q-1)\int^u_{k_n} \|u\|_{L^\infty(Q_0)}^{q-2} (s-k_n)_+ \, \d s
   = (q-1)\|u\|_{L^\infty(Q_0)}^{q-2} \frac 1 2 (u-k_n)_+^2,\\
   \leq (q-1)\int^u_{k_n} k_n^{q-2} (s-k_n)_+ \, \d s
   \leq (q-1)\left(\frac k 2\right)^{q-2} \frac 1 2 (u-k_n)_+^2.
   \end{cases}
\end{align*}
Thus
\begin{align}
\lefteqn{
\frac{q-1}2 \|u\|_{L^\infty(Q_0)}^{q-2} \int_{\B_{R_n}} w_n(x,t)^2 \zeta_n(x,t)^2 \, \d x + \frac 1 2 \iint_{Q_n} H(\nabla w_n)^2 \zeta_n^2 \, \d x \d \tau
}\nonumber\\
&\leq \frac{2\cdot 4^{n+2}}{(\sigma R)^2} \iint_{Q_n} w_n^2 \, \d x \d \tau
 + (q-1) \frac{2^{n+2}}{\sigma t} \left( \frac k 2 \right)^{q-2} \iint_{Q_n} w_n^2 \, \d x \d \tau\nonumber\\
&\leq \left[ \frac{2}{(\sigma R)^2} + \frac{q-1}{\sigma t} \left( \frac k 2 \right)^{q-2} \right] 4^{n+2} \iint_{Q_n} w_n^2 \, \d x \d \tau.\label{pm:inf-e:1}
\end{align}

Set
$$
A_n := \{(x,t) \in Q_n \colon u(x,t) > k_n\}.
$$
Employing the H\"older and Gagliardo-Nirenberg inequalities (with $H(\cdot)$), we observe that
\begin{align*}
\lefteqn{
 \iint_{Q_n} w_n^2 \zeta_n^2 \, \d x \d \tau
}\\
&\leq \left( \iint_{Q_n} |w_n \zeta_n|^{\frac{2(N+2)}N} \right)^{\frac N {N+2}} A_n^{\frac 2 {N+2}}\\
&\leq C_{\rm GN}^{\frac N{N+2}} \left( \sup_{t_n \leq \tau \leq t} \|w_n \zeta_n\|_{L^2(\B_{R_n})}^2\right)^{\frac 2 {N+2}} \left( \iint_{Q_n} H(\nabla (w_n \zeta_n))^2 \, \d x \d \tau \right)^{\frac N {N+2}} A_n^{\frac 2 {N+2}}
\end{align*}
for some constant $C_{\rm GN}>0$. Here note that
\begin{align*}
 \iint_{Q_{n-1}} w_{n-1}^2 \zeta_{n-1}^2 \, \d x \d \tau
&= \iint_{Q_{n-1}} (u-k_{n-1})_+^2 \zeta_{n-1}^2 \, \d x \d \tau\\
&\geq \iint_{A_n} (k_n - k_{n-1})_+^2 \zeta_{n-1}^2 \, \d x \d \tau\\
&= \left( \frac k {2^{n+1}} \right)^2 |A_n|,
\end{align*}
which implies
$$
|A_n| \leq \left(\frac k {2^{n+1}}\right)^{-2} \iint_{Q_{n-1}} w_{n-1}^2 \zeta_{n-1}^2 \, \d x \d \tau.
$$
Therefore combining all these facts and using Young's inequality, we deduce that
\begin{align*}
\lefteqn{
\iint_{Q_n} w_n^2 \zeta_n^2\, \d x \d \tau
}\\
&\leq (2C_{\rm GN})^{\frac N{N+2}} \left( \sup_{t_n \leq \tau \leq t} \|w_n \zeta_n\|_{L^2(\B_{R_n})}^2 \right)^{\frac 2 {N+2}}\\
&\quad \times \left( \iint_{Q_n} H(\nabla w_n)^2 \zeta_n^2 \, \d x \d \tau
+ \iint_{Q_n} w_n^2 H(\nabla \zeta_n)^2 \, \d x \d \tau \right)^{\frac N{N+2}}\\
&\quad \times \left(\frac k {2^{n+1}}\right)^{-\frac 4 {N+2}} \left( \iint_{Q_{n-1}} w_{n-1}^2 \zeta_{n-1}^2 \, \d x \d \tau\right)^{\frac 2 {N+2}}\\
&\stackrel{\eqref{pm:inf-e:1}}\leq (2C_{\rm GN})^{\frac N{N+2}} \\
&\quad \times \left(\frac 1{q-1}\right)^{\frac 2{N+2}} \|u\|_{L^\infty(Q_0)}^{\frac{2(2-q)}{N+2}} \Bigg[ 2\left\{ \frac{2}{(\sigma R)^2} + \frac{q-1}{\sigma t} \left( \frac k 2 \right)^{q-2} \right\} 4^{n+2} \iint_{Q_n} w_n^2 \, \d x \d \tau \Bigg]^{\frac2{N+2}} \\
&\quad \times \Bigg[ 2\left\{ \frac{2}{(\sigma R)^2} + \frac{q-1}{\sigma t} \left( \frac k 2 \right)^{q-2} \right\} 4^{n+2} \iint_{Q_n} w_n^2 \, \d x \d \tau + \frac{4^{n+2}}{(\sigma R)^2} \iint_{Q_n} w_n^2 \, \d x \d \tau \Bigg]^{\frac N{N+2}}\\
&\qquad \times \left(\frac{2^{n+1}}k\right)^{\frac 4{N+2}} \left(\iint_{Q_{n-1}}w_{n-1}^2 \zeta_{n-1}^2\, \d x \d \tau\right)^{\frac 2{N+2}}\\
&\leq (2C_{\rm GN})^{\frac N{N+2}} \left(\frac 1{q-1}\right)^{\frac 2{N+2}} \sigma^{-2} \|u\|_{L^\infty(Q_0)}^{\frac{2(2-q)}{N+2}} \left\{ \frac{5}{R^2} + \frac{q-1}{t} \left( \frac k 2 \right)^{q-2} \right\} \\
&\quad \times 4^{n+2} \left(\frac{2^{n+1}}k\right)^{\frac 4{N+2}} \left( \iint_{Q_{n-1}} w_{n-1}^2 \zeta_{n-1}^2\, \d x \d \tau \right)^{1 + \frac 2 {N+2}}.
\end{align*}
Here we used the fact that $0 \leq w_n = w_n \zeta_{n-1} \leq w_{n-1} \zeta_{n-1}$ in $Q_n$ and $\sigma \in (0,1]$ to derive the last inequality. Thus we have obtained
$$
\iint_{Q_n} w_n^2 \zeta_n^2\, \d x \d \tau \leq C_0 b^n \left( \iint_{Q_{n-1}} w_{n-1}^2 \zeta_{n-1}^2\, \d x \d \tau \right)^{1+\alpha}
$$
with
\begin{align}
C_0 = \gamma \sigma^{-2} \|u\|_{L^\infty(Q_0)}^{\frac{2(2-q)}{N+2}} \left( \frac 1 {R^2} + \frac{k^{q-2}}{t} \right) k^{-\frac 4 {N+2}} > 0, \quad 
b = 4 \cdot 2^{\frac 4 {N+2}} > 1 \label{C0}\\
\mbox{ and } \quad \alpha = \frac 2 {N+2}.\nonumber
\end{align}
Here and henceforth, we denote by $\gamma$ a constant which depends only on $q$, $C_{\rm GN}$ and $N$ and may vary from line to line below. Hence by Lemma 4.1 of~\cite[Chap I, p.~12]{DiBook} we infer that
$$
\iint_{Q_n} w_n^2 \zeta_n^2 \, \d x \d \tau \to 0 \quad \mbox{ as } \ n \to +\infty,
$$
provided that
\begin{equation}\label{pm:k-cond}
\iint_{Q_0} w_0^2 \zeta_0^2\, \d x \d \tau 
\leq \iint_{Q_0} u^2 \, \d x \d \tau
\leq C_0^{-\frac 1 \alpha} b^{- \frac 1 {\alpha^2}},
\end{equation}
which can be realized by choosing $k$ large enough (see the definition of $C_0$ above). Therefore, from the definition of $w_n$, it follows that
\begin{equation*}
0 \leq u \leq k \quad \mbox{ a.e. in } \ Q_\infty,
\end{equation*}
which yields
\begin{equation}\label{pm:u<k}
\|u\|_{L^\infty(Q_\infty)} \leq k.
\end{equation}

We next estimate $k$ quantitatively. From the definition \eqref{C0} of $C_0$, note that
\begin{align*}
C_0^{-\frac 1 \alpha} b^{-\frac 1 {\alpha^2}}
\stackrel{\eqref{pm:u<k}}{\geq} \gamma \sigma^{\frac 2 \alpha} \|u\|_{L^\infty(Q_0)}^{-(2-q)} \left( \frac 1 {R^2} + \frac{\|u\|_{L^\infty(Q_\infty)}^{q-2}}{t} \right)^{-\frac 1 \alpha} k^2 b^{-\frac 1 {\alpha^2}}.
\end{align*}
Hence one can choose $k$ satisfying \eqref{pm:k-cond} as follows:
$$
\iint_{Q_0} u^2 \, \d x \d \tau
= \gamma \sigma^{\frac 2\alpha} \|u\|_{L^\infty(Q_0)}^{-(2-q)} \left( \frac 1 {R^2} + \frac{\|u\|_{L^\infty(Q_\infty)}^{q-2}}t \right)^{-\frac 1\alpha} k^2 b^{-\frac 1 {\alpha^2}}
$$
which brings us 
$$
k = \gamma \sigma^{-\frac 1\alpha} \|u\|_{L^\infty(Q_0)}^{\frac{2-q}2} \left( \frac 1 {R^2} + \frac{\|u\|_{L^\infty(Q_\infty)}^{q-2}} t \right)^{\frac 1 {2\alpha}} \|u\|_{L^2(Q_0)}
$$
(here $b^{\frac 1{2\alpha^2}}$ is already involved by the constant $\gamma$ which varies from line to line). Therefore noting that
\begin{align*}
\frac 1 {R^2} + \frac{\|u\|_{L^\infty(Q_\infty)}^{q-2}} t
= \|u\|_{L^\infty(Q_\infty)}^{q-2} \left( \frac{\|u\|_{L^\infty(Q_\infty)}^{2-q}}{R^2} + \frac 1 t \right),
\end{align*}
we deduce that
\begin{align*}
 \|u\|_{L^\infty(Q_\infty)}
 &\leq \gamma \sigma^{-\frac1\alpha} \|u\|_{L^\infty(Q_0)}^{\frac{2-q}2} \|u\|_{L^2(Q_0)} \|u\|_{L^\infty(Q_\infty)}^{\frac{q-2}{2\alpha}} \left( \frac{\|u\|_{L^\infty(Q_\infty)}^{2-q}}{R^2} + \frac 1 t \right)^{\frac 1 {2\alpha}},
\end{align*}
whence follows that
\begin{align*}
 \|u\|_{L^\infty(Q_\infty)}
 &\leq \gamma \sigma^{-\frac2{2(\alpha+1)-q}} \|u\|_{L^\infty(Q_0)}^{\frac{(2-q)\alpha}{2(\alpha+1)-q}} \|u\|_{L^2(Q_0)}^{\frac{2\alpha}{2(\alpha+1)-q}} \left( \frac{\|u\|_{L^\infty(Q_\infty)}^{2-q}}{R^2} + \frac 1 t \right)^{\frac 1 {2(\alpha+1)-q}}.
\end{align*}
Note that the exponent of the first term in the right-hand side reads,
$$
\frac{(2-q)\alpha}{2(\alpha+1)-q}
= (2-q) \cdot \frac 2 {4 + (N+2)(2-q)} \in (0,1).
$$
Moreover, we observe that
$$
\left[1-\frac{(2-q)\alpha}{2(\alpha+1)-q}\right]^{-1} = \frac{2(\alpha+1)-q}{2 + q(\alpha-1)} \ \mbox{ and } \ \beta := 2 + q(\alpha-1) = \frac{N(2-q)+4}{N+2} > 0.
$$
Hence for any $\nu > 0$ there exists a constant $C_\nu > 0$ such that
\begin{equation}\label{Linf-sh}
\|u\|_{L^\infty(Q_\infty)} \leq \nu \|u\|_{L^\infty(Q_0)}
+ C_\nu \sigma^{-\frac2\beta}\|u\|_{L^2(Q_0)}^{\frac{2\alpha}{\beta}} \left( \frac{\|u\|_{L^\infty(\B_{2R}\times(t/8,t))}^{2-q}}{R^2} + \frac 1 t \right)^{\frac 1 \beta}.
\end{equation}
Here we used the fact that $Q_0 \subset \B_{2R} \times (t/8,t)$ for any $\tau,\rho$ by the definition of $Q_0$ and the range of $\tau,\rho$. 

Now, as in the proof of Lemma \ref{lLE}, setting $\sigma = \sigma_n$, $\rho = \rho_n$, $\tau = \tau_n$ and defining $\hat Q_n$, we have
$$
\|u\|_{L^\infty(\hat Q_n)}
\leq \nu \|u\|_{L^\infty(\hat Q_{n+1})} + (2^{\frac2\beta})^n C_\nu L,
$$
where 
$$
L := 2^{\frac 2\beta}\|u\|_{L^2(Q_0)}^{\frac{2\alpha}\beta} \left( \frac{\|u\|_{L^\infty(\B_{2R}\times(t/8,t))}^{2-q}}{R^2} + \frac 1 t \right)^{\frac 1\beta}.
$$
Therefore it follows that
$$
\|u\|_{L^\infty(\hat Q_0)} \leq \nu^n \|u\|_{L^\infty(\hat Q_n)} + C_\nu L \,2^{\frac2\beta} \left[1 + 2^{\frac2\beta} \nu + \cdots + (2^{\frac2\beta})^{n-1} \nu^{n-1}\right].
$$
Choosing $\nu > 0$ small enough that $2^{\frac2\beta} \nu < 1$, we conclude that
\begin{align*}
 \|u(t)\|_{L^\infty(\B_R)} \leq C \left( \frac{\|u\|_{L^\infty(\B_{2R} \times (\frac t 8,t))}^{2-q}}{R^2} + \frac 1 t \right)^{\frac{N+2}{N(2-q)+4}} \left( \int^t_{t/8} \int_{\B_{2R}} u^2 \, \d x \d \tau \right)^{\frac 2{N(2-q)+4}}.
\end{align*}
This completes the proof.
\end{proof}
}

Now, we shall introduce the following two important quantities, which will enable us to measure the growth of solutions at (spatial) infinity in an asymptotic way:
\begin{align*}
 \psir(t) &:= \sup_{\tau \in (0,t)} \normtre{u(\tau)^{q-1}}_r
= \sup_{\tau \in (0,t)} \sup_{R \geq r} \left( R^{-\frac \kappa d} \int_{\B_R\cap\Omega} u(x,\tau)^{q-1} \, \d x \right),\\
 \phir(t) &:= \sup_{\tau \in (0,t)} \sup_{R \geq r} \left( \tau^{\frac N \kappa} R^{-\frac2d} \|u(\tau)\|_{L^\infty(\B_R\cap\Omega)}^{q-1} \right),
\end{align*}
which are non-decreasing in $t$. In what follows, we always assume \eqref{phi_psi_wd}. Hence $\psir(t)$ and $\phir(t)$ are finite for any $t \in (0,T)$ and $r > 0$ (here we remark that the approximate solutions $(u_n)$ enjoy the assumption above with $\Omega = \B_n$).

The next lemma can be proved in an analogous fashion to the proof of Lemma \ref{L:lHL1}. More precisely, we refer the reader to~\cite[Lemma 3.3, p.200]{DiHe89}.

\begin{lemma}[$L^1$ estimate for gradients]\label{pm:L:grad-L1}
Let $u = u(x,t) : Q_T \to [0,+\infty)$ be a non-negative measurable function satisfying \eqref{es:1}, \eqref{es:4} and \eqref{phi_psi_wd} with $1 < q < 2$. Let $\zeta : \Omega \to [0,1]$ be a smooth cut-off function such that ${\mathrm supp}\, \zeta = \overline{\B_{2R}}$ and $\zeta \equiv 1$ on $\B_R$. Then it holds that
\begin{align*}
\lefteqn{
 \int^t_0 \int_{\B_{2R}} H(\nabla u) \zeta \, \d x \d \tau
}\\
&\leq C_{q,N} \left(
R^{1+\frac \kappa d} \int^t_0 \tau^{\frac 1 \kappa - 1} \phir(\tau)^{\frac d 2} \psir(\tau) \, \d \tau + R^{1+\frac \kappa d} \int^t_0 \tau^{\frac 1 2 -  \frac{3Nd}{2\kappa}} \phir(\tau)^{\frac 3 2 d} \psir(\tau) \, \d \tau\right)^{1/2}\\
&\quad \times \left( R^{1+\frac\kappa d} \int^t_0 \tau^{\frac 1 \kappa - 1} \phir(\tau)^{\frac d 2} \psir(\tau) \, \d \tau \right)^{1/2}
\end{align*}
for all $t \in (0,T)$ and $r,R \in \R$ satisfying $2R \geq r > 0$ and $\B_{2R} \subset \Omega$.
\end{lemma}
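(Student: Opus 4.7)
The plan is to mimic the strategy used in the proof of Lemma \ref{L:lHL1} (going back to \cite[Lemma 3.3]{DiHe89}), readjusting the exponents so that the bounds become expressible in terms of $\phir$ and $\psir$.

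\textbf{Step 1 (Energy inequality).} I would take $\sigma := (2-q)/2 \in (0,1/2)$ and $\alpha := 1/2$, and test \eqref{es:4p} against $\tau^\alpha u^\sigma\zeta^2$ — after regularizing $u$ to $u_\vep=u+\vep$ and approximating $s\mapsto s^\sigma$ by Lipschitz functions, so as to obtain a bona fide admissible non-negative test function. Integrating by parts in time over $(0,t)$ and absorbing the boundary term via Young's inequality (using the identities $\nabla_\xi H(\xi)\cdot\xi=H(\xi)$ and $H_0(\nabla_\xi H(\xi))=1$ for the Finsler gradient), one arrives, in the limit $\vep\to0_+$, at
\begin{align*}
\int_0^t \tau^{1/2} \int u^{\sigma-1}H(\nabla u)^2 \zeta^2 \, \d x \d\tau
&\lesssim \int_0^t \tau^{-1/2} \int_{\B_{2R}} u^{q/2}\zeta^2 \, \d x \d\tau \\
&\quad + \int_0^t \tau^{1/2} \int_{\B_{2R}} u^{(4-q)/2} H(\nabla\zeta)^2 \, \d x \d\tau,
\end{align*}
where the exponents $q-1+\sigma=q/2$ and $\sigma+1=(4-q)/2$ come from the chain rule applied to $\partial_t u^{q-1}$ against $u^\sigma$.

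\textbf{Step 2 (Inserting $\phir$ and $\psir$).} Pulling a suitable power of $\|u(\tau)\|_{L^\infty(\B_{2R})}^{q-1}$ outside and using the pointwise bounds $\|u(\tau)\|_{L^\infty(\B_{2R})}^{q-1}\leq \tau^{-N/\kappa}(2R)^{2/d}\phir(\tau)$ and $\int_{\B_{2R}}u^{q-1}\,\d x \leq (2R)^{\kappa/d}\psir(\tau)$, together with the identity $Nd/(2\kappa)=1/2-1/\kappa$ (which follows from $\kappa=2+Nd$) and $\|H(\nabla\zeta)\|_{L^\infty(\B_{2R})}\lesssim 1/R$, I would obtain
\begin{align*}
\int_0^t \tau^{-1/2} \int_{\B_{2R}} u^{q/2} \, \d x \d\tau &\lesssim R^{1+\kappa/d}\int_0^t \tau^{1/\kappa-1} \phir(\tau)^{d/2}\psir(\tau) \, \d\tau,\\
\int_0^t \tau^{1/2} \int_{\B_{2R}} u^{(4-q)/2} H(\nabla\zeta)^2 \, \d x \d\tau &\lesssim R^{1+\kappa/d}\int_0^t \tau^{1/2-3Nd/(2\kappa)} \phir(\tau)^{3d/2}\psir(\tau) \, \d\tau.
\end{align*}

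\textbf{Step 3 (Cauchy--Schwarz).} The Cauchy--Schwarz inequality (with the convention that the integrand vanishes on $\{u=0\}$, since there $H(\nabla u)=0$ a.e.) yields
$$\int_0^t\int H(\nabla u)\zeta\,\d x\d\tau \leq \Bigl(\int_0^t \tau^{1/2} \int u^{\sigma-1}H(\nabla u)^2\zeta^2\,\d x\d\tau\Bigr)^{1/2} \Bigl(\int_0^t\tau^{-1/2}\int_{\B_{2R}}u^{1-\sigma}\,\d x\d\tau\Bigr)^{1/2}.$$
Since $1-\sigma=q/2$, the second factor is precisely the quantity controlled by the first bound of Step 2, while the first factor is controlled through Step 1 by the sum of both bounds in Step 2. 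Multiplying the square roots then produces the claimed estimate.

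\textbf{Main obstacle.} The principal technical issue is the justification of the formal computation in Step 1: because $u$ may vanish and $\sigma-1<0$, the integrand $u^{\sigma-1}H(\nabla u)^2$ is only defined a.e., and the chain rule applied to $\partial_t u^{q-1}$ against $u^\sigma$ must first be carried out on the regularization $u_\vep=u+\vep$ (with $s^\sigma$ truncated by a Lipschitz function) and then passed to the limit $\vep\to0_+$ via monotone convergence; the a priori finiteness of the limiting integrals is furnished by \eqref{phi_psi_wd}. Everything else is a direct analogue of \cite{DiHe89,V94}.
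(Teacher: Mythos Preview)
Your proposal is correct and follows essentially the same approach as the paper: test \eqref{es:4p} with $\tau^{1/2}u_\vep^{\sigma}\zeta^2$ where $\sigma=(2-q)/2$ (the paper writes this as $(u_\vep^{q-1})^{r-1}$ with $r=q/(2(q-1))$, which is the same exponent), derive the energy inequality, bound the two resulting integrals via $\phir,\psir$ exactly as in your Step~2 (these are the paper's displays \eqref{U} and \eqref{I}), and conclude by Cauchy--Schwarz using $1-\sigma=q/2$. Your identification of the regularization issue as the main technical point is also accurate.
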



\prf{
\begin{proof}
Set $u_\vep := u + \vep$ for $\vep > 0$. Let $\zeta$ be a smooth cut-off function supported over the closure of a bounded open set $B \subset \Omega$. Substitute $w = (u_\vep^{q-1})^{r-1} \zeta^2$ to \eqref{es:4p} with a constant $r \in (1,+\infty)$ which will be determined later and recall $\Phi_\vep$ defined by \eqref{Phie}. It then follows that
\begin{align*}
\lefteqn{
\frac \d {\d t} \int_B \Phi_\vep(u^{q-1}) \zeta^2 \, \d x
 + (q-1)(r-1) \int_B H(\nabla u)^2 u_\vep^{(q-1)(r-1)-1} \zeta^2 \, \d x
}\\
& \leq - 2 \int_B (H \nabla_\xi H)(\nabla u) \cdot (\nabla \zeta) u_\vep^{(q-1)(r-1)} \zeta \, \d x\\
&\leq \frac{(q-1)(r-1)}2 \int_B H(\nabla u)^2 u_\vep^{(q-1)(r-1)-1} \zeta^2 \, \d x\\
&\quad + C_{q,r} \|H(\nabla \zeta)\|_{L^\infty(B)}^2 \int_B u_\vep^{(q-1)(r-1)+1} \, \d x.
\end{align*}
Multiply both sides by $t^\alpha$ and integrate it over $(0,t)$.\footnote{To be precise, since $u = u(x,t)$ is a subsolution only locally in time (and space), we should integrate both sides over $(s,t)$ for $0<s<t$ and then finally pass to the limit as $s \to 0_+$. For simplicity, time-shifted subsolutions can be regarded as subsolutions on [0,T) with sufficient regularity from $t = 0$. Hence we may perform all the procedure mentioned below for time-shifted subsolutions, and then, we restore the time-variable in a resulting inequality and take a limit in it as the time-shift goes to zero. Thus the desired inequality follows.} We then see that
\begin{align}
\lefteqn{
\frac{(q-1)(r-1)}2 \int^t_0 \tau^\alpha \left( \int_B H(\nabla u)^2 u_\vep^{(q-1)(r-1)-1} \zeta^2 \, \d x \right) \d \tau
}\nonumber\\
&\leq \alpha \int^t_0 \tau^{\alpha-1} \left( \int_B \Phi_\vep(u^{q-1}) \zeta^2 \, \d x\right) \d \tau\nonumber\\
&\quad + C_{q,r} \|H(\nabla \zeta)\|_{L^\infty(B)}^2 \int^t_0 \tau^{\alpha} \left( \int_B u_\vep^{(q-1)(r-1)+1} \, \d x \right) \d \tau.\label{E}
\end{align}
Now, we are ready to derive an $L^1$ estimate for gradient. We observe that
\begin{align*}
\lefteqn{
 \int^t_0 \int_B H(\nabla u) \zeta\, \d x \d \tau
}\\
&= \left( \int^t_0 \int_B \tau^\alpha H(\nabla u)^2 u_\vep^{(q-1)(r-1)-1} \zeta^2 \, \d x \d \tau\right)^{1/2}\\
&\quad \times \left( \int^t_0 \int_B \tau^{-\alpha} u_\vep^{1-(q-1)(r-1)} \, \d x \d \tau\right)^{1/2}\\
&\leq \bigg[
C_{q,r,N} \alpha \int^t_0 \tau^{\alpha-1} \left( \int_B \Phi_\vep(u^{q-1}) \zeta^2 \, \d x\right) \d \tau\\
&\quad + C_{q,r} \|H(\nabla \zeta)\|_{L^\infty(B)}^2 \int^t_0 \tau^{\alpha} \left( \int_B u_\vep^{(q-1)(r-1)+1} \, \d x \right) \d \tau
\bigg]^{1/2}\\
&\quad \times \left( \int^t_0 \int_B \tau^{-\alpha} u_\vep^{1-(q-1)(r-1)} \, \d x \d \tau\right)^{1/2}.
\end{align*}
Let us determine $r \in (1,+\infty)$ and $\alpha$ (similarly to the FDE case) as follows:
$$
r(q-1) = 1-(q-1)(r-1), \quad \alpha = 1-\alpha,
$$
namely,
$$
r = \frac q {2(q-1)} > 1 \quad \mbox{ and } \quad \alpha = \frac 1 2.
$$
Here we note that, in contrast with the fast diffusion case $q > 2$, the exponent $r = q/2(q-1)$ is greater than $1$, since $q \in (1,2)$. Then we can pass to the limit in both sides of the inequality above as $\vep \to 0_+$, since all exponents of $u_\vep$ in the both sides are non-negative.

Now, set $B = \B_{2R}$ and $\zeta = \zeta_R(x)$ which is a smooth cut-off function supported over $\overline{\B_{2R}}$ such that $\zeta_R \equiv 1$ on $\B_R$ and $0 \leq \zeta_R \leq 1$ in $\R^N$. Then we find that
\begin{align}
 \lefteqn{
\|H(\nabla \zeta_R)\|_{L^\infty(\B_{2R})}^2 \int^t_0 \tau^{\frac 1 2} \left( \int_{\B_{2R}} u(x,\tau)^{\frac{4-q}2} \, \d x \right) \d \tau
}\nonumber\\
&\lesssim R^{-2} \int^t_0 \tau^{\frac 1 2} \|u(\tau)\|_{L^\infty(\B_{2R})}^{\frac 3 2 (2-q)} \left( \int_{\B_{2R}} u(x,\tau)^{q-1} \, \d x \right) \d \tau\nonumber\\
&= R^{-2} \int^t_0 \tau^{\frac 1 2} \|u(\tau)\|_{L^\infty(\B_{2R})}^{\frac 3 2 (2-q)} (2R)^{\frac \kappa d} \bigg( \underbrace{(2R)^{-\frac \kappa d} \int_{\B_{2R}} u(x,\tau)^{q-1} \, \d x }_{\leq \ \psir(\tau)} \bigg) \d \tau\nonumber\\
&\leq R^{- 2} \int^t_0 \tau^{\frac 1 2} \bigg( \underbrace{\tau^{\frac N \kappa} (2R)^{-\frac 2 d} \|u(\tau)\|_{L^\infty(\B_{2R})}^{q-1}}_{\leq \ \phir(\tau)} \bigg)^{\frac 3 2 d} \tau^{-\frac N \kappa \cdot \frac 3 2 d} (2R)^{\frac 2 d \cdot \frac 3 2 d + \frac \kappa d} \psir(\tau) \, \d \tau\nonumber\\
&\lesssim R^{1+\frac \kappa d} \int^t_0 \tau^{\frac 1 2 - \frac N \kappa \cdot \frac 3 2 d} \phir(\tau)^{\frac 3 2 d} \psir(\tau) \, \d \tau\label{U}
\end{align}
and
\begin{align}
\lefteqn{
\int^t_0 \tau^{-\frac 12} \left( \int_{\B_{2R}} u^{\frac q 2} \, \d x\right) \d \tau
}\nonumber\\
&\leq \int^t_0 \tau^{-\frac12} \|u(\tau)\|_{L^\infty(\B_{2R})}^{\frac{2-q}2} \left(\int_{\B_{2R}} u^{q-1}(x,\tau) \, \d x \right) \, \d \tau\nonumber\\
&= \int^t_0 \tau^{-\frac 12} \|u(\tau)\|_{L^\infty(\B_{2R})}^{\frac{2-q}2} (2R)^{\frac \kappa d} \bigg(
 \underbrace{
 (2R)^{-\frac \kappa d} \int_{\B_{2R}} u^{q-1}(x,\tau) \, \d x
 }_{\leq \ \psir(\tau)} 
\bigg) \, \d \tau\nonumber\\
&\leq \int^t_0 \tau^{-\frac 1 2 - \frac N \kappa \cdot \frac d 2} (2R)^{\frac 2 d \cdot \frac d2} \bigg( \underbrace{
 \tau^{\frac N \kappa} (2R)^{-\frac 2 d} 
 \|u(\tau)^{q-1}\|_{L^\infty(\B_{2R})}
}_{\leq \ \phir(\tau)}
 \bigg)^{\frac d2} (2R)^{\frac \kappa d}
\psir(\tau) \, \d \tau\nonumber\\
&\lesssim R^{1 + \frac \kappa d} \int^t_0 \tau^{\frac 1 \kappa - 1} \phir(\tau)^{\frac d 2} \psir(\tau) \, \d \tau\label{I}
\end{align}
for any $2R \geq r > 0$. Thus it follows that
\begin{align*}
\lefteqn{
 \int^t_0 \int_{\B_{2R}} H(\nabla u) \zeta \, \d x \d \tau
}\\
&\leq C_{q,N} \left(
R^{1 + \frac \kappa d} \int^t_0 \tau^{\frac 1 \kappa - 1} \phir(\tau)^{\frac d 2} \psir(\tau) \, \d \tau + R^{1+\frac \kappa d} \int^t_0 \tau^{\frac 1 2 - \frac N \kappa \cdot \frac 3 2 d} \phir(\tau)^{\frac 3 2 d} \psir(\tau) \, \d \tau\right)^{1/2}\\
&\quad \times \left( R^{1 + \frac \kappa d}  \int^t_0 \tau^{\frac 1 \kappa - 1} \phir(\tau)^{\frac d 2} \psir(\tau) \, \d \tau \right)^{1/2} \quad \mbox{ for } \ 2R \geq r > 0,
\end{align*}
which completes the proof.
\end{proof}
}

We shall next derive integral inequalities for $\psir(t)$ and $\phir(t)$. They will finally play a crucial role to prove Lemma \ref{pm:lLE}.

\begin{lemma}[Integral inequality for $\phir$]\label{pm:e-1} Under the same setting as above, there exist constants $C_1,C_2 > 0$ such that
\begin{align}\label{phi-ineq}
 \phir(t) \leq C_1 \int^t_0 \tau^{-\frac{Nd}\kappa} 
\phir(\tau)^{\frac 1 {q-1}} \, \d \tau + C_2 \psir(t)^{\frac 2 \kappa}
\end{align}
for any $t > 0$ and $r > 0$.
\end{lemma}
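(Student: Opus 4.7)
The plan is to prove \eqref{phi-ineq} by applying the pointwise $L^\infty$ estimate of Lemma~\ref{pm:L:1} and translating it into an integral inequality for $\phir$ via the defining relations of $\phir$ and $\psir$. Writing Lemma~\ref{pm:L:1} in the abbreviated form
\[
\|u(t)\|_{L^\infty(\B_R)}\leq C(A+B)^{(N+2)/\beta}\,I^{2/\beta},
\]
with $\beta:=N(2-q)+4$, $A:=\|u\|_{L^\infty(Q_R^*)}^{2-q}/R^2$, $B:=1/t$, $Q_R^*:=\B_{2R}\times(t/8,t)$, and $I:=\iint_{Q_R^*} u^2\,\d x\,\d\tau$, the pointwise bound $\|u(\tau)\|_{L^\infty(\B_{2R})}^{q-1}\leq \tau^{-N/\kappa}(2R)^{2/d}\phir(\tau)$ implicit in the definition of $\phir$ (combined with $d(q-1)=2-q$) gives $A\lesssim t^{-Nd/\kappa}\phir(t)^d$, while the H\"older-type bound $u^2\leq u^{q-1}\|u(\tau)\|_{L^\infty(\B_{2R})}^{3-q}$ together with the definitions of $\phir$ and $\psir$ yields
\[
I\lesssim R^{\beta/(2-q)}\int_{t/8}^t \tau^{-N(3-q)/(\kappa(q-1))}\phir(\tau)^{(3-q)/(q-1)}\psir(\tau)\,\d\tau,
\]
where the $R$-exponent follows from the identity $2(3-q)/(2-q)+\kappa/d=\beta/(2-q)$.

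Raise the $L^\infty$ estimate to the power $q-1$, split $(A+B)^{(N+2)(q-1)/\beta}\lesssim A^{(N+2)(q-1)/\beta}+B^{(N+2)(q-1)/\beta}$ (permitted since this exponent is at most $1$ for $q\in(1,2)$), multiply by $t^{N/\kappa}R^{-2/d}$, and take $\sup_{R\geq r}$ so that $\phir(t)$ appears on the left. For the $B$-summand, bound $\phir(\tau),\psir(\tau)$ by $\phir(t),\psir(t)$ under the integral by monotonicity and perform the $\tau$-integration. The powers of $R$ and $t$ then cancel exactly using the identities $\kappa(q-1)=2(q-1)+N(2-q)$ and $\beta=\kappa(q-1)+2(3-q)$, leaving the bound $C\phir(t)^{2(3-q)/\beta}\psir(t)^{2(q-1)/\beta}$. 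A single Young's inequality with conjugate exponents $p=\beta/(2(3-q))$ and $p'=\beta/(\kappa(q-1))$ then produces $\epsilon\phir(t)+C_\epsilon\psir(t)^{2/\kappa}$, the $\psir$-contribution of \eqref{phi-ineq} plus an absorbable piece.

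The $A$-summand, after the same substitutions but with the integral left intact, reduces to
\[
C\,t^{2Nq/(\kappa\beta)}\,\phir(t)^{(2-q)(N+2)/\beta}\,\bigl[J(t)\bigr]^{2(q-1)/\beta},\quad J(t):=\int_{t/8}^t \tau^{-N(3-q)/(\kappa(q-1))}\phir(\tau)^{(3-q)/(q-1)}\psir(\tau)\,\d\tau.
\]
The goal is to convert this into $C_1\int_0^t\tau^{-Nd/\kappa}\phir(\tau)^{1/(q-1)}\,\d\tau$ (plus an absorbable $\phir$ piece). The plan is first to apply Young's inequality to the outer product $\phir(t)^a[J]^b$ with $a=(2-q)(N+2)/\beta<1$ and $b=2(q-1)/\beta$, absorbing the $\epsilon\phir(t)$ piece into the left-hand side and leaving a term of the form $C_\epsilon t^{N/\kappa}[J]^{(q-1)/q}$ (the exponents work because $b/(1-a)=(q-1)/q$ and $(2Nq/(\kappa\beta))\cdot p'=N/\kappa$). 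Then one processes the residual integral $J$ by Young's inequality on the integrand, exploiting the key algebraic identity $(3-q)/(q-1)=1/(q-1)+d$ to factor $\phir(\tau)^{(3-q)/(q-1)}\psir(\tau)=\phir(\tau)^{1/(q-1)}\cdot \phir(\tau)^d\psir(\tau)$, and bounding the $\phir^d\psir$ factor by a further Young decomposition that leaves the desired integrand $\tau^{-Nd/\kappa}\phir(\tau)^{1/(q-1)}$ together with an absorbable $\phir$-piece and a $\psir^{2/\kappa}$-piece.

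The main obstacle is precisely this concluding rearrangement of the $A$-summand: one must arrange the Young exponents so that, after the outer power $2(q-1)/\beta$ and the various $R$- and $t$-cancellations, the inner $\phir$ exponent becomes exactly $1/(q-1)$ and the $\tau$-weight becomes exactly $\tau^{-Nd/\kappa}$, while simultaneously controlling the $\psir$-contribution at the target power $2/\kappa$ and absorbing all extraneous $\phir(t)$ factors. The validity of this bookkeeping rests on the algebraic identities $d(q-1)=2-q$, $\kappa=2+Nd$, $\beta=\kappa(q-1)+2(3-q)$, and $(3-q)/(q-1)=1/(q-1)+d$, which together force each competing exponent to land in exactly the right place.
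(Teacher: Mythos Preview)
Your treatment of the $B$-summand is correct and matches the paper's handling of what it calls $H_2$: the bound $u^2\leq u^{q-1}\|u\|_{L^\infty}^{3-q}$ indeed produces a term of the form $\phir(t)^{2(3-q)/\beta}\psir(t)^{2(q-1)/\beta}$, and one Young inequality with the conjugate pair you name gives $\epsilon\phir(t)+C_\epsilon\psir(t)^{2/\kappa}$.  (A small correction: the exponent $(N+2)(q-1)/\beta$ is not always $\le 1$ for $q\in(1,2)$ and $N$ large; but the splitting $(A+B)^\alpha\lesssim A^\alpha+B^\alpha$ still holds with a harmless constant $2^{(\alpha-1)_+}$, so this is not an obstacle.)

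The gap is in the $A$-summand.  You reuse the same bound on $I$, which drags $\psir$ into a term that in the paper involves $\phir$ alone; your ``concluding rearrangement'' is then left as a plan, not a computation.  If one follows your outline literally---factor $\phir^{(3-q)/(q-1)}\psir=\phir^{1/(q-1)}\cdot\phir^d\psir$, pull out $\phir(t)^d\psir(t)$ and $\tau^{-N/(\kappa(q-1))}\sim t^{-N/(\kappa(q-1))}$ on $(t/8,t)$, and apply Young twice---the residual is $t^{N/\kappa}\psir(t)$, \emph{not} $\psir(t)^{2/\kappa}$.  These coincide only under a constraint of the type $t\,\psir(t)^d\lesssim 1$, which is precisely what the lemma is later used to \emph{derive} and cannot be assumed here.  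So as written the argument does not close for all $t>0$.

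The paper avoids this by estimating $I$ differently in the $A$-summand: there it uses the cruder bound
\[
\int_{\B_{2R}} u(x,\tau)^2\,\d x \;\leq\; |\B_{2R}|\,\|u(\tau)\|_{L^\infty(\B_{2R})}^2 \;\lesssim\; R^N\bigl(\tau^{-N/\kappa}R^{2/d}\phir(\tau)\bigr)^{2/(q-1)},
\]
which involves only $\phir$.  With this, the $R$- and $t$-powers cancel via the identities $-\tfrac2d+\tfrac{2N}{\lambda}+\tfrac{8}{d(q-1)\lambda}=0$ (where $\lambda:=\beta/(q-1)$) and $\tfrac{N}{\kappa}\cdot\tfrac{\lambda}{2}-\tfrac{Nd}{\kappa}\cdot\tfrac{N+2}{2}-\tfrac{N}{\kappa}\cdot\tfrac{2}{q-1}=-\tfrac{Nd}{\kappa}$, leaving
\[
H_1 \;\lesssim\; \phir(t)^{1-2/\lambda}\Bigl(\int_0^t \tau^{-Nd/\kappa}\phir(\tau)^{1/(q-1)}\,\d\tau\Bigr)^{2/\lambda},
\]
and a single Young inequality yields $\epsilon\phir(t)+C_\epsilon\int_0^t\tau^{-Nd/\kappa}\phir(\tau)^{1/(q-1)}\,\d\tau$.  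The asymmetry---two different bounds on $I$ for the two summands---is the missing idea.
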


\begin{proof}[Outline of proof]
The lemma can be proved as in~\cite[Lemma 3.2., p.200]{DiHe89}. Set
$$
\lam := Nd + \frac 4 {q-1}.
$$
By Lemma \ref{pm:L:1}, noting that
$$
R^{-2} \|u(t)\|_{L^\infty(\B_{2R} \times (t/8,t))}^{2-q} + t^{-1}
\leq t^{-\frac{Nd}\kappa} \phir(t)^d + t^{-1},
$$
we have
\begin{align*}
 t^{\frac N \kappa} R^{-\frac 2 d} \|u(t)\|_{L^\infty(\B_R)}^{q-1}
 &\lesssim t^{\frac N \kappa} R^{-\frac 2 d} \left(t^{-\frac{Nd}\kappa} \phir(t)^d + t^{-1}\right)^{\frac{N+2}\lambda} \left( \int^t_{t/8} \int_{\B_{2R}} u^2 \, \d x \d t \right)^{\frac 2 \lambda}\\
 &\lesssim H_1 + H_2,
\end{align*}
where $H_1$ and $H_2$ are given by
\begin{align*}
 H_1 &:= t^{\frac N \kappa} R^{-\frac 2 d} t^{-\frac{Nd}\kappa\cdot\frac{N+2}\lam} \phir(t)^{\frac{d(N+2)}\lam} \bigg( \int^t_{t/8} \underbrace{\int_{\B_{2R}} u(x,\tau)^2 \, \d x}_{\leq \ (2R)^N \|u(\tau)\|_{L^\infty(\B_{2R})}^2} \d \tau \bigg)^{\frac 2 \lam},\\
 H_2 &:= t^{\frac N \kappa} R^{-\frac 2 d} t^{-\frac{N+2}\lam} \bigg( \int^t_{t/8} \underbrace{\int_{\B_{2R}} u(x,\tau)^{q-1} u(x,\tau)^{3-q} \, \d x}_{\leq \ \|u(\tau)\|_{L^\infty(\B_{2R})}^{3-q} \int_{\B_{2R}} u(x,\tau)^{q-1} \, \d x} \d \tau \bigg)^{\frac 2 \lam}.
\end{align*}
Then it suffices to estimate $H_1$ and $H_2$ from above in terms of $\phir(t)$ and $\psir(t)$ by the use of the definitions of these quantities only. Finally, we obtain the assertion by taking a supremum of both sides in $R \geq r > 0$ and $t \in (0,T)$. 
\end{proof}

\prf{
\begin{proof}
Set
$$
\lam := Nd + \frac 4 {q-1}
$$
and let $0 < r \leq 2R$. By Lemma \ref{pm:L:1}, noting that
$$
R^{-2} \|u(t)\|_{L^\infty(\B_{2R} \times (t/8,t))}^{2-q} + t^{-1}
\leq t^{-\frac{Nd}\kappa} \phir(t)^d + t^{-1},
$$
we have
\begin{align*}
 t^{\frac N \kappa} R^{-\frac 2 d} \|u(t)\|_{L^\infty(\B_R)}^{q-1}
 &\lesssim t^{\frac N \kappa} R^{-\frac 2 d} \left(t^{-\frac{Nd}\kappa} \phir(t)^d + t^{-1}\right)^{\frac{N+2}\lambda} \left( \int^t_{t/4} \int_{\B_{2R}}u^2 \, \d x \d t \right)^{\frac 2 \lambda}\\
 &\lesssim H_1 + H_2,
\end{align*}
where $H_1$ and $H_2$ are given and estimated from above as follows:
\begin{align*}
 H_1 &:= t^{\frac N \kappa} R^{-\frac 2 d} t^{-\frac{Nd}\kappa\cdot\frac{N+2}\lam} \phir(t)^{\frac{d(N+2)}\lam} \bigg( \int^t_{t/8} \underbrace{\int_{\B_{2R}} u(x,\tau)^2 \, \d x}_{\leq \ (2R)^N \|u(\tau)\|_{L^\infty(\B_{2R})}^2} \d \tau \bigg)^{\frac 2 \lam}\\
&\leq t^{\frac N \kappa - \frac{Nd}\kappa \cdot \frac{N+2}\lam} R^{-\frac 2 d} \phir(t)^{\frac{d(N+2)}\lam} (2R)^{\frac{2N}\lam}\\
&\quad \times \Bigg[
\int^t_{t/8} \Big( \underbrace{\tau^{\frac N\kappa} (2R)^{-\frac 2 d} \|u(\tau)\|_{L^\infty(\B_{2R})}^{q-1}}_{\leq \,\phir(\tau)} \Big)^{\frac 2{q-1}} \tau^{-\frac N \kappa \cdot \frac 2{q-1}} (2R)^{\frac 2 d \cdot \frac 2 {q-1}} \, \d \tau
\Bigg]^{\frac 2\lam}\\
&\lesssim t^{\frac N \kappa - \frac{Nd}\kappa \cdot \frac{N+2}\lam} R^{-\frac 2 d + \frac{2N}\lam + \frac 2 d \cdot \frac 2 {q-1} \cdot \frac 2 \lam} \phir(t)^{\frac{d(N+2)}\lam} \left( \int^t_{t/8} \tau^{-\frac N \kappa \cdot \frac 2 {q-1}} \phir(\tau)^{\frac 2 {q-1}} \, \d \tau \right)^{\frac 2 \lam}\\
&\stackrel{\eqref{pm:0}}= t^{\frac N \kappa - \frac{Nd}\kappa \cdot \frac{N+2}\lam} \phir(t)^{\frac{d(N+2)}\lam} \left( \int^t_{t/8} \tau^{-\frac N \kappa \cdot \frac 2 {q-1}} \phir(\tau)^{\frac 2 {q-1}} \, \d \tau \right)^{\frac 2 \lam}\\
&= \phir(t)^{\frac{d(N+2)}\lam} \left( t^{\frac N \kappa \cdot \frac \lam 2-\frac{Nd}\kappa \cdot \frac{N+2}2} \int^t_{t/8} \tau^{-\frac N \kappa \cdot \frac 2 {q-1}} \phir(\tau)^{\frac 2{q-1}} \, \d \tau \right)^{\frac 2 \lam}\\
&\lesssim \phir(t)^{\frac{d(N+2)}\lam} \left( 
\int^t_{t/8} \tau^{\frac N \kappa \cdot \frac \lam 2 - \frac{Nd}\kappa \cdot \frac{N+2}2 - \frac N \kappa \cdot \frac 2{q-1}} 
\phir(\tau)^{\frac 2{q-1}} \, \d \tau 
\right)^{\frac 2 \lam}\\
&\stackrel{\eqref{pm:1}}=\phir(t)^{\frac{d(N+2)}\lam} \left( 
\int^t_{t/8} \tau^{-\frac{Nd}\kappa} 
\phir(\tau)^{\frac 2{q-1}} \, \d \tau 
\right)^{\frac 2 \lam}\\
&\leq \phir(t)^{\frac{d(N+2)}\lam + \frac 2 {\lam(q-1)}} \left( 
\int^t_{t/8} \tau^{-\frac{Nd}\kappa} 
\phir(\tau)^{\frac 2 {q-1} - \frac 1 {q-1}} \, \d \tau 
\right)^{\frac 2 \lam}\\
&\stackrel{\eqref{pm:2}}= \phir(t)^{1 - \frac 2 \lam} \left( 
\int^t_{t/8} \tau^{-\frac{Nd}\kappa} 
\phir(\tau)^{\frac 1 {q-1}} \, \d \tau 
\right)^{\frac 2 \lam}\\
&\leq \vep \phir(t) + C_\vep \int^t_{t/8} \tau^{-\frac{Nd}\kappa} 
\phir(\tau)^{\frac 1 {q-1}} \, \d \tau
\quad \mbox{ for any } \ \vep > 0 \ \mbox{ and some } \ C_\vep > 0,
\end{align*}
where we used the fact that
\begin{align}
- \frac 2 d + \frac{2N}\lam + \frac 2 d \cdot \frac 2 {q-1} \cdot \frac 2 \lam &= 0,\label{pm:0}\\
\frac N \kappa \cdot \frac \lam 2 - \frac{Nd}\kappa \cdot \frac{N+2}2 - \frac N \kappa \cdot \frac 2{q-1} &= - \frac{Nd}\kappa,\label{pm:1}\\
 \frac{d(N+2)}\lam + \frac 2 {\lam(q-1)} &= 1 - \frac 2 \lam,\label{pm:2}
\end{align}
and moreover,
\begin{align*}
 H_2 &:= t^{\frac N \kappa} R^{-\frac 2 d} t^{-\frac{N+2}\lam} \left( \int^t_{t/8} \int_{\B_{2R}} u(x,\tau)^{q-1} u(x,\tau)^{3-q} \, \d x \d \tau \right)^{\frac 2 \lam}\\
&\leq t^{\frac N \kappa - \frac{N+2}\lam} R^{-\frac 2 d} \Bigg[\int^t_{t/8} (2R)^{\frac \kappa d} \underbrace{(2R)^{-\frac \kappa d} \|u(\tau)^{q-1}\|_{L^1(\B_{2R})}}_{\leq \ \psir(\tau)}\\
&\quad \times \bigg( \underbrace{\tau^{\frac N \kappa} (2R)^{-\frac 2 d} \|u(\tau)\|_{L^\infty(\B_{2R})}^{q-1}}_{\leq \ \phir(\tau)} \bigg)^{\frac{3-q}{q-1}} \left(\tau^{-\frac N \kappa} (2R)^{\frac 2 d}\right)^{\frac{3-q}{q-1}} \, \d \tau \Bigg]^{\frac 2 \lam}\\
&\lesssim t^{\frac N \kappa - \frac{N+2}\lam - \frac N \kappa \cdot \frac{3-q}{q-1} \cdot \frac 2 \lam} R^{-\frac 2 d + \frac \kappa d \cdot \frac 2 \lam + \frac 2 d \cdot \frac{3-q}{q-1} \cdot \frac 2 \lam} \left( \int^t_{t/8} \psir(\tau) \phir(\tau)^{\frac{3-q}{q-1}}  \, \d \tau \right)^{\frac 2 \lam}\\
&\stackrel{\eqref{pm:4}}\leq t^{\frac N \kappa - \frac{N+2}\lam - \frac N \kappa \cdot \frac{3-q}{q-1} \cdot \frac 2 \lam + \frac 2 \lam} \left[ \psir(t) \phir(t)^{\frac{3-q}{q-1}} \right]^{\frac 2 \lam}\\
&\stackrel{\eqref{pm:5}}= \phir(t)^{\frac{3-q}{q-1} \cdot \frac 2 \lam} \psir(t)^{\frac 2 \lam}\\
&\stackrel{\eqref{pm:6}}\leq \vep \phir(t) + C_\vep \psir(t)^{\frac 2 \kappa} \quad \mbox{ for any } \ \vep > 0 \ \mbox{ and some } \ C_\vep > 0.
\end{align*}
Here we used the relations,
\begin{align}
-\frac 2 d + \frac \kappa d \cdot \frac 2 \lam + \frac 2 d \cdot \frac{3-q}{q-1} \cdot \frac 2 \lam &= 0,\label{pm:4}\\
\frac N \kappa - \frac{N+2}\lam - \frac N \kappa \cdot \frac{3-q}{q-1} \cdot \frac 2 \lam + \frac 2 \lam&=0,\label{pm:5}\\
1 - \frac{3-q}{q-1} \cdot \frac 2 \lam &= \frac \kappa \lam. \label{pm:6}
\end{align}
Thus we conclude that
\begin{align*}
\lefteqn{
t^{\frac N \kappa} R^{-\frac 2 d} \|u(t)\|_{L^\infty(\B_R)}^{q-1}
}\\
&\leq \frac 1 4 \phir(t) + C \int^t_{t/8} \tau^{-\frac{Nd}\kappa} 
\phir(\tau)^{\frac 1 {q-1}} \, \d \tau + \frac 1 4 \phir(t) + C \psir(t)^{\frac 2 \kappa}\\
&\leq \frac 1 2 \phir(t) + C \int^t_0 \tau^{-\frac{Nd}\kappa} 
\phir(\tau)^{\frac 1 {q-1}} \, \d \tau + C \psir(t)^{\frac 2 \kappa}
\end{align*}
for $t > 0$ and $2R \geq r > 0$. Noting that each term of the right-hand side is non-decreasing in $t$ and taking a supremum in $R > r$ and $t \in (0,T)$ of both sides, we obtain
$$
\phir(T) \leq C \int^T_0 \tau^{-\frac{Nd}\kappa} 
\phir(\tau)^{\frac 1 {q-1}} \, \d \tau + C \psir(T)^{\frac 2 \kappa} \quad \mbox{ for } \ T > 0,
$$
which completes the proof.
\end{proof}
}

We next derive the second integral inequality.

\begin{lemma}[Integral inequality for $\psir$]\label{pm:e-2}
Under the same setting as above, there exist constants $C_3, C_4 > 0$ such that
\begin{align}\label{psi-ineq}
 \psir(t) &\leq C_3 \normtre{\mu}_r \nonumber \\
 &\quad + C_4 \left( \int^t_0 \tau^{\frac 1 \kappa - 1} \phir(\tau)^{\frac d 2} \psir(\tau) \, \d \tau + \int^t_0 \tau^{\frac 3 \kappa - 1} \phir(\tau)^{\frac 3 2 d} \psir(\tau) \, \d \tau \right)
\end{align}
for any $t > 0$ and $r > 0$.
\end{lemma}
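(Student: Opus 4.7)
The plan is to adapt the $L^1$-type energy estimate from the proof of Lemma~\ref{L:lL1} to the Finsler porous medium setting, tracking growth at infinity through the seminorm $\normtre{\,\cdot\,}_r$. Testing the subsolution inequality \eqref{es:4p} with $w = \zeta_R^2$, where $\zeta_R \colon \Rd \to [0,1]$ is smooth, supported in $\overline{\B_{2R}}$, identically $1$ on $\B_R$, and satisfies $H(\nabla \zeta_R) \leq C/R$, and then integrating in time over $(s,t)$, yields
\begin{equation*}
\int u(x,t)^{q-1} \zeta_R^2 \, \d x \leq \int u(x,s)^{q-1} \zeta_R^2 \, \d x + 2 \int_s^t \int H(\nabla u)\, H(\nabla \zeta_R)\, \zeta_R \, \d x \d \tau.
\end{equation*}
Passing to the $\liminf$ as $s \to 0_+$ and invoking \eqref{pm-ini}, the initial-data term is dominated by $(2R)^{\kappa/d}\normtre{\mu}_r$; after multiplication by $R^{-\kappa/d}$, this produces the contribution $C_3 \normtre{\mu}_r$.

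The heart of the argument is the bound on the gradient term, which I shall handle by imitating the approach of Lemma~\ref{pm:L:grad-L1}. With $u_\vep := u+\vep$ (taking $\vep \to 0_+$ at the end), $r := q/[2(q-1)] > 1$, and $\alpha := 1/2$, the Cauchy--Schwarz inequality gives
\begin{equation*}
\int H(\nabla u)\, H(\nabla \zeta_R)\, \zeta_R \, \d x \leq \Bigl(\int \tau^{1/2} H(\nabla u)^2 u_\vep^{(q-1)(r-1)-1} \zeta_R^2 \, \d x\Bigr)^{\!1/2} \Bigl(\int \tau^{-1/2} u_\vep^{q/2} H(\nabla \zeta_R)^2 \, \d x\Bigr)^{\!1/2},
\end{equation*}
using $1-(q-1)(r-1) = (q-1)r = q/2$. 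The first factor is controlled via the energy inequality obtained by testing \eqref{es:4p} with $(u_\vep^{q-1})^{r-1} \zeta_R^2$, reducing it to integrals of the form $\int_0^t \tau^{-1/2}\int u_\vep^{q/2}$ and $R^{-2} \int_0^t \tau^{1/2}\int u_\vep^{(4-q)/2}$; both are estimated via the pointwise bound $u^p \leq \|u\|_\infty^{p-(q-1)} u^{q-1}$ and the definitions of $\phir$ and $\psir$. The second Cauchy--Schwarz factor acquires an extra $R^{-2}$ from $H(\nabla \zeta_R)^2$. Using $\kappa = 2+Nd$ to convert the resulting exponents of $\tau$, namely $1/2 - 3Nd/(2\kappa) = 3/\kappa - 1$ and $-1/2 - Nd/(2\kappa) = 1/\kappa - 1$, the first factor is bounded by $C R^{1+\kappa/d}(I_1(t) + I_2(t))$ and the second by $C R^{\kappa/d - 1} I_1(t)$, where
\begin{equation*}
I_1(t) := \int_0^t \tau^{1/\kappa - 1}\phir(\tau)^{d/2} \psir(\tau) \, \d\tau, \qquad I_2(t) := \int_0^t \tau^{3/\kappa - 1}\phir(\tau)^{3d/2} \psir(\tau) \, \d\tau.
\end{equation*}

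Multiplying and invoking Young's inequality $[(I_1+I_2)\, I_1]^{1/2} \leq I_1 + \tfrac{1}{2} I_2$, the gradient term is controlled by $C R^{\kappa/d}(I_1(t)+I_2(t))$. Dividing by $R^{\kappa/d}$ (using $\zeta_R \equiv 1$ on $\B_R$) and then taking the supremum over $R \geq r$ and $\tau \in (0,t)$ on the left-hand side yields the desired \eqref{psi-ineq}. The principal technical obstacle is the careful bookkeeping of weights and exponents in the Cauchy--Schwarz split: one must arrange that the powers of $R$ and $\tau$ produced by the two factors combine to cancel exactly the $R^{-\kappa/d}$ normalization in the definition of $\psir$ and to reproduce the stated exponents $\tau^{1/\kappa - 1}$ and $\tau^{3/\kappa - 1}$, which relies critically on the identity $\kappa = 2+Nd$ and on the choices $r = q/[2(q-1)]$ and $\alpha = 1/2$.
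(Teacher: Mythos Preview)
Your proposal is correct and follows essentially the same approach as the paper. The paper tests \eqref{es:4p} with $\zeta_R^2$, uses \eqref{pm-ini} for the initial-data term, and then simply invokes the already-proved Lemma~\ref{pm:L:grad-L1} to bound the gradient contribution; you instead re-derive that gradient estimate inline via the same Cauchy--Schwarz split and energy inequality, with the only cosmetic difference being that you absorb the factor $H(\nabla\zeta_R)^2\lesssim R^{-2}$ into the second Cauchy--Schwarz factor rather than pulling out $R^{-1}$ first, which leads to the same $R^{\kappa/d}$ scaling after multiplication.
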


\begin{proof}[Outline of proof]
The lemma can be verified as in the proof of~\cite[Lemma 3.4., p.202]{DiHe89}. So we just give an outline. Let $\zeta_R$ be a smooth cut-off (in space only) function supported over the set $\overline{\B_{2R}}$ and substitute $w = \zeta_R^2$ in \eqref{es:4p}. Then we have
\begin{align*}
 \frac{\d}{\d t} \left( \int_{\B_{2R}} u^{q-1} \zeta_R^2 \, \d x\right)
+ 2 \int_{\B_{2R}} (H \nabla_\xi H)(\nabla u) \cdot (\nabla \zeta_R) \zeta_R \, \d x \leq 0.
\end{align*}
To handle the second term in the left-hand side, we recall Lemma \ref{pm:L:grad-L1}. Accordingly, it enables us to estimate
$$
R^{-\frac \kappa d} \int_{\B_R} u(x,t)^{q-1} \, \d x
$$
(cf.~the definition of $\psi_r(t)$) from above by use of the definitions of $\phi_r(t)$ and $\psi_r(t)$.
\end{proof}

\prf{
\begin{proof}
Let $\zeta_R$ be the smooth cut-off function defined as in the proof of Lemma \ref{pm:L:grad-L1}. Substitute $w = \zeta_R^2$ in \eqref{es:4p}, we have
\begin{align*}
 \frac{\d}{\d t} \left( \int_{\B_{2R}} u^{q-1} \zeta_R^2 \, \d x\right)
+ 2 \int_{\B_{2R}} (H \nabla_\xi H)(\nabla u) \cdot (\nabla \zeta_R) \zeta_R \, \d x \leq 0.
\end{align*}
Integrating both sides in $(0,t)$ and exploiting Lemma \ref{pm:L:grad-L1}, one can derive
\begin{align*}
\lefteqn{
 \int_{\B_{R}} u(x,t)^{q-1} \, \d x
}\\
&\leq \int_{\B_{2R}} u(x,0_+)^{q-1} \, \d x
+ \frac C R \int^t_0 \int_{\B_{2R}} H(\nabla u) \zeta_R \, \d x \d \tau\\
&\leq C (2R)^{\frac \kappa d} \normtre{\mu}_r 
\\
&\quad + \frac C R \left(
R^{1+\frac \kappa d} \int^t_0 \tau^{\frac 1 \kappa - 1} \phir(\tau)^{\frac d 2} \psir(\tau) \, \d \tau + R^{1+\frac \kappa d} \int^t_0 \tau^{\frac 1 2 - \frac{3Nd}{2\kappa}} \phir(\tau)^{\frac 3 2 d} \psir(\tau) \, \d \tau
\right)^{1/2}\\
&\qquad \times \left( R^{1+\frac \kappa d} \int^t_0 \tau^{\frac 1 \kappa - 1} \phir(\tau)^{\frac d 2} \psir(\tau) \, \d \tau \right)^{1/2},
\end{align*}
where
$$
(2R)^{-\frac \kappa d} \int_{\B_{2R}} u(x,0_+)^{q-1} \, \d x := \liminf_{s \to 0_+}\left( (2R)^{-\frac \kappa d} \int_{\B_{2R}} u(x,s)^{q-1} \, \d x \right)
\stackrel{\eqref{pm-ini}}\lesssim \normtre{\mu}_r.
$$
Therefore we obtain
\begin{align*}
\lefteqn{
R^{-\frac \kappa d} \int_{\B_R} u(x,t)^{q-1} \, \d x  
}\\
&\leq C 2^{\frac \kappa d} \normtre{\mu}_r + C \bigg( \int^t_0 \tau^{\frac 1 \kappa - 1} \phir(\tau)^{\frac d 2} \psir(\tau) \, \d \tau + \int^t_0 \tau^{\frac 1 2 - \frac{3Nd}{2\kappa}} \phir(\tau)^{\frac 3 2 d} \psir(\tau) \, \d \tau \bigg)^{1/2}\\
&\qquad \times \left( \int^t_0 \tau^{\frac 1 \kappa - 1} \phir(\tau)^{\frac d 2} \psir(\tau)\, \d \tau\right)^{1/2},
\end{align*}
whence follows that
\begin{align*}
 \psir(t) &\leq C_3 \normtre{\mu}_r + C_4\left( \int^t_0 \tau^{\frac 1 \kappa - 1} \phir(\tau)^{\frac d 2} \psir(\tau) \, \d \tau + \int^t_0 \tau^{\frac 1 2 - \frac{3Nd}{2\kappa}} \phir(\tau)^{\frac 3 2 d} \psir(\tau) \, \d \tau \right)
\end{align*}
for some $C_3, C_4 \geq 0$, which along with the fact that $\frac 1 2 - \frac{3Nd}{2\kappa} = \frac 3 \kappa - 1$ completes the proof.
\end{proof}
}

We are now in a position to prove Lemma \ref{pm:lLE}. To exhibit how $\Tr(\mu)$ is selected, we shall give a complete proof for the lemma, although it is similar to~\cite[Lemma 3.5, p.202]{DiHe89}.

\begin{proof}[Proof of Lemma \ref{pm:lLE}]
Fix $t_* > 0$ arbitrarily. Since $t \mapsto \psir(t)$ is non-decreasing by definition, we derive from \eqref{phi-ineq} that
\begin{align*}
 \phir(t) \leq C_1\int^t_0 \tau^{-\frac{Nd}{\kappa}} \phir(\tau)^{\frac 1 {q-1}} \, \d \tau + C_2 \psir(t_*)^{\frac 2 \kappa}
\end{align*}
for $t \in (0,t_*)$. Then $\phir$ can be majorized by the solution of the Cauchy problem,
$$
H_\vep'(t) = C_1 t^{-\frac{Nd}{\kappa}} H_\vep(t)^{\frac 1 {q-1}}, \quad H_\vep(0) = C_2 \psir(t_*)^{\frac 2 \kappa}+\vep,
$$
whose solution is given by
$$
H_\vep(t) = \left[ \left(C_2 \psir(t_*)^{\frac 2 \kappa}+\vep\right)^{-d} - \frac{d\kappa}2 C_1t^{\frac{2}\kappa}\right]_+^{-\frac 1 d}
$$
(see also Lemma \ref{A:L:cp} in Appendix). Hence letting $\vep \to 0_+$, we infer that
$$
\phir(t) \leq \left( C_2^{-d} \psir(t_*)^{- \frac{2d}{\kappa}} - \frac{d\kappa}2 C_1 t^{\frac2\kappa}\right)_+^{-\frac 1 d}
\quad \mbox{ for } \ t \in (0,t_*),
$$
which provides a bound of $\phir(t)$, unless the quantity in the parentheses is non-positive (then no information comes out any longer). In particular, setting $t_* = t$, we have
$$
\phir(t) \leq \psir(t)^{\frac 2 \kappa }\left( C_2^{-d} - \frac{d\kappa}2 C_1 t^{\frac 2 \kappa} \psir(t)^{\frac{2d}\kappa} \right)_+^{-\frac 1 d},
$$
and hence, for any $t > 0$ satisfying $( C_2^{-d} - \frac{d\kappa}2 C_1 t^{\frac 2 \kappa} \psir(t)^{\frac{2d}\kappa} )_+^{-\frac 1 d} \leq 2C_2$, that is,
\begin{equation}\label{psi-cond}
\left[t\psir(t)^d\right]^{\frac{2}{\kappa}} \leq \frac{2}{d\kappa C_1} \left(1 - 2^{-d}\right) C_2^{-d},
\end{equation}
we have
\begin{equation}\label{mo}
\phir(t) \leq 2 C_2 \psir(t)^{\frac 2 \kappa}. 
\end{equation}
Therefore it follows that
\begin{align*}
 \psir(t) &\stackrel{\eqref{psi-ineq}}\leq C_3 \normtre{\mu}_r + C_4\left( \int^t_0 \tau^{\frac 3 \kappa - 1} \phir(\tau)^{\frac 3 2 d} \psir(\tau) \, \d \tau + \int^t_0 \tau^{\frac 1 \kappa - 1} \phir(\tau)^{\frac d 2} \psir(\tau) \, \d \tau\right)\\
&\stackrel{\eqref{mo}}\leq C_3 \normtre{\mu}_r + C_4 \left[ (2C_2)^{\frac32d} \int^t_0 \tau^{\frac 3 \kappa - 1} \psir(\tau)^{1 + \frac{3d} \kappa} \, \d \tau + (2C_2)^{\frac d2} \int^t_0 \tau^{\frac 1 \kappa - 1} \psir(\tau)^{1+\frac d \kappa} \, \d \tau\right].
\end{align*}
Here we note that
$$
\tau^{\frac 3 \kappa - 1} \psir(\tau)^{1 + \frac{3d} \kappa}
= \tau^{\frac 1 \kappa - 1} \psir(\tau)^{1+\frac d \kappa} \left[ \tau \psir(\tau)^d \right]^{\frac 2 \kappa},
$$
which along with \eqref{psi-cond} gives
$$
\psir(t) \leq C_3 \normtre{\mu}_r + C_5 \int^t_0 \tau^{\frac 1 \kappa - 1} \psir(\tau)^{1+\frac d \kappa} \, \d \tau
$$
for some constant $C_5 > 0$. Then $\psir(t)$ is majorized by the solution
$$
G_\vep(t) = \left[ (C_3\normtre{\mu}_r + \vep)^{-\frac d\kappa} - C_5 d t^{\frac 1 \kappa}\right]^{-\frac \kappa d}_+
$$
of the Cauchy problem,
$$
G'(t) = C_5 t^{\frac 1 \kappa - 1} G^{1+\frac d \kappa}, \quad G_\vep(0) = C_3 \normtre{\mu}_r + \vep
$$
(see also Lemma \ref{A:L:cp} in Appendix). Thus passing to the limit as $\vep \to 0_+$, one obtains
$$
\psir(t) \leq \left[( C_3 \normtre{\mu}_r )^{-\frac d\kappa} - C_5 d t^{\frac 1 \kappa}\right]^{-\frac \kappa d}_+
= \normtre{\mu}_r \left( C_3^{-\frac d \kappa} - C_5 d \normtre{\mu}_r^{\frac d\kappa} t^{\frac 1 \kappa} \right)^{-\frac \kappa d}_+,
$$
which implies
\begin{equation}\label{III}
\normtre{u(t)^{q-1}}_r \leq \psir(t) \leq C \normtre{\mu}_r 
\end{equation}
whenever 
\begin{equation}\label{pm:ini}
C_5 d \normtre{\mu}_r^{\frac d \kappa} t^{\frac 1 \kappa} \leq \frac12 C_3^{-\frac d \kappa}, \ \mbox{ i.e., } \
0 \leq t \leq \left( \frac{C_3^{-\frac d \kappa}}{2 C_5 d \normtre{\mu}_r^{\frac d \kappa}}\right)^\kappa \simeq \normtre{\mu}_r^{-d}.
\end{equation}
Thus (i) follows. In particular, the estimate above yields
\begin{equation}\label{UUU}
t \psir(t)^d \lesssim t \normtre{\mu}_r^{d},
\end{equation}
and hence, one can take a constant $\Tr(\mu) \simeq \normtre{\mu}_r^{-d} > 0$ as in \eqref{Tmu} such that \eqref{psi-cond} as well as \eqref{pm:ini} hold for any $t \in (0,\Tr(\mu))$. Moreover, it follows from \eqref{mo} that
$$
\|u(t)\|_{L^\infty(\B_R)}^{q-1} \lesssim t^{-\frac N \kappa} R^{\frac 2 d} \normtre{\mu}_r^{\frac 2 \kappa} \quad \mbox{ for any } \ t \in (0,\Tr(\mu)),
$$
which implies (ii). Finally, by Lemma \ref{pm:L:grad-L1} along with \eqref{psi-cond}--\eqref{III} and \eqref{UUU}, one can verify that
\begin{align}
 \lefteqn{
 \int^t_0 \|H(\nabla u)\|_{L^1(\B_R)} \, \d \tau
}\nonumber\\
 &\leq C R^{1+\frac \kappa d} \left(
\int^t_0 \tau^{\frac 1 2 - \frac{3Nd}{2\kappa}} \phir(\tau)^{\frac 3 2 d} \psir(\tau) \, \d \tau  + \int^t_0 \tau^{\frac 1 \kappa - 1} \phir(\tau)^\frac d 2 \psir(\tau) \, \d \tau \right)\nonumber\\
&\leq C R^{1+\frac \kappa d} \left(\int^t_0 \tau^{\frac 1 \kappa - 1} \, \d \tau\right) \normtre{\mu}_r^{1+\frac d \kappa}\nonumber \quad \mbox{(by \eqref{psi-cond}--\eqref{III} and \eqref{UUU})}\\
&\leq C R^{1+\frac \kappa d} t^{\frac 1 \kappa} \normtre{\mu}_r^{1+\frac d \kappa}
\quad \mbox{ for any } \ t \in (0,\Tr(\mu)).\label{A}
\end{align}
Here we used the fact that
\begin{align*}
 \tau^{\frac12 - \frac{3Nd}{2\kappa}} \phi_r(\tau)^{\frac32d} \psi_r(\tau)
&\stackrel{\eqref{mo}}\lesssim \tau^{\frac3\kappa-1}\psi_r(\tau)^{\frac3\kappa d+1}\\
&= \left[\tau \psi_r(\tau)^d\right]^{\frac2\kappa}\tau^{\frac1\kappa-1} \psi_r(\tau)^{1+\frac d\kappa}
\stackrel{\eqref{psi-cond}}\lesssim \tau^{\frac1\kappa-1} \psi_r(\tau)^{1+\frac d\kappa}.
\end{align*}
This completes the proof.
\end{proof}

We further obtain the following corollary:

\begin{corollary}[Local estimates for gradients]\label{pm:lGE}
Let $u = u(x,t) : Q_T \to [0,+\infty)$ be a non-negative measurable function satisfying \eqref{es:1}, \eqref{es:4}, \eqref{pm-ini} and \eqref{phi_psi_wd} with $1 < q < 2$. Then for each $r>0$ it holds that
\begin{align}
\int^t_0 \tau^{\frac12} \left( \int_{\B_R} H(\nabla u^{\frac{4-q}4})^2 \zeta_R^2\, \d x\right) \, \d \tau
\lesssim R^{1+\frac\kappa d} t^{\frac 1 \kappa} \normtre{\mu}_r^{1+\frac d\kappa}\label{pm:HuL2}
\end{align}
for any $t \in (0,\Tr(\mu))$ and $R \in [r,+\infty)$ satisfying $\B_{2R} \subset \Omega$. Furthermore, for any $0 < t_1 < t_2 < \Tr(\mu)$ and $R \in [r,+\infty)$ satisfying $\B_{2R} \subset \Omega$, there exists a constant $M \geq 0$ depending only on $q,N,\normtre{\mu}_r,t_1,t_2,r,R$ such that
\begin{equation}\label{pm:HL2}
\int^{t_2}_{t_1} \int_{\B_R} H(\nabla u)^2 \, \d x \d \tau \leq M.
\end{equation}
\end{corollary}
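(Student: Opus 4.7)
The first estimate is essentially a refinement of Lemma~\ref{pm:L:grad-L1} in which one retains, rather than discards, the positive dissipation term on the left-hand side. I would substitute $w=(u_\vep^{q-1})^{r-1}\zeta_R^2$ into \eqref{es:4p} with the distinguished exponents $r=q/(2(q-1))>1$ and $\alpha=1/2$ (recall $1<q<2$), multiply by $\tau^{1/2}$ and integrate over $(0,t)$. The good dissipative term on the left reads
$$
\frac{(q-1)(r-1)}{2}\int_0^t\!\tau^{1/2}\!\!\int_{\B_{2R}}\! H(\nabla u)^2\,u_\vep^{-q/2}\zeta_R^2\,\d x\d\tau=\frac{2-q}{4}\int_0^t\!\tau^{1/2}\!\!\int_{\B_{2R}}\! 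H(\nabla u)^2\,u_\vep^{-q/2}\zeta_R^2\,\d x\d\tau,
$$
since $(q-1)(r-1)-1=-q/2$. By monotone convergence (the factor $u_\vep^{-q/2}$ increases as $\vep\to0_+$) together with the chain-rule identity $H(\nabla u^{(4-q)/4})^2=\bigl((4-q)/4\bigr)^2 u^{-q/2}H(\nabla u)^2$, this term converges, after $\vep\to0_+$, to a positive multiple of $\int_0^t\tau^{1/2}\!\int_{\B_{2R}}H(\nabla u^{(4-q)/4})^2\zeta_R^2\,\d x\d\tau$, which, since $\zeta_R\equiv1$ on $\B_R$, controls the quantity appearing in \eqref{pm:HuL2}.

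The terms on the right-hand side of the resulting energy inequality are precisely
$$
C\int_0^t\tau^{-1/2}\!\!\int_{\B_{2R}}u^{q/2}\,\d x\d\tau+C\|H(\nabla\zeta_R)\|_{L^\infty}^2\int_0^t\tau^{1/2}\!\!\int_{\B_{2R}}u^{(4-q)/2}\,\d x\d\tau,
$$
the two integrals estimated inside the proof of Lemma~\ref{pm:L:grad-L1} (the counterparts of (U) and (I) there), and bounded by
$$
R^{1+\kappa/d}\!\int_0^t\tau^{1/\kappa-1}\phi_r(\tau)^{d/2}\psi_r(\tau)\,\d\tau+R^{1+\kappa/d}\!\int_0^t\tau^{3/\kappa-1}\phi_r(\tau)^{3d/2}\psi_r(\tau)\,\d\tau
$$
(using $\tfrac12-\tfrac{3Nd}{2\kappa}=\tfrac{3}{\kappa}-1$). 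Since $t\in(0,\Tr(\mu))$, I can invoke \eqref{mo}, i.e.\ $\phi_r(\tau)\lesssim\psi_r(\tau)^{2/\kappa}$, and the bound $\psi_r(\tau)\lesssim\normtre{\mu}_r$ from Lemma~\ref{pm:lLE}(i); the same reduction that produced \eqref{A} collapses both integrals to $R^{1+\kappa/d}t^{1/\kappa}\normtre{\mu}_r^{1+d/\kappa}$, which is \eqref{pm:HuL2}.

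For the $L^2$ gradient estimate \eqref{pm:HL2}, I interpolate between \eqref{pm:HuL2} and the $L^\infty$ bound of Lemma~\ref{pm:lLE}(ii). Writing $H(\nabla u)^2=\bigl(4/(4-q)\bigr)^2 u^{q/2}H(\nabla u^{(4-q)/4})^2$ on $\{u>0\}$ and pulling out the essential supremum of $u^{q/2}$, I get
$$
\int_{t_1}^{t_2}\!\!\int_{\B_R}H(\nabla u)^2\,\d x\d\tau\le C_q\sup_{\tau\in[t_1,t_2]}\|u(\tau)\|_{L^\infty(\B_R)}^{q/2}\,t_1^{-1/2}\!\int_0^{t_2}\!\!\tau^{1/2}\!\!\int_{\B_R}H(\nabla u^{(4-q)/4})^2\,\d x\d\tau,
$$
where the factor $t_1^{-1/2}$ comes from $1\le(\tau/t_1)^{1/2}$ on $[t_1,t_2]$. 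The $L^\infty$-norm is controlled by Lemma~\ref{pm:lLE}(ii) in terms of $t_1,R,\normtre{\mu}_r$, and the weighted gradient integral by \eqref{pm:HuL2}; both factors are finite, yielding the desired constant $M=M(q,N,\normtre{\mu}_r,t_1,t_2,r,R)$. The main technical subtlety is the passage $\vep\to0_+$ in the dissipative term, since $u_\vep^{-q/2}$ blows up on $\{u=0\}$; this is bypassed by reformulating the product $u^{-q/2}H(\nabla u)^2$ through $H(\nabla u^{(4-q)/4})^2$, whose gradient is legitimately a Sobolev derivative because $s\mapsto s^{(4-q)/4}$ is locally Lipschitz away from zero and $u$ is locally bounded, so the monotone-convergence step is rigorous.
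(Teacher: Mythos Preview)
Your proposal is correct and follows essentially the same route as the paper: both retain the dissipative term from the energy identity \eqref{E} (derived in the proof of Lemma~\ref{pm:L:grad-L1} via the test function $(u_\vep^{q-1})^{r-1}\zeta_R^2$ with $r=q/(2(q-1))$, $\alpha=1/2$), rewrite $H(\nabla u)^2 u_\vep^{-q/2}$ as a constant times $H(\nabla u_\vep^{(4-q)/4})^2$, bound the right-hand side by the $\phi_r,\psi_r$ integrals via \eqref{U}, \eqref{I}, and then reduce these using \eqref{mo}--\eqref{III}, \eqref{UUU} exactly as in the derivation of \eqref{A}; for \eqref{pm:HL2} both combine \eqref{pm:HuL2} with the $L^\infty$ bound of Lemma~\ref{pm:lLE}(ii), your factorization $H(\nabla u)^2=C_q\,u^{q/2}H(\nabla u^{(4-q)/4})^2$ being equivalent to the paper's lower bound $H(\nabla u^{(4-q)/4})^2\geq C_q\,\|u\|_{L^\infty}^{-q/2}H(\nabla u)^2$.
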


\begin{proof}[Outline of proof]
Indeed, as a by-product of the proof for Lemma \ref{pm:L:grad-L1}, one can derive
\begin{align*}
\lefteqn{
\int^t_0 \tau^{\frac12} \left( \int_{\B_{2R}} H(\nabla u^{\frac{4-q}4})^2 \zeta_R^2\, \d x\right) \, \d \tau
}\\
&\leq R^{1+\frac \kappa d} \int^t_0 \tau^{\frac 1 2 - \frac{3 N d}{2\kappa}} \phir(\tau)^{\frac 3 2 d} \psir(\tau) \, \d \tau
 + R^{1 + \frac \kappa d} \int^t_0 \tau^{\frac 1 \kappa - 1} \phir(\tau)^{\frac d 2} \psir(\tau) \, \d \tau.
\end{align*}
Hence using \eqref{mo}--\eqref{III} and \eqref{UUU}, we can obtain \eqref{pm:HuL2}. Moreover, \eqref{pm:HL2} follows from \eqref{pm:HuL2} and (ii) of Lemma \ref{pm:lLE}. 
\end{proof}

\prf{
\begin{proof}
Note that
$$
H(\nabla u)^2 u_\vep^{(q-1)(r-1)-1}
= H(\nabla u)^2 u_\vep^{-\frac q2}
= H(u_\vep^{-\frac q 4} \nabla u)^2
= \left(\frac 4{4-q}\right)^2 H(\nabla u_\vep^{\frac{4-q}4})^2.
$$
Recall \eqref{E} along with \eqref{U} and \eqref{I}. We have
\begin{align*}
\lefteqn{
\int^t_0 \tau^{\frac12} \left( \int_{\B_{2R}} H(\nabla u^{\frac{4-q}4})^2 \zeta_R^2\, \d x\right) \, \d \tau
}\\
&\lesssim R^{1+\frac \kappa d} \int^t_0 \tau^{\frac 1 2 - \frac N \kappa \cdot \frac 3 2 d} \phir(\tau)^{\frac 3 2 d} \psir(\tau) \, \d \tau
 + R^{1 + \frac \kappa d} \int^t_0 \tau^{\frac 1 \kappa - 1} \phir(\tau)^{\frac d 2} \psir(\tau) \, \d \tau.
\end{align*}
Then, as in the proof of \eqref{A}, we can derive from \eqref{mo}--\eqref{III} and \eqref{UUU} that
\begin{align*}
\int^t_0 \tau^{\frac12} \left( \int_{\B_{2R}} H(\nabla u^{\frac{4-q}4})^2 \zeta_R^2\, \d x\right) \, \d \tau
&\leq C R^{1+\frac \kappa d} t^{\frac 1 \kappa} \normtre{\mu}_r^{1+\frac d \kappa}
\end{align*}
for any $t \in (0,\Tr(\mu))$. Furthermore, using (ii) of Lemma \ref{pm:lLE}, we can deduce that
\begin{align*}
\lefteqn{
\int^t_0 \tau^{\frac12} \left( \int_{\B_{2R}} H(\nabla u^{\frac{4-q}4})^2 \zeta_R^2\, \d x\right) \, \d \tau
}\\
&\geq \int^t_0 \tau^{\frac12} \|u(\tau)\|_{L^\infty(\B_{2R})}^{-\frac q2} \left( \int_{\B_{2R}} H(\nabla u)^2 \zeta_R^2\, \d x\right) \, \d \tau\\
&\gtrsim R^{-\frac q{d(q-1)}} \normtre{\mu}_r^{-\frac q{\kappa(q-1)}} \int^t_0 \tau^{\frac12+\frac{Nq}{2\kappa(q-1)}} \left( \int_{\B_R} H(\nabla u)^2\, \d x\right) \, \d \tau,
\end{align*}
which implies \eqref{pm:HL2} immediately. This completes the proof.
\end{proof}
}

Now, all the estimates obtained so far can be extended to (possibly \emph{sign-changing}) locally bounded local weak \emph{solutions} to \eqref{es:1} and \eqref{es:3} for $1 < q < 2$. 

\begin{lemma}[From ``\emph{non-negative subsolution}'' to ``\emph{sign-changing solution}'']\label{L:sc}
All the assertions of Lemma \ref{pm:lLE} and Corollary \ref{pm:lGE} also hold for {\rm (}possibly \emph{sign-changing}{\rm )} locally bounded local weak \emph{solutions} to \eqref{es:1} and \eqref{es:3} for $1 < q < 2$. 
\end{lemma}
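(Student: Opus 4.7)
The plan is to reduce the sign-changing case to the already settled non-negative subsolution case by showing that $u_+ := \max\{u,0\}$ and $u_- := \max\{-u,0\}$ are both non-negative locally bounded weak subsolutions in the sense of \eqref{es:1}, \eqref{es:4}. To this end, I would run a Kato-type argument. Let $\chi_\vep : \R \to [0,1]$ be a smooth non-decreasing approximation of the Heaviside function with $\chi_\vep \equiv 0$ on $(-\infty,0]$, $\chi_\vep \equiv 1$ on $[\vep,+\infty)$ and $\chi_\vep' \geq 0$. For any non-negative $\varphi \in C^\infty_c(B \times (0,T))$ with $B \Subset \Omega$, I would test \eqref{es:3p} for $u$ with $w = \chi_\vep(u)\varphi$ and decompose
\begin{align*}
(H\dH)(\nabla u) \cdot \nabla(\chi_\vep(u)\varphi) = \chi_\vep(u)(H\dH)(\nabla u) \cdot \nabla \varphi + \chi_\vep'(u) H(\nabla u)^2 \varphi.
\end{align*}
Since the last term is non-negative, dropping it produces an inequality; handling the time-derivative term via the chain rule in the dual-space sense with $F_\vep(s) := (q-1)\int_0^s \chi_\vep(r)|r|^{q-2}\sgn(r)\,\d r \to (s_+)^{q-1}$ as $\vep \to 0_+$, and passing to the limit $\vep \to 0_+$, one obtains \eqref{es:4} for $u_+$ (using that $\chi_\vep(u)\nabla u \to \mathbf{1}_{\{u>0\}} \nabla u = \nabla u_+$ and that $(H\dH)(\nabla u)\mathbf{1}_{\{u>0\}} = (H\dH)(\nabla u_+)$ a.e.). The identical argument applied with $\chi_\vep(-u)$ handles $u_-$.

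Next, I would invoke the Jordan decomposition $\mu = \mu_+ - \mu_-$ of the initial datum, which yields $\normtre{\mu_\pm}_r \leq \normtre{\mu}_r$ since $|\mu| = \mu_+ + \mu_-$. The hypotheses \eqref{pm-ini} and \eqref{phi_psi_wd} for $u_\pm$ (with $\mu_\pm$ in place of $\mu$) follow at once from those for $u$ together with the elementary inequalities $(u_\pm)^{q-1} \leq |u|^{q-1}$ and $\|u_\pm\|_{L^\infty} \leq \|u\|_{L^\infty}$. Hence Lemma \ref{pm:lLE} and Corollary \ref{pm:lGE} are applicable to each of $u_+$ and $u_-$ on $(0,\Tr(\mu))$ and yield bounds for $\normtre{u_\pm(t)^{q-1}}_r$, $\|u_\pm(t)\|_{L^\infty(\B_R)}^{q-1}$, $\int_0^t \|H(\nabla u_\pm)\|_{L^1(\B_R)}\,\d\tau$ and the energy quantities \eqref{pm:HuL2}, \eqref{pm:HL2}, each controlled by the corresponding expression in $\normtre{\mu}_r$.

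The assertions for sign-changing $u$ then follow by superposition from the pointwise-a.e.\ identities
\begin{align*}
|u|^{q-1} = (u_+)^{q-1} + (u_-)^{q-1}, \quad H(\nabla u) = H(\nabla u_+) + H(\nabla u_-),
\end{align*}
where the second equality uses that $\nabla u_+$ and $\nabla u_-$ have a.e.~disjoint supports together with the evenness $H(-\xi) = H(\xi)$, and the analogous decomposition $|u|^{(4-q)/4} = (u_+)^{(4-q)/4} + (u_-)^{(4-q)/4}$ for \eqref{pm:HuL2}. Summing the estimates for $u_+$ and $u_-$ and absorbing multiplicative constants yields assertions (i)--(iii) of Lemma \ref{pm:lLE} as well as \eqref{pm:HuL2}, \eqref{pm:HL2} for $u$ itself, with the same local-existence time $\Tr(\mu) \simeq \normtre{\mu}_r^{-d}$.

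The main technical obstacle will be the rigorous treatment of the time derivative in the Kato-type argument above: since $\partial_t u^{q-1}$ lives only in $L^2_{\rm loc}(0,T;H^{-1}(B))$ via \eqref{es:1}, the formal identity $\langle \partial_t u^{q-1}, \chi_\vep(u)\varphi\rangle = \partial_t \int F_\vep(u)\varphi - \int F_\vep(u)\partial_t\varphi$ must be justified through a Steklov-averaging (or mollification-in-time) procedure, and the subsequent limit $\vep \to 0_+$ must be handled carefully near $\{u = 0\}$ where $s \mapsto F_\vep(s)$ degenerates when $q < 2$. Nevertheless, the local $L^\infty$-bound on $u$ combined with dominated convergence should suffice: $F_\vep(u) \to (u_+)^{q-1}$ pointwise with a uniform majorant, and the gradient term is controlled analogously.
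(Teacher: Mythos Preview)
Your proposal is correct and follows essentially the same route as the paper: decompose $u = u_+ - u_-$, verify that $u_\pm$ are non-negative locally bounded subsolutions in the sense of \eqref{es:1}, \eqref{es:4}, apply Lemma~\ref{pm:lLE} and Corollary~\ref{pm:lGE} to each, and recombine via $|u|^{q-1} = (u_+)^{q-1} + (u_-)^{q-1}$ and the disjoint-support identity for gradients. The paper's proof is far terser—it simply asserts that $u_\pm$ ``turn out to be'' subsolutions without writing out the Kato-type argument you sketch, and it uses the inequality $|\nabla u| \le |\nabla u_+| + |\nabla u_-|$ rather than your (equivalent, and in fact sharper) equality $H(\nabla u) = H(\nabla u_+) + H(\nabla u_-)$; your detour through the Jordan decomposition of $\mu$ is also unnecessary, since $(u_\pm)^{q-1} \le |u|^{q-1}$ already yields \eqref{pm-ini} for $u_\pm$ directly with the same $\normtre{\mu}_r$.
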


\begin{proof}
All the estimates obtained in Lemma \ref{pm:lLE} and Corollary \ref{pm:lGE} have been proved for non-negative measurable functions satisfying \eqref{es:1}, \eqref{es:4}. Now, let $u$ be a locally bounded local weak solution and define the {\em positive-part} $u_+$ and {\em negative-part} $u_-$ of $u$ by
$$
u_+ := u \vee 0 \geq 0, \quad u_- := (-u) \vee 0 \geq 0
$$
(hence $u = u_+ - u_-$ and $|u| = u_+ + u_-$), which turn out to be non-negative measurable functions satisfying \eqref{es:1}, \eqref{es:4}. Moreover, note that
$$
\int_B |u|^r \, \d x = \int_B u_+^r \, \d x + \int_B u_-^r \, \d x
$$
for any $r \in [1,+\infty)$ and domain $B$ in $\R^\dim$. Furthermore, we also observe that $u_\pm \in W^{1,r}(B)$ for $u \in W^{1,r}(B)$ with $r \in [1,+\infty)$, and moreover,
$$
|\nabla u| = |\nabla u_+ - \nabla u_-| \leq |\nabla u_+| + |\nabla u_-|.
$$
Thus all the assertions of Lemma \ref{pm:lLE} and Corollary \ref{pm:lGE} have been extended to {\rm (}possibly sign-changing{\rm )} locally bounded local weak solutions. 
\end{proof}

\subsection{Proof of Theorem \ref{T:pme}}

We are in a position to prove Theorem \ref{T:pme}.

\begin{proof}[Proof of Theorem \ref{T:pme}]
Let $u_n$ be (the zero extension onto $\Rd$ of) an energy solution of \eqref{pde-n}--\eqref{ic-n} on $[0,+\infty)$ (see \S \ref{Ss:aprx}). Fix $r > 0$. For $n \in \mathbb{N}$ large enough, we find that \eqref{pm-ini} and \eqref{phi_psi_wd} hold true. Indeed, letting $\varphi \in C^\infty_c(\Rd)$ such that $0 \leq \varphi \leq 1$ in $\Rd$, $\varphi \equiv 1$ in $\B_R$ and $\varphi \equiv 0$ on $\Rd \setminus \B_{2R}$, we see that
\begin{align*}
\lefteqn{
 \liminf_{t \to 0_+} \int_{\B_R} |u_n(x,t)|^{q-1} \, \d x
}\\
&\leq \liminf_{t \to 0_+} \int_{\Rd} |u_n(x,t)|^{q-1} \varphi(x)\, \d x\\
&= \int_{\Rd} |\mu_n(x)| \varphi(x) \, \d x
\to \int_{\Rd} \varphi(x) \, \d |\mu|(x) \leq \mu(\B_{2R}).
\end{align*}
It follows that
$$
\liminf_{t \to 0_+} R^{-\frac\kappa d} \int_{\B_R} |u_n(x,t)|^{q-1} \, \d x 
\lesssim (2R)^{-\frac\kappa d} |\mu|(\B_{2R})
\leq \normtre{\mu}_r
$$
for $2R \geq r$ and $n \in \mathbb{N}$ large enough. Thus it yields \eqref{pm-ini} for $u = u_n$ with $n \gg 1$. Moreover, \eqref{phi_psi_wd} readily follows from the regularity for $u_n$ mentioned in \S \ref{Ss:aprx}.

Then, thanks to (i) and (iii) of Lemma \ref{pm:lLE}, we observe that
\begin{align*}
 \int^t_0\int_{\B_R} |u_n(x,\tau)^{q-1}| \, \d x \d \tau &\lesssim  t R^{\frac \kappa d} \normtre{\mu}_r,\\
 \int^t_0 \|H(\nabla u_n)\|_{L^1(\B_R)} \, \d \tau &\lesssim t^{\frac 1 \kappa} R^{1+\frac \kappa d} \normtre{\mu}_r^{1+\frac d \kappa},
\end{align*}
for $R > n$ and $t \in (0,\Tr(\mu))$. Thus (A2) with \eqref{F-set} has been checked for $S = \Tr(\mu)$. Furthermore, by (ii) of Lemma \ref{pm:lLE}, it follows that
\begin{align*}
\sup_{t \in (t_1,t_2)} \left( \int_{\B_R} |u_n(x,t)| \, \d x\right)
&\lesssim t_1^{-\frac N {\kappa(q-1)}} R^{\frac 2 {d(q-1)} + N} \normtre{\mu}_r^{\frac 2{\kappa(q-1)}},\\
\int^{t_2}_{t_1} \int_{\B_R} |u_n(x,t)^{q-1}|^2 \, \d x \d t
&\lesssim t_1^{-\frac{N}{\kappa}}(t_2-t_1) R^{\frac 4 d + N} \normtre{\mu}_r^{\frac{4}{\kappa}},
\end{align*}
for any $0 < t_1 \leq t_2 < \Tr(\mu)$. We recall \eqref{pm:HL2} as well. Therefore (A1) follows with \eqref{F-set} and $S = \Tr(\mu)$. Consequently, thanks to Theorem \ref{T:loc}, we can assure that \eqref{pde}, \eqref{ic} with $q \in (1,2)$ admits a local-energy solution on $(0,\Tr(\mu))$. Furthermore, passing to the limits of all the estimates in Lemma \ref{pm:lLE} with $u$ replaced by $u_n$ as $n \to +\infty$ and using weak lower semicontinuity of norms, we can prove \eqref{pme:1}--\eqref{pme:3} as in the proof of Theorem \ref{T:fde}. Finally, due to \eqref{Tmu}, one can also verify that \eqref{pde}, \eqref{ic} admits a local-energy solution on $(0,\T(\mu))$ in the sense of Definition \ref{D:sol}, since for any $T \in (0,\T(\mu))$ one can take $r > 0$ such that $uT < \Tr(\mu)$.
\end{proof}

\section{Further applications and extensions}\label{S:other}

The last section is devoted to other possible applications 
as well as possible extensions of the general framework developed in the present paper.

\subsection{Finsler heat equation}\label{Ss:fhe}

Theorem \ref{T:loc} is of course applicable to the Cauchy problem for the \emph{Finsler heat equation}, that is, \eqref{pde} with $q=2$, which has already been studied by the authors (see also~\cite{OS1}). In~\cite{AIS}, the authors studied \begin{equation}\label{F-heat}
 \partial_t u = \Delta_H u \ \mbox{ in } \Rd \times (0,+\infty), \quad u|_{t=0} = \mu,
\end{equation}
and proved existence of a distributional solution $u=u(x,t)$ to the Cauchy problem \eqref{F-heat} in $(0,1/(4\Lambda))$ for any signed Radon measure $\mu$ in $\Rd$ satisfying a square-exponential condition in terms of the dual norm $H_0(x)$,
\begin{equation}\label{gr}
  \sup_{x\in \Rd}\int_{B_{H_0}(x,1/\sqrt{\Lambda})} \e^{-\Lambda H_0(y)^2}\,\d|\mu|(y)<+\infty 
\end{equation}
for some $\Lambda>0$. 
For the validity of the assumption~\eqref{gr}, see \cite[Theorem~1.2-(i)]{AIS}. 
(See also e.g., \cite[Section~9]{A} and \cite[Theorem~1.8]{IK}.)
Moreover, it is also proved that $1/(4\Lambda)$ is the optimal maximal existence time and the growth condition \eqref{gr} is also optimal in a proper sense. To this end, for approximate solutions $(u_n)$, the following local estimates are established for some $T_* > 0$: there exist constants $\sigma, C > 0$ and $\ell \in (0,1/2)$ such that
\begin{align*}
\sup_{x \in \Rd} \int_{x+\B_1} \e^{-h(y,t)} |u_n(y,t)| \, \d y &\leq C\sup_{x \in \Rd} \int_{x+\B_1} \e^{-\Lambda H_0(y)^2} |\mu_n(y)| \, \d y,\\
\sup_{x \in \Rd} \int^t_0 \int_{x+\B_1} \e^{-h(y,\tau)} H(\nabla u_n(y,\tau)) \, \d y \d \tau &\leq C t^\sigma \sup_{x \in \Rd} \int_{x+\B_1} \e^{-\Lambda H_0(y)^2} |\mu_n(y)| \, \d y,
\end{align*}
where $h(y,\tau) := \Lambda H_0(y)^2(1+\tau^\ell)$, for any $t \in (0,T_*)$, and moreover, other local estimates for $(u_n)$ (e.g., estimates in $L^\infty_{\rm loc}(0,\Sl;L^2(\B_R)) \cap L^2_{\rm loc}(0,\Sl;H^1(\B_R))$ for $R > 0$) are also derived (hence (A1) and (A2) follow immediately). Finally, a $C^{1,\alpha}$-regularity estimate for quasilinear parabolic equations (with uniform elliptic operators) is employed to derive pointwise convergence of gradients $(\nabla u_n)$, and hence, it enables us to identify the limit of the gradient nonlinearity due to the coincidence between weak and pointwise limits. In contrast, the approach developed in the present paper does not require any higher regularity estimates (such as $C^{1,\alpha}$ estimates) and the identification of weak limits of the gradient nonlinearity can be performed within the framework of local estimates established in~\cite{AIS} only.
 
\subsection{Doubly-nonlinear parabolic equations}\label{Ss:dnp}

Theorem \ref{T:loc} is also applicable to other doubly-nonlinear parabolic equations, whose typical example reads,
\begin{equation}
 \partial_t \left( |u|^{q-2}u \right) = \Delta_p u \quad \mbox{ in } \Rd \times (0,+\infty),\label{dnp-pde}
\end{equation}
where $1 < q < +\infty$ and $\Delta_p$ is the so-called \emph{$p$-Laplacian} given by
$$
\Delta_p u = \mathrm{div} \left( |\nabla u|^{p-2} \nabla u\right), \quad 1 < p < +\infty.
$$
Equation \eqref{dnp-pde} can be regarded as a unified form of the PME/FDE \eqref{pmfd-2} as well as the $p$-Laplace parabolic equation,
$$
\partial_t u = \Delta_p u \ \mbox{ in } \Rd \times (0,+\infty).
$$
The Cauchy problem for \eqref{dnp-pde} with growing initial data has been studied in~\cite{Ishige96}, where existence of solutions is proved for the three cases $p>q$, $p=q$ and $p<q$.

The reduction of \eqref{dnp-pde} to \eqref{dnp} can be performed by setting
$$
\beta(u) = |u|^{q-2}u \quad \mbox{ and } \quad a(x,t,\xi) = |\xi|^{p-2}\xi.
$$
Here a potential of $\beta$ is given by
$$
\hat \beta(u) = \frac 1 q |u|^q.
$$
Hence \eqref{a-mono}--\eqref{beta-conv} can be checked immediately as in \S \ref{S:FFDE}. Moreover, (A0) can be checked with the aid of a general theory developed in~\cite{B-dn}, which enables us to guarantee existence of energy solutions of the Cauchy-Dirichlet problem for \eqref{dnp-pde} posed on any smooth bounded domain. Therefore existence of local-energy solutions can be ensured by Theorem \ref{T:loc}, if all the assumptions (A1) and (A2) are checked (see~\cite{Ishige96} for local-energy estimates). It is noteworthy that, thanks to Theorem \ref{T:loc}, it suffices to establish at most a local $L^2$-estimate for gradients $(\nabla u_n)$.

A further application may be extended to the \emph{Finsler doubly-nonlinear problem}, that is,
$$
 \partial_t \left( |u|^{q-2}u \right) = \Delta_{H,p} u \quad \mbox{ in } \Rd \times (0,+\infty),
$$
where $\Delta_{H,p}$ is the \emph{Finsler $p$-Laplacian} given by
$$
\Delta_{H,p} u := \mathrm{div} \left( H(\nabla u)^{p-1}\nabla_\xi H(\nabla u) \right), \quad 1 < p < +\infty.
$$
Then we set $a(x,t,\xi) = H(\xi)^{p-1}\nabla_\xi H(\xi)$. Therefore \eqref{a-mono}--\eqref{beta-conv} can also be checked, and moreover, (A0) can be proved by use of the abstract theory developed in~\cite{B-dn}. Hence it remains to prove (A1) and (A2) to apply Theorem \ref{T:loc}. On the other hand, proofs of all lemmas and corollaries in \S \ref{S:FFDE} and \S \ref{S:FPME} are free from the restriction $p = 2$, because the Finsler FDE and PME are already a sort of doubly-nonlinear problem. So Theorems \ref{T:fde} and \ref{T:pme} may be extended to the Finsler doubly-nonlinear problem for $p > q$ and $p < q$, respectively, in an analogous manner (however, it is not handled in the present paper for the sake of simplicity).

\subsection{Stefan problem}\label{Ss:Stefan}

We next deal with a one-phase Stefan problem, whose classical form is given as
\begin{alignat}{4}
\partial_t u(x,t) &= \Delta u(x,t) \quad &&\mbox{ for } \ x \in \Omega(t), \ t \in (0,T),\label{stefan-pde}\\
u(x,t)&=0, \quad \partial_t u(x,t) =\mu |\nabla u(x,t)|^2 \quad &&\mbox{ for } \ x \in \partial \Omega(t), \ t \in (0,T),\label{stefan-bc}\\
u(x,0)&=u_0(x) &&\mbox{ for } \ x \in \Omega(0),\label{stefan-ic}
\end{alignat}
where $\mu > 0$ is a constant and $T > 0$. Here the second relation on the boundary $\partial \Omega(t)$ is equivalent to the so-called \emph{Stefan condition} of the free boundary $\partial \Omega(t)$. Indeed, if the free boundary has a level-set representation
$$
\partial \Omega(t) = \{x \in \Rd \colon \Phi(x,t) = 0\}
$$
for some (smooth) function $\Phi : \Rd \times (0,+\infty) \to \R$, the second relation is rewritten as
$$
\partial_t \Phi(x,t) = \mu \nabla u(x,t) \cdot \nabla \Phi(x,t) \ \mbox{ for } \ x \in \partial \Omega(t), \ t \in (0,T).
$$
Higher dimensional Stefan problems are often studied in a weak formulation, although the one-dimensional case is usually studied in the classical setting. In order to introduce a weak formulation, we first extend $u$ by zero on the whole domain $\Rd$. Then the classical form \eqref{stefan-pde}--\eqref{stefan-ic} is reduced to the following weak formulation with the aid of integration by parts formula:
\begin{align*}
- \int^T_0 \int_{\Rd} \beta_0(u) \partial_t \phi \, \d x \d t
- \int_{\Rd} \beta_0(u_0(x)) \phi(x,0) \, \d x 
+ \int^T_0 \int_{\Rd} \nabla u \cdot \nabla \phi \, \d x \d t = 0
\end{align*}
for any $\phi \in C^\infty_c(\Rd \times [0,T))$. Here $\beta_0 : \R \to \R$ is given by
$$
\beta_0(u) = \begin{cases}
	      u \ &\mbox{ if } \ u > 0,\\
	      u - \mu^{-1} \ &\mbox{ if } \ u \leq 0.
	     \end{cases}
$$
Moreover, define the maximal monotone extension $\beta : \R \to 2^\R$ of $\beta_0$ by
$$
\beta(u) = \begin{cases}
	    u \ &\mbox{ if } \ u > 0,\\
	    [-\mu^{-1},0] \ &\mbox{ if } \ u = 0,\\
	    u - \mu^{-1} \ &\mbox{ if } \ u \leq 0.
	     \end{cases}
$$
Then the weak formulation implies
\begin{equation*}
\partial_t b = \Delta u, \quad b \in \beta(u) \ \mbox{ in } \Rd \times (0,T).
\end{equation*}
Moreover, a Finsler variant reads,
\begin{equation}\label{stefan}
\partial_t b = \Lap u, \quad b \in \beta(u) \ \mbox{ in } \Rd \times (0,T),
\end{equation}
which is a sort of doubly-nonlinear problem and can also be reduced to \eqref{dnp} by setting $a(x,t,\xi) = \xi$. Furthermore, we set 
$$
\hat\beta(u) = \begin{cases}
	    \frac12 u^2 \ &\mbox{ if } \ u \geq 0,\\
	    \frac12 u^2 - \mu^{-1} u \ &\mbox{ if } \ u \leq 0.
	     \end{cases}
$$
Then $\hat\beta$ is strictly convex. Thus \eqref{a-mono}--\eqref{beta-conv} are checked, and therefore, the Finsler Stefan problem \eqref{stefan} falls within the scope of the general framework presented in this paper. On the other hand, local energy estimates for \eqref{stefan} may not have yet been established.

\subsection{Extensions of the general framework}

Finally, let us also discuss possible extensions of the general framework 
for doubly-nonlinear problems established in the present paper. A first extension is concerned with the Cauchy problem for
\begin{equation}\label{dnp-g}
 \partial_t v = \mathrm{div} \, a(x,t,\nabla u) + g, \ v \in \beta(u) \ \mbox{ in } \Rd \times (0,\Sl),
\end{equation}
where $g$ is a given function satisfying
\begin{align*}
g \in \mathcal{M}(\Rd \times [0,\Sl)) \cap L^{p'}_{\rm loc}(\Rd \times (0,\Sl)) \ \mbox{ for } R > 0.
\end{align*}
Then one can take an approximate sequence $(g_n)$ in $C^\infty_c([0,\Sl] \times \Rd)$ such that
$$
g_n \to g \quad \mbox{ strongly in } L^{p'}(B_R \times (t_1,t_2))
$$
for $R > 0$ and $0 < t_1 < t_2 < \Sl$ and
$$
\int^\Sl_0 \int_{\Rd} g_n \psi \, \d x \d t \to \int^\Sl_0 \int_{\Rd} \psi \d g(x,t)
$$
for $\psi \in C^\infty_c(\Rd\times [0,\Sl))$ as $n \to +\infty$. One can extend Theorem \ref{T:loc} to the Cauchy problem for \eqref{dnp-g}. To this end, we replace \eqref{dnp:wf} in (A0) with
\begin{align}
\left\langle \partial_t v_n(t), w \right\rangle_{W^{1,p}_0(B_n)}
+ \int_{B_n} a(x,t,\nabla u_n(x,t)) \cdot \nabla w(x) \, \d x = 
 \int_{B_n} g_n(x,t) w(x) \, \d x \label{dnp-g:wf}
\end{align}
for any $w \in W^{1,p}_0(B_n)$ and a.e.~$t \in (0,\Sl)$. Then we modify local-energy estimates; in the proof of \eqref{e:un2(q-1)}, we need control an additional term as follows:
\begin{align*}
\int_{B_n} g_n(x,t) v_n(x,t)^{p'-1} \zeta_R^{p'}(x) \, \d x
&\leq \|g_n(t) \zeta_R\|_{L^{p'}(B_{2R})} \|v_n(t) \zeta_R\|_{L^{p'}(B_{2R})}^{p'-1}\\
&\leq \|v_n(t) \zeta_R\|_{L^{p'}(B_{2R})}^{p'} + C  \|g_n(t) \zeta_R\|_{L^{p'}(B_{2R})}^{p'},
\end{align*}
which leads us to derive \eqref{e:un2(q-1)} again. A similar modification should also be applied to estimate $\rho \partial_t v_n$. Furthermore, \eqref{dphi*u} will additionally involve the term
$$
\int^{\tJ}_{\tI} \int_{\BR} g_n u_n \rho \, \d x \d t \to \int^{\tJ}_{\tI} \int_{\BR} g u \rho \, \d x \d t
$$
as $n \to +\infty$. Therefore the term
\begin{equation*}
\int^{\tJ}_{\tI} \int_{\BR} g \psi \, \d x \d t
\end{equation*}
will be added to the right-hand side of the weak form \eqref{weak_form_hat}. Finally, noting that
$$
\int^t_0 \int_{\Rd} g_n \psi \, \d x \d\tau \to \int^t_0 \int_{\Rd} \psi \d g(x,\tau)
$$
for any $\psi \in C^\infty_c(\Rd \times [0,\Sl))$, we shall rewrite \eqref{wic} as
\begin{align}
- \int^t_0 \int_{\Rd} v \partial_t \psi \, \d x \d \tau
 + \int_{\Rd} v(x,t) \psi(x,t) \, \d x
  - \int_{\Rd} \psi(x,0) \, \d \mu(x)\nonumber\\
+ \int^t_0 \int_{\Rd} a(x,t,\nabla u) \cdot \nabla \psi \, \d x \d \tau = \int^t_0 \int_{\Rd} \psi(x,\tau) \d g(x,\tau)\label{eq-g}
\end{align}
for all $\psi \in C^\infty_c([0,\Sl)\times \R^\dim)$. All the other parts of the assertion remain valid.

We further consider perturbation problems for \eqref{dnp}, that is,
\begin{equation}\label{pert}
 \partial_t v = \mathrm{div} \, a(x,t,\nabla u) + f(x,t,u,\nabla u), \ v \in \beta(u) \ \mbox{ in } \Rd \times (0,\Sl),
\end{equation}
where $f : \Rd \times (0,+\infty) \times \R \times \Rd \to \R$ is a perturbation term satisfying certain assumptions, e.g., $f$ is a Carath\'eodory function, i.e., measurable in $(x,t)$ and continuous in the other variables, and $f$ complies with some growth conditions in $u$ and $|\nabla u|$. The extension to \eqref{dnp-g} may help us to extend Theorem \ref{T:loc} to the perturbed equation \eqref{pert}.

Let us finally consider structural stability of \eqref{dnp}, namely, let $(a_n(x,t,\xi))$ and $(\beta_n(u))$ be approximate sequences of $a(x,t,\xi)$ and $\beta(u)$, respectively, and replace $a(x,t,\nabla u_n)$ and $\beta(u_n)$ in (A0) and (A2) by $a_n(x,t,\nabla u_n)$ and $\beta_n(u_n)$, respectively. To this end, we may assume $G$-convergence for $a_n(x,t,\xi)$ and $\beta_n$ (cf.~\cite{Vi11}).

\appendix

\section{Some lemmas}

\begin{lemma}\label{L:meas}
Let $\mu$ be a Radon measure in $\R^\dim$. Then there exists a sequence $(\mu_n)$ in $C^\infty_c(\Rd)$ such that $\mathrm{supp}\,\mu_n \subset B_n$ and $\mu_n \to \mu$ weakly start in $\mathcal M(\Rd)$, that is,
$$
\int_{\Rd} \varphi \mu_n\,\d x \to \int_{\Rd} \varphi \, \d \mu(x) \quad \mbox{ for } \ \varphi \in C_c(\Rd)
$$
as $n\to+\infty$. Moreover, $|\mu_n| \to |\mu|$ weakly star in $\mathcal M(\Rd)$.
\end{lemma}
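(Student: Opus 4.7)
The plan is to construct $\mu_n$ from $\mu$ by truncation followed by mollification. Fix a standard even mollifier $\rho\in C_c^\infty(\R^N)$ with $\rho\geq 0$, $\mathrm{supp}\,\rho\subset B_1$ and $\int\rho=1$, and set $\rho_\vep(x):=\vep^{-N}\rho(x/\vep)$. Choose smooth cutoffs $\eta_n\in C_c^\infty(\R^N)$ with $0\leq \eta_n\leq 1$, $\eta_n\equiv 1$ on $B_{n-1}$, $\mathrm{supp}\,\eta_n\subset B_{n-1/2}$, together with $\vep_n\in(0,1/4)$. Then I would define $\mu_n:=\rho_{\vep_n}*(\eta_n\mu)$. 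Because $\eta_n\mu$ is a finite Radon measure with compact support, $\mu_n$ lies in $C_c^\infty(\R^N)$ with $\mathrm{supp}\,\mu_n\subset B_{n-1/2+\vep_n}\subset B_n$, as required.

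For the weak-star convergence of $\mu_n$, Fubini yields
$$
\int_{\R^N}\varphi\,\mu_n\,\d x=\int_{\R^N}(\rho_{\vep_n}*\varphi)\,\eta_n\,\d\mu
$$
for any $\varphi\in C_c(\R^N)$. Fixing $R$ with $\mathrm{supp}\,\varphi\subset B_R$, for $n\geq R+2$ the integrand is supported in $B_{R+1}$ and coincides there with $\rho_{\vep_n}*\varphi$, which tends to $\varphi$ uniformly; since $|\mu|(B_{R+1})<+\infty$, dominated convergence gives $\int\varphi\,\mu_n\,\d x\to\int\varphi\,\d\mu$.

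For the convergence of the total variations, the central ingredient is the pointwise inequality of measures
$$
|\mu_n|(\d x)\;\leq\;\big(\rho_{\vep_n}*|\eta_n\mu|\big)(\d x),
$$
which holds because convolving against the non-negative kernel $\rho_{\vep_n}$ never increases total variation. Applying the previous paragraph to the non-negative measure $|\mu|$ gives $\rho_{\vep_n}*(\eta_n|\mu|)\to|\mu|$ weakly star, so that $\limsup_n\int\varphi\,\d|\mu_n|\leq\int\varphi\,\d|\mu|$ for every non-negative $\varphi\in C_c(\R^N)$. The matching lower bound $\int\varphi\,\d|\mu|\leq\liminf_n\int\varphi\,\d|\mu_n|$ is the classical lower semicontinuity of total variation under weak-star convergence, applied to the already-established $\mu_n\to\mu$. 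Sandwiching these bounds and then invoking linearity will yield $|\mu_n|\to|\mu|$ weakly star in $\mathcal{M}(\R^N)$.

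The only delicate step is the upper bound $|\mu_n|\leq\rho_{\vep_n}*|\eta_n\mu|$: it is what prevents the mollifier from spreading mutually singular positive and negative pieces of $\mu$ into overlapping lumps and thereby artificially inflating $|\mu_n|(\R^N)$. The remaining ingredients are routine mollifier calculus together with the classical semicontinuity of $\nu\mapsto|\nu|$ under weak-star convergence.
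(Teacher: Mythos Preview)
Your proof is correct and follows essentially the same route as the paper: mollification combined with smooth truncation, with the $|\mu_n|\to|\mu|$ step handled by sandwiching between the pointwise bound $|\mu_n|\leq\rho_{\vep_n}*(\eta_n|\mu|)$ and lower semicontinuity of the total variation under weak-star convergence. The paper phrases the construction slightly differently, first applying the Jordan decomposition $\mu=\mu^+-\mu^-$, mollifying each non-negative part separately, and then cutting off (so the cutoff comes \emph{after} the mollifier rather than before), but this is equivalent to your direct construction and the total-variation argument is the same in substance. One trivial omission in your write-up: you need $\vep_n\to 0$ for $\rho_{\vep_n}*\varphi\to\varphi$ uniformly, but you only stipulated $\vep_n\in(0,1/4)$.
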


\begin{proof}
The Jordan decomposition theorem implies a unique decomposition of $\mu = \mu^+-\mu^-$ for some positive Radon measures $\mu^\pm$ on $\R^\dim$. Then, one can take smooth approximations $\mu^\pm_n \in C^\infty(\R^\dim)$ of $\mu^\pm$ such that $\mu^\pm_n \to \mu^\pm$ weakly star in $\mathcal M(\R^\dim)$ (e.g., by use of the Riesz representation theorem and mollifier). Now, let $\zeta_n \in C^\infty_c(\R^\dim)$ be such that $0\leq\zeta_n\leq 1$ in $\Rd$, $\mathrm{supp}\,\zeta_n \subset B_n$ and $\zeta_n \equiv 1$ on $B_{n/2}$. Let $\psi \in C_c(\R^\dim)$ and set $\varphi = \zeta_n \psi \in C_c(\R^\dim)$. It follows that
$$
\int_{\R^\dim} \psi(x) \zeta_n(x) \mu^\pm_n(x) \, \d x \stackrel{n \gg 1}= \int_{\R^\dim} \psi(x) \mu^\pm_n(x) \, \d x \to \int_{\R^\dim} \psi(x) \, \d \mu^\pm(x).
$$
Therefore setting $\mu_n := \zeta_n (\mu^+_n - \mu^-_n) \in C^\infty_c(\Rd)$, we observe that $\mu_n \to \mu$ weakly star in $\mathcal M(\Rd)$ as $n \to +\infty$. Similarly, one can also verify that $|\mu_n| \to |\mu|$ weakly star in $\mathcal M(\Rd)$.
\end{proof}

\begin{lemma}\label{a:L:rel}
Under the setting in \S \ref{Ss:wc}, the function $t \mapsto \rho v_n(t)$ is absolutely continuous with values in $(\Wpr)^*$ on $[t_1,t_2]$ such that
\begin{equation}\label{a:rel}
\rho \partial_t v_n = \partial_t [\rho v_n] \ \mbox{ in } (\Wpr)^* \ \mbox{ a.e.~in } (t_1,t_2).
\end{equation}
\end{lemma}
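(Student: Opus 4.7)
The idea is to realize the ``multiplication by $\rho$'' operation as the transpose of a bounded linear embedding between the relevant Sobolev spaces, after which the desired commutation with time differentiation reduces to the general fact that bounded linear operators commute with distributional derivatives of absolutely continuous Banach-space-valued functions.

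\emph{Step 1: A bounded linear ``multiplication-then-extend'' map.}
I would first define
$$
T : \Wpr \longrightarrow W^{1,p}_0(B_n), \qquad T\phi := \overline{\phi \rho},
$$
where $\overline{\phi\rho}$ denotes the zero extension to $B_n$ of the product $\phi\rho \in W^{1,p}(\BR)$. To see that $T$ is well-defined, note that $\rho \in C^1(\Rd)$ with $\mathrm{supp}\,\rho = \overline{\BR}$ (cf.~\eqref{rho-hypo}), so both $\rho$ and $\nabla \rho$ vanish on $\partial \BR$; hence the trace of $\phi \rho$ on $\partial \BR$ is zero, which gives $\phi\rho \in W^{1,p}_0(\BR)$, and the zero extension then lies in $W^{1,p}_0(B_n)$. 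Moreover, since
$$
\|T\phi\|_{W^{1,p}_0(B_n)} \leq \|\rho\|_{L^\infty(\BR)} \|\phi\|_{L^p(\BR)} + \|\rho\|_{L^\infty(\BR)} \|\nabla\phi\|_{L^p(\BR)} + \|\nabla\rho\|_{L^\infty(\BR)} \|\phi\|_{L^p(\BR)},
$$
the map $T$ is bounded linear. Consequently its transpose
$$
T^* : W^{-1,p'}(B_n) \longrightarrow (\Wpr)^*,
\qquad
\langle T^* \xi, \phi \rangle_{\Wpr} := \langle \xi, \overline{\phi\rho} \rangle_{W^{1,p}_0(B_n)},
$$
is also bounded linear.

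\emph{Step 2: Identification of $\rho v_n$ and $\rho \partial_t v_n$ as pull-backs through $T^*$.}
By the very definition of $[\rho \partial_t v_n]$ given in \S \ref{Ss:wc}, we have $[\rho\partial_t v_n](t) = T^*\partial_t v_n(t)$ in $(\Wpr)^*$ for a.e.~$t$. On the other hand, identifying $L^{p'}(B_n) \hookrightarrow W^{-1,p'}(B_n)$ via the $L^2$-pairing, for each $t$ at which $v_n(t) \in L^{p'}(B_n)$ (which holds throughout $[0,\Sl]$ since $v_n \in C_{\rm weak}([0,\Sl];L^{p'}(B_n))$) one computes
$$
\langle T^* v_n(t), \phi \rangle_{\Wpr}
= \int_{B_n} v_n(t)\,\overline{\phi\rho}\,\d x
= \int_{\BR} \rho v_n(t)\,\phi \, \d x,
$$
so that $T^* v_n(t)$ coincides with $\rho v_n(t)$ viewed as an element of $(\Wpr)^*$ via integration against test functions in $\Wpr$.

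\emph{Step 3: Commutation with time derivatives.}
By (A0) we have $v_n \in W^{1,p'}(0,\Sl;W^{-1,p'}(B_n))$; in particular, $v_n$ is absolutely continuous with values in $W^{-1,p'}(B_n)$ on $[t_1,t_2]$, with distributional derivative $\partial_t v_n \in L^{p'}(0,\Sl;W^{-1,p'}(B_n))$. Applying the bounded linear operator $T^*$ term-by-term to the identity
$$
v_n(t) = v_n(t_1) + \int_{t_1}^{t} \partial_t v_n(\tau)\,\d\tau \qquad \mbox{in } W^{-1,p'}(B_n),
$$
which is a Bochner integral, I obtain
$$
T^* v_n(t) = T^* v_n(t_1) + \int_{t_1}^{t} T^* \partial_t v_n(\tau)\,\d\tau \qquad \mbox{in } (\Wpr)^*,
$$
by the usual interchange of a bounded linear operator with a Bochner integral. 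This shows that $T^* v_n = \rho v_n$ is absolutely continuous with values in $(\Wpr)^*$ on $[t_1,t_2]$, and $\partial_t[\rho v_n] = T^* \partial_t v_n = \rho \partial_t v_n$ in $(\Wpr)^*$ for a.e.~$t \in (t_1,t_2)$, which is precisely \eqref{a:rel}.

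The only delicate point is the well-definedness of $T$, which hinges on the regularity of $\rho$ at $\partial\BR$ dictated by \eqref{rho-hypo}; once $T$ is seen to be bounded linear, the remaining commutation step is a standard fact about Bochner-integrable Banach-space-valued maps and thus presents no real obstacle.
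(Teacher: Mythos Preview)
Your proof is correct and in fact cleaner than the paper's. Both arguments rest on the same underlying identity $\langle \rho\,\partial_t v_n,\phi\rangle = \langle \partial_t v_n,\overline{\phi\rho}\rangle$, but they organize it differently. The paper proceeds by hand: it first computes difference quotients to show that $t\mapsto\rho v_n(t)$ is weakly differentiable in $(\Wpr)^*$ with weak derivative $\rho\,\partial_t v_n$, then separately estimates $\|\rho v_n(t)-\rho v_n(s)\|_{(\Wpr)^*}$ via $\int_s^t\|\partial_\tau v_n\|_{W^{-1,p'}(B_n)}\,\d\tau$ to obtain absolute continuity, and finally invokes the coincidence of weak and strong derivatives for absolutely continuous Banach-valued functions. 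Your route packages all of this into a single observation: the map $T^*$ is bounded linear, hence commutes with the Bochner integral in the fundamental-theorem-of-calculus representation of $v_n$, yielding absolute continuity of $T^*v_n=\rho v_n$ and the derivative identity simultaneously. This is more conceptual and avoids the two-step structure; the paper's version has the minor advantage of being entirely self-contained, not appealing to the Bochner-integral/operator interchange as a black box. One small remark: your claim that $\nabla\rho$ vanishes on $\partial B_R$ is in fact true here (since $\rho\in C^1(\Rd)$ vanishes identically outside $\overline{B_R}$, continuity of $\nabla\rho$ forces it to vanish on $\partial B_R$), but you do not actually need it---the trace of $\phi\rho$ on $\partial B_R$ is zero already from $\rho|_{\partial B_R}=0$, which suffices for $\phi\rho\in W^{1,p}_0(B_R)$.
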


\begin{proof}
We observe that, for any $w \in \Wpr$ and a.e.~$t \in (t_1,t_2)$,
\begin{align*}
 \left\langle \rho \partial_t v_n(t), w \right\rangle_{\Wpr}
&\stackrel{\text{def.}}= \left\langle \partial_t v_n(t) , \overline{\rho w} \right\rangle_{W^{1,p}_0(B_n)}\\
&= \lim_{h \to 0} \left\langle \frac{v_n(t+h)-v_n(t)}h, \overline{\rho w} \right\rangle_{W^{1,p}_0(B_n)}\\
&= \lim_{h \to 0} \int_{\BR} \frac{v_n(t+h)-v_n(t)}h \rho w \, \d x\\
&= \lim_{h \to 0} \int_{\BR} \frac{\rho v_n(t+h)-\rho v_n(t)}h  w \, \d x\\
&= \lim_{h \to 0} \left\langle \frac{\rho v_n(t+h)-\rho v_n(t)}h, w \right\rangle_{\Wpr},
\end{align*}
whence follows that $t \mapsto \rho v_n(t)$ is weakly differentiable in $\Wpr$ at a.e.~$t \in (t_1,t_2)$, and moreover, the weak derivative coincides with $\rho \partial_t v(t)$ in $(\Wpr)^*$. On the other hand, we observe that
\begin{align*}
\lefteqn{
 \left| \left\langle \rho v_n(t)-\rho v_n(s), w \right\rangle_{\Wpr}\right|
}\\
&= \left| \left\langle v_n(t)- v_n(s), \overline{\rho w} \right\rangle_{W^{1,p}_0(B_n)}\right|\\
&\leq \left\| v_n(t)-v_n(s)\right\|_{W^{-1,p'}(B_n)} \|\overline{\rho w}\|_{W^{1,p}_0(B_n)}\\
&\lesssim \|w\|_{\Wpr} \int^t_s \|\partial_\sigma v_n(\sigma)\|_{W^{-1,p'}(B_n)} \, \d \sigma
\end{align*}
for $w \in \Wpr$ and $0 < s < t < \Sl$ (indeed, $v_n \in W^{1,p'}(0,\Sl;W^{-1,p'}(B_n))$ by (A0)). Therefore $t \mapsto \rho v_n(t)$ is absolutely continuous in $(\Wpr)^*$ on $[t_1,t_2]$, and hence, it is strongly differentiable in $(\Wpr)^*$ a.e.~in $(t_1,t_2)$. Since the weak derivative coincides with the strong one, we obtain the relation \eqref{a:rel}.
\end{proof}

\begin{lemma}[Comparison principle for integral inequalities]\label{A:L:cp}
Let $f : \R \to \R$ be a non-decreasing function and let $a_+, a_-$ be constants such that $a_- < a_+$. Let $\phi_+, \phi_-: [0,+\infty) \to \R$ and $k : [0,+\infty) \to \R_+$ be {\rm (}possibly \emph{discontinuous}{\rm )} functions such that $t \mapsto k(t) f(\phi_\pm (t)) \in L^1_{\rm loc}([0,+\infty))$ and
\begin{align*}
\phi_-(t) &\leq a_- + \int^t_0 k(\tau) f(\phi_-(\tau)) \, \d \tau,\\
\phi_+(t) &\geq a_+ + \int^t_0 k(\tau) f(\phi_+(\tau)) \, \d \tau
\end{align*}
for $t > 0$. Then $\phi_-(t) < \phi_+(t)$ for all $t > 0$. 
\end{lemma}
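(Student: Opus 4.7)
The plan is to run a contradiction argument based on the first time the ordering between $\phi_+$ and $\phi_-$ could possibly fail, exploiting the monotonicity of $f$ only on the region where the ordering is still known to hold. First I would subtract the two integral inequalities to obtain the key lower bound
$$
\phi_+(t) - \phi_-(t) \geq (a_+ - a_-) + \int_0^t k(\tau)\bigl[f(\phi_+(\tau)) - f(\phi_-(\tau))\bigr] \, \d \tau
$$
for every $t > 0$. Setting $\ddelta := a_+ - a_- > 0$, the goal becomes to rule out the set
$$
I := \{t > 0 \colon \phi_-(t) \geq \phi_+(t)\}.
$$

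Suppose, toward contradiction, that $I \neq \emptyset$, and let $t_0 := \inf I \geq 0$. For every $\tau \in [0,t_0)$, by definition of $t_0$, we have $\tau \notin I$, i.e., $\phi_-(\tau) < \phi_+(\tau)$, and the monotonicity of $f$ together with $k(\tau) \geq 0$ then yields
$$
k(\tau)\bigl[f(\phi_+(\tau)) - f(\phi_-(\tau))\bigr] \geq 0 \quad \mbox{ a.e.~} \tau \in [0,t_0).
$$
Since $t_0 = \inf I$, one can pick a sequence $(t_n) \subset I$ with $t_n \searrow t_0$. Because $t_n \in I$, the left-hand side of the key inequality at $t = t_n$ is non-positive, and hence
$$
0 \geq \ddelta + \int_0^{t_0} k(\tau)\bigl[f(\phi_+) - f(\phi_-)\bigr] \d \tau + \int_{t_0}^{t_n} k(\tau)\bigl[f(\phi_+) - f(\phi_-)\bigr] \d \tau.
$$
The middle integral is non-negative by the previous step, while the last integral tends to $0$ as $n \to \infty$ by the assumed local integrability of $\tau \mapsto k(\tau) f(\phi_\pm(\tau))$ and absolute continuity of the Lebesgue integral. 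Passing to the limit therefore gives $0 \geq \ddelta > 0$, a contradiction, so $I$ must be empty, proving the claim.

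The only subtlety \emph{(i.e., the main obstacle, although mild)} is that $\phi_\pm$ are merely measurable (and possibly discontinuous), so one cannot appeal to continuity of $\phi_+ - \phi_-$ at the critical time $t_0$. This is precisely the reason for working with $\inf I$ together with an approximating sequence $t_n \in I$ rather than evaluating at $t_0$ itself: the $L^1_{\rm loc}$ hypothesis on $k\cdot f(\phi_\pm)$ is exactly what is needed to absorb the ``unknown-sign'' contribution on the shrinking interval $[t_0,t_n]$ into a vanishing error, while the monotonicity of $f$ controls the sign of the integrand on $[0,t_0)$ where the desired ordering is already known to hold.
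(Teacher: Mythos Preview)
Your proposal is correct and follows essentially the same contradiction argument as the paper: subtract the two integral inequalities, use positivity of $a_+ - a_-$ together with local integrability of the integrand to get the ordering near $t=0$, then derive a contradiction at the ``first'' time the ordering fails, using that the integrand has a good sign on the interval where the ordering is already known. Your version is in fact slightly more careful than the paper's, since by working with a sequence $t_n \in I$ approaching $t_0 = \inf I$ (rather than evaluating directly at a putative minimal bad time $t_1 \in I$) you avoid the tacit assumption that the infimum is attained, which need not hold for discontinuous $\phi_\pm$.
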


\begin{proof}
We first note that
\begin{equation}\label{ap:subtr}
\phi_-(t) - \phi_+(t)
\leq \underbrace{a_- - a_+}_{ < 0} + \int^t_0 \underbrace{k(\tau) \left[f(\phi_-(\tau)) - f(\phi_+(\tau)) \right]}_{\in \ L^1_{\rm loc}([0,+\infty))} \, \d \tau,
\end{equation}
whence follows that one can take $t_* > 0$ such that
$$
\phi_-(t) - \phi_+(t) < 0 \quad \mbox{ for } \ t \in [0,t_*].
$$
Assume on the contrary that there exists a finite $t_1 > 0$ such that
$$
\phi_-(t) - \phi_+(t) < 0 \ \mbox{ for } \ t \in (0,t_1), \quad \phi_-(t_1) - \phi_+(t_1) \geq 0.
$$
Recall \eqref{ap:subtr} and substitute $t = t_1$. It then follows that
\begin{align*}
0 &\leq \phi_-(t_1) - \phi_+(t_1)\\
&\leq a_- - a_+ + \int^{t_1}_0 k(\tau) \left[ f(\phi_-(\tau)) - f(\phi_+(\tau)) \right] \, \d \tau < 0,
\end{align*}
which implies a contradiction. Thus we conclude that $t_1 = +\infty$.
\end{proof}

\pagestyle{myheadings}

\end{document}